\numberwithin{equation}{section}
\newcommand{\itemEq}[1]{%
        \begingroup%
        \setlength{\abovedisplayskip}{0pt}%
        \setlength{\belowdisplayskip}{0pt}%
        \parbox[c]{\linewidth}{\begin{flalign}#1&&\end{flalign}}%
        \endgroup}
\newcommand\abstractname{Abstract}  
  \newenvironment{abstract}{%
      \null\vfil
      \@beginparpenalty\@lowpenalty
      \begin{center}%
        \bfseries \abstractname
        \@endparpenalty\@M
      \end{center}}%
     {\par\vfil\null
}
\newcommand\dedicationname{}  
  \newenvironment{dedication}{%
      \null\vfil
      \@beginparpenalty\@lowpenalty
      \begin{center}%
        \bfseries \dedicationname
        \@endparpenalty\@M
      \end{center}}%
     {\par\vfil\null
}
\newcommand\Acknowledgementsname{Acknowledgements}  
  \newenvironment{acknowledgements}{%
      \null\vfil
      \@beginparpenalty\@lowpenalty
      \begin{center}%
        \bfseries \Acknowledgementsname
        \@endparpenalty\@M
      \end{center}}%
     {\par\vfil\null
}
\g@addto@macro\bfseries{\boldmath}
\newtheorem{Theorem}{Theorem}
\newtheorem{Lemma}{Lemma}
\newtheorem{Definition}{Definition}
\newtheorem{Corollary}{Corollary}
\newtheorem{Example}{Example}
\newcommand{\mathleft}{\@fleqntrue\@mathmargin\parindent}
\begin{document}
\begin{titlepage}
\begin{center}
 {\huge\bfseries On the Enumeration of Circulant Graphs of Prime-Power Order:\\ the case of $p^3$\\}
 \vspace{1.5cm}
 {\Large\bfseries Victoria Gatt}\\[5pt]
 \vspace{1cm}
{Thesis  submitted to} \\[5pt]
\emph{{University of Malta}}\\[2cm]
{in partial fulfilment for the award of the degree
 of} \\[2cm]
\textsc{\Large{{Master of Science}}} \\[5pt]
{in Mathematics} \vspace{0.4cm} \\[1cm]
 \vfill
\includegraphics[width=0.8\textwidth]{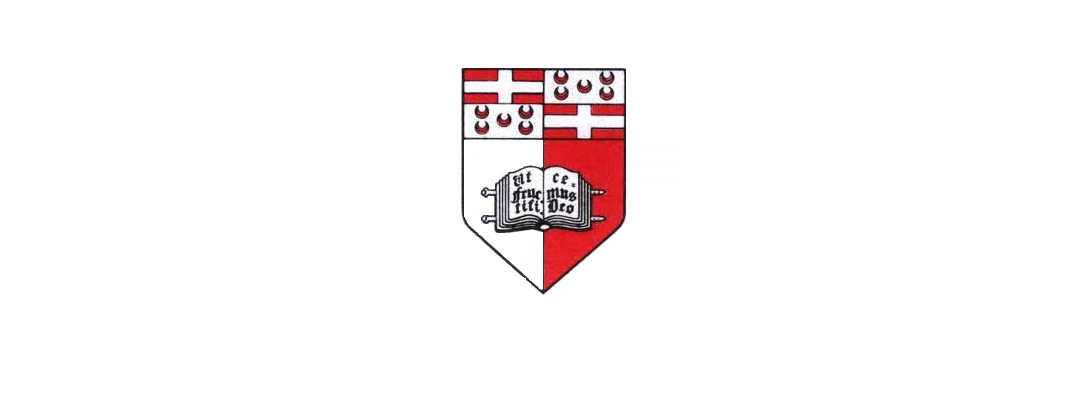}\\[5pt]
{Department of Mathematics}\\[5pt]
{University of Malta}\\[5pt]
 \vfill
{September 2013}
\end{center}
\end{titlepage}

\setcounter{secnumdepth}{3}
\setcounter{tocdepth}{3}
\pagenumbering{roman}

\bibliographystyle{plain}
\thispagestyle{empty}
\begin{dedication}
\begin{center}
\Large{To My Family and Friends}
\end{center}
\end{dedication}
\newpage

\thispagestyle{empty}
\begin{acknowledgements}
I would like to express my deepest gratitude to my tutor Prof. Josef Lauri, for his continuous guidance and encouragement during the course of this research work. His critical analysis of my work helped me to improve on my effort to program and write mathematics and his constant patience and support have helped me continue to strive for a solution when programming errors developed and problems seemed unsolvable.

\vspace{5mm}
A word of gratitude also goes to Mikhail Klin and Valery Liskovets for all their help, especially to Professor Klin for passing on a list of Schur rings which they obtained and which we required, to be able to consider a different technique in our work. I would also like to thank Matan Ziv-Av for many helpful hints with GAP.

\vspace{5mm}
Last but certainly not least, I would like to thank all my family and friends for their support throughout these two years, especially my parents who even tried to understand some of the content in this dissertation!
\end{acknowledgements}
\newpage

\thispagestyle{empty}
\vspace{3cm}
\begin{figure}[!hb]
\centering
\includegraphics[width=1.5\linewidth]{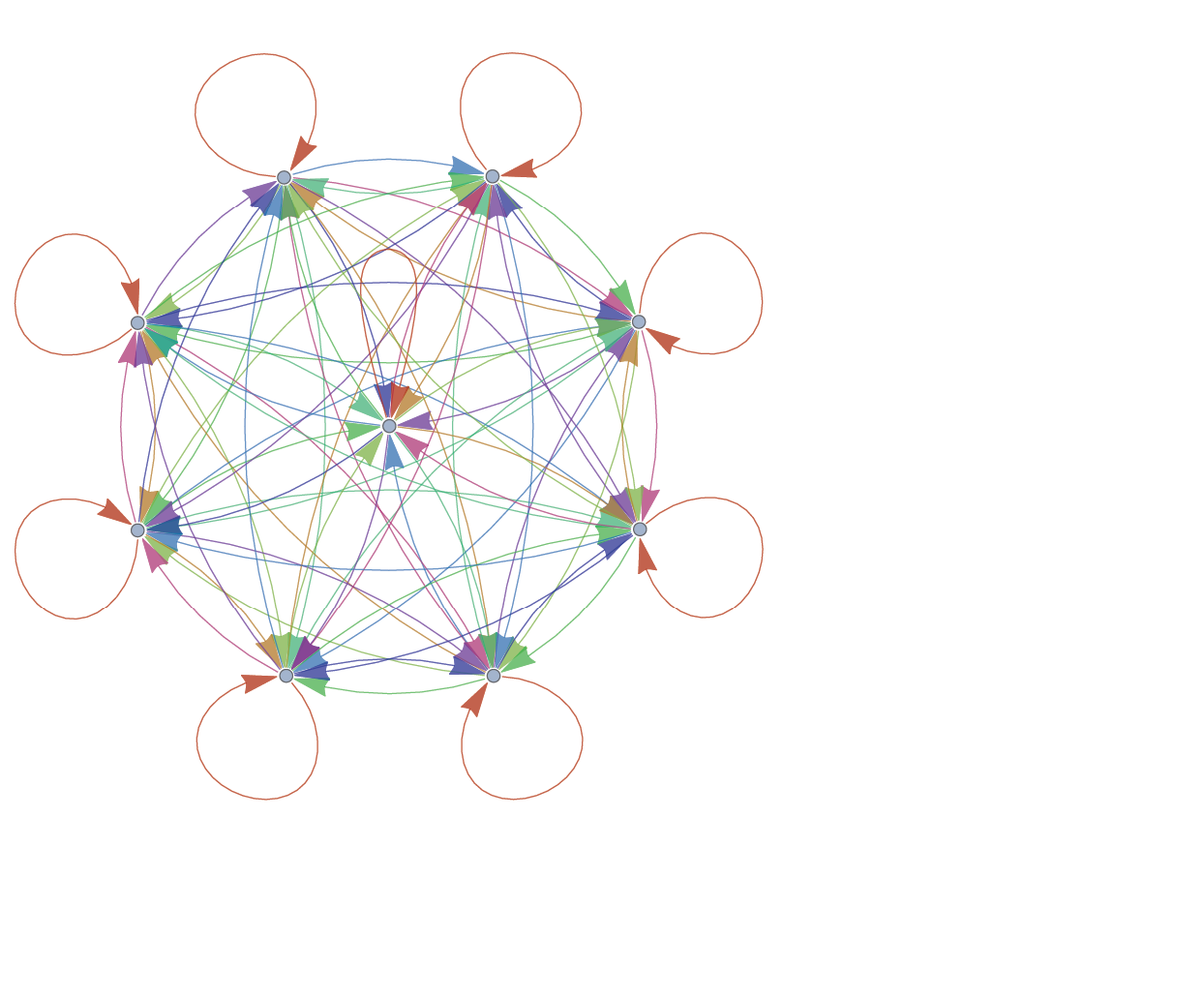}
\caption{A graphical representation of the regular action of $\mathbb{Z}_9$ on itself. (Created by Professor Joseph Muscat using Mathematica).}
\label{PermutationGroups}
\end{figure}
\newpage

\pagestyle{fancy}
\fancyhead[LO,LE]{\slshape \leftmark}
\rhead{}
\fancyfoot[C]{\thepage}
\chead{}
\begin{abstract}
A well-known problem in Algebraic Combinatorics, is the enumeration of circulant graphs. The failure of Adam's Conjecture for such graphs with order containing a repeated prime, led researchers to investigate the problem using two different methods, namely the multiplier method and the structural method. The former makes use of isomorphism theorems whereas the latter involves Schur rings. Both these methods have already been used to count the number of non-isomorphic circulants of order $p^2$. This research focuses on the extension of these two methods to enumerate circulants of order $p^3$, in particular for $p=3$ and $p=5$, through the use of the computer package GAP.
\end{abstract}

\tableofcontents
\newpage
\listoftables
\listoffigures
\mainmatter
\pagebreak
\pagenumbering{arabic}
\fancyhead[R]{\thepage}
\fancyfoot[C]{}

\chapter{Preliminaries and basic definitions}
\section{Permutation Groups}
The following definitions are all standard and can be found in \cite{LP2000} and \cite{KLP96}. The ring of integers is denoted by $\mathbb{Z}$. The additive cyclic group of order $\textit{n}$, where $\textit{n}$ is a positive integer, is defined on the set $\mathbb{Z}_n:=\lbrace0,1,2,...,n-1\rbrace$, where the group operation is addition modulo $n$. $\mathbb{Z}'_n$ will denote $\mathbb{Z}_n - \lbrace{0}\rbrace$. The multiplicative abelian group $\mathbb{Z}^\ast_n$, is defined on the set of numbers in $\mathbb{Z}_n$ which are relatively prime to $\textit{n}$ and where now the group operation is multiplication modulo $n$. Therefore $\vert\mathbb{Z}^\ast_n\vert = \phi(n)$ where $\phi(n)$ is the Euler phi-function. This group is referred to as the prime residue class group (modulo n) \cite{LP2000}. Unless otherwise specified, all arithmetic operations in this thesis will be regarded as modulo $\textit{n}$.

Given a finite set $X$, a subgroup $G$ of the symmetric group $S_{X}$ (that is, the group of all permutations on $X$), will be denoted by $(G, X)$. A regular cyclic permutation group of order (and degree) $n$, is usually denoted by $Z(n)$. This group will usually be considered to be generated by the permutation $(0,1,2,...,n-1)$. Up to similarity of permutation groups, this is in fact $(\mathbb{Z}_n,\mathbb{Z}_n)$, that is, the regular presentation of $\mathbb{Z}_n$.

If a group $G$ acts on a set $X$, then for $x\in X$, the \emph{orbit}\index{orbit} of $x$ under $G$ is the set of all images $x^g$ of $x$ under all the permutations $g$ in $G$, that is
\[Orb(x)=\lbrace x^g:g \in G \rbrace.\]
The orbits form a partition of $X$. If $(G, X)$ is a permutation group, and $x,y \in X$ such that $y=x^g$ for some $g\in G$, then $x$ and $y$ belong to the same orbit of $(G, X)$. We note that the action of a permutation on a set is either denoted by $x^g$ or by $g(x)$.

\vspace{5 mm}
The \emph{direct sum} \index{direct sum} of two groups $G$ and $H$ which act on disjoint sets $U$ and $V$, denoted as follows,

\[(G,U)\oplus(H,V):=(G \times H, U\dot{\cup}V)\]
means that the direct product of the two groups, acts on the disjoint union of the two sets. This action is defined by the rule: for any $(g,h) \in {G \times H}, u\in U$ and $v\in V$,

\[u^{(g,h)}:=u^{g},  v^{(g,h)}:=v^{h}\]

The \emph{direct product} \index{direct product} of the two groups $G$ and $H$ acting on the direct product of two sets is denoted by

\[(G,U)\otimes(H,V):=(G \times H, U \times V)\]
and its action is defined as follows: for any $(g,h) \in G \times H,  u\in U$ and $v\in V$,

\[(u,v)^{(g,h)}=(u^{g},v^{h})\]
Given a group $G$ and two of its actions on disjoint sets $U$ and $V$, given by $(G,U)$ and $(G,V)$ respectively, the \emph{join} \index{join} is denoted by:

\[(G,U) \dot{\vee} (G,V):=(G,U\dot{\cup}V)\]
This combined action is defined by the following rule: for any $g\in G$ and $w\in U \dot{\cup} V$,

\[ w^{g}: = \left\{ \begin{array}{ll}
u^{g} & \mbox{for $w = u\in U$}\\
 v^{g}& \mbox{for $w = v\in V$}\end{array} \right. \]

\vspace {5mm}
The \emph{semi-direct product} \index{semi-direct product} of the group $\mathbb{Z}_n$ with some subgroup $H \leq \mathbb{Z}^\ast_{n}$ will be denoted by $\mathbb{Z}_{n}\rtimes H$; it consists of all permutations of the form $\mathbb{Z}_n \rightarrow \mathbb{Z}_n:z\mapsto az+b$ for $z, b \in \mathbb{Z}_n$ and $a \in H$.

\section{Graphs}
\subsection{Directed and Undirected Graphs}
We shall adopt a terminology for graphs which might not be the one most generally used, but which agrees with that in \cite{KLP2003} in order to aid comparison of our results with those found in these works. The most general type of graph $\Gamma=\Gamma(V,E)$ which we shall consider, denotes
the graph having a non-empty, finite, vertex set $V=V(\Gamma)$ and a set $E=E(\Gamma)$, whose elements are ordered pairs of distinct vertices called arcs. If a graph happens to contain the two arcs $(u,v)$ and $(v,u)$, we say that it contains the edge $\lbrace u,v \rbrace$. Therefore a graph can, in general, contain both edges and arcs. In the literature, such graphs are sometimes called ``mixed graphs". In order to emphasize this, we generally refer to a ``graph'' as a \emph{directed graph}; \index{directed graph} for us these two terms are therefore synonymous. When the graph contains only edges, that is, every arc $(u,v)$ is accompanied by the arc $(v,u)$, we say that it is an \emph{undirected graph}. \index{undirected graph} Therefore an undirected graph is a particular type of graph, whereas a graph can be undirected or directed, the latter implying possible existence of both arcs and edges. Both directed and undirected graphs are considered to be without loops or multiple arcs/edges, that is, edges $\lbrace u,u\rbrace$ consisting of a pair of repeated vertices are not allowed and the same arc cannot appear more than once in $E$. When we define Cayley graphs below, the reason for this way of classifying our structures as ``undirected/directed/mixed graphs'', versus ``undirected graphs'' as a special category, will be more clear. Finally, the out-degree (in-degree) of a vertex $v$, is the number of arcs $(v,x)$, (respectively $(x,v)$) in $\Gamma$. Graph theoretical terms not defined here, may be found in \cite{LS2003}\cite{W96}.

\vspace{5 mm}
Two graphs $\Gamma$ and $\Gamma'$ are said to be \emph{isomorphic} \index{isomorphic graphs} i.e. $\Gamma \cong \Gamma'$, if there exists a bijective function $\textit{f}$ between the vertex set of $\Gamma$ and that of $\Gamma$' such that for any two vertices $\textit{u}$ and $\textit{v}$ in $\Gamma$, $(u,v)$ is an arc in $\Gamma$ if and only if $(f(u),f(v))$ is an arc in $\Gamma'$. In the case when $\Gamma=\Gamma'$, the permutation $\textit{f}$ is called an automorphism of $\Gamma$ \index{automorphism} or a graph automorphism.\index{graph automorphism} By definition, such an automorphism is considered an adjacency preserving permutation of the vertex set V($\Gamma$).  All such permutations $\textit{f}$ form the automorphism group Aut($\Gamma$).

The adjacency matrix $A$ \index{adjacency matrix} of a graph $\Gamma=(V,E)$, is a matrix in which the entries are 1 when given any two vertices $v_i, v_j \in V$, $(v_i,v_j)\in E$ and 0 otherwise.


\subsection{Cayley Graphs and Circulants}

A \emph{Cayley graph} $Cay(G,X)$, \index{Cayley Graph} where $G$ is a finite group and $X$ is a generating set for $\Gamma$ such that $1\notin X$, is a graph with vertices being elements of $G$ and two vertices $u$ and $v$ connected by an arc $(u,v)$ if and only if there is some generator $x$ in $X$ such that $v=ux$. The condition that $X$ is a generating set of $\Gamma$ ensures that any two vertices $u$ and $v$ are joined by a directed path, that is, $\Gamma$ is strongly connected \cite{LS2003}. This set is often referred to as the \emph{connecting set}\index{connecting set} of the Cayley graph. The fact that $1\notin X$, prevents the Cayley graph from having arcs of the form $(u,u)$. In order to construct an undirected Cayley graph, that is, one in which, for any pair of vertices $u$ and $v$, either both or none of the arcs $(u,v)$ and $(v,u)$ exist, an additional condition on $X$ is required, namely that for any generator $x$ in $X$, $x^{-1}$ must also be in $X$. Therefore we see that, if $X$ does not necessarily have the property that ``$x\in X \Rightarrow x^{-1} \in X$'', then the Cayley graph is what we have termed to be a ``graph'' or a ``directed graph'', which can contain both arcs and edges (it could happen that for some $x\in X$, $x^{-1} \in X$, although not for all $x\in X$). If, however, $X$ does have this property, then the Cayley graph is an ``undirected graph''. These two possibilities for $X$ therefore correspond to our two terms ``graph" and ``undirected graph".

A \emph{circulant graph} (or circulant)\index{Circulant} is a graph $\Gamma$ on $\mathbb{Z}_n$ which remains unchanged when carrying out the cyclic permutation (0,1,2,...,$\textit{n}$-1), that is, a graph which has an automorphism that cyclically permutes all the vertices. This means that if ($\textit{u,v}$) is an arc of $\Gamma$, then $(\textit{u}+1,\textit{v}+1)$ is also an arc of $\Gamma$. Equivalently, any graph with a circulant adjacency matrix, that is an adjacency matrix whose rows are obtained by successively shifting the previous row by one position to the right,
$$\left( \begin{array}{ccccc}
a_0 & a_1 &a_2&...&a_{n-1}\\
a_{n-1}&a_0&a_1&...&a_{n-2}\\
.&&&&.\\
.&&&&.\\
.&&&&.\\
a_1&a_2&a_3&...&a_0\end{array} \right),$$
is a circulant graph.

Circulants are always Cayley graphs of cyclic groups, that is, $\Gamma$ is a circulant of order $n$  if and only if $\Gamma = Cay(\mathbb{Z}_n,X)$. A circulant $\Gamma$ has the connection set $X:=\lbrace v\in \mathbb{Z}'_n \vert (0,v) \in E(\Gamma) \rbrace$ that is, the set of all vertices adjacent from the vertex 0. Since a Cayley graph is specified completely by determining the vertices which are adjacent to a single vertex, the connection set $\textit{X}$ of a circulant $\Gamma$, completely specifies the circulant. In fact $E(\Gamma):=\lbrace (u,v)\vert u,v \in \mathbb{Z}_n,v-u\in X \rbrace$. We may therefore write, $\Gamma=\Gamma(\mathbb{Z}_n,X)$ or $\Gamma=\Gamma(X)$ for short. Note that a circulant is a regular graph of outdegree $\vert X \vert $.

\section{The Cycle Index and P\'{o}lya's Theorem}
As previously mentioned, when a group $G$ acts on a set $U$, the orbits partition $U$. Therefore in order to count the number of non-equivalent elements of $U$ (where two elements are considered equivalent if one may be mapped into another by a permutation in $G$), it suffices to count the number of orbits. This may be achieved by using Burnside's Lemma which states the following:
\begin{Theorem}
Let $G$ be a permutation group acting on a set $U$. For $g\in G$ let $\psi (g)$ denote the number of points of $U$ fixed by $g$. Then the number of orbits of $G$ is equal to $\frac{1}{|G|}\sum_{g \in G}\psi(g)$.
\end{Theorem}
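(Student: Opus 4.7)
The plan is to prove this via a standard double-counting argument applied to the set of fixed pairs, combined with the orbit--stabilizer relation. Define
\[ S := \{ (g,u) \in G \times U : u^g = u \}. \]
I would count $|S|$ in two different ways. Grouping the pairs by their first coordinate gives immediately
\[ |S| = \sum_{g \in G} \psi(g), \]
which is precisely the numerator of the quantity we wish to evaluate. Grouping instead by the second coordinate gives
\[ |S| = \sum_{u \in U} |\mathrm{Stab}(u)|, \]
where $\mathrm{Stab}(u) := \{ g \in G : u^g = u \}$ is the stabilizer subgroup of the point $u$.

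Next I would bring in the orbit--stabilizer relation, which states that for every $u \in U$,
\[ |G| = |\mathrm{Orb}(u)| \cdot |\mathrm{Stab}(u)|. \]
This is proved by exhibiting a bijection between the left cosets of $\mathrm{Stab}(u)$ in $G$ and the points of $\mathrm{Orb}(u)$, sending $g\,\mathrm{Stab}(u)$ to $u^g$; one checks well-definedness and injectivity directly from the definition of the stabilizer. Substituting $|\mathrm{Stab}(u)| = |G|/|\mathrm{Orb}(u)|$ into the second count yields
\[ \sum_{u \in U} |\mathrm{Stab}(u)| = |G| \sum_{u \in U} \frac{1}{|\mathrm{Orb}(u)|}. \]

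The key observation that finishes the argument is that the orbits partition $U$, so the sum on the right can be regrouped orbit by orbit:
\[ \sum_{u \in U} \frac{1}{|\mathrm{Orb}(u)|} = \sum_{O} \sum_{u \in O} \frac{1}{|O|} = \sum_{O} 1, \]
where $O$ ranges over the distinct orbits. Hence this sum equals the number of orbits of $G$ on $U$. Equating the two expressions for $|S|$ and dividing by $|G|$ gives the desired identity.

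I do not expect a genuine obstacle here; the proof is classical and purely combinatorial. The only real care needed is in the orbit--stabilizer step, since that lemma is what converts the local datum $|\mathrm{Stab}(u)|$ into the global counting of orbits. Everything else is bookkeeping via the partition of $U$ into orbits, which was already noted in the excerpt immediately preceding the statement.
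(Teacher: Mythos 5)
Your argument is correct and complete: the double count of the fixed-point set $S=\{(g,u): u^g=u\}$, the orbit--stabilizer relation $|G|=|\mathrm{Orb}(u)|\cdot|\mathrm{Stab}(u)|$, and the regrouping of $\sum_{u}1/|\mathrm{Orb}(u)|$ over the orbit partition together give exactly the stated identity. Note that the paper itself states this result (Burnside's Lemma) without proof, citing it as standard background, so there is no in-paper argument to compare against; yours is the classical proof and nothing is missing.
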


A generalisation of Burnside's Lemma, namely P\'{o}lya's theorem, has enabled enumeration of directed and undirected graphs \cite{HP1973}. In his paper, Turner \cite{Turner67}, also made use of P\'{o}lya's theorem to determine the number of non-isomorphic, undirected circulants of order $p$, for prime $p$. This method was later used to enumerate a larger collection of circulants, such as circulant tournaments and self-complementary directed and undirected circulant graphs \cite{Mishna98}. P\'{o}lya's enumeration theorem will also be the main enumerative tool in this dissertation.

Before proceeding to the statement of this theorem, let us first define the cycle index of a group $G$: The \emph{cycle index} of a group $G$ acting on a finite set $U$ is given by the polynomial of degree $n=\left|U\right|$ in indeterminates $x_{1},x_{2},...,x_{n}$ as
\[I_{(G,U)}(x_1,x_2,...,x_n):=\frac{1}{\left|G\right|}\sum_{g\in G}\prod_{i=1}^{n}x_{i}^{a_{i}(g)}\]
where $a_{i}(g)=a_{i}(g,U)$ stands for the number of disjoint cycles of length $i$ in $g$. We sometimes denote this by $I_{(G,U)}$ for short.

The regular group $Z(n)=(\mathbb{Z}_n,\mathbb{Z}_n)$ has cycle index
\[I_{Z(n)}=\frac{1}{n}\sum_{r|n} \phi(r) x_{r}^{\frac {n}{r}},\]
since for each divisor $r$ of $n$, the cyclic group $\mathbb{Z}_n$ of order $n$, has $\phi(r)$ elements of order $r$. Moreover, as a permutation in the regular action of $\mathbb{Z}_n$, an element of order $r$ has $\frac{n}{r}$ cycles of length $r$ \cite{LW2001}.

We shall see that for enumeration of circulants, the action we shall be requiring involves $\mathbb{Z}^\ast_n$ acting on $\mathbb{Z}'_n$ or its subsets, as appropriate. We will therefore need to find the cycle index of variations of the action $(\mathbb{Z}^\ast_n,\mathbb{Z}'_n)$. For the case when $n=p$, where $p$ is a prime number, $\mathbb{Z}^\ast_p\cong\mathbb{Z}'_p$. Therefore $(\mathbb{Z}^\ast_p,\mathbb{Z}'_p)$ becomes the regular action of $\mathbb{Z}^\ast_p$ on itself. The cycle index in this case is given by:

\[I_{\mathbb{Z}^\ast_p}=\mathcal{I}_{p-1}(x)=\frac{1}{p-1}\sum_{r|p-1} \phi(r) x_{r}^{\frac {p-1}{r}}\]
where the sum is taken over all divisors of $p-1$ \cite{KLP2003}.

For an example when $n$ is not prime, consider the action of $\mathbb{Z}^\ast_9$ acting on $\mathbb{Z}'_9$ where $\mathbb{Z}^{\ast}_9=\lbrace 1,2,4,5,7,8 \rbrace$ and $\mathbb{Z}'_9 =\lbrace1,2,3,4,5,6,7,8\rbrace$. The identity permutation would give the action (1)(2)(3)(4)(5)(6)(7)(8), that is eight cycles of length one. Suppose now that we act on $\mathbb{Z}'_9$ with $2 \in \mathbb{Z}^\ast_9$. This means that we need to multiply all elements of $\mathbb{Z}'_9$ by 2, giving the result additively modulo 9. This gives the action $(1\hspace{2mm} 2\hspace{2mm} 4\hspace{2mm} 8\hspace{2mm} 7\hspace{2mm} 5)(3\hspace{2mm} 6)$ which corresponds to $x_{6}^{1}x_{2}^{1}$.
Similarly, acting on $\mathbb{Z}'_9$ with $4 \in \mathbb{Z}^\ast_9$, means that we now need to multiply all elements of $\mathbb{Z}'_9$ by 4. This gives the action $(1\hspace{2mm} 4\hspace{2mm} 7)(2\hspace{2mm} 8\hspace{2mm} 5)(3)(6)$ corresponding to the monomial $x_{3}^{2}x_{1}^{2}$. Repeating this method for all the elements of $\mathbb{Z}^{\ast}_n$, we obtain the following cycle index:

\[I_{(\mathbb{Z}^\ast_9,\mathbb{Z}'_9)}=\frac{1}{6}(x_{1}^{8}+2x_{6}^{1}x_{2}^{1}+2x_{3}^{2}x_{1}^{2}+x_{2}^{4})\]

Recall also that if $G$ acts on $A$ and $B^A$ is the set of functions from $A$ to $B$, then the induced action of $G$ on $B^A$ is defined by $gf=f\circ g$ for all $g\in G$, $f\in B^A$.

\begin{Theorem}[{P\'{o}lya's Enumeration Theorem} \cite{LW2001}]
Let $A$ and $B$ be finite sets and let $G$ act on $A$. Denote by $c_k(G)$ the number of permutations in $G$ that have exactly $k$ cycles in their cycle decomposition on $A$. Then the number of orbits of $G$ on the set $B^A$ of all mappings $f:A \rightarrow B$ is

\[\frac{1}{|G|}\sum^\infty_{k=1}c_k(G)|B|^k\]
\end{Theorem}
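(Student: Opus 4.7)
The plan is to apply Burnside's Lemma (Theorem 1) to the induced action of $G$ on the function set $B^A$. Since Burnside's Lemma already supplies the orbit count as the average number of fixed points, the entire task reduces to identifying, for a fixed $g \in G$, exactly which functions $f: A \to B$ satisfy $gf = f$, and then counting them.

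First I would unpack the induced action. By definition $gf = f \circ g$, so $f \in B^A$ is fixed by $g$ precisely when $f(g(a)) = f(a)$ for every $a \in A$. Iterating, this forces $f(g^m(a)) = f(a)$ for every positive integer $m$, so $f$ must be constant on every orbit of the cyclic subgroup $\langle g \rangle$ acting on $A$. These orbits are exactly the cycles of $g$ in its cycle decomposition on $A$. Conversely, any function that is constant on each cycle of $g$ clearly satisfies $f \circ g = f$. Hence, if $g$ has exactly $k$ cycles on $A$, then choosing a fixed function amounts to independently assigning one of $|B|$ values to each cycle, giving $\psi(g) = |B|^{k}$ fixed functions.

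The proof then concludes by substituting this into Burnside's formula applied to $(G, B^A)$: the number of orbits equals
\[\frac{1}{|G|}\sum_{g\in G}\psi(g)=\frac{1}{|G|}\sum_{g\in G}|B|^{k(g)},\]
where $k(g)$ denotes the number of cycles of $g$ on $A$. Grouping the permutations in $G$ according to their number of cycles and using the definition of $c_k(G)$ rewrites this sum as $\frac{1}{|G|}\sum_{k\geq 1} c_k(G)|B|^k$, which is the desired expression. (The upper limit $\infty$ is harmless, since $c_k(G) = 0$ whenever $k > |A|$.)

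The only delicate point, and therefore the main obstacle, is the verification that the prescription $gf := f \circ g$ really gives a well-defined action of $G$ on $B^A$ in a way compatible with Burnside's Lemma: one should check that $f \mapsto f \circ g$ is a bijection of $B^A$ and that it respects the group operation (up to the standard left/right convention). Once this is settled, the counting argument above is entirely routine, and no further structural information about $G$, $A$, or $B$ is needed.
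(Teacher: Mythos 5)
Your proof is correct and is exactly the standard argument: the paper itself omits the proof (citing \cite{LW2001}), but it states Burnside's Lemma as Theorem 1 and defines the induced action $gf=f\circ g$ immediately before this theorem precisely so that the result follows by your computation of $\psi(g)=|B|^{k(g)}$ for functions constant on the cycles of $g$. The convention point you flag is harmless: with the paper's right-action notation $x^g$, the rule $(gf)(a)=f(a^g)$ is a genuine action, and in any case $f\circ g=f$ iff $f\circ g^{-1}=f$, so the fixed-point count fed into Burnside's formula is unaffected.
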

From this theorem and the definition of cycle index, we have that the number of orbits of $G$ on $B^A$ is

\[I_{(G,U)}(b,b,...,b):=\frac{1}{\left|G\right|}\sum_{g\in G}b^{a_1(g)+a_2(g)+...+a_n(g)}\]
with $b:=|B|$ \cite{LW2001}.

\section{The Group Ring of Cyclic Groups}
The group ring $\langle\mathbb{Z}\lbrack\mathbb{Z}_{n}\rbrack;+, \cdot\rangle$ of $\mathbb{Z}_{n}$ over $\mathbb{Z}$, consists of the set of all linear formal combinations of elements of $\mathbb{Z}_{n}$ with integral coefficients, that is of all formal sums $\sum_{h\in\mathbb{Z}_{n}}\alpha_{h}\underline{h}$ with $\alpha_{h}\in\mathbb{Z}, h\in\mathbb{Z}_{n}$, together with addition

\[\sum_{h\in\mathbb{Z}_{n}}\alpha_{h}\underline{h}+\sum_{h\in\mathbb{Z}_{n}}\beta_{h}\underline{h}:=\sum_{h\in\mathbb{Z}_{n}}(\alpha_{h}+\beta_{h})\underline{h}\]
and formal multiplication
\[\left(\sum_{h\in\mathbb{Z}_{n}}\alpha_{h}\underline{h}\right)\cdot\left(\sum_{k\in\mathbb{Z}_{n}}\beta_{k}\underline{k}\right):=\sum_{h,k\in\mathbb{Z}_{n}}\alpha_{h}\beta_{k}\left(\underline{h+k}\right)=\sum_{h\in\mathbb{Z}_{n}}\left(\sum_{k\in\mathbb{Z}_{n}}\alpha_{h-k}\beta_{k}\right)\underline{h}.\]

One must note that the sum  $\sum_{h\in\mathbb{Z}_{n}}\alpha_{h}\underline{h}$ is simply a different notation for a sequence of integers $\alpha_{h}$ i.e. for
\[(\alpha_{0},\alpha_{1},\alpha_{2},...\alpha_{n-1})\]
and that $\alpha_{h}$ should not be multiplied by $\underline{h}$ and summed up. In addition, other rings or fields such as real or complex numbers, may be used instead of $\mathbb{Z}$.
$\mathbb{Z}\lbrack\mathbb{Z}_{n}\rbrack$ is also a $\mathbb{Z}$-module with scalar multiplication.
\[\alpha\left(\sum_{h\in\mathbb{Z}_{n}}\alpha_{h}\underline{h}\right):=\sum_{h\in\mathbb{Z}_{n}}\left(\alpha\alpha_{h}\right)\underline{h}\]
for $\alpha\in\mathbb{Z}$.

The $\mathbb{Z}$-submodule of $\mathbb{Z}\lbrack\mathbb{Z}_{n}\rbrack$ generated by elements $\lambda_{1},...,\lambda_{r} \in \mathbb{Z}\lbrack\mathbb{Z}_{n}\rbrack$ will be denoted by
\[\langle \lambda_{1},...,\lambda_{r}\rangle.\]
Therefore the $\mathbb{Z}$-submodule $\langle \lambda_{1},...,\lambda_{r}\rangle$, consists of all linear combinations of $\lambda_{1},...,\lambda_{r}$ and their products.

Assume $T\subseteq\mathbb{Z}_{n}, T=\lbrace t_{1},t_{2},...,t_{r} \rbrace$. Elements of the form
\[\underline{T}:=\sum_{h\in T}\underline{h}\]
are called \emph{simple quantities}\index{simple quantities} of $\mathbb{Z}\lbrack \mathbb{Z}_{n} \rbrack$. One can consider $\underline{T}$ as the formal sum $\sum_{h\in\mathbb{Z}_{n}}\alpha_{h}\underline{h}$ with $\alpha_{h}=1$ if and only if $h \in T$ and $\alpha_{h}=0$ otherwise, that is, a simple quantity is a list in which every entry has multiplicity 1. For $T=\lbrace t_{1},t_{2},...,t_{r} \rbrace$ we use the notation
\[\underline{t_{1},...,t_{r}}\]
instead of $\lbrace t_{1},...,t_{r}\rbrace$.

For example, suppose n=7 and

\[\alpha=3\hspace{0.7mm}\underline{0}+2\hspace{0.7mm}\underline{1}+2\hspace{0.7mm}\underline{2}+3\hspace{0.7mm}\underline{3}+3\hspace{0.7mm}\underline{4}+2 \hspace{0.7mm}\underline{5}+3\hspace{0.7mm}\underline{6}.\]

This can be seen as a list of elements of $\mathbb{Z}_{7}$ with multiple entries, with the elements 1,2,5 appearing twice and 0, 3, 4, 6 appearing three times. This may be expressed as two different sublists, namely $\lbrace 0,3,4,6 \rbrace$ and $\lbrace 1,2,5 \rbrace$, where the former is used three times and the latter twice:

\[\alpha=3\hspace{0.7mm}\underline{0,3,4,6 }+2\hspace{0.7mm}\underline{1,2,5}.\]

Therefore the elements of $\mathbb{Z}(\mathbb{Z}_{n})$ may be seen as lists of elements of $\mathbb{Z}_{n}$ with multiple entries.

The \emph{Schur-Hadamard product} \index{Schur-Hadamard product} $\circ$ of elements of $\mathbb{Z}\lbrack \mathbb{Z}_{n} \rbrack$ is defined as follows

\[\left(\sum_{h\in\mathbb{Z}_{n}}\alpha_{h}\underline{h}\right)\circ\left(\sum_{h\in\mathbb{Z}_{n}}\beta_{h}\underline{h}\right):=\sum_{h\in\mathbb{Z}_{n}}(\alpha_{h}\beta_{h})\underline{h}\]
where, for $T, T' \subseteq \mathbb{Z}_{n} $ we have $\underline{T} \circ \underline{T'}=\underline{T \cap T'}.$ The transpose of $\lambda=\sum_{h\in\mathbb{Z}_{n}}\alpha_{h}\underline{h}\in \mathbb{Z}\lbrack\mathbb{Z}_{n}\rbrack$ is given by

\[\lambda^{t}:=\sum_{h\in\mathbb{Z}_{n}}\alpha_{h}(\underline{-h})\]

The following example describes some of the operations described above.
Let $n=4$ and suppose we have

\[\sigma= 3\hspace{0.7mm}\underline{0}+ 2\hspace{0.7mm}\underline{1}+3\hspace{0.7mm}\underline{2},\hspace{0.5cm} \tau=\underline{1}+\underline{2}-3\hspace{0.7mm}\underline{3}\]

Then,
\begin{equation*}
\begin{split}
2\tau&=2\hspace{0.7mm}\underline{1}+2\hspace{0.7mm}\underline{2}-6\hspace{0.7mm}\underline{3}\\
\sigma \circ \tau&=2\hspace{0.7mm}\underline{1}+3\hspace{0.7mm}\underline{2}\\
\sigma + \tau&=3\hspace{0.7mm}\underline{0}+3\hspace{0.7mm}\underline{1}+4\hspace{0.7mm}\underline{2}-3\hspace{0.7mm}\underline{3}\\
\sigma\cdot\tau&=3\hspace{0.7mm}\underline{1}+3\hspace{0.7mm}\underline{2}-9\hspace{0.7mm}\underline{3}+2\hspace{0.7mm}\underline{2}+2\hspace{0.7mm}\underline{3}-6\hspace{0.7mm}\underline{0}+3\hspace{0.7mm}\underline{3}+3\hspace{0.7mm}\underline{0}-9\hspace{0.7mm}\underline{1}
\end{split}
\end{equation*}

\subsection{Schur Rings of Cyclic Groups and Circulants}
A subring $\mathcal{S}$ of a group ring $\mathbb{Z}(\mathbb{Z}_n)$ is called a \emph{Schur ring} \index{Schur Ring} $\mathfrak{S}$ or $\mathcal{S}$-ring over $\mathbb{Z}_n$, of rank $r$ if the following conditions hold:
\begin{itemize}
\item $\mathcal{S}$ is closed under addition and multiplication with elements from $\mathbb{Z}$ (i.e. $\mathcal{S}$ is a $\mathbb{Z}$-module);
\item Simple quantities $\underline{T}_{0},\underline{T}_{1},...,\underline{T}_{r-1}$ exist in $\mathcal{S}$ such that every element $\sigma \in \mathcal{S}$ has a unique representation;
\[\sigma=\sum_{i=0}^{r-1}\sigma_{i}\underline{T}_{i}\]
\item $\underline{T}_{0}=\underline{0}$, $\sum_{i=0}^{r-1}\underline{T}_{i}=\underline{\mathbb{Z}}_n$, that is, $\lbrace T_{0},T_{1},...,T_{r-1}\rbrace$ is a partition of $\mathbb{Z}_n$;
\item For every $i\in \lbrace 0,1,2,...,r-1 \rbrace$ there exists a $j \in \lbrace 0,1,2,...,r-1 \rbrace$ such that $\underline{T}_{j}=\underline{-T}_{i} (=\underline{\lbrace n-x : x \in T_{i}} \rbrace )$ (therefore, $\underline{T_{i}}^{t}=\underline{T_j}$);
\item For $i,j \in \lbrace 1,...,r \rbrace$, there exist non-negative integers $p_{ij}^{k}$ called structure constants, such that
\[\underline{T}_{i}\cdot\underline{T}_{j}=\sum_{k=1}^{r} p_{ij}^{k}\underline{T}_{k}\]
\end{itemize}

One may note that for an $\mathcal{S}$-ring $\mathfrak{S}$, the final condition is equivalent to saying that $\mathfrak{S}$ is closed under the $\cdot$ operation.

The simple quantities $\underline{T}_{0},\underline{T}_{1},...,\underline{T}_{r-1}$ form a standard basis for $\mathfrak{S}$ and their corresponding sets $T_{i}$ are \emph{basic sets} \index{basic sets} of the $\mathcal{S}$-ring. The circulant graphs $\Gamma_i=\Gamma(T_i)$, where $0\leq i \leq r-1$, are called \emph{basic circulant graphs} \index{basic circulant graphs}of $\mathfrak{S}$ \cite{KRRT91}. The following notation, will denote the relation of an $\mathcal{S}$-ring to its basic sets

\[\mathfrak{S}=\langle\underline{T}_{0},\underline{T}_{1},...\underline{T}_{r-1}\rangle.\]
$\mathbb{Z}(\mathbb{Z}_n)$ and $\langle\underline{0},\underline{\mathbb{Z}_n-\lbrace 0 \rbrace}\rangle$ are both Schur rings over $\mathbb{Z}_n$. In fact both of these are called trivial Schur rings.

A permutation $g:\mathbb{Z}_n\rightarrow\mathbb{Z}_n$ is called an automorphism of an $\mathcal{S}$-ring $\mathfrak{S}$, if it is an automorphism of every graph $\Gamma_{i}$. Equivalently, the intersection of the automorphism groups of the basic circulant graphs of an $\mathcal{S}$-ring $\mathfrak{S}=\langle \underline{T}_0,\underline{T}_1,...,\underline{T}_{r-1} \rangle$, gives the automorphism group of the $\mathcal{S}$-ring.

\begin{equation}
Aut \mathfrak{S}:=\bigcap_{i=0}^{r-1}Aut \Gamma_{i}
\end{equation}

Let $(G,\mathbb{Z}_n)$ be a permutation group containing the cyclic group $(\mathbb{Z}_n,\mathbb{Z}_n)$. Consider the stabilizer $G_{0}$ and its orbits $(G_{0},\mathbb{Z}_n)=\lbrace T_{0},...,T_{r-1}\rbrace$, where again $T_{0}:=\lbrace0\rbrace$. Then

\[\mathfrak{S}(G,\mathbb{Z}_n):=\langle \underline{T_{0}},...,\underline{T_{r-1}}\rangle\]
is called the \emph{transitivity module} of $(G,\mathbb{Z}_n)$. Schur \cite{KLP96}, has shown that $\mathfrak{S}(G,\mathbb{Z}_n)$ is an $\mathcal{S}$-ring. When an $\mathcal{S}$-ring $\mathfrak{S}$ is the transitivity module of some overgroup $(G,\mathbb{Z}_n)$ of $(\mathbb{Z}_n,\mathbb{Z}_n)$,then $\mathfrak{S}$ is called \emph{Schurian} \index{Schurian}.

For the circulants $\Gamma_{i}:=\Gamma(\mathbb{Z}_n,T_{i})$, a graph theoretical interpretation may be given to the structure constants $p_{ij}^{k}$ of a Schur ring $\mathfrak{S}$. For $(u,v)\in E(\Gamma_{k})$,

\[p_{ij}^k=|\lbrace w| (u,w) \in E(\Gamma_{i})\wedge (w,v) \in E(\Gamma_{j})\rbrace|\]
gives, for any arc $(u,v)$ in $\Gamma_{k}$, the number of ways we can go from $u$ to $v$  by first taking an arc $(u,w)$ in $\Gamma_{i}$ and then an arc $(w,v)$ in $\Gamma_{j}$. As a result of S-ring properties, this number does not depend on the choice of $(u,v)$ in $\Gamma_{k}$.

For example, let us consider $\mathbb{Z}_8$. One Schur ring of $\mathbb{Z}_8$ is the following \cite{KLP96}

\[\langle \underline{0},\underline{1,5}, \underline{3,7}, \underline{2,6}, \underline{4}\rangle.\]
Therefore we have $T_0=\lbrace 0 \rbrace$, $T_1=\lbrace 1,5 \rbrace$, $T_2=\lbrace 3,7 \rbrace$, $T_3=\lbrace 2,6 \rbrace$, and $T_4=\lbrace 4 \rbrace$. The circulants $\Gamma_{i}:=\Gamma(\mathbb{Z}_8,T_{i})$ are shown in figure~\ref{Fig2e}. In $\Gamma_3$ and $\Gamma_4$, the edges are in fact pairs of arcs.

\begin{figure}[!h]
\centering
\includegraphics[width=0.75\linewidth]{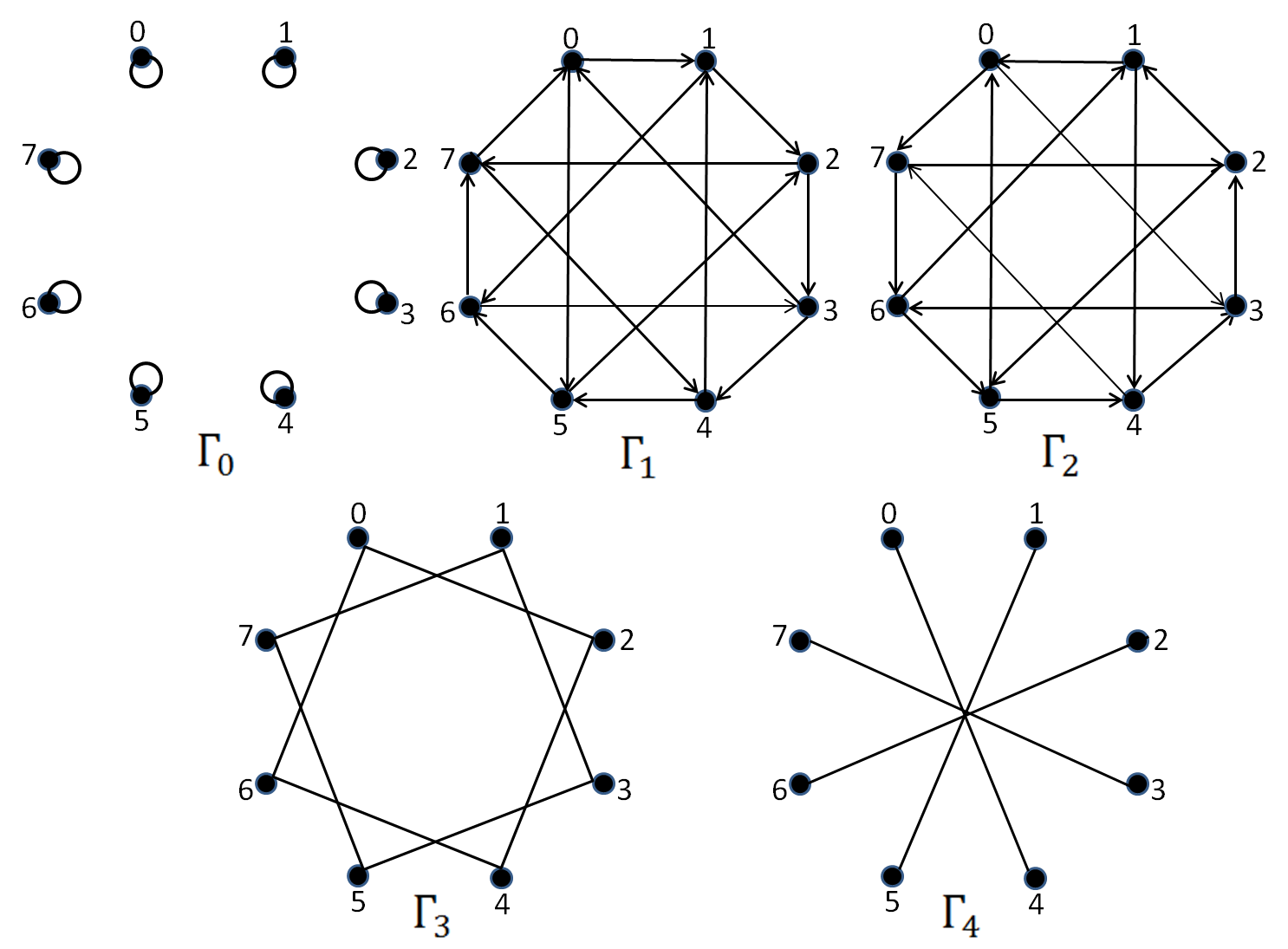}
\caption{Basic Circulant Graphs}
\label{Fig2e}
\end{figure}
In order to determine $p^1_{23}$ for example, we first take an arc in $\Gamma_1$ say $(1,6)$, and see in how many ways we can go from 1 to 6, by first choosing an arc in $\Gamma_2$ and then an arc in $\Gamma_3$. This may be done in two ways. We may either take the arc $(1,4)$ in $\Gamma_2$ and then the arc $(4,6)$ in $\Gamma_3$, or start off with the arc $(1,0)$ in $\Gamma_2$ and then choose the arc $(0,6)$ in $\Gamma_3$. We may verify that if any other arc is chosen in $\Gamma_1$, instead of $(1,6)$, we still get two possible ways of going from one vertex to another by first going through an arc in $\Gamma_2$ and then an arc in $\Gamma_3$. Therefore $p^1_{23}=2$.

Let us now consider $p^4_{21}$. We first choose an arc in $\Gamma_4$, say $(0,4)$, then determine the number of ways we can go from 0 to 4, by first picking an arc in $\Gamma_2$ and then an arc in $\Gamma_1$. Once again, there are two possible ways to do this, namely either choosing $(0,3)$ in $\Gamma_2$ and then $(3,4)$ in $\Gamma_1$, or by choosing $(0,7)$ in $\Gamma_2$, and then $(7,4)$ in $\Gamma_1$. Therefore $p^4_{21}=2$. One may also verify that for example $p^3_{24}=0$, since for any arc $(u,v)$ in $\Gamma_3$, there is no way we can go from $u$ to $v$ by first taking an arc in $\Gamma_2$ and then an arc in $\Gamma_4$.

If $G$ and $H$ are two permutation groups acting on the sets $U$ and $V$ respectively, where $|U|=n$ and $|V|=m$, the \emph{wreath product} $G\wr H$, \index{wreath product} is the group of permutations which acts on the pairs $(u,v) \in U \times V$. This means wreath products are permutation groups which act on sets of ordered pairs. It follows that a wreath product is a subgroup of the symmetric group, in this case $S_{nm}$. The permutations are represented by $f=\lbrack g, h(1),h(2),...h(n)\rbrack$ for $g \in G$ and $h(u) \in H$ for every $u \in U$ \cite{Garten77}. These act on the pairs $(u,v) \in U \times V$ by the following rule:

\[(u,v)^{f}=(u,v)^{\lbrack g, h(1),h(2),...h(n)\rbrack}=(u^{g},v^{h(u)})\]
Every n-tuple in the set of all n-tuples of elements of $H$, may be considered to be an element of the direct product of $n$ copies of $H$, $H_1\times H_2\times...\times H_n$. Since we have $|H|^n$ possible choices of $n$-tuples of elements of $H$ and $|G|$ possible choices of $g \in G$, the order of the wreath product $G\wr H$ is $\vert G \vert \cdot \vert H \vert ^{n}$ where $n=|U|$.

Let us consider the following example in order to understand this definition better. Let $U=\lbrace 1,2,3 \rbrace$ and $V=\lbrace 1,2\rbrace$. Therefore $|U|=3$, $|V|=2$ and $|U \times V|=6$. Suppose $G=\lbrace (1),(123),(132),(13)(2),(12)(3),(32)(1)\rbrace$, a permutation group of order 6, acting on $U$ and the permutation group $H=\lbrace (1), (12) \rbrace$ of order 2, acting on $V$. Since $|G|=6$ and $|H|=2$, then we have $|G \wr H|=6 \cdot 2^3=48$. This means the wreath product consists of 48 elements, of the possible 6! in $S_{6}$.

The ordered pairs $(u,v)$ are as follows:
\begin{equation*}
a_1=(1,1),\hspace{2mm}a_2=(1,2),\hspace{2mm}a_3=(2,1),\hspace{2mm}a_4=(2,2),\hspace{2mm}a_5=(3,1),\hspace{2mm}a_6=(3,2).
\end{equation*}

Now since $U=\lbrace 1,2,3 \rbrace$, the permutations of $G \wr H$ are represented by $f=\lbrack g, h(1),h(2),h(3)\rbrack$, where $h(1),h(2),h(3)$ are all possible triples of elements of $H$. Therefore we obtain the following permutations:

\begin{quote}
\begin{tabular}{llllll}
$f_1$&=&$\lbrack(1),\hspace{2mm} (1),(1),(1)\rbrack$&$f_{25}$&=&$\lbrack(13)(2),\hspace{2mm} (1),(1),(1)\rbrack$\\
$f_2$&=&$\lbrack(1),\hspace{2mm} (1),(1),(1 2)\rbrack$&$f_{26}$&=&$\lbrack(13)(2),\hspace{2mm} (1),(1),(1 2)\rbrack$\\
$f_3$&=&$\lbrack(1),\hspace{2mm} (1),(1 2),(1)\rbrack$&$f_{27}$&=&$\lbrack(13)(2),\hspace{2mm} (1),(1 2),(1)\rbrack$\\
$f_4$&=&$\lbrack(1),\hspace{2mm} (1 2),(1),(1)\rbrack$&$f_{28}$&=&$\lbrack(13)(2),\hspace{2mm} (1 2),(1),(1)\rbrack$\\
$f_5$&=&$\lbrack(1),\hspace{2mm} (1 2),(1 2),(1)\rbrack$&$f_{29}$&=&$\lbrack(13)(2),\hspace{2mm} (1 2),(1 2),(1)\rbrack$\\
$f_6$&=&$\lbrack(1),\hspace{2mm} (1 2),(1),(1 2)\rbrack$&$f_{30}$&=&$\lbrack(13)(2),\hspace{2mm} (1 2),(1),(1 2)\rbrack$\\
$f_7$&=&$\lbrack(1),\hspace{2mm} (1),(1 2),(1 2)\rbrack$&$f_{31}$&=&$\lbrack(13)(2),\hspace{2mm} (1),(1 2),(1 2)\rbrack$\\
$f_8$&=&$\lbrack(1),\hspace{2mm} (1 2),(1 2),(1 2)\rbrack$&$f_{32}$&=&$\lbrack(13)(2),\hspace{2mm} (1 2),(1 2),(1 2)\rbrack$\\
$f_9$&=&$\lbrack(123),\hspace{2mm} (1),(1),(1)\rbrack$&$f_{33}$&=&$\lbrack(12)(3),\hspace{2mm} (1),(1),(1)\rbrack$\\
$f_{10}$&=&$\lbrack(123),\hspace{2mm} (1),(1),(1 2)\rbrack$&$f_{34}$&=&$\lbrack(12)(3),\hspace{2mm} (1),(1),(1 2)\rbrack$\\
$f_{11}$&=&$\lbrack(123),\hspace{2mm} (1),(1 2),(1)\rbrack$&$f_{35}$&=&$\lbrack(12)(3),\hspace{2mm} (1),(1 2),(1)\rbrack$\\
$f_{12}$&=&$\lbrack(123),\hspace{2mm} (1 2),(1),(1)\rbrack$&$f_{36}$&=&$\lbrack(12)(3),\hspace{2mm} (1 2),(1),(1)\rbrack$\\
$f_{13}$&=&$\lbrack(123),\hspace{2mm} (1 2),(1 2),(1)\rbrack$&$f_{37}$&=&$\lbrack(12)(3),\hspace{2mm} (1 2),(1 2),(1)\rbrack$\\
$f_{14}$&=&$\lbrack(123),\hspace{2mm} (1 2),(1),(1 2)\rbrack$&$f_{38}$&=&$\lbrack(12)(3),\hspace{2mm} (1 2),(1),(1 2)\rbrack$\\
$f_{15}$&=&$\lbrack(123),\hspace{2mm} (1),(1 2),(1 2)\rbrack$&$f_{39}$&=&$\lbrack(12)(3),\hspace{2mm} (1),(1 2),(1 2)\rbrack$\\
$f_{16}$&=&$\lbrack(123),\hspace{2mm} (1 2),(1 2),(1 2)\rbrack$&$f_{40}$&=&$\lbrack(12)(3),\hspace{2mm} (1 2),(1 2),(1 2)\rbrack$\\
$f_{17}$&=&$\lbrack(132),\hspace{2mm} (1),(1),(1)\rbrack$&$f_{41}$&=&$\lbrack(32)(1),\hspace{2mm} (1),(1),(1)\rbrack$\\
$f_{18}$&=&$\lbrack(132),\hspace{2mm} (1),(1),(1 2)\rbrack$&$f_{42}$&=&$\lbrack(32)(1),\hspace{2mm} (1),(1),(1 2)\rbrack$\\
$f_{19}$&=&$\lbrack(132),\hspace{2mm} (1),(1 2),(1)\rbrack$&$f_{43}$&=&$\lbrack(32)(1),\hspace{2mm} (1),(1 2),(1)\rbrack$\\
$f_{20}$&=&$\lbrack(132),\hspace{2mm} (1 2),(1),(1)\rbrack$&$f_{44}$&=&$\lbrack(32)(1),\hspace{2mm} (1 2),(1),(1)\rbrack$\\
$f_{21}$&=&$\lbrack(132),\hspace{2mm} (1 2),(1 2),(1)\rbrack$&$f_{45}$&=&$\lbrack(32)(1),\hspace{2mm} (1 2),(1 2),(1)\rbrack$\\
$f_{22}$&=&$\lbrack(132),\hspace{2mm} (1 2),(1),(1 2)\rbrack$&$f_{46}$&=&$\lbrack(32)(1),\hspace{2mm} (1 2),(1),(1 2)\rbrack$\\
$f_{23}$&=&$\lbrack(132),\hspace{2mm} (1),(1 2),(1 2)\rbrack$&$f_{47}$&=&$\lbrack(32)(1),\hspace{2mm} (1),(1 2),(1 2)\rbrack$\\
$f_{24}$&=&$\lbrack(132),\hspace{2mm} (1 2),(1 2),(1 2)\rbrack$&$f_{48}$&=&$\lbrack(32)(1),\hspace{2mm} (1 2),(1 2),(1 2)\rbrack$\\
\end{tabular}
\end{quote}

Now suppose we require $a_2^{f_4}$

\begin{equation*}
\begin{split}
(1,2)^{f_4}&=(1,2)^{\lbrack 1,\hspace{2mm} (1\hspace{2mm}2),(1),(1)\rbrack}\\
&=(1^{(1)},2^{h(1)})\\
&=(1^{(1)},2^{(1\hspace{2mm} 2)})\\
&=(1,1)
\end{split}
\end{equation*}
The first component of the ordered pair, that is 1, is left unchanged by $g$, however, the second component of the ordered pair, that is 2, is acted upon by the $h(1)$ component of $f_4$ that is (1\hspace{2mm} 2), in which 2 is mapped into 1. Therefore $(1,2)$ is mapped into $(1,1)$.
Let us now consider $a_5^{f_{15}}$:

\begin{equation*}
\begin{split}
(3,1)^{f_{15}}&=(3,1)^{\lbrack (1\hspace{2mm}2\hspace{2mm}3),\hspace{2mm} (1),(1\hspace{2mm}2),(1\hspace{2mm}2)\rbrack}\\
&=(3^{(1\hspace{2mm}2\hspace{2mm}3)},1^{h(3)})\\
&=(3^{(1\hspace{2mm}2\hspace{2mm}3)},1^{(1\hspace{2mm} 2)})\\
&=(1,2)
\end{split}
\end{equation*}
In this case, the first component of the ordered pair, that is 3, is acted upon by $(1\hspace{2mm}2\hspace{2mm}3)$, where 3 is mapped into 1. The second component, that is 1, is acted upon by the $h(3)$ component of $f_4$ that is (1\hspace{2mm} 2), in which 1 is mapped into 2. As a result we have $(3,1)$ being mapped to $(1,2)$.\\
Similarly in $a_6^{f_{21}}$ we have the following
\begin{equation*}
\begin{split}
(3,2)^{f_{21}}&=(3,2)^{\lbrack (1\hspace{2mm}3\hspace{2mm}2),\hspace{2mm} (1\hspace{2mm}2),(1\hspace{2mm}2),(1)\rbrack}\\
&=(3^{(1\hspace{2mm}3\hspace{2mm}2)},2^{h(3)})\\
&=(3^{(1\hspace{2mm}3\hspace{2mm}2)},2^{(1)})\\
&=(2,2)
\end{split}
\end{equation*}
in which the first component of the ordered pair is mapped into 2, while the second component is left unchanged by the action of the $h(3)$ component of $f_{21}$, which leaves 2 fixed. Therefore $(3,2)$ is mapped to $(2,2)$.

The 48 elements of the wreath product $G\wr H$, may be expressed as permutations of $a_1, a_2,a_3,a_4,a_5,a_6$. For example we have:

\begin{equation*}
\begin{split}
f_1=&I\\
f_2=&(a_1)(a_2)(a_3)(a_4)(a_5a_6)\\
f_3=&(a_1)(a_2)(a_3a_4)(a_5)(a_6)\\
\end{split}
\end{equation*}
All the other mappings may be obtained in a similar manner.



\chapter{\'{A}d\'{a}m's Conjecture}
\section{The Conjecture}
\begin{Lemma}\label{L:P*}
For an arbitrary $n$, let $X$ and $X'$ be two connection sets in $\mathbb{Z}'_n$ such that
\begin{equation}\tag{M1}
mX=X'
\end{equation}
for some integer $m$ prime to $n$, where $mX:=\lbrace mv | v\in X \rbrace$. Then the $n$-circulants $\Gamma (\mathbb{Z}_n,X)$ and $\Gamma (\mathbb{Z}_n,X')$ are isomorphic.
\end{Lemma}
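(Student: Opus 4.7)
The plan is to exhibit an explicit isomorphism, namely multiplication by $m$. Define the map $\phi : \mathbb{Z}_n \to \mathbb{Z}_n$ by $\phi(z) := mz \pmod{n}$. Because $\gcd(m,n) = 1$, the element $m$ lies in $\mathbb{Z}_n^{\ast}$ and hence $\phi$ is a bijection of $\mathbb{Z}_n$; this is the only place where the hypothesis that $m$ is prime to $n$ is actually used, and it is essential (without it $\phi$ would fail to be injective).

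Next I would verify that $\phi$ carries the arc set of $\Gamma(\mathbb{Z}_n, X)$ bijectively onto the arc set of $\Gamma(\mathbb{Z}_n, X')$. By the description of the edge set of a circulant recalled in the preliminaries, $(u,v)$ is an arc of $\Gamma(\mathbb{Z}_n,X)$ if and only if $v - u \in X$. Applying $\phi$,
\[
\phi(v) - \phi(u) \;=\; mv - mu \;=\; m(v-u) \pmod{n}.
\]
Thus $(\phi(u),\phi(v))$ is an arc of $\Gamma(\mathbb{Z}_n, X')$ if and only if $m(v-u) \in X'$. Using the hypothesis (M1), $X' = mX$, and the fact that multiplication by $m$ is a bijection on $\mathbb{Z}_n$, one has $m(v-u) \in mX$ if and only if $v-u \in X$. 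Combining these equivalences shows that $\phi$ preserves arcs in both directions, so $\phi$ is a graph isomorphism and therefore $\Gamma(\mathbb{Z}_n,X) \cong \Gamma(\mathbb{Z}_n, X')$.

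There is essentially no obstacle here: the statement is a direct unpacking of the definitions together with the observation that elements of $\mathbb{Z}_n^{\ast}$ act as bijections on $\mathbb{Z}_n$. The only point worth flagging is that $\phi$ is not in general a group automorphism unless one views $\mathbb{Z}_n$ additively (it is the multiplier map $z\mapsto mz$), and the argument relies on the circulant's adjacency depending only on the difference $v-u$, which is exactly what makes multipliers a natural source of isomorphisms of circulants and motivates \'{A}d\'{a}m's conjecture discussed in the sequel.
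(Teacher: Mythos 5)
Your proof is correct and follows essentially the same route as the paper: both define the multiplier map $z\mapsto mz$, observe it is a bijection of $\mathbb{Z}_n$ because $m\in\mathbb{Z}^\ast_n$, and check that it sends arcs of $\Gamma(\mathbb{Z}_n,X)$ to arcs of $\Gamma(\mathbb{Z}_n,X')$ since $mv-mu=m(v-u)\in mX=X'$. Your explicit verification that arcs are preserved in both directions is a small tidying of a step the paper leaves implicit, but the argument is identical in substance.
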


\begin{proof}\cite{KLP96}
The mapping $\alpha_{m}:v\mapsto mv$, $\forall v \in \mathbb{Z}_n$, is a bijection from $\mathbb{Z}_n$ onto itself since $m\in\mathbb{Z}^\ast_n$ is invertible. Let $(u,v)$ be an edge of $\Gamma(\mathbb{Z}_n,X)$, then by definition, $v-u \in X$.
Now $(u,v)^{\alpha_{m}}=(mu,mv)$ is an edge of $\Gamma(\mathbb{Z}_n,X')$ since $mv-mu=m(v-u) \in mX=X'$. Hence $\Gamma(\mathbb{Z}_n,mX)\cong \Gamma (\mathbb{Z}_n,X)$, that is $\Gamma(\mathbb{Z}_n,X')\cong \Gamma (\mathbb{Z}_n,X)$.
\end{proof}
The condition $(M_{1})$ simply represents the induced action of the multiplicative group $\mathbb{Z}^\ast_n$ on the connection sets, which are subsets of $\mathbb{Z}'_{n}$. Two connection sets which satisfy this condition, are said to be \emph{equivalent} or \emph{one-multiplier equivalent} with respect to the multiplier $m$. 
The condition given by $(M_{1})$ is sometimes referred to as \textit{\'{A}d\'{a}m's condition}, due to the fact that A.\'{A}d\'{a}m conjectured the opposite assertion, that is:
\begin{quote}
\textbf{A}(n): The connection sets of isomorphic circulant graphs on $n$ vertices are one-multiplier equivalent.
\end{quote}
In other words, \'{A}d\'{a}m conjectured that given two connection sets $X$ and $X'$, of isomorphic circulant n-graphs, then there exists some integer $m$, prime to $n$, such that $mX=X'$.

\subsection{Validity of the Conjecture}
The conjecture \textbf{A}(n) has been shown to be true for the case when $n=p$, where $p$ is prime.

\begin{Theorem}[\cite{ET70}, \cite{FIK90}]
Let $p$ be a prime number. Two cyclic graphs $\Gamma(\mathbb{Z}_p,X)$ and $\Gamma(\mathbb{Z}_p,X')$ are isomorphic if and only if $X'=mX$ for some $m\in \mathbb{Z}^\ast_p$.
\end{Theorem}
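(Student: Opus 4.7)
The converse (``if'') direction of the biconditional is immediate from Lemma~\ref{L:P*}, so the plan is to prove only that an isomorphism forces the connection sets to be one-multiplier equivalent. The strategy hinges on the primality of $p$ through Sylow's Theorem in $S_p$, combined with the standard description of the normaliser of a full $p$-cycle.

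First I would fix an isomorphism $\phi:\Gamma(\mathbb{Z}_p,X)\to\Gamma(\mathbb{Z}_p,X')$ and set $Z:=\langle(0,1,\ldots,p-1)\rangle\cong\mathbb{Z}_p$. Because both graphs are circulants, $Z$ is contained in each of their automorphism groups, and conjugation by $\phi$ maps $\text{Aut}(\Gamma(\mathbb{Z}_p,X))$ isomorphically onto $\text{Aut}(\Gamma(\mathbb{Z}_p,X'))$. In particular $\phi Z\phi^{-1}$ is a second subgroup of order $p$ inside $\text{Aut}(\Gamma(\mathbb{Z}_p,X'))$.

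The key observation is that $|S_p|=p!$ is divisible by $p$ only to the first power, so every subgroup of order $p$ in $S_p$ is a Sylow $p$-subgroup of any intermediate subgroup. Applying Sylow's Theorem inside $\text{Aut}(\Gamma(\mathbb{Z}_p,X'))$ to $Z$ and $\phi Z\phi^{-1}$ produces a $\sigma\in\text{Aut}(\Gamma(\mathbb{Z}_p,X'))$ with $\sigma(\phi Z\phi^{-1})\sigma^{-1}=Z$. Setting $\psi:=\sigma\phi$ still gives an isomorphism $\Gamma(\mathbb{Z}_p,X)\to\Gamma(\mathbb{Z}_p,X')$, and a direct calculation shows $\psi Z\psi^{-1}=Z$, i.e.\ $\psi\in N_{S_p}(Z)$.

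The final step is the identification $N_{S_p}(Z)=\mathbb{Z}_p\rtimes\mathbb{Z}^\ast_p=\text{AGL}(1,p)$, consisting of all maps $v\mapsto av+b$ with $a\in\mathbb{Z}^\ast_p$ and $b\in\mathbb{Z}_p$. Granting this, $\psi(v)=av+b$ for some admissible $a,b$. Composing on the left with the translation $v\mapsto v-b\in Z$ (an automorphism of $\Gamma(\mathbb{Z}_p,X')$) produces an isomorphism $\psi':v\mapsto av$; comparing the neighbours of $0$ on each side then forces $aX=X'$, so $m:=a$ is the required multiplier. The one substantive ingredient is the description of $N_{S_p}(Z)$: I expect this to be the only genuine obstacle, and it follows from the fact that any permutation conjugating the standard $p$-cycle to one of its powers is determined by the image of $0$ together with the exponent of that power.
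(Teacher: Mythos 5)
Your argument is correct, and it takes a route the paper itself does not spell out: the paper states this theorem without proof, deferring to Turner's spectral proof in \cite{ET70} (eigenvalues of circulant adjacency matrices) and to the Schur-ring proof in \cite{FIK90}. Your proof is the classical purely group-theoretic one. The ``if'' direction is indeed just Lemma~\ref{L:P*}. For the converse, every step checks out: since $p^2\nmid p!$, the regular cyclic group $Z$ and its conjugate $\phi Z\phi^{-1}$ are both Sylow $p$-subgroups of $\mathrm{Aut}(\Gamma(\mathbb{Z}_p,X'))$, so Sylow's theorem supplies the correcting automorphism $\sigma$; the identification $N_{S_p}(Z)=\mathbb{Z}_p\rtimes\mathbb{Z}^\ast_p$ follows exactly as you indicate, because $\psi z\psi^{-1}=z^{a}$ for $z\colon v\mapsto v+1$ forces $\psi(v+1)=\psi(v)+a$ and hence $\psi(v)=av+\psi(0)$ by induction; and killing the translation part leaves a multiplier fixing $0$, which must carry the out-neighbourhood $X$ of $0$ onto $X'$. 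What your approach buys over the two cited ones is self-containedness and economy: no linear algebra over $\mathbb{C}$ and no Schur-ring machinery, only Sylow's theorem and the normaliser computation. It is also the template that generalises (via Babai's lemma) to the CI-problem discussed in the paper's conclusion, whereas the spectral argument is specific to cyclic groups of prime order. The one thing worth making explicit in a final write-up is that the argument is insensitive to whether the circulants are directed or undirected, since every map involved preserves arcs.
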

This was proved by Turner using spectral techniques in \cite{ET70}. In \cite{FIK90}, the authors make use of a different technique, in particular, one involving Schur rings, in order to prove this result. This technique was also used later by M. Muzychuk in 1995, in order to prove that the conjecture is also valid when $n$ is square-free \cite{Muzy95} (except for $n=4$ and for undirected circulants having $n=9$, for which the conjecture is still valid \cite{KLP96}).

\begin{Theorem}[\cite{Muzy95}]
\label{thm:theorem4a}
If n is a square-free number, then two circulant graphs are isomorphic if and only if they are one multiplier equivalent.
\end{Theorem}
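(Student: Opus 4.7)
The forward direction, namely that one--multiplier equivalent circulants are isomorphic, is already established by Lemma~\ref{L:P*}, so only the converse needs a proof. The plan is to reduce the statement to a question about S--rings over $\mathbb{Z}_n$, and then exploit the fact that, for square--free $n$, such S--rings admit a controlled decomposition in terms of their prime factors, where \'{A}d\'{a}m's conjecture is already known to hold.

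First I would reformulate the statement in the style of Babai. If $\sigma : \Gamma(\mathbb{Z}_n, X) \to \Gamma(\mathbb{Z}_n, X')$ is a graph isomorphism, then $\mathbb{Z}_n$ (acting regularly by left translation) and $\sigma^{-1}\mathbb{Z}_n\sigma$ are two regular cyclic subgroups of $\text{Aut}\,\Gamma(\mathbb{Z}_n,X)$. Producing a multiplier $m\in\mathbb{Z}_n^\ast$ with $mX = X'$ is equivalent to showing these two regular cyclic subgroups are conjugate inside $\text{Aut}\,\Gamma(\mathbb{Z}_n,X)$ by an element of the normaliser $\mathbb{Z}_n\rtimes\mathbb{Z}_n^\ast$; the desired multiplier is then read off from the conjugating element. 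So the problem becomes a \emph{conjugacy} problem about regular cyclic subgroups.

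Next I would replace $\text{Aut}\,\Gamma$ by the transitivity module $\mathfrak{S}(\text{Aut}\,\Gamma,\mathbb{Z}_n)$, which is an S--ring containing $\underline{X}$ as one of its simple quantities by Schur's theorem. The circulant is encoded, up to the stabiliser action, by this S--ring, so the question reduces to a structural classification of S--rings over $\mathbb{Z}_n$. The key input for square--free $n$ is a decomposition theorem in the spirit of Leung--Man: every S--ring over $\mathbb{Z}_n$ arises as an iterated generalised wreath product $\mathfrak{S}_1\wr\mathfrak{S}_2\wr\cdots$ of S--rings over the prime--order cyclic factors $\mathbb{Z}_p$, $p\mid n$. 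Since the preceding Theorem (Turner/Elspas--Turner, Fridman--Ivanov--Klin) already guarantees the CI--property for every prime $\mathbb{Z}_p$, one obtains, at each prime factor, a local multiplier $m_p\in\mathbb{Z}_p^\ast$ witnessing the equivalence of the connection sets modulo $p$. The Chinese Remainder isomorphism $\mathbb{Z}_n^\ast\cong\prod_{p\mid n}\mathbb{Z}_p^\ast$ then glues the local multipliers into a single $m\in\mathbb{Z}_n^\ast$ with $mX = X'$.

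The hard part, as expected, is the structural decomposition theorem for S--rings over square--free cyclic groups, together with verifying that the local multipliers at the various primes are mutually compatible under the wreath--product structure. The square--free hypothesis is indispensable here: a repeated prime factor $p^2$ would admit S--rings coming from the non--trivial subgroup lattice of $\mathbb{Z}_{p^2}$, in which the two "layers" at $p$ can be twisted relative to each other by a symmetry that is not realisable by any element of $\mathbb{Z}_{p^2}^\ast$. This is precisely the phenomenon that causes \'{A}d\'{a}m's conjecture to fail at $n=p^2$, and it is what motivates the more delicate analysis undertaken in the remainder of the thesis for $n=p^3$.
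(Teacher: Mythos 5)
The paper does not actually prove this theorem: it is quoted from Muzychuk's 1995 paper as an external result, with only the remark that the proof uses Schur rings in the manner of Fridman--Ivanov--Klin. There is therefore no internal proof to compare your proposal against, and it has to be judged on its own merits.

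Your outline follows the right general strategy (Babai's reformulation as conjugacy of regular cyclic subgroups inside $\mathrm{Aut}\,\Gamma$, passage to the transitivity module, local analysis at each prime, gluing of multipliers), but it has a genuine gap at exactly the point you yourself flag as ``the hard part''. First, the structural claim you invoke is misstated: S-rings over a square-free cyclic group $\mathbb{Z}_n$ are \emph{not} all iterated generalised wreath products of S-rings over the prime-order factors $\mathbb{Z}_p$; a Leung--Man-type classification also requires tensor (dot) products of S-rings over the coprime components, and it is precisely the interaction between the tensor and wedge constructions that the actual argument has to control. Second, the gluing step is not a formal Chinese Remainder argument: a graph isomorphism $\sigma$ between the two circulants need not decompose as a product of isomorphisms of the $p$-components, so the ``local multipliers'' $m_p$ you posit are not automatically well defined, let alone mutually compatible; showing that one may always replace $\sigma$ by a conjugating element lying in the normaliser $\mathbb{Z}_n\rtimes\mathbb{Z}_n^\ast$ is the entire content of Muzychuk's theorem, and your proposal defers that step rather than supplying it. As a proof the submission is therefore incomplete; as a roadmap it is broadly faithful to the known Schur-ring approach.
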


In \cite{KLP96} the authors observe that Theorem~\ref{thm:theorem4a} means that for both directed and undirected graphs, the conjecture \textbf{A}(n) is valid for $n=\varepsilon p_{1}p_{2}...p_{k}$, where $p_{1},p_{2},...p_{k}$ are pairwise distinct odd prime numbers and $\varepsilon \in \lbrace 1,2,4\rbrace$. They also state that the conjecture is false for all other numbers greater than 18. P\'{a}lfy also proved that \'{A}d\'{a}m's conjecture holds when $(n,\phi(n))=1$, by considering the isomorphism problem of Cayley structures over the cyclic group $C_n$ (see \cite{Muzy95}).

\subsection{Falsity of the Conjecture for $n$ non-prime}
Over the years it has been shown that in general this conjecture is false. In 1970, B.Elspas and J. Turner \cite{ET70} constructed a counterexample on 8 vertices.

Consider the connection sets $S=\lbrace 1,2,5 \rbrace$ and $T=\lbrace 1,5,6\rbrace$. The integers relatively prime to 8 are 1,3,5 and 7 (mod 8). One may note however, that $mS\neq T$ for $m=1,3,5$ or 7. Therefore the connection sets are not equivalent. However the graphs $G_1$ and $G_2$ with these connection sets are isomorphic \cite{ET70}.  A vertex $v_i \in G_1$ may be mapped onto a vertex $v_j \in G_2$ where

\[j=4\left[\frac{i+1}{2}\right]+i, \hspace{6mm} i=0,1,...,7\]
which gives an isomorphism.
\vspace{5mm}
\begin{figure}[!h]
\centering
\includegraphics[width=1.0\linewidth]{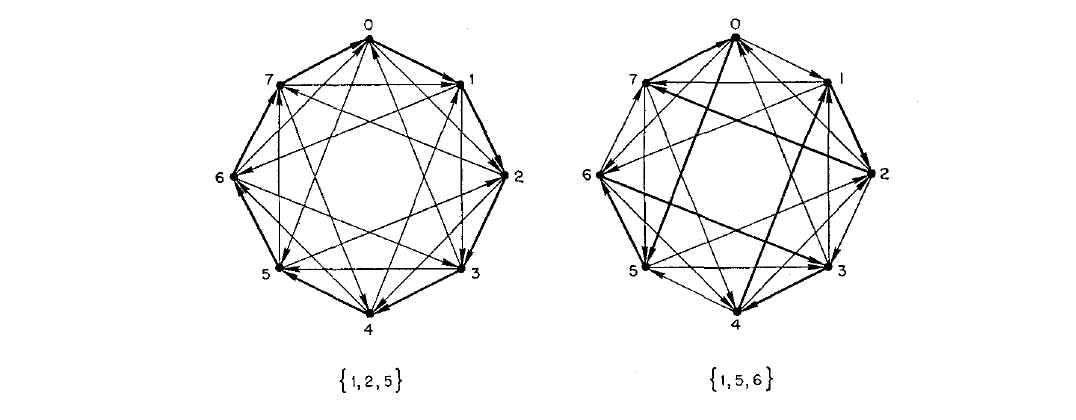}
\caption{Isomorphic graphs with inequivalent connection sets}
\label{isomorphicgraphs}
\end{figure}

In 1979, Alspach and Parsons \cite{AP79} showed that in general the conjecture fails if $n$ is divisible by 8 or by a square of an odd prime. V.N. Egorov and A.I. Markov also showed this independently (see \cite{MKP2001}).

Let us now consider some examples when $n$ is divisible by a square of an odd prime, in particular $n=9$ and $n=25$, in order to show that \'{A}d\'{a}m's conjecture fails in this case. We can see that for the directed circulants $\Gamma=\Gamma(\mathbb{Z}_{9},X)$ and $\Gamma'=\Gamma(\mathbb{Z}_{9},X')$   with $X=\lbrace 1,3,4,7\rbrace$ and $X'=\lbrace 1,6,4,7\rbrace$, no multiplier transforms $X$ into $X'$. However, under the isomorphism (0)(1)(2)(3,6)(4,7)(5,8), these two circulants are isomorphic. Similarly, Adam's conjecture is not valid for the directed circulant graphs $\Gamma(\mathbb{Z}_{25},Y)$ and $\Gamma(\mathbb{Z}_{25},Y')$ where the connection sets, $Y$ and $Y'$ are given by $\lbrace 1,4,6,9,11,14,16,19,21,24,5,20 \rbrace$ and $\lbrace 1,4,6,9,11,14,16,19,21,24,10,15 \rbrace$ respectively.

As a result of Lemma 1, we have the following important enumeration result in the case when $A(n)$ holds.

\begin{Theorem}
If the conjecture \textbf{A}(n) is valid for a given order n and a given set of n-circulants, then the number of non-isomorphic circulant digraphs under consideration is equal to the number of orbits of the group $\mathbb{Z}^\ast_n$ in its induced multiplicative action on the subsets of $\mathbb{Z}_n$ not containing 0.
\end{Theorem}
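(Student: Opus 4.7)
The plan is to set up an explicit bijection between isomorphism classes of the circulant digraphs under consideration and orbits of $\mathbb{Z}^\ast_n$ acting multiplicatively on the power set of $\mathbb{Z}'_n$. Once this correspondence is established, equating the two counts is immediate. The argument hinges on Lemma~\ref{L:P*} in one direction and the assumed validity of $\textbf{A}(n)$ in the other.

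First I would note that, by the discussion on Cayley graphs and circulants in Section~1.2.2, every circulant on $n$ vertices has the form $\Gamma(\mathbb{Z}_n,X)$ for a uniquely determined connection set $X \subseteq \mathbb{Z}'_n$ (the set of out-neighbours of~$0$). Hence the assignment $X \longmapsto \Gamma(\mathbb{Z}_n,X)$ is a surjection from the subsets of $\mathbb{Z}_n$ not containing $0$ onto the family of circulant digraphs under consideration. This already handles the surjectivity half of the argument and reduces the problem to showing that two subsets $X, X' \subseteq \mathbb{Z}'_n$ yield isomorphic circulants if and only if they lie in the same orbit under the action $X \mapsto mX$ of $\mathbb{Z}^\ast_n$.

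Next I would establish the two implications of this ``if and only if''. The forward direction ``same orbit $\Rightarrow$ isomorphic'' is exactly Lemma~\ref{L:P*}: if $X' = mX$ for some $m \in \mathbb{Z}^\ast_n$, then $\alpha_m : v \mapsto mv$ furnishes an isomorphism $\Gamma(\mathbb{Z}_n,X) \cong \Gamma(\mathbb{Z}_n,X')$. The reverse direction ``isomorphic $\Rightarrow$ same orbit'' is precisely the statement of \'Ad\'am's conjecture $\textbf{A}(n)$, which has been assumed to hold for the order $n$ and family of circulants under consideration. Combining the two directions shows that the map $X \mapsto \Gamma(\mathbb{Z}_n,X)$ descends to a well-defined bijection
\[
\bigl\{\text{orbits of }\mathbb{Z}^\ast_n\text{ on subsets of }\mathbb{Z}'_n\bigr\}\ \longleftrightarrow\ \bigl\{\text{isomorphism classes of circulant digraphs on }\mathbb{Z}_n\bigr\}.
\]

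Finally, counting the two sides of this bijection gives the stated equality of cardinalities, completing the proof. There is no genuine technical obstacle here; the whole argument is a bookkeeping step that repackages Lemma~\ref{L:P*} and the hypothesis $\textbf{A}(n)$ into the language of orbit counting. The only point requiring a little care is to be explicit that the correspondence $X \mapsto \Gamma(\mathbb{Z}_n,X)$ is a genuine bijection between subsets of $\mathbb{Z}'_n$ and circulants on $\mathbb{Z}_n$ (i.e., different connection sets yield different labelled graphs), so that orbits on the left correspond exactly to isomorphism classes on the right.
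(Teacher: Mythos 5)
Your argument is correct and is exactly the reasoning the paper intends: the forward implication is Lemma~1, the reverse implication is the assumed validity of $\textbf{A}(n)$, and the bijection $X \mapsto \Gamma(\mathbb{Z}_n,X)$ turns isomorphism classes into orbits of $\mathbb{Z}^\ast_n$ on the subsets of $\mathbb{Z}'_n$. The paper states the theorem as an immediate consequence of Lemma~1 without writing out these steps, so your proposal simply makes the same argument explicit.
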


This means that one can apply the P\'olya enumeration theorem and substitute the appropriate variables in order to count circulants. The easiest case when this can be done, occurs when the order of the circulant is prime. This was done by Turner \cite{Turner67}, and we review these results next.

\section{Enumeration of Circulants of Prime Order}
\subsection{The Directed Case}
As previously stated, in the case when $n$ is prime, $n=p$, we simply need to consider the regular action of ${Z}^\ast_p$ on itself since $\mathbb{Z}^\ast_p\cong\mathbb{Z}'_p$. In the case of directed circulant graphs, we do not have any restrictions on the connection sets, since all subsets of ${Z}'_p$ can be considered as connection sets. Therefore the cycle index may be determined using $\mathcal{I}_{p-1}(x)$. In order to obtain the generating function in terms of $t$, for such graphs, one must substitute $x_{r}:=1+t^{r}$, in $\mathcal{I}_{p-1}(x)$, where $r=1,2,...$, denotes the size of the connection set, that is, the out-degree of the vertices \cite{LP96}. The total number of nonequivalent sets is then obtained by substituting $t:=1$. In other words, one may simply substitute $x_{r} :=2$ for all $r$ in the cycle index, in order to obtain the number of (directed) circulants of prime order.

\vspace{3mm}
For example consider the case $n=13$. We determine the cycle index $I_{\mathbb{Z}^\ast_{13}}(x)$ of the group $\mathbb{Z}^\ast_{13}$ in its regular action and then substitute accordingly.
This means we have the action $(\mathbb{Z} ^\ast_{13},\mathbb{Z}^\ast_{13})$ where $\mathbb{Z}^\ast_{13}=\lbrace 1,2,3,4,5,6,7,8,9,10,11,12 \rbrace$. Table \ref{table:Table1} demonstrates the multiplicative action of each element and the corresponding cycle structure.

\begin{table}[ht]
\caption{The Cycle Structure for the Directed Case when n=13}
\begin{quote}
\centering
\begin{tabular}{|c|c|c|}
\hline
Action of&Action&Cycle Structure\\
\hline
1&(1)(2)(3)(4)(5)(6)(7)(8)(8)(10)(11)(12)&$x_{1}^{12}$\\
2&(1,2,4,8,3,6,12,11,9,5,10,7)&$x_{12}^{1}$\\
3&(1,3,9)(2,6,5)(4,12,10)(7,8,11)&$x_{3}^{4}$\\
4&(1,4,3,12,9,10)(2,8,6,11,5,7)&$x_{6}^{2}$\\
5&(1,5,12,8)(2,10,11,3)(4,7,9,6)&$x_{4}^{3}$\\
6&(1,6,10,8,9,2,12,7,3,5,4,11)&$x_{12}^{1}$\\
7&(1,7,10,5,9,11,12,6,3,8,4,2)&$x_{12}^{1}$\\
8&(1,8,12,5)(2,3,11,10)(4,6,9,7)&$x_{4}^{3}$\\
9&(1,9,3)(2,5,6)(4,10,12)(7,11,8)&$x_{3}^{4}$\\
10&(1,10,9,12,3,4)(2,7,5,11,6,8)&$x_{6}^{2}$\\
11&(1,11,4,5,3,7,12,2,9,8,10,6)&$x_{12}^{1}$\\
12&(1,12)(2,11)(3,10)(4,9)(5,8)(6,7)&$x_{2}^{6}$\\
\hline
\end{tabular}
\label{table:Table1}
\end{quote}
\end{table}

Therefore the cycle index is given by:
\[I_{\mathbb{Z}^\ast_{13}}(x)=\frac{1}{12}(x_{1}^{12}+x_{2}^{6}+2x_{3}^{4}+2x_{4}^{3}+2x_{6}^{2}+4x_{12})\]

Substituting $x_r:=1+t^r$ gives the following generating function:
\begin{equation*}
\begin{split}
\frac{1}{12}(12t^{12}&+12t^{11}+72t^{10}+228t^{9}+516t^{8}+792t^7+960t^6\\
&+792t^5+516t^4+228t^3+72t^2+12t+12)\\
\end{split}
\end{equation*}
\begin{equation*}
=t^{12}+t^{11}+6t^{10}+19t^{9}+43t^{8}+66t^7+80t^6+66t^5+43t^4+19t^3+6t^2+t+1
\end{equation*}

Letting $t=1$ in the generating function, or equivalently $x_i=2$ in the cycle index, we obtain 352 non-isomorphic directed circulants on 13 vertices. Not only can we enumerate the circulants, but the generating function also provides information on the number of edges/arcs each circulant has. The coefficient of $t^k$ gives the number of circulants of out-degree $k$ for directed circulants. For instance the term $6t^{10}$ indicates that 6 of the 352 non-isomorphic directed circulants have out-degree 10.

\subsection{The Undirected Case and Turner's Trick}

Since undirected circulants only contain edges, we must now treat the set receiving the action in a different manner to that described in the directed case, since we now need to exclude connection sets which are not closed under inversion. The set on which the action is taking place will now be made up of inverse pairs of elements, that is, pairs $\lbrace x, -x\rbrace$ for all $x\in \mathbb{Z}'_p$.
In order to demonstrate this let us consider the case $n=13$ once again. In this case our action is now $(\mathbb{Z}^\ast_{13},\lbrace \lbrace 1,12\rbrace,\lbrace 2,11\rbrace,\lbrace 3,10\rbrace,\lbrace 4,9\rbrace,\lbrace 5,8\rbrace,\lbrace 6,7\rbrace \rbrace )$.

In order to simplify this, each pair may be considered as a ``block", that is, if we let $a=\lbrace 1,12\rbrace$, $b=\lbrace 2,11\rbrace$, $c=\lbrace 3,10\rbrace$, $d=\lbrace 4,9\rbrace$, $e=\lbrace 5,8\rbrace$ and $f=\lbrace 6,7\rbrace$ we have the action $(\mathbb{Z}^\ast_{13},\lbrace a,b,c,d,e,f\rbrace)$. The cycle structure may be determined as shown in table \ref{table:Table2}.
\begin{table}[ht]
\caption{The Cycle Structure for the Undirected Case when $n=13$}
\begin{quote}
\centering
\begin{tabular}{|c|c|c|}
\hline
Action of&Action&Cycle Structure\\
\hline
1&$(a)(b)(c)(d)(e)(f)$&$x_{1}^{6}$\\
2&$(a\hspace{2mm}b\hspace{2mm}d\hspace{2mm}e\hspace{2mm}c\hspace{2mm}f)$&$x_{6}^{1}$\\
3&$(a\hspace{2mm}c\hspace{2mm}d)(b\hspace{2mm}f\hspace{2mm}e)$&$x_{3}^{2}$\\
4&$(a\hspace{2mm}d\hspace{2mm}c)(b\hspace{2mm}e\hspace{2mm}f)$&$x_{3}^{2}$\\
5&$(a\hspace{2mm}e)(b\hspace{2mm}c)(d\hspace{2mm}f)$&$x_{2}^{3}$\\
6&$(a\hspace{2mm}f\hspace{2mm}c\hspace{2mm}e\hspace{2mm}d\hspace{2mm}b)$&$x_{6}^{1}$\\
7&$(a\hspace{2mm}f\hspace{2mm}c\hspace{2mm}e\hspace{2mm}d\hspace{2mm}b)$&$x_{6}^{1}$\\
8&$(a\hspace{2mm}e\hspace{2mm})(b\hspace{2mm}c)(d\hspace{2mm}f)$&$x_{2}^{3}$\\
9&$(a\hspace{2mm}d\hspace{2mm}c)(b\hspace{2mm}e\hspace{2mm}f)$&$x_{3}^{2}$\\
10&$(a\hspace{2mm}c\hspace{2mm}d)(b\hspace{2mm}f\hspace{2mm}e)$&$x_{3}^{2}$\\
11&$(a\hspace{2mm}b\hspace{2mm}d\hspace{2mm}e\hspace{2mm}c\hspace{2mm}f)$&$x_{6}^{1}$\\
12&$(a)(b)(c)(d)(e)(f)$&$x_{1}^{6}$\\
\hline
\end{tabular}
\label{table:Table2}
\end{quote}
\end{table}

Therefore the cycle index is given by:
\begin{equation*}
\begin{split}
I_{\mathbb{Z}^\ast_{13}}(x)&=\frac{1}{12}(2x_{1}^{6}+2x_{2}^{3}+4x_{3}^{2}+4x_{6})\\
&=\frac{1}{6}(x_{1}^{6}+x_{2}^{3}+2x_{3}^{2}+2x_{6})
\end{split}
\end{equation*}
and substituting $x_{r} :=2$ for all $r$ we obtain 14 non-isomorphic undirected circulants. In order to obtain the generating function in this case, the substitution $x_{r}:=1+t^{2r}$ for all $r$ is required. This will give the generating function
\begin{equation*}
t^{12}+t^{10}+3t^8+4t^6+3t^4+t^2+1.
\end{equation*}
In the case of undirected circulants, the coefficient of $t^k$ in the generating function, gives the number of circulants of degree $k$. Therefore, for example, the number of non-isomorphic undirected circulants on 13 vertices, with degree 8 is 3.

We had to substitute $x_r:=1+t^{2r}$ in this case because now, each block gave an in-degree and an out-degree, therefore a connection set made up of $r$ blocks gives an undirected circulant of degree $2r$.

In his paper Turner \cite{Turner67}, explains this in the following way:
Since the connecting set $S$ is paired by inversion, we can take this to be $\lbrace s\in S:s\leq \frac{p-1}{2}\rbrace$. As a result we are acting on half of the possible edges, which for the case $n=13$ is $\lbrace 1,2,3,4,5,6\rbrace$. This turns out to be $\mathbb{Z}'_{\frac{p-1}{2}}=\mathbb{Z}'_6$. The group effecting the action can also be considered to be $\frac{\mathbb{Z}'_p}{\lbrace 1,-1\rbrace}$ because the multiplicative action of $m$ is equivalent to that of $-m$. This turns out to be the cyclic group of order 6 and the action is therefore the regular action of this group. The cycle index polynomial in this case may therefore be given by $\mathcal{I}_{\frac{p-1}{2}}(x)$ that is

\[\mathcal{I}_{\frac{p-1}{2}}(x)=\frac{2}{p-1}\sum_{r|\frac{p-1}{2}} \phi(r) x_{r}^{\frac {p-1}{2r}}\]
which therefore gives a general formula for the cycle index for undirected circulants of order $n=p$, $p$ prime.

In the case $n=13$, $r=1,2,3,6$, and $\phi(1)=1$, $\phi(2)=1$, $\phi(3)=2$ (since g.c.d.(1,3)=1, g.c.d.(2,3)=1 and g.c.d.(3,3)=2) and $\phi(6)=2$. By substituting these values in $\mathcal{I}_{\frac{p-1}{2}}(x)$, one can verify that the result is the same as that obtained previously.

Since \'{A}d\'{a}m's conjecture is false in general, a more profound isomorphism criterion is required to enumerate circulants whose order $n$ is non-square-free $n$.

\chapter{Multiplier Theorem for $\MakeLowercase {p^{k}}$}
In order to count non-isomorphic structures using P\'{o}lya enumeration techniques, one needs a criterion for isomorphism. As we have seen, one multiplier equivalence of connection sets is not a valid criterion for isomorphism of circulant graphs in general. Finding an isomorphism criterion for general non-square-free circulants seems to be a hopelessly difficult task, but, using the theory of Schur-rings, considerable progress has been registered for circulants of order $p^k$, $p$ prime. In this chapter we shall look at the special case of $p^2$.
\section{The Case $n=p^2$}
The isomorphism criterion which is to follow, will require us to partition the elements of the connecting sets into \emph{layers}\index{layers}. This is done in the following way: We will first consider the set $\mathbb{Z}'_{p^2}$ and divide the elements into two layers, namely $Y_0$ and $Y_1$, where $Y_0$ will contain those elements which do not have $p$ as a factor and $Y_1$ will contain those elements which do have $p$ as a factor. A connecting set $X$ is then given by

\[X=X_{(0)}\dot{\cup} X_{(1)}\]
where $X_{(0)}=X \cap Y_0$ and $X_{(1)}=X \cap Y_1$. In other words, the layers $X_{(0)}$ and $X_{(1)}$ of the connecting set, are the intersections of the connecting set $X$ with the respective layers $Y_0$ and $Y_1$ in $\mathbb{Z}'_{p^2}$. The layer $X_{(0)}$ is a subset of $\mathbb{Z}^\ast_{p^2}$ while the layer $X_{(1)}$ is a subset of $p\mathbb{Z}^\ast_p$. In addition, when these layers are acted upon (multiplicatively) by elements of $\mathbb{Z}^\ast_{n}$, where in this case $n=p^2$, these layers are invariant.

Let us consider the case when $p=5$. In $\mathbb{Z}'_{25}$ we have the following two layers:
\begin{equation*}
\begin{split}
Y_0&=\lbrace 1,2,3,4,6,7,8,9,11,12,13,14,16,17,18,19,21,22,23,24 \rbrace\\
Y_1&=\lbrace 5,10,15,20 \rbrace\\
\end{split}
\end{equation*}
Now suppose our connecting set is $X=\lbrace 1,3,5,10,24 \rbrace$. Then in this case we have
\begin{equation*}
\begin{split}
X_{(0)}&=\lbrace 1,3,5,10,24 \rbrace\cap Y_0=\lbrace 1,3,24\rbrace\\
X_{(1)}&=\lbrace 1,3,5,10,24 \rbrace\cap Y_1=\lbrace 5,10\rbrace.\\
\end{split}
\end{equation*}
One may note that $X_{(0)}\subseteq\mathbb{Z}^\ast_{25}$ and $X_{(1)}\subseteq5\mathbb{Z}^\ast_5$.

Such partitioning of the elements may be generalised to $n=p^{k}$ for any $k$, for which $k$ layers arise, in which case

\[X=X_{(0)}\dot{\cup}X_{(1)}\dot{\cup}...\dot{\cup}X_{(k-1)}\]
where the $i$-layer is $X_{(i)}=X\cap p^i\mathbb{Z}^\ast_{p^{k-i}}$ \cite{LP96}.

Klin, Liskovets and P\"{o}schel\cite{KLP96} presented the following isomorphism criterion for circulants of order $p^2$.
\begin{Theorem}[\cite{KLP96}]
\label{thm:theorem2}
Two circulant graphs $\Gamma(\mathbb{Z}_n,X)$ and $\Gamma'=\Gamma(\mathbb{Z}_n,X')$ with $n=p^2$ vertices, are isomorphic if and only if their respective layers are multiplicatively equivalent, i.e.

\begin{equation}\tag{$M_{2}$}
X'_{(0)}=m_{0}X_{(0)},   X'_{(1)}=m_{1}X_{(1)},
\end{equation}
for a pair of multipliers $m_{0}, m_{1} \in \mathbb{Z}^\ast_{p^{2}}$. Moreover, in the above, one must have
\begin{equation}\tag{E}
m_{0}=m_{1}
\end{equation}
whenever
\begin{equation}\tag{R}
(1+p)X_{(0)}\not= X_{(0)}
\end{equation}
\end{Theorem}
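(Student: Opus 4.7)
The theorem splits naturally into a ``sufficiency'' direction (the existence of layer-wise multipliers yields a graph isomorphism) and a ``necessity'' direction (every isomorphism can be decoded into such multipliers). The first direction I would handle by explicit construction, leaning on Lemma~\ref{L:P*}; the second requires the structural theory of Schur rings over $\mathbb{Z}_{p^2}$.

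For \emph{sufficiency}, first suppose $m_0 = m_1 = m$. Then $X' = mX_{(0)} \dot\cup mX_{(1)} = mX$, so Lemma~\ref{L:P*} gives the isomorphism $\alpha_m : v \mapsto mv$ directly. If $m_0 \neq m_1$, the hypothesis of the theorem forces $(1+p)X_{(0)} = X_{(0)}$. I would first reduce via $\alpha_{m_0}$ to the situation $X'_{(0)} = m_0 X_{(0)}$, so that only the $Y_1$-layer is still ``incorrect'', then construct an explicit layer-respecting bijection that fixes $Y_0$ setwise while acting on $Y_1$ as the adjustment $m_1 m_0^{-1}$ modulo $p$. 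The failure of condition $(R)$ supplies precisely the $\langle 1+p\rangle$-invariance needed so that the $p$-multiple discrepancies introduced by relocating elements of $Y_1$ are absorbed, making the twist a genuine graph isomorphism rather than merely a set bijection.

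For \emph{necessity}, let $\phi : \Gamma(X) \to \Gamma(X')$ be an isomorphism. Composing with a translation I may assume $\phi(0) = 0$, whence $\phi(X) = X'$. The heart of the argument is to show that $\phi$ preserves the layer decomposition, i.e.\ $\phi(Y_0) = Y_0$ and $\phi(Y_1) = Y_1$. This I would extract from Schur-ring theory applied to the transitivity module of $\mathrm{Aut}(\Gamma(X))$: since $p\mathbb{Z}_{p^2}$ is the unique subgroup of order $p$ in $\mathbb{Z}_{p^2}$, every $\mathcal{S}$-ring over $\mathbb{Z}_{p^2}$ refining $X$ has its non-trivial basic sets partitioned into those lying in $Y_0$ and those lying in $Y_1$. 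Once the layer partition is preserved, applying the prime-order isomorphism result (the validity of $\mathbf{A}(p)$) to the induced action on each layer produces multipliers $m_0, m_1 \in \mathbb{Z}^\ast_{p^2}$ with $\phi(X_{(0)}) = m_0 X_{(0)} = X'_{(0)}$ and $\phi(X_{(1)}) = m_1 X_{(1)} = X'_{(1)}$. Finally, if $(R)$ holds then $\langle 1+p\rangle$ does not stabilise $X_{(0)}$, which rigidifies the way the two layers interact under $\phi$ and collapses the independent choice of multipliers, forcing $m_0 = m_1$.

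The \emph{main obstacle} is the Schur-ring step forcing $\phi$ to respect the layer partition. A priori a graph isomorphism is merely a combinatorial bijection and has no reason to preserve the subgroup $p\mathbb{Z}_{p^2}$; that it must do so for $n = p^2$ is a feature specific to this order and depends on the classification of $\mathcal{S}$-rings over $\mathbb{Z}_{p^2}$ (P\"oschel--Klin), which shows that the admissible $\mathcal{S}$-ring types are essentially ``layered''. A secondary subtlety is the precise ``$m_0 = m_1$'' forcing under $(R)$: here one must show that, whenever $\langle 1+p\rangle$ fails to stabilise $X_{(0)}$, any mismatch between the two layer multipliers creates an incompatibility between arcs of $\Gamma(X)$ with endpoints in different cosets of $p\mathbb{Z}_{p^2}$, so the two layers can no longer be transformed independently.
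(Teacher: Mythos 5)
The paper does not actually prove this statement: Theorem~\ref{thm:theorem2} is quoted from Klin, Liskovets and P\"oschel \cite{KLP96} and is only illustrated with the $n=9$ examples, so there is no in-paper proof to compare yours against. Judged on its own, the \emph{sufficiency} half of your sketch is sound and can be completed exactly as you indicate: when $m_0\neq m_1$ the hypothesis forces $(1+p)X_{(0)}=X_{(0)}$, and since the orbit of $x\in\mathbb{Z}^\ast_{p^2}$ under $\langle 1+p\rangle$ is the full coset $x+p\mathbb{Z}_{p^2}$, this says $X_{(0)}$ is a union of cosets of $p\mathbb{Z}_{p^2}$; hence the arcs with difference in $Y_0$ depend only on residues mod $p$, and the map $r+ps\mapsto r+p(m_1m_0^{-1}s)$ (after first applying $\alpha_{m_0}$) adjusts the within-coset circulants without disturbing the between-coset ones. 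One small correction of phrasing: this map does not fix $Y_0$ setwise in any useful sense; what it preserves is the partition into cosets of $p\mathbb{Z}_{p^2}$, which is what the argument actually needs.

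The \emph{necessity} half, however, is where the theorem lives, and your proposal does not supply it. Two concrete gaps: first, the claim that every isomorphism (normalised to fix $0$) preserves the layer partition is asserted to follow from ``Schur-ring theory applied to the transitivity module,'' but no argument is given; this is precisely the Klin--P\"oschel classification of $\mathcal{S}$-rings over $\mathbb{Z}_{p^2}$ (wreath-decomposable versus coset-type, cf.\ Theorem~\ref{thm:theorem13}), and invoking it wholesale is not a proof. Second, the step ``apply the prime-order result $\mathbf{A}(p)$ to the induced action on each layer'' does not work as stated: $Y_0$ has $p^2-p$ elements and is not a prime-order circulant layer, and even knowing $\phi(X_{(0)})=X'_{(0)}$ as sets does not by itself produce a multiplier $m_0$ with $X'_{(0)}=m_0X_{(0)}$ --- that conclusion requires control of the \emph{form} of $\phi$ (that it lies in, or can be replaced by an element of, the normaliser $\mathbb{Z}_{p^2}\rtimes\mathbb{Z}^\ast_{p^2}$ of the relevant automorphism group), which again is exactly the content of the $\mathcal{S}$-ring analysis you are deferring. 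The same remark applies to the forcing of $m_0=m_1$ under $(R)$: you correctly identify where the rigidity must come from, but the incompatibility argument is named rather than carried out.
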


In order to illustrate this Theorem consider two directed circulants of order 9 with the connection sets $X=\lbrace 1,3,4,7 \rbrace$ and $X'=\lbrace 1,6,4,7 \rbrace$. We have already seen this example in the previous chapter and we saw that no multiplier transforms $X$ into $X'$. In this case we have, $X_{(0)}=\lbrace 1,4,7 \rbrace =X'_{(0)}, X_{(1)}=\lbrace 3 \rbrace$ and $X'_{(1)}=\lbrace 6 \rbrace$. In addition, condition (R) does not hold since $(1+3)X_{(0)}=X_{(0)}$. Therefore by Theorem~\ref{thm:theorem2}, if two non-equal multipliers acting on the layers of the connection sets transform one into the other, then the two circulants are isomorphic. In this case, $m_{0}=1$ and $m_{1}=2$ in ($M_{2}$), may be used to give the required equivalence of the layers, so that $\Gamma (\mathbb{Z}_9,X')\cong \Gamma(\mathbb{Z}_9,X)$.

Now if we consider $X=\lbrace 1,3\rbrace$ and $X'=\lbrace 1,6\rbrace$ for the case $n=9$, again the multipliers $m_{0}=1$ and $m_{1}=2$ give layers which are multiplicatively equivalent. However, condition (R) is satisfied, since $(1+3)X_{(0)}\not= X_{(0)}$. Therefore we cannot have unequal multipliers in this case. Since no multiplier $m$ gives the equality $mX'=X$, the circulants $\Gamma'=\Gamma (\mathbb{Z}_9,X')$ and $\Gamma=\Gamma(\mathbb{Z}_9,X)$ are not isomorphic.

\begin{figure}[!h]
\centering
\includegraphics[width=0.9\linewidth]{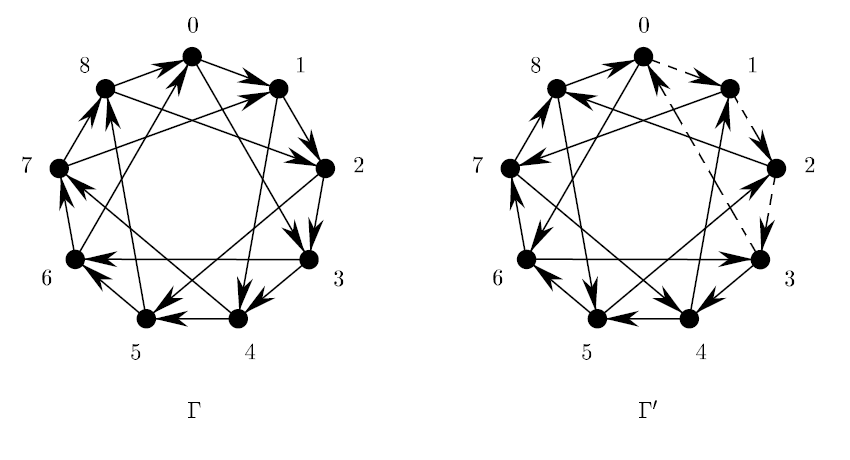}
\caption{Non-isomorphic directed circulants on 9 vertices}
\label{Fig6}
\end{figure}

As seen in Figure~\ref{Fig6}, $\Gamma'$ contains directed 4-cycles such as (0,1,2,3), whereas $\Gamma$ does not.

\section{Application to the Enumeration of Circulants of order $\MakeLowercase{p^2}$}
In this section, we shall show how Theorem~\ref{thm:theorem2} has been used to count circulants of order $p^2$ for some values of the prime $p$. In practice, it is easier to count orbits under invariance conditions
\begin{equation}\tag{$\neg R$}
(1+p)X_{(0)}=X_{(0)},
\end{equation}
that is, when the restrictions have an equality, rather than under the non-invariance condition $(R)$. Therefore, when the problem under consideration includes the non-invariance condition $(R)$, this is changed to the invariance condition $(\neg R)$ and then the result is subtracted from the total amount. The procedure for counting under the non-invariance relation $(R)$, would therefore be as follows:
\begin{itemize}
\item[(i)]Determine the total count, (for the $p^2$ case this will be the number of orbits under the action $(\mathbb{Z}^\ast_{p^2},\mathbb{Z}'_{p^2}))$,
\item[(ii)]Determine the count under the action assuming the invariance relation $(\neg R)$ ,
\item[(iii)]Subtract the result of (ii) from (i).
\end{itemize}

Let us consider the case when $n=9$. In this case we have
\begin{equation*}
\begin{split}
\mathbb{Z}^\ast_{9}&=\lbrace 1,2,4,5,7,8\rbrace \mbox{ and }\\
\mathbb{Z}'_{9}&=\lbrace 1,2,3,4,5,6,7,8\rbrace
\end{split}
\end{equation*}
that is, the connection set $X$ is a subset of $\mathbb{Z}'_{9}$  and the multipliers $m_0$ and $m_1$ come from $\mathbb{Z}^\ast_9$.
Let $Y_0$ and $Y_1$ be the two layers of $\mathbb{Z}'_{9}$, where $Y_0$ contains all elements of $\mathbb{Z}'_9$ which do not have a factor of 3 and $Y_1$ contains those elements which do have a factor of 3. Therefore we have $\mathbb{Z}'_9=Y_0 \dot{\cup}Y_1$ where
\begin{equation*}
\begin{split}
Y_0&=\lbrace 1,2,4,5,7,8\rbrace\\
Y_{1}&=\lbrace3,6\rbrace
\end{split}
\end{equation*}
The connecting sets are then given by the layers
\begin{equation*}
\begin{split}
X_{(0)}&=X\cap Y_0\\
X_{(1)}&=X\cap Y_1
\end{split}
\end{equation*}

Now two circulants may be isomorphic either under one multiplier, that is, $m_0=m_1$, or two distinct multipliers, that is, when $m_0\neq m_1$. From Theorem ~\ref{thm:theorem2}, we have that the non-invariance condition $(R)$, holds only when the multipliers are equal. Therefore in order to count those circulants which are equivalent via two different multipliers, we need to consider the invariance relation $(\neg R)$ given by $4X_{(0)}=X_{(0)}$. One must note however, that this relation may still hold when the multipliers are equal.

When enumerating under this invariance condition, the set $X_{(0)}$ must be taken from whole subsets of $Y_0$ which are invariant under $4Y_0=Y_0$ such that $4X_{(0)}=X_{(0)}$. This will give the partition of $Y_0$ as $Y^\ast_0$. In this case $Y^\ast_0=\lbrace \lbrace 1,4,7 \rbrace, \lbrace 2,8,5 \rbrace \rbrace$. Therefore under the condition $4X_{(0)}=X_{(0)}$, the set $X_{(0)}$ must be a union of these parts. Therefore the multiplicative action is on the sets $\lbrace 1,4,7 \rbrace$ and $\lbrace 2,8,5\rbrace$, not on the single elements of $\mathbb{Z}'_9$. In other words, $X_{(0)}$ must either contain all of the set $\lbrace 1,4,7 \rbrace$, or none of it and similarly all of the set $\lbrace 2,8,5 \rbrace$ or none of it. One may note that these sets are invariant under multiplication by elements of $\mathbb{Z}^\ast_9$, and therefore make the action well-defined.

In order to count the number of non-isomorphic directed circulants on 9 vertices, we will divide the counting problem into two subproblems $A_1(9)$ and $A_2(9)$, in which
\begin{equation*}
\begin{split}
A_1(9):&\mbox{ Counts all those circulants which are distinct under the non-invariance }\\
&\mbox{condition } (R)\mbox{ and } m_0=m_1\\
A_2(9):&\mbox{ Counts all those circulants which are distinct under the invariance }\\
&\mbox{condition } (\neg R)\mbox{ with no restriction on the multipliers. }
\end{split}
\end{equation*}
Let us consider $A_1(9)$. Since in this case we have the non-invariance condition $(R)$, we shall make use of the counting procedure described previously. Let
\begin{equation*}
\begin{split}
A_{11}& \mbox{ count all the circulants which are distinct under } m_0=m_1, \mbox{ that is, when }\\
&\mbox{ \'{A}d\'{a}m's condition holds and }\\
A_{12}& \mbox{ count the number of circulants which are distinct under } (\neg R) \mbox{ and having }\\
&m_0=m_1\\
\end{split}
\end{equation*}
Then
\[A_1(9)=A_{11}-A_{12}.\]

Let us determine $A_{11}$. Here we need to count all circulants assuming \'{A}d\'{a}m's conjecture holds, that is the number of one-multiplier equivalent directed circulants. This is the number of orbits under the action $(\mathbb{Z}^\ast_{9},\mathbb{Z'}_{9})$.

\begin{quote}
\centering
\begin{tabular}{|c|c|c|}
\hline
Action of&Action&Cycle Structure\\
\hline
1&(1)(2)(3)(4)(5)(6)(7)(8)&$x_{1}^{8}$\\
2&(1,2,4,8,7,5)(3,6)&$x_{2}x_{6}$\\
4&(1,4,7)(2,8,5)(3)(6)&$x_{1}^{2}x_{3}^{2}$\\
5&(1,5,7,8,4,2)(3,6)&$x_{2}x_{6}$\\
7&(1,7,4)(2,5,8)(3)(6)&$x_{1}^{2}x_{3}^{2}$\\
8&(1,8)(2,7)(4,5)(3,6)&$x_{2}^{4}$\\
\hline
\end{tabular}
\end{quote}
Therefore the cycle index is given by:
\[A_{11}=\frac{1}{6}(x_{1}^{8}+2x_{2}x_{6}+2x_{1}^{2}x_{3}^{2}+x_{2}^{4})\]
and substituting $x_{i}=2$ for all $i$ we obtain
\[A_{11}=\frac{1}{6}(2^{8}+2(2)(2)+2(2)^{2}(2)^{2}+(2)^4)=52\]

Let us now determine $A_{12}$. Since in $A_{12}$ the multipliers are equal and we have that $4X_0=X_0$, we need to consider the orbits of the action $(\mathbb{Z}^\ast_9,Y^\ast_0 \cup Y_1)$, that is, $(\mathbb{Z}^\ast_9,\lbrace\lbrace 1,4,7\rbrace,\lbrace 2,8,5 \rbrace, 3,6 \rbrace)$. The sets $\lbrace 1,4,7\rbrace$ and $\lbrace 2,8,5 \rbrace$ are blocks, that is, each block must appear whole as a neighbour or not at all. Therefore each block may be seen as a point. The contents of $X_1$ do not influence whether or not $X_0$ is 4-invariant, therefore 3 and 6 are acted upon separately. Now the action of $\mathbb{Z}^\ast_9$ on $\lbrace\lbrace 1,4,7\rbrace, \lbrace2,8,5\rbrace\rbrace$ as two points is equivalent to the action of $\mathbb{Z}^\ast_9$ on $\lbrace1,2\rbrace \bmod3$. Therefore, this action may be expressed as $(\mathbb{Z}^\ast_{9},\lbrace1',2',1,2\rbrace)$ mod3 where 1', 2' represent $\lbrace1,4,7\rbrace$ and $\lbrace 2,8,5\rbrace$ respectively.

\begin{quote}
\centering
\begin{tabular}{|c|c|c|}
\hline
Action of&Action&Cycle Structure\\
\hline
1&(1)(2)(1')(2')&$x_{1}^{4}$\\
2&(1,2)(1',2')&$x_{2}^{2}$\\
4&(1)(2)(1')(2')&$x_{1}^{4}$\\
5&(1,2)(1',2')&$x_{2}^{2}$\\
7&(1)(2)(1')(2')&$x_{1}^{4}$\\
8&(1,2)(1',2')&$x_{2}^{2}$\\
\hline
\end{tabular}
\end{quote}

Therefore the cycle index is given by:

\[A_{12}=\frac{1}{6}(3x_{1}^{4}+3x_{2}^{2})=\frac{1}{2}(x_{1}^{4}+x_{2}^{2})\]
and substituting $x_{i}=2$ for all $i$ we obtain

\[A_{12}=\frac{1}{2}(2^{4}+2^{2})=10\]
Therefore we have $A_1(9)=A_{11}-A_{12}=52-10=42$.

Let us now consider $A_2(9)$. In this case we have the invariance condition $4X_{(0)}=X_{(0)}$ and no restriction on the multipliers, that is, two circulants may be equivalent under one multiplier or two distinct multipliers.

Here we need the action of $\mathbb{Z}^\ast_{9}\times \mathbb{Z}^\ast_{9}$ on $\lbrace Y_1 \cup Y^\ast_0 \rbrace$. The multiplier on $Y_1$ can be the same or different from that on $Y^\ast_0$. Therefore we have to consider the action of all $(i,j)\in \mathbb{Z}^\ast_{9} \times \mathbb{Z}^\ast_{9}$ on $\lbrace 3,6\rbrace \cup \lbrace F,H \rbrace$ where $F=\lbrace1,4,7\rbrace$ and $H=\lbrace2,8,5\rbrace$. For example $(2,5)$ has the action (3\hspace{2mm}6)(F\hspace{2mm}H), where 2 acts on $\lbrace3,6\rbrace$ and 5 acts on $\lbrace F,H\rbrace$. This gives the monomial $x_{1}^{2}x_{2}$. There are $6^2$ such actions since $|\mathbb{Z}^\ast_{9}|=6$.
We may however, determine $A_{2}(9)$ more simply, by finding the cycle index of $\mathbb{Z}^\ast_{9}$ on $\lbrace 3,6\rbrace$ and $\mathbb{Z}^\ast_{9}$ on $\lbrace F,H\rbrace$ and take the product. Both of these are equivalent to the action of $\mathbb{Z}^\ast_{9}$ on $\lbrace 1,2\rbrace$ mod 3, as such we may simply find the latter and square.

\begin{quote}
\centering
\begin{tabular}{|c|c|c|}
\hline
Action of&Action&Cycle Structure\\
\hline
1&(1)(2)&$x_{1}^{2}$\\
2&(1,2)&$x_{2}$\\
4&(1)(2)&$x_{1}^{2}$\\
5&(1,2)&$x_{2}$\\
7&(1)(2)&$x_{1}^{2}$\\
8&(1,2)&$x_{2}$\\
\hline
\end{tabular}
\end{quote}
Therefore the cycle index is:

\[\frac{1}{6}(3x_{1}^{2}+3x_{2})\]
Squaring and simplifying gives
\[A_2(9)=\frac{1}{4}(x_{1}^{2}+x_{2})^2\]
Substituting $x_{i}=2$ for all $i$ we obtain $A_2(9)=9$

Therefore combining our results we obtain:

\[A_1(9)+A_2(9)=42+9=51.\]
This means that 51 directed(mixed), non-isomorphic circulants on 9 vertices exist.

\vspace{3mm}
Summarizing the above in order to see the inclusion-exclusion more clearly, what we have essentially, is the set $A_{11}$ which counts the number of distinct circulants under the conditions $(R)$ and $(\neg R)$ and $m_0=m_1$. The set $A_2$ counts the set of distinct circulants under the condition $(\neg R)$ with no restriction on the multipliers (that is the multipliers could be the same or different). Now what we require is $|A_{11} \cup A_2|$. We know that $|A_{11} \cup A_2|=|A_{11}|+|A_2|-|A_{11} \cap A_2|$, where $|A_{11} \cap A_2|$ counts all those circulants with $(\neg R)$ and $m_1=m_0$. This is simply the set $A_{12}$ mentioned above.

\vspace{3mm}
The same procedure may be repeated for undirected circulants, however as previously stated we now need a slight modification on the connection set.
\begin{equation*}
\mathbb{Z}^\ast_{9}=\lbrace 1,2,4,5,7,8\rbrace
\end{equation*}
and
\begin{equation*}
\mathbb{Z}'_{9}=\lbrace 1,2,3,4,5,6,7,8\rbrace
\end{equation*}
that is the connection set is a subset $X$ of $\mathbb{Z}'_9$ which must have the property $X=-X$, and the multipliers $m_0$ and $m_1$ come from $\mathbb{Z}^\ast_9$.

Since the elements in the connecting sets are paired by inversion, as we did when we discussed Turner's trick, we partition $\mathbb{Z}'_9$ as
\[\mathbb{Z}'_9=\lbrace \lbrace1,8\rbrace,\lbrace2,7\rbrace,\lbrace3,6\rbrace,\lbrace4,5\rbrace\rbrace\]

One must note that any connection set we shall work with must have either both elements of a given pair or none. The multiplicative action must therefore be taken on these pairs, again as we did when discussing Turner's trick.

In this case we have
\begin{equation*}
\begin{split}
Y_0&=\lbrace\lbrace1,8\rbrace,\lbrace2,7\rbrace,\lbrace4,5\rbrace\rbrace\\
Y_1&=\lbrace{3,6}\rbrace
\end{split}
\end{equation*}
still partitioned into inverse pairs and
\[X_{(0)}=X \cap Y_0 \mbox{ and } X_{(1)}=X \cap Y_1.\]
We note here that $Y^\ast_0=\lbrace 1,2,4,5,7,8\rbrace$, that is the two separate blocks obtained previously in $Y^\ast_0$ are now merged together so that every element and its inverse are in the same set.

In order to determine $A_{11}$, again we require the orbits of the action $(\mathbb{Z}^\ast_{9},\mathbb{Z}'_{9})$ and therefore we will consider
\[(\mathbb{Z}^\ast_{9},\lbrace \lbrace1,8\rbrace,\lbrace2,7\rbrace,\lbrace3,6\rbrace,\lbrace4,5\rbrace\rbrace).\]
Again $\lbrace1,8\rbrace,\lbrace2,7\rbrace,\lbrace3,6\rbrace,\lbrace4,5\rbrace$ are considered as blocks, that is, we may consider $(\mathbb{Z}^\ast_{9},\lbrace K,L,M,N\rbrace)$ where $K=\lbrace1,8\rbrace$, $L=\lbrace2,7\rbrace$, $M=\lbrace3,6\rbrace$, $N=\lbrace4,5\rbrace$. The cycle index corresponding to this action is given by
\[A_{11}=\frac{1}{6}(2x_{1}^{4}+4x_{3}x_{1})\]
and substituting $x_{i}=2$ for all $i$, we obtain $A_{11}=8$.

To determine $A_{12}$ again we require the action $(\mathbb{Z}^\ast_{9},Y^\ast_0\cup Y_1)$. Therefore we have
\[(\mathbb{Z}^\ast_{9},\lbrace \lbrace1,2,4,5,7,8\rbrace,\lbrace3,6\rbrace \rbrace)\]
This action may be seen as $(\mathbb{Z}^\ast_{9},\lbrace M,O \rbrace)$ where $O=\lbrace 1,2,4,5,7,8\rbrace$. The cycle index corresponding to this action is
\[A_{12}=\frac{1}{6}(6)x_{1}^{2}=x_{1}^{2}\]
and substituting $x_{i}=2$ we obtain $A_{12}=4$.
Therefore
\[A_1(9)=A_{11}-A_{12}=4.\]
For $A_2(9)$ we need to multiply the cycle indices of the actions $(\mathbb{Z}^\ast_{9},\lbrace3,6\rbrace)=(\mathbb{Z}^\ast_{9},\lbrace M\rbrace)$ and $(\mathbb{Z}^\ast_{9},\lbrace 1,2,4,5,7,8\rbrace)=(\mathbb{Z}^\ast_{9},\lbrace O\rbrace)$. Both these cycle indices are equivalent to $x_{1}$. Therefore

\[A_2(9)=x_{1}^{2}.\]
Substituting, we obtain $A_2(9)=4$.

Therefore the number of non-isomorphic, undirected circulants on 9 vertices is

\[A_1(9)+A_2(9)=4+4=8.\]

\section{The Case $\MakeLowercase{n=p^3}$}
The main isomorphism theorem for circulants of order $p^k$ is the one we give next. This was proved by Klin and P\"{o}schel in \cite{KP80} and it was used for the enumeration of circulants of order $p^2$ in \cite{KLP96} and \cite{LP2000}. Whereas Theorem~\ref{thm:theorem2} for $p^2$ involved two multipliers and two subcases depending on non-invariance conditions on the layers, the isomorphism theorem for $p^k$ involves $k$ multipliers and $\binom{k}{2}$ cases coming from the non-invariance conditions $R_{ij}$, making it more difficult to apply in practice for enumeration purposes. It is this that makes the enumeration problem difficult, not only that there are multipliers for the separate layers of the connection sets, but that, depending on non-invariance conditions, some multipliers must be equal in certain cases. Moreover, the intersection between the conditions makes this case even more difficult. The two cases for $p^2$ involved three different enumeration problems, as we have seen, and for $p^3$, the three non-invariance relations $R_{00}, R_{01}, R_{10}$ below, will break up into five cases which will eventually give eleven enumeration subproblems, as we shall see in the next chapter.
\subsection{The Main Theorem}
\begin{Theorem}[\cite{KP80},\cite{LP2000}]
\label{thm:theorem3}
Let $n=p^k$ ($p$ an odd prime) and let $\Gamma$ and $\Gamma'$ be two $p^k$-circulants with the connection sets $X$ and $X'$, respectively. Then $\Gamma$ and $\Gamma'$ are isomorphic if and only if their respective layers are multiplicatively equivalent, i.e.
\begin{equation}\tag{$M_{k}$}
X'_{(i)}=m_{i}X_{(i)}, \hspace{5mm} i=0,1,...,k-1,
\end{equation}
for an  arbitrary set of multipliers $m_0, m_1,...m_{k-1}\in \mathbb{Z}^\ast_p$ which satisfy the following constraints: whenever the layer $X_{(i)}$ satisfies the non-invariance condition

\begin{equation}\tag{$R_{ij}$}
(1+p^{k-i-j-1})X_{(i)}\neq X_{(i)}
\end{equation}
for some $i \in \lbrace 0,1,...,k-2\rbrace$ and $j \in \lbrace 0,1,...,k-2-i\rbrace$, the successive multipliers $m_{i},...,m_{k-j-1}$ meet the system of congruences
\begin{equation}\tag{$E_{ij}$}
\begin{split}
m_{i+1}&\equiv m_i(\bmod p^{k-i-j-1}),\\
m_{i+2}&\equiv m_{i+1}(\bmod p^{k-i-j-2}),\\
&\vdots\\
m_{k-j-1}&\equiv m_{k-j-2}(\bmod p).
\end{split}
\end{equation}
\end{Theorem}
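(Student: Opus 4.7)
The plan is to prove the theorem by induction on $k$, with Theorem~\ref{thm:theorem2} serving as the base case, and the theory of Schur rings over cyclic $p$-groups providing the structural tools. Since the conditions $(M_k)$ together with $(E_{ij})$ describe the orbit structure of a specific group of ``multi-multiplier'' permutations acting on the layered connection sets, the theorem is essentially a claim that this group captures the full isomorphism relation on $p^k$-circulants. The strategy is therefore to match this combinatorial description against the transitivity module $\mathfrak{S}(\mathrm{Aut}\,\Gamma,\mathbb{Z}_{p^k})$.

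For the sufficiency direction, given multipliers $m_0,\ldots,m_{k-1}\in\mathbb{Z}^\ast_{p^{k}}$ satisfying $(M_k)$ and $(E_{ij})$ for every layer $X_{(i)}$ violating the corresponding invariance, I would construct a ``piecewise multiplier'' map $\phi\colon\mathbb{Z}_{p^k}\to\mathbb{Z}_{p^k}$ defined on each layer $Y_i$ by $\phi(v)=m_i v$. The first step is to verify that $\phi$ is a well-defined bijection and that $\phi(X)=X'$, which follows directly from $(M_k)$. The heart of the sufficiency argument is to show that $\phi$ preserves adjacency, namely that $v-u\in X$ forces $\phi(v)-\phi(u)\in X'$, including the delicate case when $u$ and $v$ lie in different layers. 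Here $(E_{ij})$ is essential: it forces consecutive $m_i$ to agree modulo suitable powers of $p$, so that cross-layer differences transform uniformly up to negligible multiples of $p^{k-j-1}$, and $X_{(i)}$ absorbs precisely such multiples whenever $R_{ij}$ fails.

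For the necessity direction, I would pass through Schur rings. Given an isomorphism $\Gamma(X)\cong\Gamma(X')$, consider the $\mathcal{S}$-ring $\mathfrak{S}$ whose basic sets refine both the orbits of the stabilizer of $0$ in $\mathrm{Aut}\,\Gamma$ and the layer partition $\{Y_0,\ldots,Y_{k-1}\}$. Invoking the structure theorem for Schur rings over $\mathbb{Z}_{p^k}$, every such $\mathfrak{S}$ arises from trivial Schur rings by iterated wreath product along the chain $\mathbb{Z}_{p^k}\supset p\mathbb{Z}_{p^k}\supset\cdots\supset p^{k-1}\mathbb{Z}_{p^k}$, and its admissible automorphisms are exactly compositions of layer-wise multipliers. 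Reading off the algebraic constraints placed on the $m_i$ by the requirement that $\mathfrak{S}$ be preserved, and combining this with the induction hypothesis applied to the natural quotient $\mathbb{Z}_{p^k}/p^{k-1}\mathbb{Z}_{p^k}\cong\mathbb{Z}_{p^{k-1}}$, should yield exactly the system $(E_{ij})$.

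The main obstacle lies in the necessity direction: showing that no weaker constraint on the $m_i$ suffices. I would tackle this by analyzing, for each pair $(i,j)$ violating invariance, the Schur-Hadamard product $\underline{X}\circ\underline{(1+p^{k-i-j-1})Y_i}$, and by tracking how its non-trivial behaviour propagates through successive layers down to index $k-j-1$. A subtle point demanding care is that the conditions $R_{ij}$ for different $(i,j)$ are coupled: if a single layer $X_{(i)}$ fails several invariances simultaneously, the resulting congruences must be verified to be mutually consistent rather than jointly over-constraining, and the orbit intersections must be computed explicitly. It is this interaction of conditions that is responsible for the combinatorial explosion flagged in the surrounding discussion, where already the case $p^3$ splits into eleven enumeration sub-problems.
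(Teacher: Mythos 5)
The paper itself offers no proof of Theorem~\ref{thm:theorem3}: it is imported wholesale from \cite{KP80} and \cite{LP2000}, so your proposal can only be judged on its own merits, and on those merits the sufficiency direction contains a fatal gap. The ``piecewise multiplier'' map $\phi(v)=m_i v$ for $v\in Y_i$ is in general \emph{not} an isomorphism from $\Gamma(X)$ to $\Gamma(X')$, and the paper's own running example already refutes it. Take $n=9$, $X=\lbrace 1,3,4,7\rbrace$, $X'=\lbrace 1,6,4,7\rbrace$, $m_0=1$, $m_1=2$ (here $(1+3)X_{(0)}=X_{(0)}$, so the theorem permits $m_0\neq m_1$ and asserts the circulants are isomorphic). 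Your $\phi$ fixes $1$ and $4$ since both lie in $Y_0$ and $m_0=1$; but $(1,4)$ is an arc of $\Gamma(X)$ because $4-1=3\in X_{(1)}$, while $4-1=3\notin X'$. The difficulty is that the layer of $v-u$ bears no relation to the layers of $u$ and $v$, so adjacency is governed by the layer of the difference, which $\phi$ multiplies by the wrong $m_i$; your ``absorption'' argument does not rescue this, because the relevant invariance condition constrains $X_{(0)}$, not the layer in which the offending difference lands. The genuine isomorphism in this example is $(0)(1)(2)(3\,6)(4\,7)(5\,8)$, which is not layerwise multiplicative at all: it is a wreath-product-type permutation living in $\mathrm{Aut}$ of the Schur ring whose basic sets are the blocks forced by the invariance condition. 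Any correct sufficiency proof must construct the isomorphism inside that larger group, which is exactly where the hypothesis $\neg R_{ij}$ (rather than a congruence on multipliers) earns its keep.

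The necessity direction is closer in spirit to the actual arguments of Klin and P\"{o}schel, but it too rests on a false structural premise: it is not true that every Schur ring over $\mathbb{Z}_{p^k}$ arises by iterated wreath composition along the chain of subgroups. Already for $k=2$ the paper's Theorem~\ref{thm:theorem13}(a) exhibits a second, wreath-\emph{in}decomposable family $\langle\underline{0},\underline{H},\underline{y_{1}H},\ldots,\underline{y_{l}H}\rangle$ with $H\leq\mathbb{Z}^\ast_{p^2}$ and $(1+p)\notin H$, whose automorphism group is $\mathbb{Z}_{p^2}\rtimes H$ rather than a wreath product; it is precisely this dichotomy (decomposable versus coset type) that produces the case split between equal and independent multipliers. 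Your induction scheme would need to carry both branches, and the coupling you flag at the end --- several conditions $R_{ij}$ failing or holding simultaneously for one layer --- is not a ``subtle point to verify'' but the substance of the theorem. As it stands the proposal is a plausible outline of where a proof might live, not a proof.
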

For the case when $k=3$, Theorem~\ref{thm:theorem3} translates to the following theorem:

\begin{Theorem}
\label{thm:theorem4}
Let $n=p^3$ ($p$ an odd prime) and let $\Gamma$ and $\Gamma'$ be two $p^3$-circulants with the connection sets $X$ and $X'$, respectively. Then $\Gamma$ and $\Gamma'$ are isomorphic if and only if their respective layers are multiplicatively equivalent, i.e.
\begin{equation}\tag{$M_{3}$}
X'_{(0)}=m_{0}X_{(0)},\hspace{3mm}X'_{(1)}=m_{1}X_{(1)}\hspace{3mm}X'_{(2)}=m_{2}X_{(2)},
\end{equation}
for an  arbitrary set of multipliers $m_0, m_1, m_2 \in \mathbb{Z}^\ast_{p^3}$.  Moreover, in the above, one must have

\begin{itemize}
\item[(i)] \hfill \makebox[0pt][r]{%
            \begin{minipage}[b]{\textwidth}
              \begin{equation}
                 m_1 \equiv m_0(\bmod p^2) \mbox{ and } m_2 \equiv m_1(\bmod p)\tag{$E_{00}$})
              \end{equation}
          \end{minipage}}
          \end{itemize}
whenever
\begin{equation}\tag{$R_{00}$}
(1+p^2)X_{(0)}\neq X_{(0)},
\end{equation}

\begin{itemize}
\item[(ii)] \hfill \makebox[0pt][r]{%
            \begin{minipage}[b]{\textwidth}
              \begin{equation}
                 m_1 \equiv m_0(\bmod p)\tag{$E_{01}$})
              \end{equation}
          \end{minipage}}
          \end{itemize}
whenever
\begin{equation}\tag{$R_{01}$}
(1+p)X_{(0)}\neq X_{(0)},
\end{equation}

\begin{itemize}
\item[(iii)] \hfill \makebox[0pt][r]{%
            \begin{minipage}[b]{\textwidth}
              \begin{equation}
                 m_2 \equiv m_1(\bmod p)\tag{$E_{10}$})
              \end{equation}
          \end{minipage}}
          \end{itemize}
whenever
\begin{equation}\tag{$R_{10}$}
(1+p)X_{(1)}\neq X_{(1)}.
\end{equation}
\end{Theorem}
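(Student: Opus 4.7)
The plan is to deduce Theorem~\ref{thm:theorem4} as a direct specialization of the general isomorphism criterion in Theorem~\ref{thm:theorem3}, since the latter is cited from \cite{KP80,LP2000}. No new content is required; the work is a bookkeeping exercise of substituting $k=3$ into the families of non-invariance conditions $R_{ij}$ and congruence systems $E_{ij}$, and verifying that exactly the three cases $R_{00},R_{01},R_{10}$ and the stated congruences $E_{00},E_{01},E_{10}$ arise.

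First, I would enumerate the admissible pairs $(i,j)$ for $k=3$. Since $i$ ranges over $\{0,1,\ldots,k-2\}=\{0,1\}$ and $j$ over $\{0,1,\ldots,k-2-i\}$, the admissible pairs are $(0,0)$, $(0,1)$, and $(1,0)$, giving exactly three non-invariance conditions and hence three systems of congruences to record. The multiplicative equivalence statement $(M_{3})$ is nothing more than $(M_{k})$ with $k=3$, so the ``only if'' direction for the layer-equivalence part is immediate.

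Next, for each admissible pair I would substitute $k=3$ into the formula $R_{ij}:(1+p^{k-i-j-1})X_{(i)}\neq X_{(i)}$ and into the exponents appearing in $E_{ij}$. Concretely: the pair $(0,0)$ gives the exponent $k-i-j-1=2$, so the non-invariance condition becomes $(1+p^{2})X_{(0)}\neq X_{(0)}$, which is $R_{00}$; the involved multipliers are $m_{0},m_{1},\ldots,m_{k-j-1}=m_{0},m_{1},m_{2}$, yielding the two congruences $m_{1}\equiv m_{0}\pmod{p^{2}}$ and $m_{2}\equiv m_{1}\pmod{p}$, i.e.\ $E_{00}$. The pair $(0,1)$ gives exponent $1$, recovering $R_{01}$ and the single congruence $m_{1}\equiv m_{0}\pmod{p}$, i.e.\ $E_{01}$. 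Finally, the pair $(1,0)$ gives exponent $1$ applied to the layer $X_{(1)}$, recovering $R_{10}$ and the lone congruence $m_{2}\equiv m_{1}\pmod{p}$, i.e.\ $E_{10}$.

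Since Theorem~\ref{thm:theorem3} asserts that these are the only constraints needed (beyond the layer equivalence $(M_{k})$), collecting the three cases above gives precisely the statement of Theorem~\ref{thm:theorem4}. There is no genuine obstacle in this proof: every step is a direct index substitution. The only care required is to remember that each $R_{ij}$ may be triggered independently, so all relevant congruences must be imposed simultaneously in the case when several non-invariance conditions hold at the same time — this observation will be important for the enumeration in the next chapter, where the eleven subproblems arise precisely from intersecting and complementing the three conditions $R_{00},R_{01},R_{10}$.
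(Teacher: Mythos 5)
Your proposal is correct and coincides with the paper's treatment: the paper offers no separate proof of Theorem~\ref{thm:theorem4}, merely observing that it is the $k=3$ specialization of Theorem~\ref{thm:theorem3}, and your enumeration of the admissible index pairs $(0,0)$, $(0,1)$, $(1,0)$ together with the exponent substitutions $k-i-j-1$ reproduces exactly the conditions $R_{00},R_{01},R_{10}$ and congruences $E_{00},E_{01},E_{10}$ as stated. The bookkeeping is carried out correctly, so nothing further is needed.
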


\subsection{Representation of the Main Theorem}
It is possible to use Theorem~\ref{thm:theorem4} directly to count circulants of order $p^3$ and we shall show how to do this later in the next chapter. This involves considering the different subcases and their intersection. However, Liskovets and P\"{o}schel in \cite{LP2000} manage for $p^3$ and $p^4$, to partition the conditions of the theorem into five parts which make their use in enumeration much easier. Liskovets and P\"{o}schel take into consideration all combinations of non-invariance conditions $(R_{ij})$, together with the remaining invariance conditions

\begin{equation}\tag{$\neg R_{ij}$}
(1+p^{k-i-j-1})X_{(i)}=X_{(i)}
\end{equation}
and make use of a number of results, in order to obtain the subproblem list for counting circulants of order $p^k$, $k\leq 4$. For details of how this list has been generated from the main theorem using results from number theory and walks through a rectangular lattice, the reader is referred to \cite{LP2000}. The necessary information required for the enumeration of $p^3$ circulants is listed in Table~\ref{table:Table3}, which we therefore take to be a rewording of Theorem~\ref{thm:theorem4}. This has been obtained from Table 1 in \cite{LP2000}.

\begin{table}[ht]
\begin{adjustwidth}{-1cm}{}
\caption{The conditions for isomorphism of circulants of order $p^3$}
\begin{quote}
\centering
\begin{tabular}{|c|c|c|c|}
\hline
Subproblem&Non-Invariance&Invariance&Condition on\\
&Condition&Condition&Multipliers\\
\hline
$A_1$&$\emptyset$&$\neg R_{01}$, $\neg R_{10}$&no restriction\\
$A_2$&$R_{00}$&$\emptyset$&$m_2=m_1=m_0$\\
$A_3$&$R_{01}$&$\neg R_{00}$, $\neg R_{10}$&$m_1=m_0$\\
$A_4$&$R_{10}$&$\neg R_{01}$&$m_2=m_1$\\
$A_5$&$R_{01},R_{10}$&$\neg R_{00}$&$m_2=m_1$ and $m_1 \equiv m_0(\bmod p)$\\
\hline
\end{tabular}
\label{table:Table3}
\end{quote}
\end{adjustwidth}
\end{table}

The five subcases $A_1$ to $A_5$ shown in Table~\ref{table:Table3}, give conditions on the three multipliers $m_0$, $m_1$ and $m_2$ for two circulants of order $p^3$ to be isomorphic. The three multipliers must satisfy at least one of the conditions. For example $A_3$ means that if the non-invariance condition $R_{01}$ holds, together with the invariance conditions $\neg R_{00}$ and $\neg R_{10}$, then $m_1=m_0$ but $m_2$ can be independent.

In addition to this information, we also require the following observations\cite{LP2000}
\begin{equation*}
(R_{ij})\Rightarrow(R_{ij'}) \mbox{ whenever } j'\geq j
\end{equation*}
Therefore
\begin{equation*}
\neg(R_{ij'})\Rightarrow \neg(R_{ij})
\end{equation*}
As a result we have that
\begin{equation}
\neg(R_{01})\Rightarrow \neg(R_{00})
\end{equation}
In addition,
\begin{equation*}
(E_{ij})\Rightarrow(E_{i'j}) \mbox{ whenever } i'\geq i
\end{equation*}
Therefore
\begin{equation*}
\neg(E_{i'j})\Rightarrow \neg(E_{ij})
\end{equation*}
that is
\begin{equation}
\neg(E_{10})\Rightarrow \neg(E_{00})
\end{equation}

In the next chapter we shall use Theorem~\ref{thm:theorem4} as represented in Table~\ref{table:Table3} and also as stated in the theorem itself in order to enumerate circulants of order $p^3$ for $p=3$ and $p=5$.

\chapter{Enumeration of Circulants of Order $\MakeLowercase{p^3}$}\label{chp:Chp4}
\section{Using Table ~\ref{table:Table3}}
In this chapter, we will first count $p^3$ circulants for $p=3$ and $p=5$ by making use of Table~\ref{table:Table3}, that is, by dividing the enumeration problem into the five subproblems corresponding to the conditions for isomorphism $A_1,A_2,...A_5$. In order to determine $|A_1\cup A_2\cup A_3\cup A_4\cup A_5|$ we then have to consider the following:
\begin{equation*}
\begin{split}
|A_1|+&|A_2|+|A_3|-|\mbox{pairwise intersections}|\\
&+|\mbox{triplewise intersections}|-|\mbox{4-wise intersections}|+|\mbox{all 5 intersections}|
\end{split}
\end{equation*}

As stated in the previous chapter, it is easier to count orbits under invariance conditions, in this case $\neg (R_{ij})$, rather than under the non-invariance conditions $(R_{ij})$. Therefore, when the subproblem in question includes one or more non-invariance conditions, these are changed to invariance conditions and then the result is subtracted from the total amount. The procedure for counting would therefore be as described previously, with the difference that here we do not have just one non-invariance condition and one invariance condition. In order to count under a given non-invariance relation $R_{ij}$, we first
\begin{itemize}
\item[(i)]Determine the count under the action assuming the invariance relation $\neg (R_{ij})$,
\item[(ii)] Determine the count under the action without any (non)-invariance relations,
\item[(iii)]Subtract the result of (i) from (ii).
\end{itemize}
This procedure is often complicated by having both invariance and non-invariance conditions. For example to count the number of non-isomorphic circulants in case $A_3$ of Table~\ref{table:Table3}, we
\begin{itemize}
\item[(i)]First count under the conditions $m_1=m_0$, $\neg (R_{00}), \neg (R_{10})$.
\item[(ii)]Then count under the conditions $m_1=m_0$, $\neg (R_{01}),\neg (R_{00}),\neg (R_{10})$.
\item[(iii)]Subtract the result of (ii) from (i).
\end{itemize}

Having counted the number of non-isomorphic circulants under each of the five isomorphism conditions $A_1$ to $A_5$, we then need to consider their intersections, as we described above. It however transpires that these intersections are empty. This can be seen by considering the invariance and non-invariance relations. For example, if conditions $A_2$ and $A_3$ are both to hold, then the connection sets must satisfy both $R_{00}$ (from $A_2$) and $\neg (R_{00})$ (from $A_3$), and this is impossible. Therefore there are no cases corresponding to both conditions $A_2$ and $A_3$. Similarly if conditions $A_1$ and $A_2$ are both to hold, then the connection sets must satisfy $\neg (R_{01})$ and $\neg (R_{10})$ (from $A_1$) and $R_{00}$ (from $A_2$). However since $\neg(R_{01})\Rightarrow\neg(R_{00})$, then it is not possible to have both $\neg(R_{01})$ and $R_{00}$. Similarly for the other intersections. When we later do enumeration using the actual statement of Theorem~\ref{thm:theorem4}, we shall have to deal with cases whose intersection is not empty. This is the main reason why it is easier to do enumeration using the formulation of Theorem~\ref{thm:theorem4} as in Table~\ref{table:Table3}.

In order to determine cycle indices and generating functions efficiently, the computer package GAP was used. The reader is referred to Appendix A and Appendix B for the GAP programmes and a full explanation of the commands used.

\subsection{$\MakeLowercase{n=27}$ Directed}
Let us consider the case when $p=3$ and count all non-isomorphic directed circulants on $3^3$ vertices. In this case we have
\begin{equation*}
\begin{split}
\mathbb{Z}^\ast_{27}&=\lbrace 1,2,4,5,7,8,10,11,13,14,16,17,19,20,22,23,25,26 \rbrace\\
\mathbb{Z}'_{27}&=\lbrace 1,2,3,4,...,26 \rbrace
\end{split}
\end{equation*}
that is, the connection set is a subset $X$ of $\mathbb{Z}'_{27}$ and the multipliers $m_0,m_1,m_2$ come from $\mathbb{Z}^\ast_{27}$.

Let $Y_0$, $Y_1$, $Y_2$, be the three layers of $\mathbb{Z}'_{27}$ where $Y_0$ contains all elements of $\mathbb{Z}'_{27}$ which are relatively prime to 27, $Y_1$ contains those elements which are divisible by 3 and not by 9 and $Y_2$ contains those divisible by 9. Therefore we have $\mathbb{Z}'_{27}=Y_0 \dot{\cup}Y_1 \dot{\cup}Y_2$ where:
\begin{equation*}
\begin{split}
Y_0&=\lbrace1,2,4,5,7,8,10,11,13,14,16,17,19,20,22,23,25,26\rbrace\\
Y_1&=\lbrace3,6,12,15,21,24\rbrace\\
Y_2&=\lbrace 9,18\rbrace\\
\end{split}
\end{equation*}
Now by theorem~\ref{thm:theorem4}, the non-invariance conditions in this case are:
\begin{equation*}
\begin{split}
R_{00}&: \hspace{3mm} 10X_{(0)}\neq X_{(0)}\\
R_{01}&: \hspace{3mm} 4X_{(0)}\neq X_{(0)}\\
R_{10}&: \hspace{3mm} 4X_{(1)}\neq X_{(1)},
\end{split}
\end{equation*}
where $X_{(0)}=X \cap \mathbb{Z}^\ast_{27}$ that is, $X_{(0)}=X \cap Y_0$ and $X_{(1)}=X \cap 3\mathbb{Z}^\ast_{9}$, that is, $X_{(1)}=X \cap Y_1$. Recall that $X_{(2)}=X\cap Y_2$.

As described in the $p^2$ case, when we enumerate under an invariance condition, such as $10X_{(0)}=X_{(0)}$, we must take $X_{(0)}$ from whole subsets of $Y_0$ which are invariant under $10Y_0=Y_0$. These subsets partition $Y_0$, therefore under the condition $10X_{(0)}=X_{(0)}$, the set $X_{(0)}$ must be a union of these parts. Therefore the multiplicative action is taken on these parts or blocks.

We shall denote the partitioned set corresponding to the invariance condition $10Y_{0}=Y_{0}$ by $Y^\ast_0$, that corresponding to $4Y_{0}=Y_{0}$ by $Y^{\ast\ast}_0$, and that corresponding to the invariance condition $4Y_{1}=Y_{1}$ by $Y^\ast_1$.
We have that
\begin{equation*}
\begin{split}
Y^\ast_0&=\lbrace\lbrace 1,10,19\rbrace,\lbrace 2,20,11\rbrace,\lbrace 4,13,22\rbrace,\lbrace 5,23,14\rbrace,\lbrace 7,16,25\rbrace,\lbrace 8,26,17\rbrace\rbrace\\
Y^{\ast\ast}_0&=\lbrace\lbrace 1,4,16,10,13,25,19,22,7\rbrace,\lbrace2,8,5,20,26,23,11,17,14\rbrace\rbrace\\
Y^\ast_1&=\lbrace\lbrace3,12,21\rbrace,\lbrace6,24,15\rbrace\rbrace
\end{split}
\end{equation*}
Therefore, under the non-invariance condition $4X_{(0)}=X_{(0)}$, the set $X_{(0)}$ must either contain all of the set $\lbrace 1,4,16,10,13,25,19,22,7\rbrace$ or none of it, similarly all of the set $\lbrace2,8,5,20,26,23,11,17,14\rbrace$ or none of it. Therefore the multiplicative action is on the sets $\lbrace 1,4,16,10,13,25,19,22,7\rbrace$ and $\lbrace2,8,5,20,26,23,11,17,14\rbrace$ not on the single elements of $\mathbb{Z}'_{27}$. Under the multiplicative action by elements of $\mathbb{Z}^\ast_{27}$, these sets are invariant, as such the action is well defined.

The sets defined above, are identified in GAP as follows:
\begin{spverbatim}
zstar27:=PrimeResidues(27);
zprime27:=[1..26];
y0:=zstar27;
y1:=[3,6,12,15,21,24];
y2:=[9,18];
ystar0:= List([[1,10,19],[8,17,26],[2,11,20],[7,16,25],[4,13,22],
[5,14,23]],x->AsSet(x));
ystarstar0:=List([[1,4,16,10,13,25,19,22,7],[2,8,5,20,26,23,11,17,14]],
x->AsSet(x));
ystar1:=List([[3,12,21],[6,24,15]], x->AsSet(x));
\end{spverbatim}
\vspace{5mm}
Having defined all the necessary sets, we shall now proceed to count the number of non-isomorphic directed circulants for $n=27$ under each of the five isomorphism conditions $A_1$ to $A_5$.

Recall that since
\begin{equation*}
\neg(R_{ij'})\Rightarrow \neg(R_{ij})
\end{equation*}

We have
\begin{equation*}
\neg(R_{01})\Rightarrow \neg(R_{00})
\end{equation*}

Consider the subproblem $A_1$. In this case we have that, when the invariance conditions $\neg R_{01}$ and $\neg R_{10}$ hold, there is no restriction on the multipliers.  Therefore we have blocks arising from
\begin{equation*}
\begin{split}
4X_{(0)}&=X_{(0)} \mbox{ and }\\
4X_{(1)}&=X_{(1)}
\end{split}
\end{equation*}
Since under the invariance condition $4X_{(0)}=X_{(0)}$, we must take $X_{(0)}$ from whole subsets of $Y_0$ which are invariant under $4Y_0=Y_0$. Therefore the set $X_{(0)}$ must be a union of the blocks in $Y^{\ast\ast}_0$. Consequently, instead of $Y_0$ we shall make use of $Y^{\ast\ast}_0$. Similarly, under the invariance condition $4X_{(1)}=X_{(1)}$, we must take $X_{(1)}$ from whole subsets of $Y_1$ which are invariant under $4Y_{1}=Y_{1}$. These subsets partition $Y_{1}$ as $Y^\ast_1$. Therefore under the condition $4X_{(1)}=X_{(1)}$, the set $X_{(1)}$ must be a union of these parts. We shall therefore use $Y^\ast_1$ instead of $Y_1$.

Since we have no restriction on the multipliers here, the cycle index of our action is the product of the cycle indices $I_{(\mathbb{Z}^\ast_{27},Y^{\ast\ast}_0)}$, $I_{(\mathbb{Z}^\ast_{27},Y^{\ast}_1)}$ and $I_{(\mathbb{Z}^\ast_{27},Y_2)}$.

If we consider the cycle index $I_{(\mathbb{Z}^\ast_{27},Y^{\ast\ast}_0)}$ for example, this is coded in GAP as follows:

\begin{spverbatim}
l1:= List(zstar27,c->PermList(List(ystarstar0,x->Position(ystarstar0,
AsSet(\mod(c*x,27))))));
c1:=List(l1,g->CycleIndex(g,[1..Length(ystarstar0)]));
\end{spverbatim}
The generating function may then be obtained by using the following commands:
\begin{spverbatim}
g1:=List(c1, f->Value(f,[x_1,x_2],[1+t^9,1+t^18]));
allones:=[];
for i in [1..Length(g1)] do
allones[i]:=1;
od;
G1:=(1/Length(g1))*(g1*allones);
\end{spverbatim}
\vspace{5mm}
\verb!l1! is a list of permutations of elements of $Y^{\ast\ast}_0$, generated by elements of $\mathbb{Z}^\ast_{27}$. \verb!c1! gives the cycle structures of the elements in \verb!l1! and \verb!g1! gives the generating function for each cycle structure in \verb!c1!. The result of \verb!g1! is still a list. In order to obtain one generating function for this action, we first create a list of ones (identified by \verb!allones! in the code above), of the same length as the list \verb!g1!. We then carry out componentwise multiplication of this list with the list \verb!g1!, and divide by the length of \verb!g1!. This is given by \verb!G1! above. The cycle indices $I_{(\mathbb{Z}^\ast_{27},Y^{\ast}_1)}$ and $I_{(\mathbb{Z}^\ast_{27},Y_2)}$ may be obtained in a similar manner (refer to Appendix B). The three generating functions obtained in these three actions are then multiplied together to obtain
\begin{equation*}
\begin{split}
t^{26}+&t^{25}+t^{24}+t^{23}+t^{22}+t^{21}+t^{20}+t^{19}+t^{18}+t^{17}+t^{16}+\\
&t^{15}+t^{14}+t^{13}+t^{12}+t^{11}+t^{10}+t^9+t^8+t^7+t^6+t^5+t^4+t^3+t^2+t+1
\end{split}
\end{equation*}
Substituting $t=1$, we obtain 27 distinct circulants under $A_1$.

Let us now consider $A_2$. Here we have the condition that when $m_0=m_1=m_2$ then the non-invariance condition $R_{00}$ must hold. Since in this case we need to consider the non-invariance condition $R_{00}$, we will use the counting procedure described at the beginning of this chapter. Therefore this isomorphism condition will be split into the following two problems:
\begin{equation*}
\begin{split}
A_{21}:& \mbox{ The result of our action } (\mathbb{Z}^\ast_{27},\mathbb{Z}'_{27})\\
A_{22}:& \mbox{ The result of an action with } \neg R_{00} \mbox{ that is with } 10X_{(0)}=X_{(0)}.
\end{split}
\end{equation*}
Again since we have the condition $10X_{(0)}=X_{(0)}$ in $A_{22}$, we must take $X_{(0)}$ from whole subsets of $Y^\ast_0$. Since $m_0=m_1=m_2$, the action in $A_{22}$ is therefore $(\mathbb{Z}^\ast_{27},Y^\ast_0 \cup Y_1 \cup Y_2)$. The required result is then given by $A_{21}-A_{22}$.

Once again using GAP, the generating function for $A_{21}-A_{22}$ is:
\begin{equation*}
\begin{split}
&t^{25}+17t^{24}+141t^{23}+828t^{22}+3642t^{21}+12785t^{20}+36514t^{19}+86790t^{18}+\\
&173529t^{17}+295097t^{16}+429153t^{15}+536547t^{14}+577720t^{13}+536547t^{12}+\\
&429153t^{11}+295097t^{10}+173529t^9+86790t^8+36514t^7+12785t^6+3642t^5+\\
&828t^4+141t^3+17t^2+t
\end{split}
\end{equation*}
Substituting $t=1$ we obtain 3727808 distinct, directed circulants.

Let us consider $A_3$. Here we have the conditions $R_{01}$, $\neg R_{00}$, $\neg R_{10}$ for $m_1=m_0$. Therefore in this case our isomorphism problem will again be divided into two problems, namely $A_{31}$ and $A_{32}$, where\\
$A_{31}$: is the result of our action with $\neg R_{00}$ and $\neg R_{10}$, that is, with blocks arising from

\begin{equation*}
\begin{split}
10X_{(0)}&=X_{(0)} \mbox{ and }\\
4X_{(1)}&=X_{(1)}
\end{split}
\end{equation*}
and therefore we shall need to use $Y^{\ast}_0$ instead of $Y_0$ and $Y^\ast_1$ instead of $Y_1$, and\\
$A_{32}$: is the result of our action with $\neg R_{00}$, $\neg R_{10}$ and $\neg R_{01}$, that is, with blocks arising from
\begin{equation*}
\begin{split}
10X_{(0)}&=X_{(0)} \mbox{ and }\\
4X_{(1)}&=X_{(1)} \mbox{ and }\\
4X_{(0)}&=X_{(0)}
\end{split}
\end{equation*}
In this case however, we know that $\neg(R_{01})\Rightarrow \neg(R_{00})$, therefore the first equation is redundant. Therefore for $A_{32}$ we shall use $Y^{\ast\ast}_0$ instead of $Y_0$ and $Y^\ast_1$ instead of $Y_1$.
Since $m_1=m_0$ and $m_2$ is independent, the cycle index of our action here, is the product of the cycle indices $I_{(\mathbb{Z}^\ast_{27},Y_0\cup Y_1)}$ and $I_{(\mathbb{Z}^\ast_{27},Y_2)}$, blocked as required. This means we have:\\
\begin{equation*}
\begin{split}
A_{31}:&I_{(\mathbb{Z}^\ast_{27},Y^\ast_0\cup Y^\ast_1)}\times I_{(\mathbb{Z}^\ast_{27},Y_2)}\\
A_{32}:&I_{(\mathbb{Z}^\ast_{27},Y^{\ast\ast}_0\cup Y^\ast_1)}\times I_{(\mathbb{Z}^\ast_{27},Y_2)}\\
\end{split}
\end{equation*}
The generating function for $A_{31}-A_{32}$ is:
\begin{equation*}
\begin{split}
t^{23}+t^{22}&+t^{21}+5t^{20}+5t^{19}+5t^{18}+9t^{17}+9t^{16}+9t^{15}+12t^{14}+12t^{13}+\\
&12t^{12}+9t^{11}+9t^{10}+9t^9+5t^8+5t^7+5t^6+t^5+t^4+t^3
\end{split}
\end{equation*}
This gives 126 distinct directed circulants under our isomorphism problem $A_3$.

We shall now consider $A_4$. Here we have the conditions $R_{10}$ and $\neg R_{01}$ when $m_2=m_1$. Therefore we will now consider $A_{41}$ and $A_{42}$ as follows:\\
$A_{41}$: The result of an action with $\neg R_{01}$, that is with blocks arising from $4X_{(0)}=X_{(0)}$. Therefore in this action $X_{(0)}$ must be a union of parts in $Y^{\ast\ast}_0$, as such we will use $Y^{\ast\ast}_0$ instead of $Y_0$.\\
$A_{42}$: The result of an action with $\neg R_{01}$ and$\neg R_{10}$. This means the set $X_{(0)}$ must be a union of the
blocks in $Y^{\ast\ast}_0$ and $X_{(1)}$ a union of blocks in $Y^\ast_1$.
The required result will then be $A_{41}-A_{42}$.
Since $m_2=m_1$ while $m_0$ is independent, we require
\[I_{(\mathbb{Z}^\ast_{27},Y_1\cup Y_2)}\times I_{(\mathbb{Z}^\ast_{27},Y_0)},\]
blocked as required. Therefore we have:
\begin{equation*}
\begin{split}
A_{41}:&I_{(\mathbb{Z}^\ast_{27},Y_1\cup Y_2)}\times I_{(\mathbb{Z}^\ast_{27},Y^{\ast\ast}_0)}\\
A_{42}:&I_{(\mathbb{Z}^\ast_{27},Y^{\ast}_1\cup Y_2)}\times I_{(\mathbb{Z}^\ast_{27},Y^{\ast\ast}_0)}.\\
\end{split}
\end{equation*}

The generating function for $A_{41}-A_{42}$ is:
\begin{equation*}
\begin{split}
&t^{25}+5t^{24}+9t^{23}+12t^{22}+9t^{21}+5t^{20}+t^{19}+t^{16}+5t^{15}+9t^{14}+\\
&12t^{13}+9t^{12}+5t^{11}+t^{10}+t^7+5t^6+9t^5+12t^4+9t^3+5t^2+t
\end{split}
\end{equation*}
and for $t=1$ we have 126 distinct directed circulants.

Once again, $A_5$ will be divided into the problems $A_{51}$ and $A_{52}$. Although $A_{51}$ is determined in a manner similar to the previous cases, one should be cautious when determining $A_{52}$, since this time we have 2 non-invariance conditions. This means that we have to consider the following:\\
$A_{51}$: The result of an action with $\neg R_{00}$, that is, we shall use $Y^\ast_0$ instead of $Y_0$,\\
$A_{52}$: The result of an action with $\neg(R_{01} \mbox {and } R_{10} )$ and $\neg R_{00}$.\\
Now for $A_{52}$, by de Morgan's laws, we have that:
\begin{equation*}
\begin{split}
\neg(R_{01} \mbox { and } R_{10})\mbox{ and } \neg R_{00}=&(\neg R_{01} \mbox { or } \neg R_{10}) \mbox {and } \neg R_{00}\\
=&(\neg R_{01}\mbox { and } \neg R_{00}) \mbox { or } (\neg R_{10}\mbox { and } \neg R_{00})\\
|\neg(R_{01} \mbox { and } R_{10})\mbox{ and } \neg R_{00}|=&|(\neg R_{01}\mbox { and } \neg R_{00})|+|(\neg R_{10}\mbox { and } \neg R_{00})|-\\
&|(\neg R_{01}\mbox { and } \neg R_{10} \mbox{ and } \neg R_{00})|\\
\end{split}
\end{equation*}
Now $\neg(R_{01})\Rightarrow \neg(R_{00})$, therefore for $A_{52}$ we have:
\begin{equation*}
\begin{split}
|\neg(R_{01} \mbox { and } R_{10})\mbox{ and } \neg R_{00}|=&|\neg (R_{01})|+|(\neg (R_{10})\mbox { and } \neg (R_{00}))|-\\
&|(\neg (R_{01})\mbox { and } \neg (R_{10}))|.\\
\end{split}
\end{equation*}
Therefore we shall split $A_{52}$ into 3 enumeration subproblems, with the first problem enumerating under the condition $\neg (R_{01})$, the second under $\neg (R_{10})\mbox { and } \neg (R_{00})$ and the last under the invariance conditions $\neg (R_{01})\mbox { and } \neg (R_{10})$.

Moreover, we now have 2 restrictions on the multipliers, an equality with $m_2=m_1$ and a congruence, $m_1\equiv m_0\bmod 3$. Therefore in this case, we need to define a group $G$ which will act on $\lbrace Y_1\cup Y_2\cup Y_3 \rbrace$ (blocked as required according to the given invariance condition), such that:
\begin{enumerate}
\item[(1)] The same multiplier acts on all $\lbrace Y_1\cup Y_2\cup Y_3 \rbrace$\\
\item[(2)] Two different multipliers act:
\begin{itemize}
\item $a$ on $Y_1 \cup Y_2$
\item $a'$ on $Y_0$
\end{itemize}
with $a'\equiv a\bmod3$.
\end{enumerate}
Now, since $a'\equiv a\bmod3$ implies the possibility that $a' \equiv a$, the second possibility includes the first. Therefore we will construct $G$ as follows:
$G$ will contain all ordered pairs $(a,a')$ such that $a, a' \in \mathbb{Z}^\ast_{27}$ and $a'\equiv a\bmod3$. Then $(a,a')$ will act as follows:
\[(a,a')(y)=\left\{
\begin{array}{ll}
&ay \mbox{ if } y \in Y_1 \cup Y_2\\
&a'y \mbox{ if } y \in Y_0
\end{array}\right.\]
once again, with $Y_0$, $Y_1$ and $Y_2$ blocked as required. This would give all actions as in (1) and (2) above. One may verify that $G$ is in fact a group since if $(a,a'),(b,b')\in G$ then $(ab,a'b')\in G$. This group is coded in GAP as follows:

\begin{spverbatim}
allpairs:= function(x,y) return [x,y]; end;
S1:=ListX(zstar27, zstar27,allpairs);
ismod3:= function(a) return \mod(a[1],3) = \mod(a[2],3); end;
grp := Filtered(S1, ismod3);
\end{spverbatim}
\vspace{5mm}
Here \verb!S1! is a list of all pairs $(a,a')$, of elements of $\mathbb{Z}^\ast_{27}$ and the function \verb!ismod3! ensures that $a'\equiv a\bmod3$. In order to obtain the required group $G$ (identified by \verb!grp! in the code above), the pairs in \verb!S1! are filtered so that only those which satisfy the condition \verb!ismod3! remain.

Let us start off by determining the generating function for $A_{51}$. Here we need to consider all pairs $(a, a')\in G$ where $a$ acts on $\lbrace Y_1 \cup Y_2\rbrace$ and $a'$ acts on $Y^\ast_0$. In this case, we have following generating function:
\begin{equation*}
\begin{split}
&t^{26}+2t^{25}+6t^{24}+11t^{23}+18t^{22}+20t^{21}+29t^{20}+38t^{19}+47t^{18}+64t^{17}+86t^{16}+\\
&91t^{15}+109t^{14}+124t^{13}+109t^{12}+91t^{11}+86t^{10}+64t^9+47t^8+38t^7+29t^6+20t^5+\\
&18t^4+11t^3+6t^2+2t+1
\end{split}
\end{equation*}
Substituting $t=1$ this gives 1168 distinct circulants.

Now let us consider $A_{52}$.
In order to determine $|\neg R_{01}|$ we need to consider all pairs $(a, a')\in G$ where $a$ acts on $\lbrace Y_1 \cup Y_2\rbrace$ and $a'$ acts on $Y^{\ast\ast}_0$. This action produces the following generating function

\begin{equation*}
\begin{split}
&t^{26}+2t^{25}+6t^{24}+10t^{23}+14t^{22}+10t^{21}+6t^{20}+2t^{19}+t^{18}+t^{17}+4t^{16}+10t^{15}+\\
&20t^{14}+26t^{13}+20t^{12}+10t^{11}+4t^{10}+t^9+t^8+2t^7+6t^6+10t^5+14t^4+10t^3+\\
&6t^2+2t+1
\end{split}
\end{equation*}
If we substitute $t=1$, this generating function gives 200 distinct directed circulants.

In order to determine $|(\neg R_{10} \mbox{ and } \neg R_{00})|$, we will now require every pair $(a, a')\in G$ with $a$ acting on $\lbrace Y^\ast_1 \cup Y_2\rbrace$ and $a'$ acting on $Y^{\ast}_0$. This returns the following generating function:
\begin{equation*}
\begin{split}
&t^{26}+t^{25}+t^{24}+2t^{23}+4t^{22}+2t^{21}+6t^{20}+10t^{19}+6t^{18}+10t^{17}+20t^{16}+10t^{15}+\\
&14t^{14}+26t^{13}+14t^{12}+10t^{11}+20t^{10}+10t^9+6t^8+10t^7+6t^6+2t^5+4t^4+2t^3+\\
&t^2+t+1
\end{split}
\end{equation*}
and on substituting for $t$ we obtain another 200 distinct circulants.

Similarly, we can obtain $|(\neg R_{01} \mbox{ and } \neg R_{10})|$ by considering all the pairs $(a, a')\in G$, this time with $a$ acting on $\lbrace Y^\ast_1 \cup Y_2\rbrace$ and $a'$ acting on $Y^{\ast\ast}_0$. This gives the following generating function
\begin{equation*}
\begin{split}
&t^{26}+t^{25}+t^{24}+t^{23}+2t^{22}+t^{21}+t^{20}+t^{19}+t^{18}+t^{17}+2t^{16}+t^{15}+2t^{14}+4t^{13}+\\
&2t^{12}+t^{11}+2t^{10}+t^9+t^8+t^7+t^6+t^5+2t^4+t^3+t^2+t+1.
\end{split}
\end{equation*}
Therefore when $t=1$ we have 36 distinct circulants.

Combining our results we have:
\[A_{52}=|(\neg R_{01})|+|(\neg R_{10}\mbox { and } \neg R_{00})|-|(\neg R_{01}\mbox { and } \neg R_{10})|=200+200-36=364\]
Therefore our final result for $A_5$ is given by $A_{51}-A_{52}=1168-364=804$. This means the last isomorphism problem $A_5$ counts 804 circulants.

Adding all the generating functions of the individual problems $A_1,...A_5$, returns the following generating function

\begin{equation*}
\begin{split}
&t^{26}+3t^{25}+23t^{24}+152t^{23}+844t^{22}+3662t^{21}+12814t^{20}+36548t^{19}+86837t^{18}+\\
&173593t^{17}+295172t^{16}+429240t^{15}+536646t^{14}+577821t^{13}+536646t^{12}+\\
&429240t^{11}+295172t^{10}+173593t^9+86837t^8+36548t^7+12814t^6+3662t^5+\\
&844t^4+152t^3+23t^2+3t+1\\
\end{split}
\end{equation*}
giving a total of 3728891 non-isomorphic directed circulants on 27 vertices.

\subsection{$\MakeLowercase{n=27}$ Undirected}
Let us consider the case when $p=3$ and count all non-isomorphic undirected circulants on $3^3$ vertices. In this case we have
\begin{equation*}
\begin{split}
\mathbb{Z}^\ast_{27}&=\lbrace 1,2,4,5,7,8,10,11,13,14,16,17,19,20,22,23,25,26 \rbrace\\
\mathbb{Z}'_{27}&=\lbrace 1,2,3,4,...26 \rbrace
\end{split}
\end{equation*}
that is, the connection set is a subset $X$ of $\mathbb{Z}'_{27}$ which must have the property $X=-X$, and the multipliers $m_0,m_1,m_2$ come from $\mathbb{Z}^\ast_{27}$.
Since the elements in the connecting sets are paired by inversion, as discussed in the previous chapter for the $p^2$ case, we partition $\mathbb{Z}'_{27}$ as
\begin{equation*}
\begin{split}
\mathbb{Z}'_{27}=\lbrace \lbrace1,26\rbrace,&\lbrace2,25\rbrace,\lbrace3,24\rbrace,\lbrace4,23\rbrace,\lbrace5,22\rbrace,\lbrace6,21\rbrace,\\
&\lbrace7,20\rbrace,\lbrace8,19\rbrace,\lbrace9,18\rbrace,\lbrace10,17\rbrace,\lbrace11,16\rbrace,\lbrace12,15\rbrace,\lbrace13,14\rbrace\rbrace
\end{split}
\end{equation*}
Again note that any connection set we shall work with must have either both elements of a given pair or none. The multiplicative action must therefore be taken on these pairs, again as we did when discussing Turner's trick and the $p^2$ case.

Let $Y_0$, $Y_1$, $Y_2$, be the three layers of $\mathbb{Z}'_{27}$, with elements in $Y_0$ having no factor of 3, elements of $Y_1$ having a factor of 3 but not 9 and elements of $Y_2$ with a factor of 9. Then $\mathbb{Z}'_{27}=Y_0 \dot{\cup}Y_1 \dot{\cup}Y_2$ where:
\begin{equation*}
\begin{split}
Y_0&=\lbrace\lbrace1,26\rbrace,\lbrace2,25\rbrace,\lbrace4,23\rbrace,\lbrace5,22\rbrace,\lbrace7,20\rbrace,\lbrace8,19\rbrace,\lbrace10,17\rbrace,\lbrace11,16\rbrace,\lbrace13,14\rbrace\rbrace\\
Y_1&=\lbrace \lbrace3,24\rbrace,\lbrace6,21\rbrace,\lbrace12,15\rbrace\rbrace\\
Y_2&=\lbrace \lbrace9,18\rbrace \rbrace\\
\end{split}
\end{equation*}
still partitioned into inverse pairs.
Now by theorem~\ref{thm:theorem4}, the non-invariance conditions in this case are:
\begin{equation*}
\begin{split}
R_{00}&: \hspace{3mm} 10X_{(0)}\neq X_{(0)}\\
R_{01}&: \hspace{3mm} 4X_{(0)}\neq X_{(0)}\\
R_{10}&: \hspace{3mm} 4X_{(1)}\neq X_{(1)}
\end{split}
\end{equation*}
where $X_{(0)}=X \cap \mathbb{Z}^\ast_{27}$ that is, $X_{(0)}=X \cap Y_0$ and $X_{(1)}=X \cap 3\mathbb{Z}^\ast_{9}$, that is, $X_{(1)}=X \cap Y_1$. Recall that $X_{(2)}=X\cap Y_2$ and that throughout, $X,X_{(0)},X_{(1)},X_{(2)}$ are partitioned into inverse pairs and it is the action on these pairs that we are dealing with.

Recall that when we enumerate under an invariance condition $\neg(R_{ij})$, such as $4X_{(1)}=X_{(1)}$, then we must take $X_{(1)}$ from whole subsets of $Y_1$ which are invariant under $4Y_1=Y_1$. Once again we shall denote the partitioned set corresponding to the invariance condition $10Y_{0}=Y_{0}$ by $Y^\ast_0$, that corresponding to $4Y_{0}=Y_{0}$ by $Y^{\ast\ast}_0$, and that corresponding to the invariance condition $4Y_{1}=Y_{1}$ by $Y^\ast_1$. However, we must now make sure that each part or block used in these sets for the directed case, is fused so that additive inverses are together in the same block. This results in the following partitioned sets:

\begin{equation*}
Y^\ast_0=\lbrace\lbrace1,10,19,8,26,17\rbrace, \lbrace2,20,11,7,16,25\rbrace, \lbrace 4,13,22,5,23,14\rbrace\rbrace
\end{equation*}
Therefore under the condition $10X_{(0)}= X_{(0)}$, $X_{(0)}$ is a union of these parts, therefore the multiplicative action is taken on these blocks. This means that the sets $\lbrace1,10,19,8,26,17\rbrace$, $\lbrace2,20,11,7,16,25\rbrace$ and $\lbrace 4,13,22,5,23,14\rbrace$, must be fixed, that is, every set must appear whole as a neighbour or not at all.
\begin{equation*}
Y^{\ast\ast}_0=\lbrace\lbrace 1,4,16,10,13,25,19,22,7,2,8,5,20,26,23,11,17,14\rbrace\rbrace
\end{equation*}
This means that under the invariance condition $4X_{(0)}= X_{(0)}$ the multiplicative action takes place on only one block not on 18 different points.
\begin{equation*}
Y^\ast_1=\lbrace3,12,21,6,24,15\rbrace
\end{equation*}
$Y^\ast_1$ gives that under the invariance condition $4X_{(1)}= X_{(1)}$, the set $X_{(1)}$ is simply the block $\lbrace3,12,21,6,24,15\rbrace$, therefore we may again consider the multiplicative action as being on just one point.

Therefore we can now consider the isomorphism problems $A_1,...A_5$ using the actions mentioned in the directed case for $n=27$, however this time using the sets listed here.

The cycle index for the action in $A_{1}$ was given by
\[I_{(\mathbb{Z}^\ast_{27},Y^{\ast\ast}_0)}\times I_{(\mathbb{Z}^\ast_{27},Y^{\ast}_1)}\times I_{(\mathbb{Z}^\ast_{27},Y_2)}.\]
Using the software package GAP, the following generating function has been obtained for $A_1$:
\[t^{26}+t^{24}+t^{20}+t^{18}+t^8+t^6+t^2+1\]
Substituting $t=1$, we obtain 8 distinct circulants.
\vspace{3mm}

As previously mentioned, $A_{2}$ is given by $A_{21}-A_{22}$ where for $A_{21}$ we have the action
\[(\mathbb{Z}^\ast_{27},\mathbb{Z}'_{27})\]
and for $A_{22}$ we have
\[(\mathbb{Z}^\ast_{27},Y^\ast_0 \cup Y_1 \cup Y_2)\]

Once again using GAP, the generating function for $A_{21}$ is
\begin{equation*}
\begin{split}
&t^{26}+3t^{24}+10t^{22}+34t^{20}+83t^{18}+147t^{16}+194t^{14}+194t^{12}+147t^{10}+83t^8+\\
&34t^6+10t^4+3t^2+1
\end{split}
\end{equation*}
Substituting $t=1$, we obtain 944 undirected circulants.
The generating function for $A_{22}$ is
\[t^{26}+2t^{24}+2t^{22}+3t^{20}+5t^{18}+6t^{16}+5t^{14}+5t^{12}+6t^{10}+5t^8+3t^6+2t^4+2t^2+1\]
and substituting $t=1$ we obtain 48 undirected circulants.
Therefore the subproblem $A_2$ gives $944-48=896$ distinct undirected circulants.

Once again, for $A_3$, we need to determine $A_{31}-A_{32}$ where
\[A_{31}:I_{(\mathbb{Z}^\ast_{27},Y^\ast_0\cup Y^\ast_1)}\times I_{(\mathbb{Z}^\ast_{27},Y_2)}\]
and
\[A_{32}:I_{(\mathbb{Z}^\ast_{27},Y^{\ast\ast}_0\cup Y^\ast_1)}\times I_{(\mathbb{Z}^\ast_{27},Y_2)}\]
These actions correspond to the following generating functions:

\begin{equation*}
A_{31}: t^{26}+t^{24}+2t^{20}+2t^{18}+2t^{14}+2t^{12}+2t^8+2t^6+t^2+1
\end{equation*}
for which, when $t=1$, we obtain 16 circulants, and
\begin{equation*}
A_{32}:t^{26}+t^{24}+t^{20}+t^{18}+t^8+t^6+t^2+1
\end{equation*}
which results in 8 circulants. Therefore the number of distinct, undirected circulants in this case is $16-8=8$.

$A_4$ will again be given by $A_{41}-A_{42}$, where:
\begin{equation*}
\begin{split}
A_{41}:&I_{(\mathbb{Z}^\ast_{27},Y_1\cup Y_2)}\times I_{(\mathbb{Z}^\ast_{27},Y^{\ast\ast}_0)}\\
A_{42}:&I_{(\mathbb{Z}^\ast_{27},Y^{\ast}_1\cup Y_2)}\times I_{(\mathbb{Z}^\ast_{27},Y^{\ast\ast}_0)}
\end{split}
\end{equation*}

$A_{41}$ returns the generating function:
\[t^{26}+2t^{24}+2t^{22}+2t^{20}+t^{18}+t^8+2t^6+2t^4+2t^2+1\]
and substituting for $t$, we obtain 16 circulants.\\
$A_{42}$ returns the generating function:
\[t^{26}+t^{24}+t^{20}+t^{18}+t^8+t^6+t^2+1.\]
Substituting for $t=1$, we obtain 8 circulants. Therefore, for the case $A_4$, we have $16-8=8$, undirected circulants which are distinct.

For $A_5$ we will once again proceed as stated for the directed case.
In $A_{51}$, we need to consider all pairs $(a, a')\in G$ where $a$ acts on $\lbrace Y_1 \cup Y_2\rbrace$ and $a'$ acts on $Y^\ast_0$. In doing so, we obtain the following generating function
\[t^{26}+2t^{24}+2t^{22}+3t^{20}+3t^{18}+2t^{16}+3t^{14}+3t^{12}+2t^{10}+3t^8+3t^6+2t^4+2t^2+1\]
and when substituting $t=1$ we obtain 32 circulants.

Now let us consider $A_{52}$.
In order to determine $|\neg R_{01}|$ we need to consider all pairs $(a, a')\in G$ where $a$ acts on $\lbrace Y_1 \cup Y_2\rbrace$ and $a'$ acts on $Y^{\ast\ast}_0$. This action produces the following generating function

\[t^{26}+2t^{24}+2t^{22}+2t^{20}+t^{18}+t^8+2t^6+2t^4+2t^2+1\]
as such $|\neg R_{01}|=16$.\\

In order to determine $|(\neg R_{10} \mbox{ and } \neg R_{00})|$, we again require every pair $(a, a')\in G$ with $a$ acting on $\lbrace Y^\ast_1 \cup Y_2\rbrace$ and $a'$ acting on $Y^{\ast}_0$. This returns the following generating function:
\[t^{26}+t^{24}+2t^{20}+2t^{18}+2t^{14}+2t^{12}+2t^8+2t^6+t^2+1\]
and on substituting for $t$ we obtain 16 circulants.

Similarly, as done in the directed case for $n=27$ we obtain $|(\neg R_{01} \mbox{ and } \neg R_{10})|$ by considering all the pairs $(a, a')\in G$, this time with $a$ acting on $\lbrace Y^\ast_1 \cup Y_2\rbrace$ and $a'$ acting on $Y^{\ast\ast}_0$. This gives the following generating function
\[t^{26}+t^{24}+t^{20}+t^{18}+t^8+t^6+t^2+1.\]
Therefore when $t=1$ we have $|(\neg R_{01} \mbox{ and } \neg R_{10})|=8$.

Combining our results we have:
\[A_{52}=|(\neg R_{01})|+|(\neg R_{10}\mbox { and } \neg R_{00})|-|(\neg R_{01}\mbox { and } \neg R_{10})|=16+16-8=24\]
Therefore our final result for $A_5$ is given by $A_{51}-A_{52}=32-24=8$.

Combining all the results from our isomorphism problems $A_1,...A_5$ gives
\[A_1+A_2+A_3+A_4+A_5=8+896+8+8+8=928.\]
This means we have 928 non-isomorphic, undirected circulants on 27 vertices.

The following generating function has been obtained for this problem:
\begin{equation*}
\begin{split}
&t^{26}+3t^{24}+10t^{22}+34t^{20}+81t^{18}+143t^{16}+192t^{14}+192t^{12}+143t^{10}+81t^8+\\
&34t^6+10t^4+3t^2+1
\end{split}
\end{equation*}

\subsection{$\MakeLowercase{n=125}$ Directed}
The method used above, to count the number of non-isomorphic circulants on 27 vertices, may of course be applied to any other prime, not necessarily only for the case when $p=3$. As such in this thesis we also present results for the case $5^3$, that is, we will count the number of non-isomorphic circulants on 125 vertices. In the case of directed circulants, we will require the following sets:
\begin{equation*}
\begin{split}
\mathbb{Z}'_{125}&=\lbrace 1,2,3,4,...124 \rbrace\\
\mathbb{Z}^\ast_{125}&=\mbox{ All numbers in } \mathbb{Z}'_{125} \mbox{ relatively prime to }125
\end{split}
\end{equation*}
In the case when $p=5$, the three layers which arise for $p^3$ are $Y_0$, which contains all those elements relatively prime to 125, $Y_1$, with elements which are divisible by 5, but not $5^2$, and $Y_2$, which contains elements divisible by $p^2=5^2$. The sets $Y_0, Y_1, Y_2$, are therefore as follows:
\begin{equation*}
\begin{split}
Y_0=&\mathbb{Z}^\ast_{125}\\
Y_1=&\lbrace 5,10,15,20,30,35,40,45,55,60,65,70,80,85,90,95,105,110,115,120\rbrace\\
Y_2=&\lbrace 25,50,75,100 \rbrace\\
\end{split}
\end{equation*}

By theorem~\ref{thm:theorem4}, the non-invariance conditions in this case are:
\begin{equation*}
\begin{split}
R_{00}&: \hspace{3mm} 26X_{(0)}\neq X_{(0)}\\
R_{01}&: \hspace{3mm} 6X_{(0)}\neq X_{(0)}\\
R_{10}&: \hspace{3mm} 6X_{(1)}\neq X_{(1)}
\end{split}
\end{equation*}

$Y^\ast_0$ is made up of the blocks $a_1,a_2,...a_{20}$ given below:
\renewcommand{\tabcolsep}{4.6pt}
\begin{quote}
\centering
\small
\begin{tabular}{@{} *{21}{l} @{}}
$a_1=\lbrace 1,26,51,76,101\rbrace$,&$a_6=\lbrace 7,57,107,32,82\rbrace,$\\
$a_2=\lbrace 2,52,102,27,77\rbrace$,&$a_7=\lbrace 8,83,33,108,58\rbrace$,\\
$a_3=\lbrace 3,78,28,103,53\rbrace$,&$a_8=\lbrace 9,109,84,59,34\rbrace$,\\
$a_4=\lbrace 4,104,79,54,29\rbrace$,&$a_9=\lbrace 11,36,61,86,111\rbrace$,\\
$a_5=\lbrace 6,31,56,81,106\rbrace$,&$a_{10}=\lbrace 12,62,112,37,87\rbrace$,\\
\end{tabular}
\end{quote}

\begin{quote}
\centering
\small
\begin{tabular}{@{} *{21}{l} @{}}
$a_{11}=\lbrace 13,88,38,113,63\rbrace$,&$a_{16}=\lbrace 19,119,94,69,44\rbrace$,\\
$a_{12}=\lbrace 14,114,89,64,39\rbrace$,&$a_{17}=\lbrace 21,46,71,96,121\rbrace$,\\
$a_{13}=\lbrace 16,41,66,91,116\rbrace$,&$a_{18}=\lbrace 22,72,122,47,97\rbrace$,\\
$a_{14}=\lbrace 17,67,117,42,92\rbrace$,&$a_{19}=\lbrace 23,98,48,123,73\rbrace$,\\
$a_{15}=\lbrace 18,93,43,118,68\rbrace$,&$a_{20}=\lbrace 24,124,99,74,49\rbrace.$\\
\end{tabular}
\end{quote}
that is
\begin{equation*}
Y^\ast_0=\lbrace a_1,a_2,...a_{20} \rbrace
\end{equation*}
The sets $Y^{\ast\ast}_0$ and $Y^\ast_1$ are given as follows:
\begin{equation*}
\begin{split}
Y^{\ast\ast}_0=&\lbrace \lbrace a_1 \cup a_5 \cup a_9 \cup a_{13} \cup a_{17}\rbrace, \lbrace a_2 \cup a_6 \cup a_{10} \cup a_{14} \cup a_{18}\rbrace,\\
&\lbrace a_3 \cup a_7 \cup a_{11} \cup a_{15} \cup a_{19}\rbrace,\lbrace a_4 \cup a_8 \cup a_{12} \cup a_{16} \cup a_{20}\rbrace\rbrace\\
Y^\ast_1=&\lbrace\lbrace 5,30,55,80,105\rbrace, \lbrace 10,60,110,35,85 \rbrace, \lbrace 15,90,40,115,65\rbrace,\\
&\lbrace 20,120,95,70,45\rbrace\rbrace
\end{split}
\end{equation*}

We will now consider the isomorphism problems $A_1$ to $A_5$. Unlike the case $n=27$, we shall not present the generating functions in this case, but we will give the final result in each case.

In the first problem, $A_1$, we have the following product of cycle indices:
\[I_{(\mathbb{Z}^\ast_{125},Y^{\ast\ast}_0)}\times I_{(\mathbb{Z}^\ast_{125},Y^{\ast}_1)}\times I_{(\mathbb{Z}^\ast_{125},Y_2)}\]
Again the software package GAP was used in order to determine the number of circulants under this isomorphism problem. This came to 216 circulants.

$A_{2}$ is given by $A_{21}-A_{22}$ where for $A_{21}$ we have the action

\[(\mathbb{Z}^\ast_{125},\mathbb{Z}'_{125})\]
and for $A_{22}$ we have
\[(\mathbb{Z}^\ast_{125},Y^\ast_0 \cup Y_1 \cup Y_2).\]
The number of distinct circulants obtained for $A_2$ from $A_{21}-A_{22}$ is\\
212,676,479,325,586,539,710,725,813,950,176,256.

Similarly, for $A_3$, the result we require is $A_{31}-A_{32}$ where

\[A_{31}:I_{(\mathbb{Z}^\ast_{125},Y^\ast_0\cup Y^\ast_1)}\times I_{(\mathbb{Z}^\ast_{125},Y_2)}\]
and
\[A_{32}:I_{(\mathbb{Z}^\ast_{125},Y^{\ast\ast}_0\cup Y^\ast_1)}\times I_{(\mathbb{Z}^\ast_{125},Y_2)}\]
The result $A_{31}-A_{32}$ gives 5034348 circulants.

The number of distinct, directed circulants given by subproblem $A_4$, is determined by working out $A_{41}-A_{42}$ where
\[A_{41}:I_{(\mathbb{Z}^\ast_{125},Y_1\cup Y_2)}\times I_{(\mathbb{Z}^\ast_{125},Y^{\ast\ast}_0)}\]
and
\vspace{3mm}
\[A_{42}:I_{(\mathbb{Z}^\ast_{125},Y^{\ast}_1\cup Y_2)}\times I_{(\mathbb{Z}^\ast_{125},Y^{\ast\ast}_0)}.\]
The result obtained from $A_{41}-A_{42}$ is again 5034348.

$A_5$, will be determined in a manner similar to that used for the directed case when $n=27$, however the group $G$ effecting the action in this case, is constructed as follows: $G$ will contain all ordered pairs $(a,a')$ such that $a, a' \in \mathbb{Z}^\ast_{125}$ and $a'\equiv a\bmod5$.

The number of distinct circulants obtained in this case, by splitting this isomorphism problem into a number of other subproblems as described for the case $n=27$ is 175916533428.

The following list gives the number of distinct, directed circulants for each subproblem:

\begin{quote}
\centering
\begin{tabular}{c|l}
Sub-Problem&Number of distinct circulants\\
\hline
$A_1$&216\\
$A_2$&212,676,479,325,586,539,710,725,813,950,176,256\\
$A_3$&5034348\\
$A_4$&5034348\\
$A_5$&175,916,533,428\\
\end {tabular}
\end{quote}

Therefore the total number of non-isomorphic, directed circulants on 125 vertices is 212,676,479,325,586,539,710,725,989,876,778,596.
\subsection{$\MakeLowercase{n=125}$ Undirected}
We will now enumerate the number of non-isomorphic undirected circulants on 125 vertices. In this case we will require the following sets:
\begin{equation*}
\begin{split}
\mathbb{Z}'_{125}&=\lbrace 1,2,3,4,...124 \rbrace\\
\mathbb{Z}^\ast_{125}&=\mbox{ All numbers in } \mathbb{Z}'_{125} \mbox{ relatively prime to }125
\end{split}
\end{equation*}
where the elements in $\mathbb{Z}'_{125}$ are paired by inversion.

The sets $Y_0$, $Y_1$ and $Y_2$ are as follows:

\begin{equation*}
\begin{split}
Y_0=&\mathbb{Z}^\ast_{125}\\
Y_1=&\lbrace \lbrace 5,120 \rbrace,\lbrace 10,115 \rbrace,\lbrace 15,110 \rbrace,\lbrace 20,105 \rbrace,\lbrace 30,95 \rbrace,\\
&\lbrace 35,90 \rbrace,\lbrace 40,85 \rbrace,\lbrace 45,80 \rbrace,\lbrace 55,70 \rbrace,\lbrace 60,65 \rbrace\rbrace\\
Y_2=&\lbrace \lbrace 25,100 \rbrace,\lbrace 50,75 \rbrace \rbrace.
\end{split}
\end{equation*}

Again the non-invariance conditions in this case are:
\begin{equation*}
\begin{split}
R_{00}&: \hspace{3mm} 26X_{(0)}\neq X_{(0)}\\
R_{01}&: \hspace{3mm} 6X_{(0)}\neq X_{(0)}\\
R_{10}&: \hspace{3mm} 6X_{(1)}\neq X_{(1)}
\end{split}
\end{equation*}
Pairing the sets $a_1,a_2,...a_{20}$ obtained in the directed case by inversion, we have:
\begin{equation*}
\begin{split}
Y^\ast_0=&\lbrace\lbrace a_1,a_{20}\rbrace,\lbrace a_2,a_{19}\rbrace,\lbrace a_3,a_{18}\rbrace,\lbrace a_4,a_{17}\rbrace,\\
&\lbrace a_5,a_{16}\rbrace,\lbrace a_6,a_{15}\rbrace,\lbrace a_7,a_{14}\rbrace,\lbrace a_8,a_{13}\rbrace,\\
&\lbrace a_9,a_{12}\rbrace,\lbrace a_{10},a_{11}\rbrace\rbrace\\
\end{split}
\end{equation*}

Similarly each block used in the sets $Y^{\ast\ast}$ and $Y^\ast_1$ for the directed case, is fused so that additive inverses are together in the same block. We therefore have the following sets:
\begin{equation*}
\begin{split}
Y^{\ast\ast}_0=&\lbrace \lbrace a_1 \cup a_4 \cup a_5 \cup a_8 \cup a_9 \cup a_{12} \cup a_{13} \cup a_{16} \cup a_{17} \cup a_{20} \rbrace,\\
&\lbrace a_2 \cup a_3 \cup a_6 \cup a_7 \cup a_{10} \cup a_{11} \cup a_{14} \cup a_{15} \cup a_{18} \cup a_{19}\rbrace\rbrace\\
Y^\ast_1=&\lbrace\lbrace 5,20,30,45,55,70,80,95,105,120\rbrace, \lbrace 10,15,35,40,60,65,85,90,110,115\rbrace\rbrace.
\end{split}
\end{equation*}
Therefore we may now determine $A_1, A_2, A_3, A_4, A_5$.

In the case of $A_1$, we have the following:
\[I_{(\mathbb{Z}^\ast_{125},Y^{\ast\ast}_0)}\times I_{(\mathbb{Z}^\ast_{125},Y^{\ast}_1)}\times I_{(\mathbb{Z}^\ast_{125},Y_2)}\]
This produces the generating function
\begin{equation*}
\begin{split}
&t^{124}+t^{122}+t^{120}+t^{114}+t^{112}+t^{110}+t^{104}+t^{102}+t^{100}+t^{74}+t^{72}+t^{70}+\\
&t^{64}+t^{62}+t^{60}+t^{54}+t^{52}+t^{50}+t^{24}+t^{22}+t^{20}+t^{14}+t^{12}+t^{10}+t^4+t^2+1
\end{split}
\end{equation*}
which gives 27 distinct, undirected circulants on 125 vertices.

\vspace{3mm}
$A_{2}$ is given by $A_{21}-A_{22}$ where for $A_{21}$ we have the action
\[(\mathbb{Z}^\ast_{125},\mathbb{Z}'_{125})\]
and for $A_{22}$ we have
\[(\mathbb{Z}^\ast_{125},Y^\ast_0 \cup Y_1 \cup Y_2).\]
The number of distinct circulants obtained from $A_{21}-A_{22}$ is 92233720411413504.

Similarly, for $A_3$, the result we require is $A_{31}-A_{32}$ where
\[A_{31}:I_{(\mathbb{Z}^\ast_{125},Y^\ast_0\cup Y^\ast_1)}\times I_{(\mathbb{Z}^\ast_{125},Y_2)}\]
and
\[A_{32}:I_{(\mathbb{Z}^\ast_{125},Y^{\ast\ast}_0\cup Y^\ast_1)}\times I_{(\mathbb{Z}^\ast_{125},Y_2)}\]
The generating function obtained for $A_{31}-A_{32}$ is

\begin{equation*}
\begin{split}
&t^{114}+t^{112}+t^{110}+7t^{104}+7t^{102}+7t^{100}+22t^{94}+22t^{92}+22t^{90}+51t^{84}+51t^{82}+\\
&51t^{80}+79t^{74}+79t^{72}+79t^{70}+94t^{64}+94t^{62}+94t^{60}+79t^{54}+79t^{52}+79t^{50}+\\
&51t^{44}+51t^{42}+51t^{40}+22t^{34}+22t^{32}+22t^{30}+7t^{24}+7t^{22}+7t^{20}+t^{14}+t^{12}+t^{10}
\end{split}
\end{equation*}
Substituting $t=1$ we obtain 1242 circulants.

The number of distinct, undirected circulants, given by subproblem $A_4$, is determined by working out $A_{41}-A_{42}$ where

\[A_{41}:I_{(\mathbb{Z}^\ast_{125},Y_1\cup Y_2)}\times I_{(\mathbb{Z}^\ast_{125},Y^{\ast\ast}_0)}\]
and
\vspace{3mm}
\[A_{42}:I_{(\mathbb{Z}^\ast_{125},Y^{\ast}_1\cup Y_2)}\times I_{(\mathbb{Z}^\ast_{125},Y^{\ast\ast}_0)}.\]
$A_{41}-A_{42}$ gives the following generating function

\begin{equation*}
\begin{split}
&t^{122}+7t^{120}+22t^{118}+51t^{116}+79t^{114}+94t^{112}+79t^{110}+51t^{108}+22t^{106}+\\
&7t^{104}+t^{102}+t^{72}+7t^{70}+22t^{68}+51t^{66}+79t^{64}+94t^{62}+79t^{60}+\\
&51t^58+22t^{56}+7t^{54}+t^{52}+t^{22}+7t^{20}+22t^{18}+51t^{16}+79t^{14}+\\
&94t^{12}+79t^{10}+51t^8+22t^6+7t^4+t^2
\end{split}
\end{equation*}
and substituting for $t$, again gives 1242 distinct circulants.

$A_5$, is determined as described for the directed case when $n=125$, however this time the sets $Y^\ast_0, Y^{\ast\ast}_0$ and $Y^\ast_1$ defined in this subsection are used.
The number of distinct circulants obtained in this case is 83268.

Therefore adding up our results from $A_1,...,A_5$, we obtain 92233720411499283 non-isomorphic, undirected circulants on 125 vertices. The generating function in this case is given by
\begin{equation*}
\begin{split}
\small
&t^{124}+3t^{122}+45t^{120}+774t^{118}+11207t^{116}+129485t^{114}+1229657t^{112}+9835988t^{110}\\
&+67622641t^{108}+405731843t^{106}+2150382085t^{104}+10165426468t^{102}\\
&+43203077195t^{100}+166165624857t^{98}+581579739591t^{96}+1861054998416t^{94}\\
&+5466849215583t^{92}+14792650391699t^{90}+36981626382405t^{88}+85641660162366t^{86}\\
&+184129570236171t^{84}+368259138698205t^{82}+686301123812811t^{80}\\
&+1193567168903172t^{78}+1939546652290065t^{76}+2948110907190899t^{74}\\
&+4195388602819760t^{72}+5593851464926268t^{70}+6992314336461413t^{68}\\
&+8197885767564289t^{66}+9017674350331611t^{64}+9308567065105337t^{62}\\
&+9017674350331611t^{60}+8197885767564289t^{58}+6992314336461413t^{56}\\
&+5593851464926268t^{54}+4195388602819760t^{52}+2948110907190899t^{50}\\
&+1939546652290065t^{48}+1193567168903172t^{46}+686301123812811t^{44}\\
&+368259138698205t^{42}+184129570236171t^{40}+85641660162366t^{38}\\
&+36981626382405t^{36}+14792650391699t^{34}+5466849215583t^{32}+1861054998416t^{30}\\
&+581579739591t^{28}+166165624857t^{26}+43203077195t^{24}+10165426468t^{22}\\
&+2150382085t^{20}+405731843t^{18}+67622641t^{16}+9835988t^{14}+1229657t^{12}\\
&+129485t^{10}+11207t^8+774t^6+45t^4+3t^2+1\\
\end{split}
\end{equation*}

\subsection{Discussion of Results}
Tables ~\ref{table:Table5} and ~\ref{table:Table6} sum up our enumeration results obtained in this chapter by using the isomorphism problems $A_1,...A_5$ given in Table~\ref{table:Table3}.
\begin{table}[h!]
\caption{The number of non-isomorphic circulants on 27 vertices}
\centering
\begin{tabular}{|c|c|}
\hline
&Number of Circulants\\
\hline
Undirected&928\\
Directed&3728891\\
\hline
\end{tabular}
\label{table:Table5}
\end{table}
\begin{table}[h!]
\caption{The number of non-isomorphic circulants on 125 vertices}
\centering
\begin{tabular}{|c|c|}
\hline
&Number of Circulants\\
\hline
Undirected&92233720411499283\\
Directed&212,676,479,325,586,539,710,725,989,876,778,596\\
\hline
\end{tabular}
\label{table:Table6}
\end{table}

The value we have obtained for the undirected case when $n=27$, that of 928, agrees with the figure B.D. McKay obtained (see \cite{KLP2003}) by using exhaustive computer analysis. Our generating function for this case, has also matched Matan Ziv-Av's generating function which he also obtained by brute force. The value (3728891) obtained for the directed case when $n=27$, also agrees with Matan Ziv-Av's result.

Present methods of the above-mentioned authors, do not go up to $n=125$. However, the generating functions we have obtained may be useful for checking our result, using exhaustive search since these methods may be feasible for $n=125$ and some specific values of the degree.

\section{Using Theorem ~\ref{thm:theorem4} directly: $n=27$ undirected}

In this section, we will describe another method which may be used to enumerate non-isomorphic undirected circulants on 27 vertices. This method makes direct use of Theorem ~\ref{thm:theorem4}. Rearranging this theorem for the case when $p=3$, we have the following:

\begin{Corollary}
\label{thm:theorem8}
Let $n=27$ and let $\Gamma$ and $\Gamma'$ be two 27-circulants with the connection sets $X$ and $X'$, respectively. Then $\Gamma$ and $\Gamma'$ are isomorphic if and only if their respective layers are multiplicatively equivalent, i.e.
\begin{equation}\tag{$M_{3}$}
X'_{(0)}=m_{0}X_{(0)},\hspace{3mm}X'_{(1)}=m_{1}X_{(1)}\hspace{3mm}X'_{(2)}=m_{2}X_{(2)},
\end{equation}

for an  arbitrary set of multipliers $m_0, m_1, m_2 \in \mathbb{Z}^\ast_{p^3}$.  Moreover, in the above, one must have

\begin{equation}\tag{$E_{00}$}
m_1 \equiv m_0(\bmod 9) \mbox{ and } m_2 \equiv m_1(\bmod 3)
\end{equation}
whenever
\begin{equation}\tag{$R_{00}$}
10X_{(0)}\neq X_{(0)}
\end{equation}

\begin{equation}\tag{$E_{01}$}
m_1 \equiv m_0(\bmod 3)
\end{equation}
whenever
\begin{equation}\tag{$R_{01}$}
4X_{(0)}\neq X_{(0)}
\end{equation}

\begin{equation}\tag{$E_{10}$}
m_2 \equiv m_1(\bmod 3)
\end{equation}
whenever
\begin{equation}\tag{$R_{10}$}
4X_{(1)}\neq X_{(1)}.
\end{equation}
\end{Corollary}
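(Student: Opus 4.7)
The plan is to observe that this corollary is nothing more than Theorem~\ref{thm:theorem4} specialized to the case $p=3$ (equivalently, Theorem~\ref{thm:theorem3} with $k=3$ and $p=3$), so the proof consists of unpacking the generic formulas for this specific prime and verifying that the resulting statement matches the corollary verbatim. Since Theorem~\ref{thm:theorem4} has already been established in \cite{KP80} and \cite{LP2000}, no new combinatorial or Schur-ring argument is required; the work is a bookkeeping check on the indices.

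First, I would verify that the three non-invariance conditions match. Theorem~\ref{thm:theorem4} states $R_{ij}:(1+p^{k-i-j-1})X_{(i)}\neq X_{(i)}$ with $k=3$. Setting $(i,j)=(0,0)$ gives the exponent $k-i-j-1=2$, so with $p=3$ we get the multiplier $1+3^2=10$, recovering $R_{00}:10X_{(0)}\neq X_{(0)}$. For $(i,j)=(0,1)$ the exponent is $1$, giving $1+3=4$ and $R_{01}:4X_{(0)}\neq X_{(0)}$. For $(i,j)=(1,0)$ we again get the multiplier $1+3=4$ and $R_{10}:4X_{(1)}\neq X_{(1)}$. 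These are exactly the three non-invariance conditions listed in the corollary.

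Next, I would verify that the congruence systems carry over. For $(i,j)=(0,0)$ the system $E_{ij}$ in Theorem~\ref{thm:theorem4} prescribes $m_{i+1}\equiv m_i\pmod{p^{k-i-j-1}}$, $m_{i+2}\equiv m_{i+1}\pmod{p^{k-i-j-2}}$, $\ldots$, $m_{k-j-1}\equiv m_{k-j-2}\pmod{p}$. With $k=3$, $p=3$, $(i,j)=(0,0)$, this becomes $m_1\equiv m_0\pmod{9}$ and $m_2\equiv m_1\pmod{3}$, which is $(E_{00})$. For $(i,j)=(0,1)$ the system truncates to the single congruence $m_1\equiv m_0\pmod{3}$, giving $(E_{01})$. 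For $(i,j)=(1,0)$ it truncates to $m_2\equiv m_1\pmod{3}$, giving $(E_{10})$. The multiplier equivalence $(M_3)$ of the layers is just $(M_k)$ with $k=3$, and since $p=3>2$ is an odd prime, the hypothesis of Theorem~\ref{thm:theorem4} is satisfied.

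The only potential pitfall is making sure the ranges of $i$ and $j$ in Theorem~\ref{thm:theorem4} are correctly enumerated for $k=3$: $i\in\{0,1,\ldots,k-2\}=\{0,1\}$ and $j\in\{0,1,\ldots,k-2-i\}$, giving the three pairs $(0,0),(0,1),(1,0)$ and no others. This matches the three conditions in the corollary, confirming that nothing is lost or duplicated in the specialization, and completing the proof.
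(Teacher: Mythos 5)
Your proposal is correct and matches the paper's own treatment: the paper introduces this corollary with the single remark that it is Theorem~\ref{thm:theorem4} ``rearranged'' for $p=3$, and your index-by-index verification of the pairs $(i,j)\in\{(0,0),(0,1),(1,0)\}$, the multipliers $1+p^2=10$ and $1+p=4$, and the truncated congruence chains is exactly the bookkeeping that justification entails. Nothing further is needed.
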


We shall therefore consider the following subproblems:
\\
$|A_1|$: Counts all non-isomorphic circulants with multipliers satisfying condition $E_{00}$, whenever $R_{00}$ holds,\\
$|A_2|$: Counts all non-isomorphic circulants with multipliers satisfying condition $E_{01}$, whenever $R_{01}$ holds,\\
$|A_3|$: Counts all non-isomorphic circulants with multipliers satisfying condition $E_{10}$, whenever $R_{10}$ holds.\\
In addition to these, we also need to consider case B, where $|B|$ is the number of non-isomorphic circulants under independent multipliers, whenever none of $R_{00}$, $R_{01}$ and $R_{10}$ hold.

The result we are after is then given by
\[|A_1\cup A_2\cup A_3\cup B|.\]

Now since B is disjoint from the rest, this may be calculated separately and added to the rest. Therefore, in order to enumerate the total number of non-isomorphic undirected circulants on 27 vertices we simply have the following inclusion-exclusion problem.
\begin{equation}\label{eq:erl2}
|A_1|+|A_2|+|A_3|-|A_1\cap A_2|-|A_2\cap A_3|-|A_1\cap A_3|+|A_1\cap A_2\cap A_3|+|B|
\end{equation}

Recall that it is simpler to carry out the calculations using the invariance conditions, rather than the non-invariance conditions. Therefore each of the subproblems listed in \eqref{eq:erl2}, will be split further into other problems. In doing so however, we must be careful when dealing with the intersections. For example, in the case $A_2\cap A_3$, we have to consider the non-invariance conditions
\[R_{01} \mbox{ and } R_{10}.\]
Therefore the corresponding invariance condition in this case would be
\[\neg(R_{01} \mbox{ and }R_{10}),\]
which, by de Morgan's law is equivalent to
\[\neg(R_{01}) \mbox{ or } \neg(R_{10})\]

So let us for a moment stop to discuss this question: suppose $(G_1,X)$ and $(G_2,X)$ are two permutation groups, then how many elements are non-equivalent under at least one of these actions? We say that $x,y \in X$, are equivalent under $G_1$, $x\sim_1 y$, if there exists $\sigma \in G_1$ such that $\sigma(x)=y$. Similarly $x \sim_2 y$. Let $\sim$ be defined on $X$ as follows:

\[x \sim y \mbox{ iff } \left\{
\begin{array}{ll}
&\mbox{ either } x \sim_1 y\\
&\mbox{ or } x\sim_2 y
\end{array} \right.\]

Now $\sim$ need not be an equivalence relation. We may have $x \sim y$, $y\sim z$ but $x\nsim z$. Consider the following example:

\begin{equation*}
\begin{split}
G_1&=\langle (1\hspace{2mm}2) (3\hspace{2mm}4)(5\hspace{2mm}6)(7\hspace{2mm}8)(9\hspace{2mm}10)\rangle\\
G_2&=\langle (1\hspace{2mm}2\hspace{2mm}3) (4\hspace{2mm}5\hspace{2mm}6)(7\hspace{2mm}8\hspace{2mm}9)(10)\rangle\\
\mbox{with } X&=\lbrace 1,2,3,...,10 \rbrace.\\
\end{split}
\end{equation*}
We start off by choosing an element of $X$, and remove those equivalent to it under $\sim$. We repeat this procedure, until we obtain a set of representatives. For example if we start off with the element, 2, in $X$, we have:
\[{\bf1},\underline{2},{\bf{3}},4,5,6,7,8,9,10.\]
Suppose we choose $4 \in X$ next. Then we have
\[{\bf{1}},\underline{2},{\bf{3}},\underline{4},{\bf{5}},{\bf{6}},7,8,9,10.\]
If we pick 7 next we have:
\[{\bf{1}},\underline{2},{\bf{3}},\underline{4},{\bf{5}},{\bf{6}},\underline{7},{\bf{8}},{\bf{9}},10.\]
Therefore, $\lbrace2,4,7,10\rbrace$, is a set of non-equivalent elements, such that any other element is equivalent to one of them.

Now if we pick 9, 6, 2 in that order and repeat the above procedure, we obtain the set of representatives, $\lbrace 2,6,9 \rbrace$. This means that these three elements are non-equivalent under $\sim$, and any other element of $X$ is equivalent to one of them. This shows that equivalence under the ``or" relation $\sim$ is not well defined here. In order to be able to determine how many elements are non-equivalent under $\sim$, we must have that $\sim$ is an equivalence relation.

If $\sim$ is an equivalence relation, then $\sim_1$, $\sim_2$ are said to be \emph{compatible}. When a class $C_1$ of $\sim_1$, intersects a class $C_2$ of $\sim_2$, as shown in Figure~\ref{intersections}, then $\sim_1$, $\sim_2$ are not compatible since $a \sim b$ and $b \sim c$ but $a\nsim c$.

\begin{figure}[!h]
\centering
\includegraphics[width=0.4\linewidth]{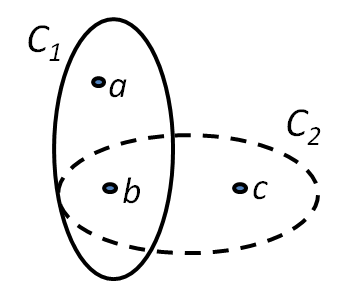}
\caption{Class Intersections}
\label{intersections}
\end{figure}
Therefore, for $\sim_1$, $\sim_2$ to be compatible, the classes must be such, that every class of $\sim_1$ is a union of classes of $\sim_2$, or together with other classes of $\sim_1$, forms part of a partition of classes of $\sim_2$. This is illustrated in Figure~\ref{partition}.
\begin{figure}[!h]
\centering
\includegraphics[width=0.6\linewidth]{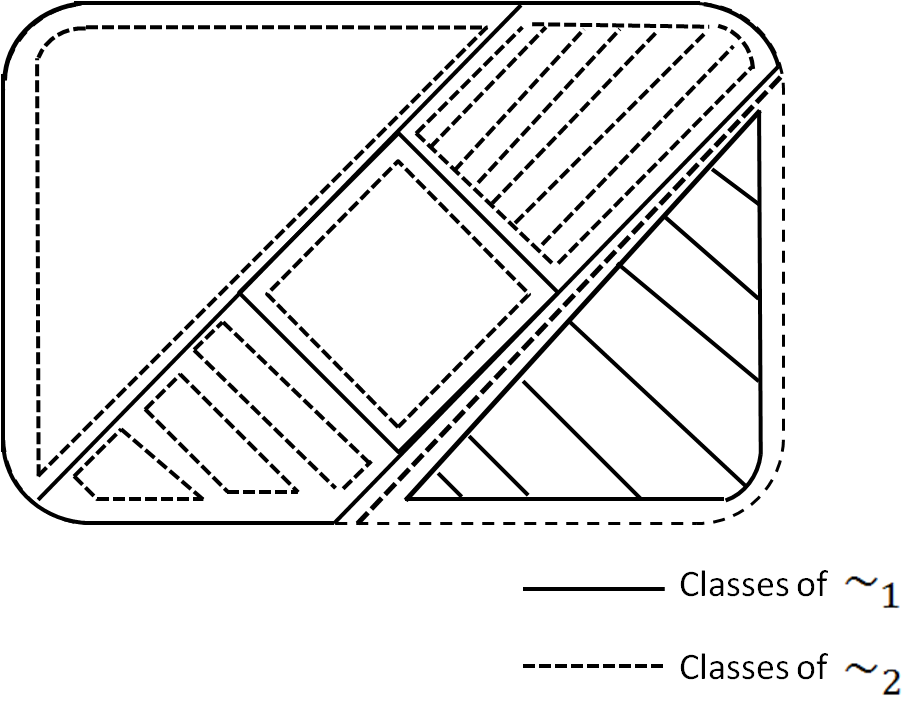}
\caption{Classes of $\sim_1$ and $\sim_2$ }
\label{partition}
\end{figure}
In this example, the relation $\sim$ has 20 classes. Using Figure~\ref{partition}, we may count these classes in terms of the number of classes of $\sim_1$ and $\sim_2$. The equivalence relation $\sim_1$, has 12 classes, while $\sim_2$ has 13 classes. Now $20=12+13-5$. This means that 5 classes are counted twice. These 5 over-counted classes are shown in Figure~\ref{overcountedclasses}.
\begin{figure}[!h]
\centering
\includegraphics[width=0.9\linewidth]{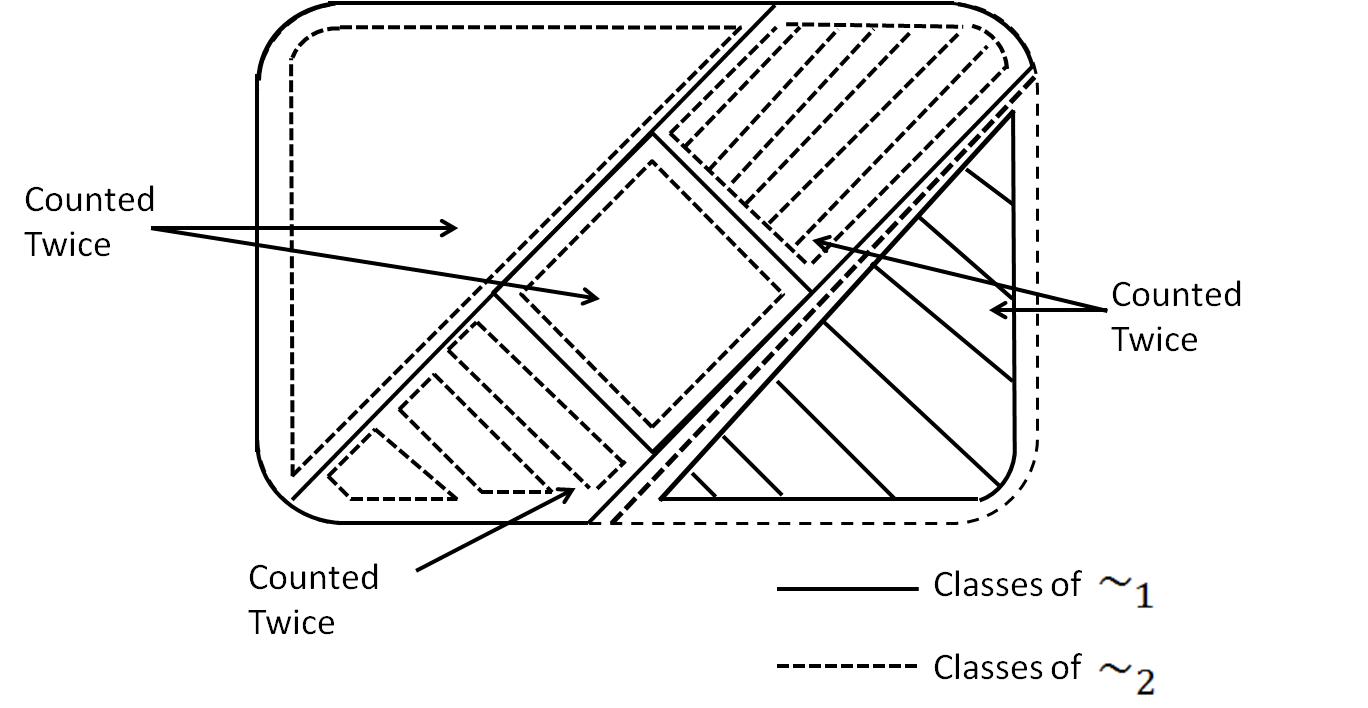}
\caption{Over-counted Classes }
\label{overcountedclasses}
\end{figure}

Now, in order to find the number of non-equivalent elements under $\sim$, we must be able to count and remove the number of over-counted classes. One must note that
\begin{equation*}
\begin{split}
\mbox{ Number of orbits }(G_1,X)+&\mbox{ Number of orbits }(G_2,X)-\\
&\mbox{ Number of orbits }(G_1\cap G_2,X)
\end{split}
\end{equation*}
is a wrong answer, because the number of orbits of $(G_1\cap G_2,X)$ is generally larger than the number of orbits $(G_1,X)$ or the number of orbits $(G_2,X)$, therefore it cannot enumerate the over-counted classes. Also, $(G_1 \cup G_2,X)$ does not work since $G_1 \cup G_2$ is not a group in general. In order to obtain the over-counted classes as orbits of some group, we need to consider the orbits of $(\langle G_1 \cup G_2 \rangle,X)$, where $\langle G_1 \cup G_2 \rangle$ is the group generated by $G_1 \cup G_2$. In order to see this, suppose the classes of $\sim_1$, $\sim_2$ in Figure~\ref{partition} are respectively the orbits of $(G_1,X)$ and $(G_2,X)$. Then we may obtain the five over-counted classes as orbits of $(\langle G_1 \cup G_2 \rangle,X)$, for suppose $\sigma \in G_1$ and $\tau \in G_2$. Then $\sigma \tau$ cannot take us beyond any of the boundaries of the over-counted orbits, however, one of $\sigma \in G_1$ or $\tau \in G_2$, would be able to move us anywhere within any of these orbits, even if partitioned by orbits of the other group.

Now in our case, we have the groups $G_1$, $G_2$ and $G_3$ (subgroups of $\mathbb{Z}^\ast_{27}\times \mathbb{Z}^\ast_{27}\times \mathbb{Z}^\ast_{27}$ under multiplication modulo 27). These are defined as follows

\begin{equation*}
\begin{split}
G_1&=\lbrace (a,b,c): a\equiv b\bmod 9 \mbox{ and } b\equiv c\bmod 3\rbrace\\
G_2&=\lbrace (a,b,c): a\equiv b\bmod 3\rbrace\\
G_3&=\lbrace (a,b,c): b\equiv c\bmod 3\rbrace\\
\end{split}
\end{equation*}
These groups are abelian, therefore $\langle G_1 \cup G_2 \rangle$ for example, gives all products of an element from $G_1$ with an element from $G_2$. This also includes the groups themselves, since each contains the identity. ($G_1$, $G_2$ and $G_3$ are respectively identified by \verb!grp1!, \verb!grp2! and \verb!grp3! in our program).

What follows, gives an overview of how we go about determining \eqref{eq:erl2} in GAP. The reader is referred to Appendix B for the actual program.

\vspace{3mm}
We start off by defining the sets $Y_0, Y_1, Y_2, Y^\ast_0, Y^{\ast\ast}_0, Y^\ast_1$ as defined in section 4.1.1. These are identified in GAP by \verb!uy0, uy1, uy2, uystar0, uystarstar0!, and \verb!uystar1! respectively. We then define the set \verb!alltriplezstar27! of all possible combinations of multipliers, that is, this set contains all triples $\lbrack a,b,c\rbrack$ for $a,b,c \in \mathbb{Z}^\ast_{27}$. Now suppose that the opposite extreme of \'{A}d\'{a}m's conjecture holds, that is, isomorphism can be through any independent multipliers from $\mathbb{Z}^\ast_{27}$ acting on the three layers, without any invariance or non-invariance condition. Then we could find the whole cycle index of $\mathbb{Z}^\ast_{27}$ separately on $Y_0, Y_1$ and $Y_2$ and multiply the three cycle indices. Alternatively we can use \verb!alltriplezstar27! to get the lists of cycle indices \verb!c1, c2, c3! on the three separate layers. In the latter case, we make use of the first component, \verb!c[1]!, of \verb!alltriplezstar27! (which is $m_0$) for the list \verb!c1!, the second component, \verb!c[2]! for \verb!c2! and the third, \verb!c[3]! for \verb!c3!. We then carry out three ``inner products" of \verb!c1, c2! and \verb!c3!. This makes sure that we combine actions of multipliers as allowed by the restrictions (this will be necessary for the other cases). The result is a list of individual cycle indices of permutations, and as such this is then transformed into one cycle index by taking a final inner product with the vector $\lbrack 1,1,...1\rbrack$. Since \verb!alltriplezstar27! has all possible combinations of cycle indices, this has the same effect as multiplying the three cycle indices.

\vspace{3mm}
Now suppose that we have a restriction as in $A_1$. Then we first have to filter \verb!alltriplezstar27! in order to get only those triples of multipliers which satisfy these restrictions. Therefore we define a function, identified by \verb!cond1! in GAP, which satisfies the conditions on the multipliers (in this case $E_{00}$). This function is defined in GAP as follows
\begin{spverbatim}
cond1:= function(a) return (\mod(a[2],9) = \mod(a[1],9)) and
(\mod(a[3],3) = \mod(a[2],3)); end;
\end{spverbatim}

The set \verb!alltriplezstar27! is then filtered according to this condition, by using the command
\[\verb!grp1 := Filtered(alltripleszstar27, cond1);!\]
The resulting group, grp1, will then be used as above instead of the group \verb!alltriplezstar27!.

Moreover, in the case of $A_1$ say, we have the condition $R_{00}$. Therefore, in order to enumerate, we first obtain the lists of cycle indices when there is no invariance condition, that is, we act with grp1 on $Y_0, Y_1, Y_2$. This will be given by A1.1. We shall then consider the invariance condition $\neg{R_{00}}$, that is, we now act with grp1 on $Y^\ast_0,Y_1$ and $Y_2$. This will be given by A1.2. $A_1$ will then be obtained by determining $A1.1-A1.2$. A similar procedure is adopted for $A_2$ and $A_3$.

\vspace{3mm}
Let us now consider the intersections. The intersection $|A_1\cap A_2|$, will be denoted by $A_{12}$ and the group effecting the action by grp12. Similarly $|A_1\cap A_3|$ is denoted by $A_{13}$ and the group by grp13, $|A_2\cap A_3|$ by $A_{23}$ and grp23 and $|A_1\cap A_2\cap A_3|$ by $A_{123}$ and grp123. As described previously, the group \verb!grp12!, is that generated by $\mbox{ grp1 }\cup \mbox{ grp2 }$, $\langle \mbox{ grp1 } \cup \mbox{ grp2 }\rangle$. This group is created in GAP as follows.

Suppose we have the lists of triples \verb!grp1! and \verb!grp2!, obtained by filtering \verb!alltipleszstar27! according to the conditions on the multipliers. Define a list, \verb!grp12!, to be an empty list. We then use two \verb!for! loops, to fill this list as follows
\begin{verbatim}
grp12:=[];
for a in grp1 do
for b in grp2 do
c:=[1,1,1];
c[1]:=\mod(a[1]*b[1],27);
c[2]:=\mod(a[2]*b[2],27);
c[3]:=\mod(a[3]*b[3],27);
Add(grp12,c);
od;
od;
\end{verbatim}
where the list \verb!c! is a triple to be included in the group we require. This list has as its first component, \verb!c[1]!, the result modulo 27, of the first component of a triple in \verb!grp1!, \verb!a[1]!, multiplied by the first component of a triple in \verb!grp2!, \verb!b[1]!. The second and third components of the list \verb!c! are obtained by taking the second and third components of the triples in \verb!grp1! and \verb!grp2! respectively.

In order to include all the lists \verb!c! in our empty list \verb!grp12! we use the command \verb!Add!, which takes our empty list as its first argument, and all lists \verb!c! to be added, as its second argument. This command would however create duplicates. In order to eliminate these duplicates we use the command \verb!DuplicateFreeList!. The resulting list of lists, gives the group $\langle \mbox{ grp1 } \cup \mbox{ grp2 }\rangle$.




The groups required for $A_{13}$, $A_{23}$ and $A_{123}$ need not be determined in the same way as \verb!grp12!. If we consider \verb!grp13! for example, since \verb!grp1!$\leq$\verb!grp3!, we have that \verb!grp13!$=$\verb!grp3! immediately. One must keep in mind however, that the sets being acted on are different in this case. Similarly \verb!grp123!=\verb!grp23!. Moreover, \verb!grp23!, is in fact equal to the set of all triples, \verb!alltripleszstar27!.

This method gave the same generating function and final result, that of 928 non-isomorphic undirected circulants, as the method described earlier, which made use of Table~\ref{table:Table3}.

In the next chapter, we verify this result again by using a different technique, namely one involving Schur rings.

\chapter{The Structural Method}
\section{The General Method Described}

The structural approach is based on the lattice $\mathcal{L}(n)$ of all Schur rings over $\mathbb{Z}_n$. This, together with information of their automorphism groups, suffices to carry out the enumeration. Therefore, in order to be able to use this method, determination of all Schur rings over $\mathbb{Z}_n$ and the description of the lattice are essential. In addition, the following conditions need to be satisfied:
\begin{itemize}
\item Every $\mathcal{S}$-ring is a transitivity module of a suitable overgroup $(G,\mathbb{Z}_n)$ of the regular group $(\mathbb{Z}_n,\mathbb{Z}_n)$, i.e. each $\mathcal{S}$-ring in the lattice is Schurian;
\item All $\mathcal{S}$-rings from the lattice are pairwise non-isomorphic.
\end{itemize}

Muzychuk \cite{Muzy94} proved that the second condition is satisfied for all values of $n$. Owing to the fact that a $p$-group is Schurian if and only if it is cyclic \cite{KLP96}, we have that the first condition is satisfied for $n=p^{m}, m\geq 1$ where $p$ is prime. This property also holds in the case of square-free $n$ and may in fact be valid for all values of $n$ \cite{KLP96}.
With these assumptions, one may use the enumeration scheme which follows. This scheme has already been described in \cite{KLP96}, but will be reproduced in this thesis for completeness purposes.
\begin{itemize}
\item We will first need to construct the lattice $\mathcal{L}(n)$ of all Schur rings as a sequence $\mathcal{L}(n)=(\mathfrak{S}_{1},\mathfrak{S}_{2},...\mathfrak{S}_{s})$ such that $\mathfrak{S}_{j}\subseteq\mathfrak{S}_{i}$ implies $j\leq i$;
\item The number of $r$-element basis sets of the $\mathcal{S}$-ring $\mathfrak{S}_{i}$, different from the basis set $T_{0}=\lbrace 0 \rbrace$ is given by

\begin{equation}
\tilde{d}_{ir}:=\vert \lbrace T_{(x)}\in \mathfrak{S}_{i}\vert \hspace{0.7mm} x\neq 0 \hspace{0.7mm} \mbox{ and }\hspace{0.7mm} \vert T_{(x)}\vert=r \rbrace\vert
\end{equation}

\item For undirected circulants, we will require the number of $r$-element symmetrized (that is closed under taking of inverses) basis sets of $\mathfrak{S}_{i}$, different from $T_{0}$. Let this be
\begin{equation}
d_{ir}:=\vert\lbrace T_{(x)}^{sym} \vert \hspace{0.7mm} x\neq 0 \hspace{0.7mm} \mbox { and } \hspace{0.7mm} \vert T_{(x)}^{sym}\vert=r \rbrace\vert
\end{equation}
\item Enumeration of all labelled directed and undirected circulant graphs which belong to the Schur ring $\mathfrak{S}_{i}$ may then be carried out by making use of generating functions $\tilde{f}_{i}(t)$ and $f_i(t)$ respectively, given by:

\begin{equation}\label{eq:erl}
\begin{split}
\tilde{f}_{i}(t)&:=\sum_{r=0}^{n-1}\tilde{f}_{ir}t^{r}:=\prod_{r=1}^{n-1}(1+t^{r})^{\tilde{d}_{ir}}\\
f_i(t)&:=\sum_{r=0}^{n-1}f_{ir}t^{r}:=\prod_{r=1}^{n-1}(1+t^{r})^{d_{ir}}
\end{split}
\end{equation}
\end{itemize}

Substituting $t=1$ in the generating functions, would give us the number of all labelled directed and undirected circulant graphs in $\mathfrak{S}_{i}$. In addition, the graph corresponding to $T \in \mathfrak{S}_{i}$ is of valency $r$ if $T$ has $r$ elements.

\begin{Lemma}[\cite{KLP96}]
Let $G_{i}= \mbox{ Aut }(\mathfrak{S}_{i})$, let $N(G_{i})=N_{S_{n}}(G_{i})$ be the normalizer of the group $G_{i}$ in $S_{n}$, and let $\Gamma$ be a circulant graph belonging to $\mathfrak{S}_{i}$. Then

\begin{itemize}
\item[\text{(a)}] $\mbox{ Aut }(\Gamma)=G_{i}\Longleftrightarrow \Gamma$ generates $\mathfrak{S}_{i}$.
\item[(b)] If $\mbox{ Aut }(\Gamma)=G_{i}$ then there are exactly $\lbrack N(G_{i}):G_{i} \rbrack$ (that is, equal to the number of cosets of $G_i$ in $N(G_{i})$) distinct circulant graphs which are isomorphic to $\Gamma$.
\end{itemize}
\end{Lemma}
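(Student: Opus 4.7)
The plan is to handle each part separately, relying on the two standing hypotheses of the lattice $\mathcal{L}(n)$: every $\mathcal{S}$-ring is Schurian, and (by Muzychuk) distinct $\mathcal{S}$-rings in $\mathcal{L}(n)$ are pairwise non-isomorphic. Throughout I write $\Gamma=\Gamma(T)$.

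For part (a), both directions will rest on the inclusion-reversing correspondence $G\leftrightarrow\mathfrak{S}(G,\mathbb{Z}_n)$ between overgroups of $(Z(n),\mathbb{Z}_n)$ and their transitivity modules. If $\Gamma$ generates $\mathfrak{S}_i$, then $T$ is a union of basic sets of $\mathfrak{S}_i$, so every $g\in G_i=\bigcap_j\mbox{Aut}(\Gamma_j)$ preserves $\Gamma$, giving $G_i\subseteq\mbox{Aut}(\Gamma)$; for the reverse, $\mbox{Aut}(\Gamma)$ is an overgroup of $Z(n)$ whose stabilizer of $0$ fixes the out-neighbourhood of $0$ setwise, so $\underline{T}$ lies in its transitivity module, which therefore contains the whole Schur ring $\mathfrak{S}_i$ generated by $T$, and reversing the correspondence gives $\mbox{Aut}(\Gamma)\subseteq G_i$. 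Conversely, assuming $\mbox{Aut}(\Gamma)=G_i$ and letting $\mathfrak{S}'$ be the $\mathcal{S}$-ring generated by $\underline{T}$ (so $\mathfrak{S}'\subseteq\mathfrak{S}_i$), the basic graphs of $\mathfrak{S}'$ are unions of basic graphs of $\mathfrak{S}_i$, so $G_i\subseteq\mbox{Aut}(\mathfrak{S}')$; combined with the trivial $\mbox{Aut}(\mathfrak{S}')\subseteq\mbox{Aut}(\Gamma)=G_i$ this yields $\mbox{Aut}(\mathfrak{S}')=G_i$, and the Schurian hypothesis then collapses $\mathfrak{S}'$ with $\mathfrak{S}(G_i,\mathbb{Z}_n)=\mathfrak{S}_i$.

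For part (b), I will define $\Phi\colon N(G_i)\to\mathcal{C}(\Gamma)$ by $\phi\mapsto\phi\Gamma$, where $\mathcal{C}(\Gamma)$ is the set of circulants on $\mathbb{Z}_n$ isomorphic to $\Gamma$. For $\phi\in N(G_i)$, $\mbox{Aut}(\phi\Gamma)=\phi G_i\phi^{-1}=G_i\supseteq Z(n)$, so $\phi\Gamma\in\mathcal{C}(\Gamma)$; and $\phi\Gamma=\psi\Gamma$ iff $\psi^{-1}\phi\in\mbox{Aut}(\Gamma)=G_i$, so the fibres of $\Phi$ are the left cosets of $G_i$ in $N(G_i)$. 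Hence the image has cardinality $[N(G_i):G_i]$, and the entire theorem reduces to showing that $\Phi$ is surjective.

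The main obstacle will be this surjectivity. Given $\Gamma'\in\mathcal{C}(\Gamma)$, choose any $\sigma\in S_n$ with $\Gamma'=\sigma\Gamma$, so that $\mbox{Aut}(\Gamma')=\sigma G_i\sigma^{-1}$. By part (a) applied to the circulant $\Gamma'$, there is a unique $\mathfrak{S}_j\in\mathcal{L}(n)$ generated by $\Gamma'$, with $\mbox{Aut}(\Gamma')=G_j$, so $G_j=\sigma G_i\sigma^{-1}$. The isomorphism $\sigma$ transports the basic graphs of $\mathfrak{S}_i$ (the orbital digraphs of the stabilizer of $0$ in $G_i$) to those of $\mathfrak{S}_j$, so $\mathfrak{S}_i$ and $\mathfrak{S}_j$ are combinatorially isomorphic as association schemes; Muzychuk's non-isomorphism property of $\mathcal{L}(n)$ then forces $\mathfrak{S}_i=\mathfrak{S}_j$, whence $G_j=G_i$ and $\sigma\in N(G_i)$, yielding the desired surjectivity.
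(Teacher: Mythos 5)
Your argument is correct and complete. Be aware that the thesis does not actually prove this lemma --- it is stated with only the citation \cite{KLP96} --- so there is no in-paper proof to compare against; your write-up supplies the argument the paper omits, and it is essentially the standard one from the cited source. Part (a) rests on the inclusion-reversing correspondence between overgroups of $Z(n)$ and their transitivity modules, and part (b) realises the labelled circulants isomorphic to $\Gamma$ as the orbit of $\Gamma$ under $N(G_i)$, whose stabiliser of $\Gamma$ is $N(G_i)\cap\mbox{Aut}(\Gamma)=G_i$. The two points where something substantive is needed are exactly where you invoke the standing hypotheses of the structural method: the Schurian property of every $\mathcal{S}$-ring over $\mathbb{Z}_n$ (used, in the form $\mathfrak{S}=\mathfrak{S}(\mbox{Aut}(\mathfrak{S}),\mathbb{Z}_n)$, to collapse $\mathfrak{S}'$ with $\mathfrak{S}_i$ in the forward direction of (a)), and Muzychuk's pairwise non-isomorphism of the $\mathcal{S}$-rings in $\mathcal{L}(n)$ (to force $\sigma\in N(G_i)$ in the surjectivity step of (b)); both are genuinely necessary and you placed them correctly. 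The one step worth writing out explicitly is the containment $\mbox{Aut}(\Gamma)\subseteq\mbox{Aut}(\mathfrak{S}(\mbox{Aut}(\Gamma),\mathbb{Z}_n))$ hidden in ``reversing the correspondence'': it holds because the basic graphs of the transitivity module of a transitive overgroup of $Z(n)$ are precisely that group's orbital digraphs, each of which the group preserves.
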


Let the generating function for the number of pairwise non-isomorphic undirected circulant graphs with automorphism group $G_{i}$, be given by
\begin{equation*}
g_i(t)=\sum_{r=0}^{n-1}g_{ir}t^{r}
\end{equation*}
whereas the generating function for the number of pairwise non-isomorphic directed circulant graphs with automorphism group $G_{i}$ is
\begin{equation*}
\tilde{g}_{i}(t)=\sum_{r=0}^{n-1}\tilde{g}_{ir}t^{r}
\end{equation*}
Moreover,
\begin{equation*}
g(t)=g(n,t) \mbox{ and } \tilde{g}(t)=\tilde{g}(n,t)
\end{equation*}
will denote the generating functions for the number of pairwise non-isomorphic undirected and directed circulant graphs, respectively, with $n$ vertices. $g(1)$ and $\tilde{g}(1)$ give the numbers of all non-isomorphic undirected and directed circulant graphs respectively, with $n$ vertices. Here if $G$ is a subpermutation group of $S_n$, then $N(G)$ denotes the normaliser of $G$ in $S_n$ \cite{KLP96}.

\begin{Theorem}[\cite{KLP96}]
\label{thm:Theorem5}
\begin{equation}\label{eq:erl1}
\begin{split}
g_{i}(t)&=\frac{\vert G_{i} \vert}{\vert N(G_i)\vert} \left( f_{i}(t)-\sum_{\mathfrak{S}_{j} \subseteq \mathfrak{S}_{i}}\frac{\vert N(G_{j})\vert}{\vert G_{j} \vert}g_{j}(t)\right),\\
\tilde{g}_{i}(t)&=\frac{\vert G_{i} \vert}{\vert N(G_i)\vert}\left(\tilde{f}_{i}(t)-\sum_{\mathfrak{S}_{j}\subseteq\mathfrak{S}_{i}}\frac{\vert N(G_{j})\vert}{\vert G_{j} \vert}\tilde{g}_{j}(t)\right),\\
g(t)&=\sum_{i=1}^{s}g_{i}(t), \hspace{3mm} \tilde{g}(t)=\sum_{i=1}^{s}\tilde{g}_{i}(t).
\end{split}
\end{equation}
\end{Theorem}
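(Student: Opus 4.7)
The plan is to enumerate the labelled circulant graphs in $\mathfrak{S}_i$ in two different ways and equate them. Every labelled circulant $\Gamma(\mathbb{Z}_n,X)$ generates a unique minimum Schur ring $\mathfrak{S}(\Gamma)$ (the one spanned by $X$), and by part (a) of the preceding Lemma this is $\mathfrak{S}_j$ exactly when $\mbox{Aut}(\Gamma)=G_j$. Hence the labelled circulants belonging to $\mathfrak{S}_i$ partition, according to the Schur ring they themselves generate, into classes indexed by $\mathfrak{S}_j\subseteq\mathfrak{S}_i$, and the three displayed identities will drop out of the corresponding identity of generating functions by inclusion-exclusion on the lattice.

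First I would verify that the two enumerations of labelled circulants in $\mathfrak{S}_i$ match. On the one hand, a labelled undirected circulant in $\mathfrak{S}_i$ is specified by independently deciding, for each non-zero symmetrized basic set of $\mathfrak{S}_i$, whether or not to include it in the connection set; grouping these choices by the cardinality $r$ of the basic set and tracking the resulting valency gives the product $\prod_r(1+t^r)^{d_{ir}}=f_i(t)$ of (3.3), and the directed count is identical with $\tilde d_{ir}$ and $\tilde f_i$. On the other hand, by part (b) of the preceding Lemma, each isomorphism class of circulants with automorphism group $G_j$ contributes exactly $[N(G_j):G_j]=|N(G_j)|/|G_j|$ labelled members; summing over classes and sorting by degree, the labelled circulants generating $\mathfrak{S}_j$ contribute $\tfrac{|N(G_j)|}{|G_j|}\,g_j(t)$. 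Equating the two expressions gives
\[
f_i(t)\;=\;\sum_{\mathfrak{S}_j\subseteq\mathfrak{S}_i}\frac{|N(G_j)|}{|G_j|}\,g_j(t),
\]
and solving for $g_i(t)$ by isolating the $j=i$ term (the remaining sum on the right of (3.4) being understood as running over proper subrings, so that the recursion is well-founded thanks to the ordering on $\mathcal{L}(n)$ chosen at the start of the chapter) yields the first formula. The same argument with tildes gives the directed version, and the third pair of identities is immediate: since every non-isomorphic circulant of order $n$ generates exactly one $\mathfrak{S}_i$ in the lattice, summing $g_i(t)$ over $i$ counts each isomorphism class once, and likewise for $\tilde g(t)$.

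The main obstacle, and the step I would linger over, is the $[N(G_i):G_i]$ multiplicity coming from part (b) of the preceding Lemma. I would prove it as an orbit--stabilizer computation: the normaliser $N(G_i)$ acts on the set of labelled circulants $\Gamma$ with $\mbox{Aut}(\Gamma)=G_i$ by relabelling (well-defined because $\sigma G_i\sigma^{-1}=G_i$ for $\sigma\in N(G_i)$), two such labelled circulants are isomorphic iff they lie in the same $S_n$-orbit, and since $\mbox{Aut}(\Gamma)=G_i$ is precisely the $S_n$-stabiliser of $\Gamma$, the stabiliser inside $N(G_i)$ is also $G_i$. Orbit--stabiliser then yields orbit size $|N(G_i)|/|G_i|$, as required. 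Once this is secured, the remainder of the proof is routine manipulation of the generating functions $f_i, \tilde f_i, g_j, \tilde g_j$ along the finite lattice $\mathcal{L}(n)$.
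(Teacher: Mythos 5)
Your proof is correct and follows exactly the route the paper indicates: the paper itself offers no proof of Theorem~\ref{thm:Theorem5}, deferring to \cite{KLP96} with only the remark that the argument is by inclusion--exclusion, and your double count of the labelled circulants belonging to $\mathfrak{S}_{i}$ --- once via free choices of (symmetrized) basic sets, giving $f_{i}(t)$, and once via the partition by the Schur ring each circulant generates, with each isomorphism class weighted by $[N(G_{j}):G_{j}]$ through the orbit--stabiliser argument you supply for part (b) of the Lemma --- is precisely that inclusion--exclusion. You are also right to insist that the sum in \eqref{eq:erl1} be read over proper subrings $\mathfrak{S}_{j}\subsetneq\mathfrak{S}_{i}$ (taken literally it would include $j=i$ and be circular), which is indeed how the paper applies the formula in its computations for $n=6$, $n=13$ and $n=27$.
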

For a proof of this theorem, the reader is referred to \cite{KLP96}. This proof is based on the principle of inclusion and exclusion.

\section{Simple Examples}
\subsection{$\MakeLowercase{n=6}$}

Let us consider the example with $n=6$. The following is a list of all Schur rings, $(\mathfrak{S}_{1},\mathfrak{S}_{2},...\mathfrak{S}_{6})$, over $\mathbb{Z}_6$ as given in \cite{KLP96}:
\begin{equation*}
\begin{split}
\mathfrak{S}_{1}&=\langle\underline{0},\underline{1,2,3,4,5}\rangle,\\
\mathfrak{S}_{2}&=\langle\underline{0},\underline{1,2,4,5},\underline{3}\rangle,\\
\mathfrak{S}_{3}&=\langle\underline{0},\underline{1,3,5},\underline{2,4}\rangle,\\
\mathfrak{S}_{4}&=\langle\underline{0},\underline{1,5},\underline{2,4},\underline{3}\rangle,\\
\mathfrak{S}_{5}&=\langle\underline{0},\underline{1,3,5},\underline{2},\underline{4}\rangle,\\
\mathfrak{S}_{6}&=\langle\underline{0},\underline{1},\underline{2},\underline{3},\underline{4},\underline{5}\rangle.
\end{split}
\end{equation*}
One must note that for $\mathfrak{S}_{j} \subseteq\mathfrak{S}_{i}$, the automorphism groups $G_{i}$ and $G_{i}$ satisfy $G_{j}\geq G_{i}$. In the above list of $\mathcal{S}$-rings, $\mathfrak{S}_{1}$ is the coarsest, having the largest automorphism group, whereas $\mathfrak{S}_{6}$ is the finest with the smallest automorphism group. This means $\mathfrak{S}_{6}$ contains all the other Schur rings. In figure~\ref{Fig3} we have the lattice of all $\mathcal{S}$-rings.

\begin{figure}[!h]
\centering
\includegraphics[width=0.3\linewidth]{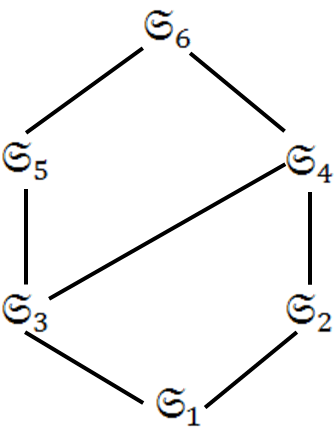}
\caption{Lattice of all $\mathcal{S}$-rings over $\mathbb{Z}_6$}
\label{Fig3}
\end{figure}

Using \eqref{eq:erl} we obtain the following:
\begin{quote}
\centering
\begin{tabular}{ll}
$f_{1}(t)=1+t^{5}$,&$\tilde{f}_{1}(t)=1+t^{5}$,\\
$f_{2}(t)=(1+t)(1+t^4)$,&$\tilde{f}_{2}(t)=(1+t)(1+t^{4})$,\\
$f_{3}(t)=(1+t^{2})(1+t^{3})$,&$\tilde{f}_{3}(t)=(1+t^{2})(1+t^{3})$,\\
$f_{4}(t)=(1+t^{2})^{2}(1+t)$,&$\tilde{f}_{4}(t)=(1+t^{2})^{2}(1+t)$,\\
$f_{5}(t)=(1+t^{2})(1+t^{3})$,&$\tilde{f}_{5}(t)=(1+t)^{2}(1+t^{3})$,\\
$f_{6}(t)=(1+t)(1+t^{2})^{2}$,&$\tilde{f}_{6}(t)=(1+t)^{5}$,\\
\end{tabular}
\end{quote}

The following is a list of automorphism groups and their corresponding normalizers.

\begin{quote}
\centering
\begin{tabular}{l l}
Automorphism Group&Normalizer\\
$G_{1}=S_{6}$,&$N(G_{1})=G_{1}$,\\
$G_{2}=S_{3}\wr S_{2}$,&$N(G_{2})=G_{2}$,\\
$G_{3}=S_{2}\wr S_{3}$,&$N(G_{3})=G_{3}$,\\
$G_{4}=D_{6}$,&$N(G_{4})=G_{4}$,\\
$G_{5}=S_{2}\wr\mathbb{Z}_{3}$,&$\lbrack N(G_{5}):G_{5}\rbrack=2$,\\
$G_{6}=\mathbb{Z}_{6}$,&$\lbrack N(G_{6}):G_{6}\rbrack=2$.
\end {tabular}
\end{quote}
These groups and their respective normalizers, may be determined using the software package COCO. Some of these may even be determined visually since for example $G_{1}=$Aut$(\mathfrak{S}_1)=\bigcap_{T_{i}\in \mathfrak{S}_{i}}$Aut$(Cay(\mathbb{Z}_6,T_{i}))\hspace{0.7mm}$. Cayley graphs for $\mathfrak{S}_{1}, \mathfrak{S}_{2}$ and $\mathfrak{S}_{4}$ are shown in figure~\ref{Fig4}. From these one may note that the automorphism group of $\mathfrak{S}_{1}$ is given by the symmetric group $S_{6}$, $G_{2}$ is given by the wreath product of $S_{3}$ with $S_{2}$, while $G_{4}$ is given by the dihedral group $D_{6}$.

\begin{figure}[h!]
\centering
\includegraphics[width=0.75\linewidth]{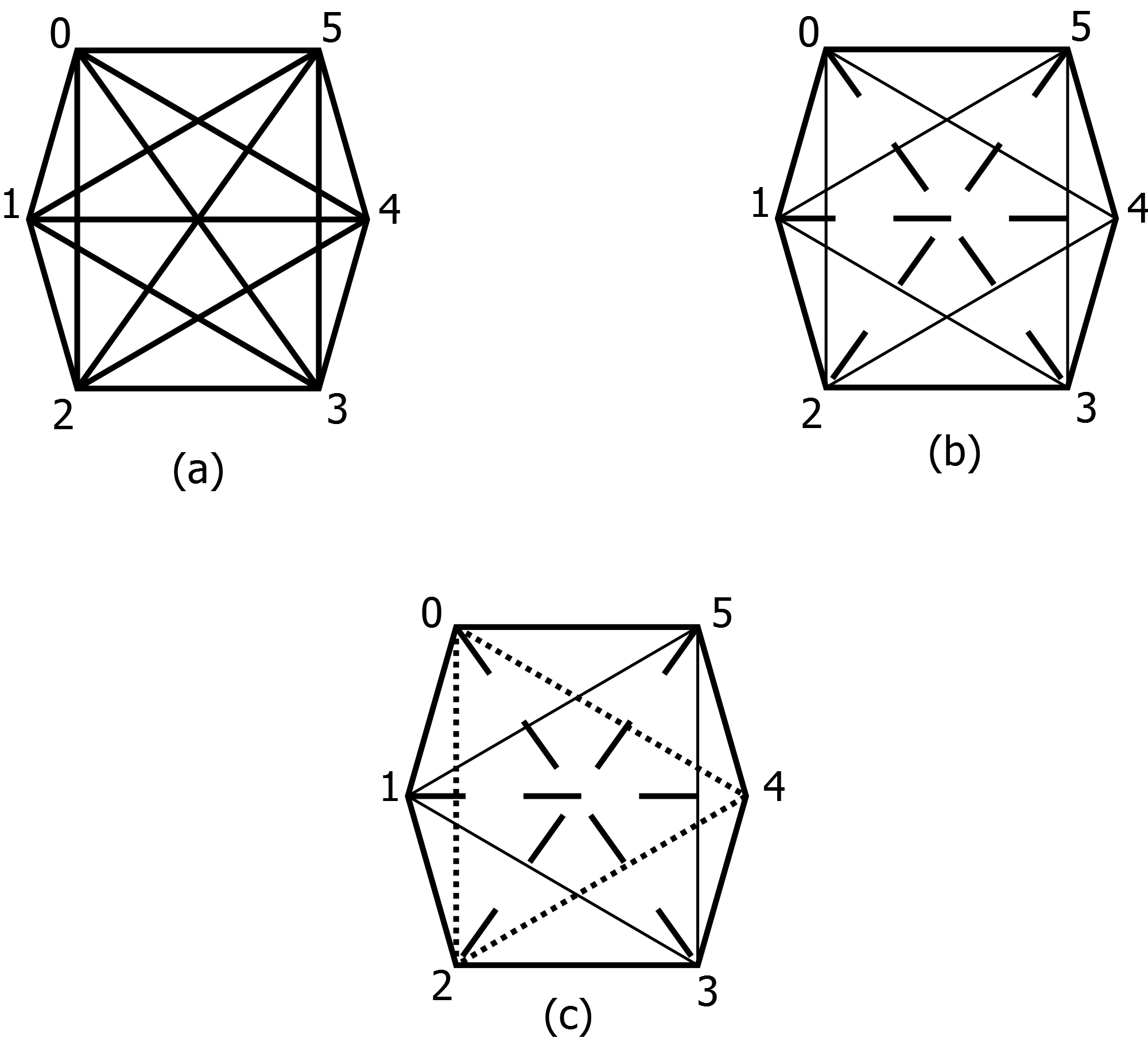}
\caption{(a) Cay$(\mathbb{Z}_6, T_{i})\hspace{0.7mm}, T_{i} \in \mathfrak{S}_{1}$, (b) Cay$(\mathbb{Z}_6, T_{i}) \hspace{0.7mm}, T_{i} \in \mathfrak{S}_{2}$,\\
\newline
(c) Cay$(\mathbb{Z}_6, T_{i}) \hspace{0.7mm}, T_{i} \in \mathfrak{S}_{4}$}
\label{Fig4}
\end{figure}

We will now determine $g_{i}(t)$ for $i=1,2,...,6$, using \eqref{eq:erl1} and figure~\ref{Fig3}. If we consider $\mathfrak{S}_{1}$, this $\mathcal{S}$-ring contains no other Schur rings according to the lattice. Therefore the summation in our equation will reduce to zero and we have $g_{1}(t)=\frac{\vert G_{1} \vert}{\vert N(G_1)\vert} \left( f_{1}(t)\right)$. As stated above, $G_{1}=S_{6}$, therefore $\vert G_{1} \vert=6!$ also. Since $N(G_{1})=G_{1}$, then $\vert N(G_{1})\vert =6!$. Therefore $g_{1}(t)=f_{1}(t)=1+t^{5}$.

Let us now consider $\mathfrak{S}_{2}$. According to the lattice, this $\mathcal{S}$-ring contains $\mathfrak{S}_{1}$. Therefore the summation in the equation will now contain a term in $g_{1}$.

\[g_{2}(t)=\frac{\vert G_{2} \vert}{\vert N(G_2)\vert} \left( f_{2}(t)-\frac{\vert N(G_{1}) \vert}{\vert G_1\vert}g_{1}(t)\right)\]

Since $N(G_{1})=G_{1}$ and $N(G_{2})=G_{2}$, the ratios $\frac{\vert N(G_{1}) \vert}{\vert G_1\vert}$ and $\frac{\vert G_{2} \vert}{\vert N(G_2)\vert}$ are both equal to one. Therefore we obtain
\[g_{2}(t)=f_{2}(t)-g_{1}(t)=(1+t)(1+t^{4})-(1+t^{5})=t+t^4\]
Like $\mathfrak{S}_{2}$, $\mathfrak{S}_{3}$ only contains $\mathfrak{S}_{1}$. Therefore $g_{3}$ may be obtained in a similar fashion to $g_{2}$. In doing so we obtain

\[g_{3}(t)=f_{3}(t)-g_{1}(t)=t^{2}+t^{3}\]
The $\mathcal{S}$-ring $\mathfrak{S}_{4}$, contains $\mathfrak{S}_{1}$, $\mathfrak{S}_{2}$,and $\mathfrak{S}_{3}$, as such the summation in our equation will now contain terms in $g_{1}, g_{2}$ and $g_{3}$. In fact we have
\begin{equation*}
\begin{split}
g_{4}(t)&=\frac{\vert G_{4} \vert}{\vert N(G_4)\vert} \left( f_{4}(t)-\frac{\vert N(G_{1}) \vert}{\vert G_1\vert}g_{1}(t)+\frac{\vert N(G_{2}) \vert}{\vert G_2\vert}g_{2}(t)+\frac{\vert N(G_{3}) \vert}{\vert G_3\vert}g_{3}(t)\right)
\end{split}
\end{equation*}
Now $\vert N(G_{1})\vert=\vert G_{1}\vert, \vert N(G_{2})\vert=\vert G_{2}\vert, \vert N(G_{3})\vert=\vert G_{3}\vert, \vert N(G_{4})\vert=\vert G_{4}\vert$. Therefore

\begin{equation*}
\begin{split}
g_{4}(t)&=f_{4}(t)-(g_{1}+g_{2}+g_{3})\\
g_{4}(t)&=(1+t^{2})^{2}(1+t)-(1+t^{5}+t+t^{4}+t^{2}+t^{3})\\
&=t^{2}+t^{3}
\end{split}
\end{equation*}

The same method may be repeated for $g_{5}$ and $g_6$. These give:

\begin{equation*}
\begin{split}
g_{5}(t)&=\frac{1}{2}(f_{5}(t)-g_{1}(t)-g_{3}(t))=0\\
g_{6}(t)&=\frac{1}{2}(f_{6}(t)-g_{1}(t)-g_{2}(t)-g_{3}(t)-g_{4}(t)-2g_{5}(t))=0
\end{split}
\end{equation*}

Therefore we have
\begin{equation*}
\begin{split}
g(t)&=g_{1}+g_{2}+g_{3}+g_{4}+g_{5}+g_{6}\\
&=1+t^5+t+t^4+t^2+t^3+t^2+t^3+0+0\\
&=1+t+2t^2+2t^3+t^4+t^5
\end{split}
\end{equation*}

The generating function for the number of pairwise non-isomorphic directed circulant graphs $\tilde{g}(t)$ may be determined using the same method described above. This is given by \cite{KLP96}:

\[\tilde{g}(t)=1+3t+6t^2+6t^3+3t^4+t^5.\]
Substituting $t=1$ in both generating functions, we obtain the number of all non-isomorphic undirected and directed circulant graphs with 6 vertices. This gives $g(1)=8$ and $\tilde g(1)=20$.

\subsection{$\MakeLowercase{n=13}$}

Now let us consider the case when $n=p$, where $p$ is a prime number. The list of Schur rings may be found as follows. Consider the multiplicative group $\mathbb{Z}^{\ast}_{p}$ and let $H=H_{d}$ be a subgroup of order $d$. Let the set of distinct cosets of the subgroup $H$ in $\mathbb{Z}^{\ast}_{p}$ be $H_1, H_2,...H_r$ where $r=\frac{p-1}{d}$. Then the basic sets of the Schur ring $\mathfrak{S}_{d}$ over $\mathbb{Z}_{p}$ are $\langle \underline{0}, \underline{H},\underline{H_1},...\underline{H_r}\rangle$. Each subgroup gives a Schur ring which is not isomorphic to any other $\mathcal{S}$-ring, thus there are as many $\mathcal{S}$-rings as there are subgroups $H$. Since $\mathbb{Z}^{\ast}_{p}$ is cyclic, of order $p-1$, there is exactly one subgroup for every divisor $d$ of $p-1$ \cite{KLP96}.

\theoremstyle{definition}
\begin{Theorem}[\cite{KLP96}]
\label{thm:theorem6}
\textnormal{ }\\
\begin{itemize}
\item[(i)] \itemEq{\mbox{ Each $\mathcal{S}$-ring over } \mathbb{Z}_{p} \mbox{ coincides with some } \mathfrak{S}_{d} \hspace{0.7mm}(d\vert(p-1)).\notag}
\item[(ii)] \itemEq {\mbox{ Aut }(\mathfrak{S}_{d})=\left\{
\begin{array}{ll}
\mathbb{Z}_{p} \rtimes H_{d} &\mbox{ if } d <p-1\\
S_{p} & \mbox { if } d=p-1.
\end{array} \right.\notag}
\item[(iii)] \itemEq {\mbox{ N((Aut }(\mathfrak{S}_{d}))=\left\{
\begin{array}{ll}
\mathbb{Z}_{p} \rtimes \mathbb{Z}^{\ast}_{p} &\mbox{ if } d<p-1\\
S_{p} &\mbox{ if } d=p-1.
\end{array} \right.\notag}
\item[(iv)] \itemEq {\lbrack \mbox {N(Aut }(\mathfrak{S}_{d})):\mbox{ Aut }(\mathfrak{S}_{d})\rbrack=\frac{p-1}{d}.\notag}
\end{itemize}
\end{Theorem}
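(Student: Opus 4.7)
The plan is to prove (i) first (which carries the substantive structural content) and then derive (ii)–(iv) from it together with standard facts about transitive permutation groups of prime degree.

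For (i), let $\mathfrak{S}$ be any $\mathcal{S}$-ring over $\mathbb{Z}_p$ with basic sets $\{0\}=T_0,T_1,\ldots,T_{r-1}$, where $T_1$ is chosen to contain $1$. The starting point is the Schur–Wielandt principle: for every $k\in\mathbb{Z}_p^{\ast}$ the map $x\mapsto kx$ permutes the basic sets of $\mathfrak{S}$, a consequence of $\gcd(k,p)=1$ and the closure of $\mathfrak{S}$ under the ring operations reviewed earlier. Setting $H:=\{k\in\mathbb{Z}_p^{\ast}:kT_1=T_1\}$, I would first note that $H$ is a subgroup of $\mathbb{Z}_p^{\ast}$ and then establish $T_1=H$: the inclusion $H\subseteq T_1$ is immediate from $1\in T_1$, while for the reverse inclusion, any $x\in T_1$ has $xT_1$ a basic set containing $x\in T_1$, so by disjointness of basic sets $xT_1=T_1$, giving $x\in H$. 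For a generic basic set $T_i$ and any $x\in T_i$, the same disjointness argument applied to $x^{-1}T_i$, which contains $1\in T_1$, yields $x^{-1}T_i=T_1=H$, hence $T_i=xH$. Thus every basic set other than $\{0\}$ is a coset of $H$, so $\mathfrak{S}=\mathfrak{S}_d$ with $d=|H|$, and $H=H_d$ is the unique subgroup of order $d$ in the cyclic group $\mathbb{Z}_p^{\ast}$.

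For (ii), the inclusion $\mathbb{Z}_p\rtimes H_d\subseteq\mathrm{Aut}(\mathfrak{S}_d)$ is direct: translations $x\mapsto x+c$ preserve every Cayley graph $\Gamma(\mathbb{Z}_p,T)$, and multiplications $x\mapsto mx$ with $m\in H_d$ fix each coset of $H_d$, hence each basic circulant. The reverse inclusion uses Burnside's theorem on transitive permutation groups of prime degree: any such group is either contained in $\mathrm{AGL}(1,p)=\mathbb{Z}_p\rtimes\mathbb{Z}_p^{\ast}$ or is $2$-transitive. When $d<p-1$ there is a basic set $T_i$ with $0<|T_i|<p-1$, so the corresponding basic circulant $\Gamma_i$ is neither empty nor complete and $\mathrm{Aut}(\Gamma_i)$ is not $2$-transitive; hence $\mathrm{Aut}(\mathfrak{S}_d)\subseteq\mathrm{AGL}(1,p)$, and a short check using that its multiplicative part must stabilise every coset $xH_d$ pins this multiplicative part down to exactly $H_d$. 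When $d=p-1$, the only non-trivial basic set is $\mathbb{Z}_p-\{0\}$, its circulant is $K_p$, and $\mathrm{Aut}(\mathfrak{S}_{p-1})=\mathrm{Aut}(K_p)=S_p$.

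For (iii), when $d<p-1$ the group $\mathbb{Z}_p\rtimes H_d$ has order $pd$ with $d<p$, so $\mathbb{Z}_p$ is its unique Sylow $p$-subgroup and therefore characteristic in $\mathrm{Aut}(\mathfrak{S}_d)$; consequently $N_{S_p}(\mathrm{Aut}(\mathfrak{S}_d))\subseteq N_{S_p}(\mathbb{Z}_p)=\mathbb{Z}_p\rtimes\mathbb{Z}_p^{\ast}$. Conversely, because $\mathbb{Z}_p^{\ast}$ is abelian, $H_d$ is normal in $\mathbb{Z}_p^{\ast}$ and so $\mathbb{Z}_p\rtimes H_d$ is normal in $\mathbb{Z}_p\rtimes\mathbb{Z}_p^{\ast}$, giving the reverse inclusion and hence equality. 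For $d=p-1$, both $\mathrm{Aut}(\mathfrak{S}_{p-1})$ and its normaliser in $S_p$ coincide with $S_p$. Part (iv) is then a one-line index computation: $[\mathbb{Z}_p\rtimes\mathbb{Z}_p^{\ast}:\mathbb{Z}_p\rtimes H_d]=[\mathbb{Z}_p^{\ast}:H_d]=(p-1)/d$, which equals $1$ in the $d=p-1$ case. The main obstacle in the whole argument is the appeal to the Schur–Wielandt principle in (i); once that is available, (ii)–(iv) are essentially formal, with the only other sensitive point being the invocation of Burnside's classification in (ii) to exclude $2$-transitive overgroups of $\mathbb{Z}_p\rtimes H_d$.
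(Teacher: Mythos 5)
Your argument is essentially correct, but note that the thesis itself does not prove this theorem: it only describes the construction of the rings $\mathfrak{S}_d$ from the cosets of $H_d\leq\mathbb{Z}_p^{\ast}$ and cites \cite{KLP96} for the full statement, so you have supplied a proof where the paper supplies none. What you have written is in substance the classical argument of Klin--Liskovets--P\"{o}schel: part (i) via the fact that multiplication by units permutes basic sets, part (ii) via Burnside's dichotomy for transitive groups of prime degree, and parts (iii)--(iv) via the normalizer of a Sylow $p$-subgroup in $S_p$ together with the normality of $\mathbb{Z}_p\rtimes H_d$ in $\mathrm{AGL}(1,p)$; all of these steps check out, including the coset argument $T_i=xH$ and the identification of the point stabiliser of $\mathrm{Aut}(\mathfrak{S}_d)$ with $H_d$.

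The one place you should be more careful is the opening appeal in (i). The statement that $x\mapsto kx$ permutes the basic sets for $(k,p)=1$ is not ``a consequence of the closure of $\mathfrak{S}$ under the ring operations'' in any immediate sense, and it is not the Schur--Wielandt principle itself: that principle is the coefficient-extraction statement (each level set of the coefficients of an element of $\mathfrak{S}$ is again a simple quantity of $\mathfrak{S}$). What you need is Schur's multiplier theorem for $\mathcal{S}$-rings over cyclic groups, whose proof combines the Schur--Wielandt principle with the Frobenius congruence $\bigl(\sum_h\alpha_h\underline{h}\bigr)^q\equiv\sum_h\alpha_h\,\underline{qh}\pmod{q}$ for primes $q$, iterated over the prime factors of $k$. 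Since you correctly flag this as the main obstacle and the result is standard, the proof stands, but the attribution and the claim that it is a formal consequence of the axioms should be corrected.
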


From this theorem it can be proved that Adam's conjecture holds for all prime numbers $n=p$ \cite{FIK90}.

Let us suppose $n=13$. The divisors of $p-1$, i.e. of 12 are 1, 2, 3, 4, 6 and 12. Therefore we have 6 Schur rings. The subgroup $H=\lbrace 1 \rbrace$, returns the $\mathcal{S}$-ring $\mathfrak{S}_{1}$ given by
\[\mathfrak{S}_{1}=\langle \underline{0}, \underline{1},\underline{2},\underline{3},\underline{4},\underline{5},\underline{6},\underline{7},\underline{8},\underline{9},\underline{10},\underline{11},\underline{12}\rangle \]

The only subgroup $H$ of order 2, is $\lbrace 1,a\rbrace$ where $a^2=1\bmod 13$. This gives $a=12$ since $144=1+143=1+11(13)$. Therefore the basic subsets of the $\mathcal{S}$-ring $\mathfrak{S}_{2}$, given by the cosets of the subgroup $\lbrace 1, 12 \rbrace$ in $\mathbb{Z}^\ast_{13}$ are
\[\mathfrak{S}_{2}=\langle \underline{0}, \underline{1,12},\underline{2,11},\underline{3,10},\underline{4,9},\underline{5,8},\underline{6,7}\rangle. \]

The subgroup $H$ of order 3, is $\lbrace 1,a,a^{2}\rbrace$ where $a^3=1\bmod 13$. This gives $a=3$ since $27=1+26=1+2(13)$. Therefore $\mathfrak{S}_{3}$ is given by:
\[\mathfrak{S}_{3}=\langle \underline{0}, \underline{1,3,9},\underline{2,6,5},\underline{4,12,10},\underline{7,8,9}\rangle. \]

Repeating the above procedure one can obtain the remaining Schur rings. These are
\[\mathfrak{S}_{4}=\langle \underline{0}, \underline{1,5,8,12},\underline{2,3,10,11},\underline{4,7,6,9}\rangle, \]
\[\mathfrak{S}_{6}=\langle \underline{0}, \underline{1,3,4,9,10,12},\underline{2,5,6,7,8,11}\rangle, \]
\[\mathfrak{S}_{12}=\langle \underline{0}, \underline{1,2,3,4,5,6,7,8,9,10,11,12}\rangle. \]

\vspace{5mm}
Using the method described previously in $n=6$, we can obtain the generating functions $f_{i}(t)$ and $\tilde{f}_{i}(t)$ for undirected and directed circulants respectively and evaluate $f_{i}(1)$ and $\tilde{f}_{i}(1)$ accordingly. These are as follows:

\begin{quote}
\centering
\begin{tabular}{ll}
$f_{1}(t)=(1+t^{2})^{6}=2^6$,&$\tilde{f}_{1}(t)=(1+t)^{12}=2^{12}$,\\
$f_{2}(t)=(1+t^{2})^{6}=2^6$,&$\tilde{f}_{2}(t)=(1+t^{2})^{6}=2^6$,\\
$f_{3}(t)=(1+t^{6})^{2}=2^2$,&$\tilde{f}_{3}(t)=(1+t^{3})^4=2^4$,\\
$f_{4}(t)=(1+t^{4})^{3}=2^3$,&$\tilde{f}_{4}(t)=(1+t^{4})^{3}=2^3$,\\
$f_{6}(t)=(1+t^{6})^{2}=2^2$,&$\tilde{f}_{6}(t)=(1+t^{6})^{2}=2^2$,\\
$f_{12}(t)=(1+t^{12})=2^1$,&$\tilde{f}_{12}(t)=(1+t^{12})=2^1$,\\
\end{tabular}
\end{quote}

Figure~\ref{Fig5} shows the lattice of Schur rings. Note here, that unlike the case $n=6$, the Schur ring with the largest automorphism group, has the largest index whereas that with the smallest automorphism group, has the smallest index.
\vspace{5mm}
\begin{figure}[h!]
\centering
\includegraphics[width=0.3\linewidth]{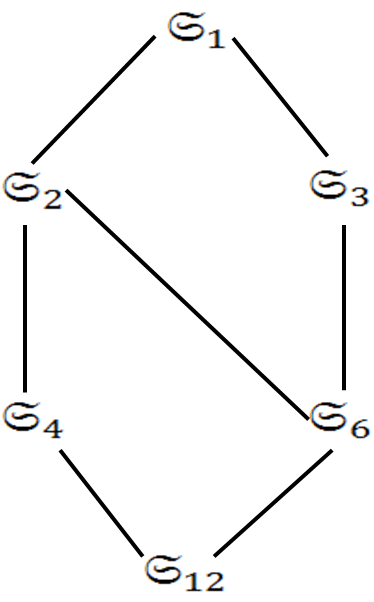}
\caption{Lattice of all $\mathcal{S}$-rings over $\mathbb{Z}_{13}$}
\label{Fig5}
\end{figure}

Using the computer package GAP and/or Theorem~\ref{thm:theorem6}, one can obtain the following sizes for the automorphism groups and their normalizers:

\begin{quote}
\centering
\begin{tabular}{|c|c|c|c|}
\hline
Schur Ring&Size of Automorphism Group&Size of Normalizer&$\frac{\vert G_{i}\vert}{\vert N(G_{i})\vert}$\\
\hline
$\mathfrak{S}_{1}$&13&156&$\frac{1}{12}$\\
$\mathfrak{S}_{2}$&26&156&$\frac{1}{6}$\\
$\mathfrak{S}_{3}$&39&156&$\frac{1}{4}$\\
$\mathfrak{S}_{4}$&52&156&$\frac{1}{3}$\\
$\mathfrak{S}_{6}$&78&156&$\frac{1}{2}$\\
$\mathfrak{S}_{12}$&6227020800&6227020800&$\frac{1}{1}$\\
\hline
\end{tabular}
\end{quote}

\vspace{3mm}
Since
\[\mbox{ Aut }\mathfrak{S}=\bigcap^{r-1}_{i=0}\mbox { Aut }\Gamma_i\]
the sizes of the automorphism groups of the Schur rings and their normalizers, are obtained in GAP as follows. We first create the group generated by $a=(1,...,13)$ and determine the connecting sets $T_1,...,T_{13}$, by taking into consideration the simple quantities in that particular Schur ring. For example for $\mathfrak{S}_4$, we have: $T_1:=[a,a^5,a^8,a^{12}]$, $T_2:=[a^2,a^{3},a^{10},a^{11}]$ and $T_3:=[a^4,a^7,a^6,a^9]$.

The Cayley graphs are then created and the automorphism group for each Cayley graph is determined. The intersection of all automorphism groups then gives the automorphism group of the Schur ring being considered, as such we simply have to determine the size of the intersection of all automorphism groups. The size of the normalizer is then found using the appropriate command. For example for $\mathfrak{S}_4$ we have:

\begin{spverbatim}
a:=(1,2,3,4,5,6,7,8,9,10,11,12,13);;
grp:=Group([a]);;

T1:=[a,a^5,a^8,a^{12}];;
T2:=[a^2,a^3,a^{10},a^{11}];;
T3:=[a^4,a^7,a^6,a^9];;

g1:=CayleyGraph(grp,T1);;
g2:=CayleyGraph(grp,T2);;
g3:=CayleyGraph(grp,T3);;

grp1:=AutomorphismGroup(g1);;
grp2:=AutomorphismGroup(g2);;
grp3:=AutomorphismGroup(g3);;

grpall:=Intersection(grp1,grp2,grp3);;
Size(grpall);
Size(Normalizer(SymmetricGroup(13),grpall));
\end{spverbatim}

\vspace{5mm}
For other Schur rings that have connection sets which are not closed under inversion, the command \verb!CayleyGraph(grp,Ti)! should be replaced by \[\verb!CayleyGraph(grp,Ti,false)!.\]
By default, \verb!CayleyGraph(grp,Ti,true)! (or \verb!CayleyGraph(grp,Ti)!), returns an undirected Cayley graph, therefore to specify a directed Cayley Graph, the argument ``false" needs to be included.

\vspace{5mm}
Now using \eqref{eq:erl1} and the lattice, we can obtain the functions $g_{i}(t)$ and $\tilde{g}_{i}(t)$.

\begin{equation*}
\begin{split}
g_{12}(1)&=\frac{1}{1}(f_{12}(1))=\frac{1}{1}\cdot 2=2\\
g_{6}(1)&=\frac{1}{2}(f_{6}(1)-\frac{1}{1}g_{12}(1))\\
&=\frac{1}{2}(2^2-2)=1\\
g_{4}(1)&=\frac{1}{3}(f_{4}(1)-\frac{1}{1}g_{12}(1))\\
&=\frac{1}{3}(2^3-2)=2\\
g_{3}(1)&=\frac{1}{4}(f_{3}(1)-\frac{1}{1}g_{12}(1)-\frac{2}{1}g_{6}(1))\\
&=\frac{1}{2}(2^2-1(2)-2(1))=0\\
g_{2}(1)&=\frac{1}{6}(f_{2}(1)-\frac{1}{1}g_{12}(1)-\frac{2}{1}g_{6}(1)-\frac{3}{1}g_{4}(1))\\
&=\frac{1}{6}(2^6-1(2)-2(1)-3(2))=9\\
g_{1}(1)&=\frac{1}{12}(f_{1}(1)-\frac{1}{1}g_{12}(1)-\frac{2}{1}g_{6}(1)-\frac{3}{1}g_{4}(1)-\frac{4}{1}g_{3}(1)-\frac{6}{1}g_{2}(1))\\
&=\frac{1}{12}(2^6-1(2)-2(1)-3(2)-4(0)-6(9))=0
\end{split}
\end{equation*}

\vspace{3mm}
Therefore the number of non-isomorphic undirected circulants on 13 vertices is 14.

\section{The Structural approach for $\MakeLowercase{p^{2}}$ circulants}

\begin{Definition}[\cite{KLP96}]
A Schur ring $\mathfrak{S}=\langle \underline{T_{0}},\underline{T_{1}},...,\underline{T_{l}} \rangle$, $l \in \mathbb{Z}^+$, over $\mathbb{Z}_{n}$ is called \emph{wreath decomposable} if there exists a non-trivial proper subgroup $K \leq \mathbb{Z}_{n}$ such that for every basic set $T_{i}$, either $T_{i}\subseteq K$ or $T_{i}=\bigcup_{x\in T_{i}}(K+x)$. The $\mathcal{S}$-ring $\mathfrak{S}$ is called wreath indecomposable, if it is not wreath decomposable.
\end{Definition}

\begin{Definition}[\cite{KLP96}]
Let $n=p^{2}, K=\mathbb{Z}_{p}$. Let
\[\mathfrak{S}_{1}=\langle \underline{Q_{0}},\underline{Q_{1}},...,\underline{Q_{d}}\rangle \mbox { and } \mathfrak{S}_{2}=\langle \underline{R_{0}},\underline{R_{1}},...,\underline{R_{k}}\rangle\]
be $\mathcal{S}$-rings over $\mathbb{Z}_{p}$. Let
\begin{equation*}
\begin{split}
T_{1,i}&:=\lbrace px_{2}|x_{2}\in R_{i}\rbrace, \hspace{0.7mm} 0\leq i \leq k,\\
T_{2,i}&:=\lbrace x_{1}+px_{2}|x_{1}\in Q_{i}, \hspace{0.7mm} x_{2}\in\mathbb{Z}_{p}\rbrace, 1\leq i \leq d.
\end{split}
\end{equation*}
These sets are the basic sets of an $\mathcal{S}$-ring $\mathfrak{S}$ over $\mathbb{Z}_{p^2}$

\[\mathfrak{S}:=\langle \underline{T_1,_{0}},\underline{T_1,_{1}},...,\underline{T_1,_{k}},\underline{T_2,_{1}},...,\underline{T_2,_{d}}\rangle\]
which is called the \emph{wreath composition} \index{wreath composition} of $\mathfrak{S}_{1}$ and $\mathfrak{S}_{2}$ and is denoted by $\mathfrak{S}=\mathfrak{S}_{1}\lbrack \mathfrak{S}_{2}\rbrack$.
\end{Definition}

For the case $n=p^{2}$, we shall require the fact that a wreath decomposable $\mathcal{S}$-ring over some group $H$, may be represented as the wreath composition of suitable $\mathcal{S}$-rings $\mathfrak{S}_{1}$ and $\mathfrak{S}_{2}$ over groups of smaller order. Moreover, $\mbox{Aut}(\mathfrak{S}_{1}\lbrack\mathfrak{S}_{2}\rbrack)$ is the wreath product of the automorphism groups of the $\mathcal{S}$-rings $\mathfrak{S}_{1}$ and $\mathfrak{S}_{2}$.

\theoremstyle{definition}
\begin{Theorem}[\cite{KLP96}]
\label{thm:theorem13}
Let $\mathfrak{S}$ be a non-trivial $S$-ring over $\mathbb{Z}_{p^2}$. Then we have
\begin{itemize}
\item[(a)] Either
\hspace{30mm} \item[(i)]$\mathfrak{S}$ is a wreath composition $\mathfrak{S}=\mathfrak{S}_{1}\lbrack\mathfrak{S}_{2}\rbrack$ of $\mathcal{S}$-rings $\mathfrak{S}_{1}$, $\mathfrak{S}_{2}$ over $\mathbb{Z}_{p}$,\\
or
\hspace{30mm} \item[(ii)]$\mathfrak{S}=\langle\underline{0},\underline{H},\underline{y_{1}H},...\underline{y_{l}H}\rangle$, where $H\leq\mathbb{Z}^\ast_{p^2}, (1+p)\notin H, y_{i} \in \mathbb{Z}^\ast_{p^{2}}$ for $1\leq i\leq l$;
\item[(b)] \itemEq{\mbox{ Aut }(\mathfrak{S})=\left\{
\begin{array}{ll}
\mbox{Aut}(\mathfrak{S}_{1})\wr \mbox{ Aut }(\mathfrak{S}_{2}) &\mbox{  in case (i) }\\
\mathbb{Z}_{p^{2}} \rtimes H & \mbox{ in case (ii) };
\end{array} \right.\notag}
\item[(c)] \itemEq {\lbrack N(\mbox{Aut}(\mathfrak{S})): \mbox {Aut}(\mathfrak{S})\rbrack=\left\{
\begin{array}{ll}
\prod_{i=1}^{2}\lbrack N(\mbox{Aut}(\mathfrak{S}_{i}):\mbox{Aut}(\mathfrak{S}_{i})\rbrack &\mbox{ in case (i) }\\
\frac{(p-1)p}{|H|} & \mbox{ in case (ii) }.
\end{array} \right.\notag}
\end{itemize}
\end{Theorem}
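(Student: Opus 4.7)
The plan is to use the fact (stated earlier in the thesis, via Klin--Liskovets--P\"{o}schel) that every $\mathcal{S}$-ring over the cyclic $p$-group $\mathbb{Z}_{p^{2}}$ is Schurian, so I can write $\mathfrak{S}=\mathfrak{S}(G,\mathbb{Z}_{p^{2}})$ for some overgroup $G\geq(\mathbb{Z}_{p^{2}},\mathbb{Z}_{p^{2}})$ of the regular cyclic action, and the basic sets of $\mathfrak{S}$ become the orbits of the stabiliser $G_{0}$ of $0$. The starting structural observation is that $\mathbb{Z}_{p^{2}}$ has exactly one non-trivial proper subgroup $K:=p\mathbb{Z}_{p^{2}}\cong\mathbb{Z}_{p}$, and the cosets of $K$ give the only non-trivial block system available to a transitive group containing the regular cyclic action.

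For part (a), I would split by whether $G$ preserves the $K$-coset block system. In the first subcase each $G_{0}$-orbit is either contained in $K$ or is a union of full cosets of $K$, so $\mathfrak{S}$ is wreath decomposable in the sense of the definition above. I would then define $\mathfrak{S}_{2}$ as the $\mathcal{S}$-ring on $\mathbb{Z}_{p}$ obtained from the basic sets lying in $K$ (identifying $K\cong\mathbb{Z}_{p}$ via $px\mapsto x$) and $\mathfrak{S}_{1}$ as the quotient $\mathcal{S}$-ring on $\mathbb{Z}_{p^{2}}/K\cong\mathbb{Z}_{p}$ coming from the remaining basic sets reduced modulo $K$; a direct check of the $\mathcal{S}$-ring axioms gives $\mathfrak{S}=\mathfrak{S}_{1}[\mathfrak{S}_{2}]$. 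In the second subcase $G$ is primitive on $\mathbb{Z}_{p^{2}}$, and I would appeal to the classification of transitive overgroups of the regular cyclic group of prime-power degree to conclude that $G_{0}\leq\mathbb{Z}_{p^{2}}^{\ast}$, so $G=\mathbb{Z}_{p^{2}}\rtimes H$ with $H:=G_{0}$. The orbits of $H$ acting multiplicatively on $\mathbb{Z}_{p^{2}}^{\ast}$ are the cosets $y_{i}H$, yielding the form claimed in (ii); the condition $(1+p)\notin H$ is precisely what prevents an orbit $yH$ from containing the full coset $y+K$, since $(1+p)\cdot y=y+py$ and iterating fills out $y+K$. This is exactly the condition that blocks wreath decomposability.

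Part (b) is then essentially immediate: in case (i) the equality $\mathrm{Aut}(\mathfrak{S}_{1}[\mathfrak{S}_{2}])=\mathrm{Aut}(\mathfrak{S}_{1})\wr\mathrm{Aut}(\mathfrak{S}_{2})$ follows from unpacking the wreath composition of basic sets and checking that the permutations preserving all of them are exactly the elements of the wreath product; in case (ii) we already have $\mathrm{Aut}(\mathfrak{S})=G=\mathbb{Z}_{p^{2}}\rtimes H$ by construction, since the Schurian $\mathcal{S}$-ring built from an overgroup of $(\mathbb{Z}_{p^{2}},\mathbb{Z}_{p^{2}})$ contained in the holomorph has automorphism group equal to that overgroup. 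For part (c), in case (i) the normaliser of $\mathrm{Aut}(\mathfrak{S}_{1})\wr\mathrm{Aut}(\mathfrak{S}_{2})$ in $S_{p^{2}}$ factorises as $N(\mathrm{Aut}(\mathfrak{S}_{1}))\wr N(\mathrm{Aut}(\mathfrak{S}_{2}))$ (a standard computation for wreath products with regular factors), and quotienting by the group itself gives the product of indices. In case (ii), the normaliser of $\mathbb{Z}_{p^{2}}\rtimes H$ inside $S_{p^{2}}$ is the full holomorph $\mathbb{Z}_{p^{2}}\rtimes\mathbb{Z}_{p^{2}}^{\ast}$, so the index equals $|\mathbb{Z}_{p^{2}}^{\ast}|/|H|=p(p-1)/|H|$.

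The hard part is the primitive subcase of part (a). Showing that a primitive overgroup of the regular cyclic group on $\mathbb{Z}_{p^{2}}$ whose stabiliser is non-trivial (to keep $\mathfrak{S}$ non-trivial) must actually lie inside the holomorph $\mathbb{Z}_{p^{2}}\rtimes\mathbb{Z}_{p^{2}}^{\ast}$ requires ruling out exotic primitive groups of degree $p^{2}$ containing a regular cyclic subgroup; this is where one leans on a structural result for primitive groups of prime-power degree. Once this step is in place, the rest of the theorem, including all of (c), becomes routine bookkeeping in the holomorph.
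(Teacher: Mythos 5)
First, note that the thesis does not actually prove Theorem~\ref{thm:theorem13}; it is imported verbatim from \cite{KLP96} and used as a black box, so there is no in-paper argument to compare yours against. Judged on its own terms, your proposal contains a genuine gap in the case division for part (a). You split according to whether the overgroup $G$ with $\mathfrak{S}=\mathfrak{S}(G,\mathbb{Z}_{p^2})$ preserves the block system of cosets of $K=p\mathbb{Z}_{p^2}$, and you claim that in the block-preserving case every $G_0$-orbit outside $K$ is a union of full $K$-cosets. That is false: the groups $G=\mathbb{Z}_{p^2}\rtimes H$ of case (ii) always preserve this block system (the map $z\mapsto az+b$ sends $z+K$ to $(az+b)+K$), yet when $(1+p)\notin H$ the orbit $yH$ of the point stabiliser is \emph{not} a union of $K$-cosets --- indeed, as you yourself observe, $y+K=y\langle 1+p\rangle$, so $yH\supseteq y+K$ exactly when $\langle 1+p\rangle\leq H$. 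So your first subcase does not cover all imprimitive $G$, and your second subcase (primitive $G$) is empty for non-trivial $\mathfrak{S}$: by Schur's classical theorem, a primitive group of composite degree containing a regular cyclic subgroup is doubly transitive, and its transitivity module is the trivial $\mathcal{S}$-ring, which the hypothesis excludes. The appeal to a classification of primitive groups of degree $p^2$ is therefore aimed at a case that cannot occur, while the genuinely hard case --- $G$ imprimitive but $\mathfrak{S}$ not wreath decomposable over $K$ --- is never addressed. The correct dichotomy is ``wreath decomposable over $K$ or not,'' and in the indecomposable case one must argue directly with the $\mathcal{S}$-ring axioms (e.g.\ via structure constants or Schur--Hadamard products of the simple quantities $\underline{T}_i$) that the basic set containing $1$ is a subgroup $H\leq\mathbb{Z}^\ast_{p^2}$ with $(1+p)\notin H$ and that the remaining basic sets are its multiplicative cosets; this step does not follow from block-system considerations alone.

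Two smaller points in the same spirit: in part (b), case (ii), the identity $\mathrm{Aut}(\mathfrak{S}(G,\mathbb{Z}_{p^2}))=G$ is not automatic ``by construction'' --- the automorphism group of a transitivity module always contains $G$ but can be strictly larger (it is the $2$-closure of $G$), so one must actually verify that $\mathbb{Z}_{p^2}\rtimes H$ is $2$-closed here. Likewise the factorisation of the normaliser of $\mathrm{Aut}(\mathfrak{S}_1)\wr\mathrm{Aut}(\mathfrak{S}_2)$ in $S_{p^2}$ used in part (c)(i) needs justification rather than being cited as standard; it holds in this setting but only because both factors contain regular cyclic groups of prime degree, which forces the normaliser to preserve the imprimitivity system.
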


\begin{Example}
Let us suppose $p=3$, i.e. $n=9$. Let $\mathfrak{S}_1$ and $\mathfrak{S}_2$ be two $\mathcal{S}$-rings of $\mathbb{Z}_{3}$, where
\begin{equation*}
\begin{split}
\mathfrak{S}_{1}&=\lbrace \lbrace0\rbrace,\lbrace 1,2 \rbrace \rbrace=\lbrace Q_{0},Q_{1}\rbrace\\
\mathfrak{S}_{2}&=\lbrace \lbrace0\rbrace,\lbrace 1\rbrace,\lbrace 2 \rbrace \rbrace=\lbrace R_{0},R_{1},R_{2} \rbrace
\end{split}
\end{equation*}
\end{Example}
Then by the definition above we have
\begin{equation*}
\begin{split}
T_{1,0}&=\lbrace 3x_{2}|x_{2}\in R_{0}\rbrace=\lbrace 0 \rbrace\\
T_{1,1}&=\lbrace 3x_{2}|x_{2}\in R_{1}\rbrace=\lbrace 3 \rbrace\\
T_{1,2}&=\lbrace 3x_{2}|x_{2}\in R_{2}\rbrace=\lbrace 6 \rbrace\\
\end{split}
\end{equation*}
and
\begin{equation*}
\begin{split}
T_{2,1}&=\lbrace x_{1}+px_{2}|x_{1} \in Q_{i}, \hspace{2mm} x_{2} \in \mathbb{Z}_{3}\rbrace\\
&=\lbrace 1+3\cdot0, 1+3\cdot1, 1+3\cdot2, 2+3\cdot0, 2+3\cdot1, 2+3\cdot2\rbrace\\
&=\lbrace1,4,7,2,5,8\rbrace.
\end{split}
\end{equation*}
Therefore, the wreath composition of $\mathfrak{S}_{1}$ and $\mathfrak{S}_{2}$ is
\[\mathfrak{S}_{1}\lbrack\mathfrak{S}_{2}\rbrack=\lbrace\lbrace 0\rbrace, \lbrace 3\rbrace, \lbrace 6\rbrace, \lbrace 1,2,4,5,7,8\rbrace\rbrace\]

Furthermore, by the definition above, if we take $K=3\mathbb{Z}_{3}$, or in general $K=p\mathbb{Z}_{p}$, one may note that $\mathfrak{S}$ is wreath decomposable, since $3\mathbb{Z}_{3}=\lbrace 0,3,6\rbrace$ is a subgroup of $\mathbb{Z}_{9}$ such that for every basic set $T_{i}$ of $\mathfrak{S}$, either $T_{i}\subseteq K$ or $T_{i}=\bigcup_{x\in T_{i}}(K+x)$.

In order to determine $\mathfrak{S}_{2}\lbrack\mathfrak{S}_{1}\rbrack$, let
\begin{equation*}
\begin{split}
\mathfrak{S}_{1}&=\lbrace \lbrace0\rbrace,\lbrace 1,2 \rbrace \rbrace=\lbrace R_{0},R_{1}\rbrace\\
\mathfrak{S}_{2}&=\lbrace \lbrace0\rbrace,\lbrace 1\rbrace,\lbrace 2 \rbrace \rbrace=\lbrace Q_{0},Q_{1},Q_{2} \rbrace
\end{split}
\end{equation*}
Thus we obtain
\begin{equation*}
\begin{split}
T_{1,0}&=\lbrace 3x_{2}|x_{2}\in R_{0}\rbrace=\lbrace 0 \rbrace\\
T_{1,1}&=\lbrace 3x_{2}|x_{2}\in R_{1}\rbrace=\lbrace 3,6 \rbrace\\
T_{2,1}&=\lbrace x_{1}+px_{2}|x_{1} \in Q_{i}, \hspace{2mm} x_{2} \in \mathbb{Z}_{3}\rbrace\\
&=\lbrace 1+3\cdot0, 1+3\cdot1, 1+3\cdot2\rbrace\\
&=\lbrace1,4,7\rbrace\\
T_{2,2}&=\lbrace 2+3\cdot0, 2+3\cdot1, 2+3\cdot2\rbrace\\
&=\lbrace 2,5,8 \rbrace
\end{split}
\end{equation*}
Therefore we have
\[\mathfrak{S}_{2}\lbrack\mathfrak{S}_{1}\rbrack=\lbrace\lbrace 0\rbrace, \lbrace 3,6\rbrace, \lbrace 1,4,7\rbrace, \lbrace 2,5,8\rbrace\rbrace\]

Repeating the above procedure for $\mathfrak{S}_{1}\lbrack\mathfrak{S}_{1}\rbrack$ and $\mathfrak{S}_{2}\lbrack\mathfrak{S}_{2}\rbrack$ we obtain

\[\mathfrak{S}_{1}\lbrack\mathfrak{S}_{1}\rbrack=\lbrace \lbrace0 \rbrace,\lbrace3,6 \rbrace,\lbrace1,4,7,2,5,8 \rbrace\rbrace.\]
and
\[\mathfrak{S}_{2}\lbrack\mathfrak{S}_{2}\rbrack=\lbrace \lbrace0 \rbrace,\lbrace3 \rbrace,\lbrace6 \rbrace,\lbrace1,4,7\rbrace, \lbrace2,5,8 \rbrace\rbrace.\]

The following is a list of all Schur rings over $\mathbb{Z}_{9}$
\begin{equation*}
\begin{split}
\mathfrak{S}_{1}&=\langle \underline{0},\underline{1,2,3,4,5,6,7,8}\rangle,\\
\mathfrak{S}_{2}&=\langle \underline{0},\underline{1,2,4,5,7,8},\underline{3,6}\rangle,\\
\mathfrak{S}_{3}&=\langle \underline{0},\underline{1,4,7},\underline{2,5,8},\underline{3,6}\rangle,\\
\mathfrak{S}_{4}&=\langle \underline{0},\underline{1,2,4,5,7,8},\underline{3},\underline{6}\rangle,\\
\mathfrak{S}_{5}&=\langle \underline{0},\underline{1,4,7},\underline{2,5,8},\underline{3},\underline{6}\rangle,\\
\mathfrak{S}_{6}&=\langle \underline{0},\underline{1,8},\underline{2,7},\underline{3,6},\underline{4,5}\rangle,\\
\mathfrak{S}_{7}&=\langle \underline{0},\underline{1},\underline{2},\underline{3},\underline{4},\underline{5},\underline{6},\underline{7},\underline{8}\rangle,\\
\end{split}
\end{equation*}

Together with the four $\mathcal{S}$-rings obtained by taking the wreath products, we have the two trivial $\mathcal{S}$-rings $\mathfrak{S}_{1}$ and $\mathfrak{S}_{7}$ as well as another $\mathcal{S}$-ring obtained by using the second part of (a) in Theorem~\ref{thm:theorem13}, given as $\mathfrak{S}_{6}$ above. In order to obtain this Schur ring, cosets of the subgroup $H=\lbrace 1,8 \rbrace$ in $\mathbb{Z}^\ast_{p^2}$ are determined.

\section{The Undirected Case for $\MakeLowercase{n=27}$ using the Structural Method}

A list of Schur rings over $\mathbb{Z}_{27}$ may be determined using the package COCO. For the purpose of this thesis, we have been provided with a list of symmetric Schur rings for the undirected case for $n=27$ by M. Klin, and details of how these are generated, will not be given in this thesis. (By ``symmetric Schur-Ring", we mean one in which every basic set $T$ satisfies $-T=T$. This is sufficient for our purpose of enumerating undirected circulants.) The following is a list of all Schur rings required to determine the number of non-isomorphic, undirected circulants on 27 vertices.

\begin{equation*}
\begin{split}
\mathfrak{S}_{1}&=\langle \underline{0},\underline{1,26},\underline{2,25},\underline{3,24},\underline{4,23},\underline{5,22},\underline{6,21},\underline{7,20},\underline{8,19},\underline{9,18},\underline{10,17},\underline{11,16},\underline{12,15},\underline{13,14}\rangle,\\
\mathfrak{S}_{2}&=\langle \underline{0},\underline{1,26,8,19,10,17},\underline{2,25,7,20,11,16},\underline{3,24},\underline{4,23,5,22,13,14},\underline{6,21},\underline{9,18},\underline{12,15}\rangle,\\
\mathfrak{S}_{3}&=\langle \underline{0},\underline{1,26,2,25,4,23,5,22,7,20,8,19,10,17,11,16,13,14},\underline{3,24},\underline{6,21},\underline{9,18},\underline{12,15}\rangle,\\
\mathfrak{S}_{4}&=\langle \underline{0},\underline{1,26,8,19,10,17},\underline{2,25,7,20,11,16},\underline{3,24,6,21,12,15},\underline{4,23,5,22,13,14},\underline{9,18}\rangle,\\
\mathfrak{S}_{5}&=\langle \underline{0},\underline{1,26,2,25,4,23,5,22,7,20,8,19,10,17,11,16,13,14},\underline{3,24,6,21,12,15},\underline{9,18}\rangle,\\
\mathfrak{S}_{6}&=\langle \underline{0},\underline{1,26,2,25,4,23,5,22,7,20,8,19,10,17,11,16,13,14},\underline{3,24,6,21,9,18,12,15}\rangle,\\
\mathfrak{S}_{7}&=\langle \underline{0},\underline{1,26,2,25,3,24,4,23,5,22,6,21,7,20,8,19,10,17,11,16,12,15,13,14},\underline{9,18}\rangle,\\
\mathfrak{S}_{8}&=\langle \underline{0},\underline{1,2,3,4,5,6,7,8,9,10,11,12,13,14,15,16,17,18,19,20,21,22,23,24,25,26}\rangle\\
\end{split}
\end{equation*}

In this list, we can observe that $\mathfrak{S}_{1}$ is the finest with the smallest automorphism group, while $\mathfrak{S}_{8}$ has the largest automorphism group. Therefore $\mathfrak{S}_{1}$ contains all the other Schur rings. We may now construct the lattice of Schur rings. This is given in Figure~\ref{Fig7}.
\begin{figure}[h!]
\centering
\includegraphics[width=0.3\linewidth]{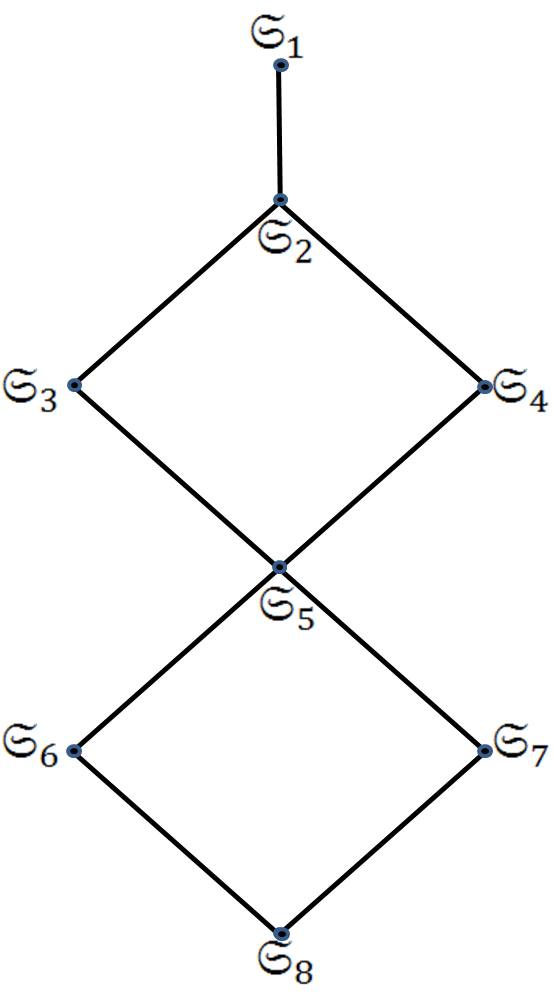}
\caption{Lattice of all $\mathcal{S}$-rings for $n=27$ Undirected}
\label{Fig7}
\end{figure}
\\
\vspace{5mm}
Using \eqref{eq:erl} we can obtain the generating functions $f_{i}(t)$. These are as follows
\begin{equation*}
\begin{split}
f_1(t)&=(1+t^2)^{13}\\
f_2(t)&=(1+t^6)^3(1+t^2)^4\\
f_3(t)&=(1+t^{18})(1+t^2)^4\\
f_4(t)&=(1+t^6)^4(1+t^2)\\
f_5(t)&=(1+t^{18})(1+t^6)(1+t^2)\\
f_6(t)&=(1+t^{18})(1+t^8)\\
f_7(t)&=(1+t^{24})(1+t^2)\\
f_8(t)&=(1+t^{26})
\end{split}
\end{equation*}
Table ~\ref{table:Table4} gives a list of the sizes of the automorphism groups and their normalizers. These may be obtained by using GAP as described for the case $n=13$.
\begin{table}[h!]
\caption{Sizes of Automorphism Groups and their Normalizers for the Case n=27}
\begin{adjustwidth}{-1.2cm}{}
\begin{quote}
\centering
\begin{tabular}{|c|c|c|c|}
\hline
$G_i$&$|G_i|$&$|N(G_i)|$&$\frac{|G_i|}{|N(G_i)|}$\\
\hline
$G_1$&54&486&$\frac{1}{9}$\\
$G_2$&486&4374&$\frac{1}{9}$\\
$G_3$&34992&104976&$\frac{1}{3}$\\
$G_4$&181398528&544195584&$\frac{1}{3}$\\
$G_5$&13060694016&13060694016&1\\
$G_6$&286708355039232000&286708355039232000&1\\
$G_7$&3656994324480&3656994324480&1\\
$G_8$&10888869450418352160768000000&10888869450418352160768000000&1\\
\hline
\end{tabular}
\label{table:Table4}
\end{quote}
 \end{adjustwidth}
\end{table}
\vspace{8mm}
\\
We may now determine $g_i=g_{i}(t)$ for $i=1,2,...,8$, using \eqref{eq:erl1} and figure~\ref{Fig7}
\begin{equation*}
\begin{split}
g_8&=f_8=1+t^{26}\\
g_7&=f_7-g_8=t^{24}+t^2\\
g_6&=f_6-g_8=t^{18}+t^8\\
g_5&=f_5-(g_8+g_7+g_6)=t^{20}+t^6\\
g_4&=\frac{1}{3}(f_4-g_8-g_7-g_6-g_5)=t^{20}+t^{18}+2t^{14}+2t^{12}+t^8+t^6\\
g_3&=\frac{1}{3}(f_3-g_8-g_7-g_6-g_5)=t^{24}+2t^{22}+t^{20}+t^6+2t^4+t^2\\
g_2&=\frac{1}{9}(f_2-g_8-g_7-g_6-g_5-3g_4-3g_3)=t^{18}+2t^{16}+t^{14}+t^{12}+2t^{10}+t^8\\
g_1&=\frac{1}{9}(f_1-g_8-g_7-g_6-g_5-3g_4-3g_3-9g_2)\\
&=t^{24}+8t^{22}+31t^{20}+78t^{18}+141t^{16}+189t^{14}+189t^{12}+141t^{10}+78t^8+31t^6+\\
&8t^4+t^2\\
\end{split}
\end{equation*}
Therefore
\begin{equation*}
\begin{split}
g(t)=g_1+g_2+...+g_8=t^{26}+&3t^{24}+10t^{22}+34t^{20}+81t^{18}+143t^{16}+192t^{14}+\\
&192t^{12}+143t^{10}+81t^8+34t^6+10t^4+3t^2+1\\
\end{split}
\end{equation*}
One may observe that this generating function is the same as that obtained in the method described in \Chapref{chp:Chp4}, which made use of the isomorphism theorem. Once again we have 928 non-isomorphic, undirected circulants on 27 vertices.

\chapter{Conclusion}
In his paper \cite{Turner67}, Turner proved that \'{A}d\'{a}m's conjecture holds for prime-ordered circulants (Cayley graphs on cyclic groups), that is, he showed that two cyclic graphs, $\Gamma(\mathbb{Z}_p,X)$ and $\Gamma(\mathbb{Z}_p,X’)$ are isomorphic if and only if $X'=mX$ for some integer $m \in \mathbb{Z}^\ast_p$. Therefore, the isomorphism classes of circulants on $\mathbb{Z}_p$, correspond to the equivalence classes of subsets of $\mathbb{Z}_p$, under this multiplicative action. Moreover, this relation may be seen as the action of the group $\mathbb{Z}^\ast_p$ on the connecting sets, where the orbits of the group give the equivalence classes. As a result, Turner went on to use Polya's Enumeration theorem to enumerate undirected circulants of prime order. His techniques however, have also been shown to work equally well for directed circulants.

When multiplying by the integer $m\in \mathbb{Z}^\ast_p$, we obtain an automorphism of the group $\mathbb{Z}_p$, where the group operation is addition modulo $p$. By considering the resulting action of the group automorphism on the vertices of the group, we see that the automorphism maps one Cayley graph $\Gamma$ on $\mathbb{Z}_p$, onto another Cayley graph $\Gamma'$ on $\mathbb{Z}_p$. In fact, if $\Gamma(\mathbb{Z}_p,X)$ is a Cayley graph on the group $\mathbb{Z}_p$, and $\alpha \in Aut(\mathbb{Z}_p)$, then $\alpha$ is an isomorphism from the graph $\Gamma$ to the graph $\Gamma'=\Gamma(\mathbb{Z}_p,\alpha(X))$ \cite{Mishna98}.
In terms of this result, Turner proved that two prime ordered circulants are isomorphic if and only if there exists some automorphism of the cyclic group in question which simultaneously acts as a graph isomorphism. This has given rise to a whole area of research on isomorphism of Cayley graphs which led to the definitions which follow.

\section{Cayley-Isomorphic Graphs}
If $G$ is a finite group and for every undirected Cayley graph $Cay(G,T)$  isomorphic to the undirected Cayley graph $Cay(G,S)$ , there exists a group automorphism mapping $S$ into $T$, then $Cay(G,S)$ is said to be a CI graph (or is said to have the CI property). When every undirected Cayley graph of $G$ is a CI graph, $G$ is called a CI-group. When the graphs are directed, the CI-group is referred to as a DCI-group.

If $G$ is a DCI-group, the isomorphism classes of the directed Cayley graphs on $G$ correspond to the orbits of $Aut(G)$ acting on the Cayley subsets of $G$. In the case of undirected Cayley graphs, the action is taken on the inverse pairs $\lbrace x, -x \rbrace$ chosen from the Cayley subsets of $G$. Determining which graphs (or groups) have the CI property, has become one of the foremost problems in the study of Cayley graphs. This is the so called CI problem.

Therefore, Turner's result is essentially that all circulant graphs of order $n=p$, $p$ prime, have the CI property, or equivalently, that every cyclic group of prime order is a CI group. In 1970, Elspas and Turner proved that not all cyclic groups are CI groups and that for $p\geq 5$, $\mathbb{Z}_{p^2}$ is not CI \cite{ET70}. In 1979, Alspach and Parsons \cite{AP79} generalised Babai's result \cite{Babai77}, who proved that $\mathbb{Z}_{2p}$ is CI. They proved that in fact $\mathbb{Z}_{pq}$ is CI, where $p$ and $q$ are distinct primes.  In 1997, Muzychuk \cite{Muzy97} solved the CI problem for the cyclic groups in general. He showed that the cyclic groups which are CI groups, are those of order $n=8,9,18$ or $2^em$, where $e \in \lbrace0,1,2\rbrace$ and $m$ is odd and square free.  He also showed that the cyclic group $\mathbb{Z}_n$ is a DCI-group, if and only if $n=2^em$, where $e$ and $m$ are as defined previously.

The CI problem has been extended to other families of graphs. Since Turner showed that $\mathbb{Z}_p$ is a CI group, some researchers questioned whether or not all groups of the form $\mathbb{Z}^t_p$ are CI-groups, where $\mathbb{Z}^t_p$, denotes the direct product of $\mathbb{Z}_p$ with itself $t$ times and $p$ is a prime. In \cite{AM2002}, we find that Godsil, Dobson and Morris, proved that $\mathbb{Z}^2_p$, $\mathbb{Z}^3_p$ and $\mathbb{Z}^4_p$, respectively, are CI-groups. However, Nowitz showed that $\mathbb{Z}^6_2$ is not a CI group.

\section{Summary of Results}
In this research project, we have focused on the enumeration of circulant graphs of prime power order, in particular the $p^3$ case. We have shown how enumeration of circulants of order $p^2$ and order $p^3$, may be achieved using two different methods: the multiplier method, which involves the use of an isomorphism criterion and the structural method which makes use of Schur rings.

We have seen how in the former method, the invariance conditions imposed on the multipliers in the isomorphism criterion, make enumeration difficult and that in order to be able to count the number of non-isomorphic directed/undirected circulants on $p^3$ vertices, we required five separate cases and eleven enumeration sub-problems. In addition, unlike enumeration of directed circulants, enumeration of undirected circulants required the connecting sets to be paired by inversion. This method of splitting the isomorphism theorem into five subproblems, was applied for the cases $p=3$ and $p=5$, in order to obtain the number of directed and undirected circulants on 27 and 125 vertices respectively. We have also produced some generating functions when taking this approach. The following results have been obtained by using the software package GAP.

\begin{table}[h!]
\caption{The number of non-isomorphic circulants on 27 vertices}
\centering
\begin{tabular}{|c|c|}
\hline
&Number of Circulants\\
\hline
Undirected&928\\
Directed&3728891\\
\hline
\end{tabular}
\end{table}
The generating function for the undirected case when $n=27$ is:
\begin{equation*}
\begin{split}
&t^{26}+3t^{24}+10t^{22}+34t^{20}+81t^{18}+143t^{16}+192t^{14}+192t^{12}+143t^{10}+81t^8+34t^6+\\
&10t^4+3t^2+1
\end{split}
\end{equation*}
and the generating function for the directed case when $n=27$ is:
\begin{equation*}
\begin{split}
&t^{26}+3t^{25}+23t^{24}+152t^{23}+844t^{22}+3662t^{21}+12814t^{20}+36548t^{19}+86837t^{18}\\
&+173593t^{17}+295172t^{16}+429240t^{15}+536646t^{14}+577821t^{13}+536646t^{12}\\
&+429240t^{11}+295172t^{10}+173593t^9+86837t^8+36548t^7+12814t^6+3662t^5\\
&+844t^4+152t^3+23t^2+3t+1\\
\end{split}
\end{equation*}
\begin{table}[h!]
\caption{The number of non-isomorphic circulants on 125 vertices}
\centering
\begin{tabular}{|c|c|}
\hline
&Number of Circulants\\
\hline
Undirected&92233720411499283\\
Directed&212,676,479,325,586,539,710,725,989,876,778,596\\
\hline
\end{tabular}
\end{table}
\\
The generating function for the undirected case when $n=125$ is given by
\begin{equation*}
\begin{split}
\small
&t^{124}+3t^{122}+45t^{120}+774t^{118}+11207t^{116}+129485t^{114}+1229657t^{112}+9835988t^{110}\\
&+67622641t^{108}+405731843t^{106}+2150382085t^{104}+10165426468t^{102}\\
&+43203077195t^{100}+166165624857t^{98}+581579739591t^{96}+1861054998416t^{94}\\
&+5466849215583t^{92}+14792650391699t^{90}+36981626382405t^{88}+85641660162366t^{86}\\
&+184129570236171t^{84}+368259138698205t^{82}+686301123812811t^{80}\\
&+1193567168903172t^{78}+1939546652290065t^{76}+2948110907190899t^{74}\\
&+4195388602819760t^{72}+5593851464926268t^{70}+6992314336461413t^{68}\\
&+8197885767564289t^{66}+9017674350331611t^{64}+9308567065105337t^{62}\\
&+9017674350331611t^{60}+8197885767564289t^{58}+6992314336461413t^{56}\\
&+5593851464926268t^{54}+4195388602819760t^{52}+2948110907190899t^{50}\\
&+1939546652290065t^{48}+1193567168903172t^{46}+686301123812811t^{44}\\
&+368259138698205t^{42}+184129570236171t^{40}+85641660162366t^{38}\\
&+36981626382405t^{36}+14792650391699t^{34}+5466849215583t^{32}+1861054998416t^{30}\\
&+581579739591t^{28}+166165624857t^{26}+43203077195t^{24}+10165426468t^{22}\\
&+2150382085t^{20}+405731843t^{18}+67622641t^{16}+9835988t^{14}+1229657t^{12}\\
&+129485t^{10}+11207t^8+774t^6+45t^4+3t^2+1\\
\end{split}
\end{equation*}

The results for both directed and undirected circulants on 27 vertices, have matched Matan Ziv-Av's results which he obtained by using the structural method and by brute force respectively. Comparison of the generating function for the undirected case when $n=27$, also gave the same result.

In this research, we also showed how, for undirected circulants on 27 vertices, enumeration can be done directly from Theorem ~\ref{thm:theorem4}, by using inclusion-exclusion. The final result and generating function obtained using this method, have also matched those which we obtained by splitting the isomorphism theorem into five subproblems.

The structural method was used to count the number of non-isomorphic undirected circulants on 27 vertices. In order to make use of this method, we required a full list of Schur rings over $\mathbb{Z}_{27}$, as well as the sizes of their automorphism groups and the normalizers in $S_{27}$ of these automorphism groups. A list of symmetric Schur rings for the undirected case for n = 27 was provided by M. Klin, and the sizes of the automorphism groups and their normalizers have been obtained by using the computer package GAP. Once again, the final result and generating function matched those obtained using the multiplier method. Although more algebraically profound, this method becomes more demanding as the value of $p$ increases, particularly because obtaining the lattice of all Schur rings may prove to be a difficult task and computer packages such as GAP do not have sufficient memory to compute the sizes of their automorphism groups and the normalizers.

Although no results have been published as yet for directed and undirected circulants on 125 vertices, the generating function we have obtained for the undirected case, may be useful for checking our results, using exhaustive search. Moreover, although focusing on the number of non-isomorphic directed and undirected circulant graphs on $p^3$ vertices in our work, enumeration of other types of circulant graphs, such as circulant tournaments and self-complementary circulants, may also produce interesting results.

\backmatter 
\appendix
\chapter{Appendix A: An explanation of some key GAP commands}
\lhead{\emph{APPENDIX A.} \emph{AN EXPLANATION OF SOME KEY GAP COMMANDS}}
The following is an explanation of the commands used in GAP for the programs given in Appendix B.

With reference to the GAP program for the number of non-isomorphic graphs for n = 27 (Directed)
using Table 3.1, we start off by defining all the sets we require. The following table gives a list of identifiers used in GAP, for the sets mentioned in \Chapref{chp:Chp4}.
\begin{quote}
\centering
\begin{tabular}{|c|c|}
\hline
Identifier in GAP&Notation in Thesis\\
\hline
zstar27&$\mathbb{Z}^\ast_{27}$\\
zprime27&$\mathbb{Z}'_{27}$\\
y0&$Y_0$\\
y1&$Y_1$\\
y2&$Y_2$\\
ystar0&$Y^\ast_0$\\
ystarstar0&$Y^{\ast\ast}_0$\\
ystar1&$Y^\ast_1$\\
\hline
\end{tabular}
\end{quote}
\vspace{5mm}
In order to assign an identifier to its meaning, the command := is used. For example
\[\mbox{y1}:=\lbrack3,6,12,15,21,24\rbrack,\]
where the collection of objects $\lbrack3,6,12,15,21,24\rbrack$, is called a list.

Now suppose that we have the list $\mbox{l}:=\lbrack 4,6,8\rbrack$. Then

\[\verb!List(l,x->x^2)!\]
returns the list $\lbrack 16,36,64\rbrack$. This means that the command \verb!List! creates a new list, in which each \verb!x! in \verb!l! is substituted by the value of the function
\[\verb!x->x^2!\]

As discussed in the thesis, the sets ystar0, ystarstar0 and ystar1 are partitioned into blocks. Now suppose a block \verb![3,12,21]! is mapped into \verb![15,6,24]!. We must let GAP know that this is a mapping into \verb![6,15,24]!. To achieve this, we use the command \verb!AsSet! when defining the sets and when determining the list of permutations. This command puts the elements in the list in ascending order so that \verb![15,6,24]! becomes \verb![6,15,24]!. For example the set \verb!ystar1! is defined in GAP as follows

\[\verb!ystar1:=List([[3,12,21],[6,24,15]], x->AsSet(x))!;\]

To define the indeterminates required for the cycle index, the command
\[\verb!x_i:=Indeterminate(Rationals,i)!;\]
is used. Moreover, to obtain the generating function in terms of one variable \verb!t!, we first need to define \verb!t!.  One has to be careful that this variable is defined in such a way that it does not take up the place of one of the \verb!x_i! (by default it would become the name for \verb!x_1!). This may be achieved by the command
\[\verb!t:=Indeterminate(Rationals,"t", [x_1]);!\]

When enumerating using the isomorphism theorem, what we have is essentially a list which is multiplied by $m \in \mathbb{Z}^\ast_{27}$. The resulting list is a permutation of the original list. We shall therefore need to express this permutation in the usual way GAP expresses permutations.

Suppose \verb!l:=[4,6,8]! as before. The command \verb!Position(l,8)! gives 3, because 8 is in the third position in \verb!l!. Therefore the command
\[\verb!List(l, x -> Position(l,x))!;\]
gives \verb![1,2,3]!, since each \verb!x! is in its own position in \verb!l!. However, if \verb!w! were a permutation of \verb!l!, say \verb!w=[6,4,8]!, then
\[\verb!List(l, x -> Position(w,x))!;\]
gives \verb![2,1,3]! since the 4 of \verb!l! is in position 2 of \verb!w!, the 6 of \verb!l! is in position 1 of \verb!w! and the 8 of \verb!l! is in position 3 of \verb!w!. Therefore the command
\[\verb!x->Position(zprime27,\mod(c*x,27))!;\]
replaces \verb!x! by the position of \verb!\mod(c*x,27)! in \verb!zprime27!. Note here that \verb!c! is a free variable and the command \verb!\mod(a,b)!, gives the value of \verb!a mod b!. The command
\[\verb!List(zprime27,x->Position(zprime27,\mod(c*x,27)))!;\]
replaces \verb!zprime27! by the altered positions of its elements. The variable \verb!c! is still a free variable here.

We saw previously that the command \verb!List(l, x -> Position(w,x))! returns \verb![2,1,3]!. Now \verb![2,1,3]! is a permutation of \verb![1,2,3]!. It is the permutation  \verb!(12)(3)!. In order to obtain this permutation we make use of the command \verb!PermList! as follows
\[\verb!PermList([2,1,3])!;\]
Therefore the following command
\[\verb!PermList(List(zprime27,x->Position(zprime27,\mod(c*x,27)));!\]
will replace the list of altered positions by a permutation. Once again \verb!c! is still a free variable here.

In order to define the range of the variable \verb!c!, we require the following command
\begin{verbatim}
l1:=List(zstar27,c->PermList(List(zprime27,x-> Position(zprime27,
\mod(c*x,27)))));
\end{verbatim}
where every element \verb!c! of the list \verb!zstar27!, is replaced by the permutation which it generates on \verb!zprime27!. The cycle structures of the permutations in \verb!l1! are then obtained by the command
\[\verb!c1:=List(l1,g->CycleIndex(g,[1..Length(zprime27)]));!\]
where every permutation \verb!g! of the list \verb!l1!, is replaced by its cycle structure. One must note that when given a list \verb![1,2,4,5,7,8]! say, every permutation in GAP is written in terms of 1,2,3,4,5,6. This means that the number of objects being permuted is important not the objects themselves. Therefore to avoid counting the size of the list in each case, the command \verb![1..Length(list)]! is used.

Now the result of \verb!c1! is still a list. It is the list of cycle structures of the permutations in \verb!l1!. To obtain the cycle index from this list, we will require an inner product of the list \verb!c1! with a list of 1's. To obtain a list of ones which is of the same length as the list \verb!c1!, the following commands are used:

\begin{verbatim}
                    allones:=[];
                    for in [1..Length(c1)] do
                    allones[i]:=1;
                    od;
\end{verbatim}
We begin by defining a list \verb!allones! to be an empty list. We then use a \verb!for! loop to fill this list with 1's, with its length being equal to the length of the list of cycle structures \verb!c1!. The \verb!for! loop should be terminated with the keyword \verb!od!. The cycle index is then obtained by the command
\[\verb!cycind1:=(1/Length(c1))*(c1*allones)!;\]
which multiplies the two lists \verb!c1! and \verb!allones! and divides the result by the length of the list \verb!c1!. We could have simply added the elements of the list \verb!c1!, however later on, we shall need to multiply two or more cycle indices in this fashion (componentwise), where none of them is the \verb!allones! list.

In order to obtain the generating function for this particular case in terms of the variable \verb!t!, we require the following command
\begin{verbatim}
g1:=List(c1, f->Value(f,[x_1,x_2,x_3,x_6,x_9,x_18],[1+t,1+t^2,1+t^3,
1+t^6,1+t^9,1+t^18]));
\end{verbatim}
in which each cycle structure \verb!f! in the list \verb!c1!, composed of one or more of the indeterminates \verb!x_1,x_2,x_3,x_6,x_9,x_18! is replaced by a polynomial in terms of \verb!t!. The result is again a list of the same length as \verb!c1!.  Each one of the indeterminates \verb!x_i! is substituted by one of the values \verb!1+t,1+t^2,1+t^3,1+t^6,1+t^9,1+t^18!. The value to be substituted depends on the position of the indeterminate. For example the indeterminate in the fourth position (\verb!x_6!), is substituted by the value in the fourth position (\verb!1+t^6!). Moreover, the power of \verb!t! depends on the degree of the generating sets. In this case since \verb!zprime27! consists of singletons, the indeterminate \verb!x_i! is replaced by \verb!1+t^i!. If the set being acted upon consists of blocks each containing 6 elements say, the substitution would be \verb!1+t^6i!.

Let us consider the isomorphism problem $A_3$, for the directed case when $n=27$. As mentioned in \Chapref{chp:Chp4}, the actions in this case are
\begin{equation*}
\begin{split}
A_{31}:&(\mathbb{Z}^\ast_{27},Y^\ast_0 \cup Y^\ast_1) \times (\mathbb{Z}^\ast_{27}, Y_2)\\
A_{32}:&(\mathbb{Z}^\ast_{27},Y^{\ast\ast}_0 \cup Y^\ast_1) \times (\mathbb{Z}^\ast_{27}, Y_2)
\end{split}
\end{equation*}

The lists of permutations required in $A_{31}$ are therefore given by
\begin{verbatim}
l1:= List(zstar27,c->PermList(List(ystar0,x->Position(ystar0,
AsSet(\mod(c*x,27))))));
l2:= List(zstar27,c->PermList(List(ystar1,x->Position(ystar1,
AsSet(\mod(c*x,27))))));
l3:= List(zstar27,c->PermList(List(y2,x->Position(y2,(\mod(c*x,27))))));
\end{verbatim}
For the first action of $A_{31}$ that is $(\mathbb{Z}^\ast_{27},Y^\ast_0 \cup Y^\ast_1)$, we first obtain the list of cycle structures \verb!c1! and \verb!c2! from the list of permutations \verb!l1! and \verb!l2! respectively using the following commands
\begin{verbatim}
                    c1:=List(l1,g->CycleIndex(g,[1..Length(ystar0)]));
                    c2:=List(l2,g->CycleIndex(g,[1..Length(ystar1)]));
\end{verbatim}
The corresponding lists of generating functions \verb!g1! and \verb!g2! are then determined using the method described above. Since both \verb!ystar0! and \verb!ystar1! have generating sets of degree 3, the indeterminate \verb!x_i! is substituted by \verb!1+t^3i!.
\begin{verbatim}
g1:=List(c1, f->Value(f,[x_1,x_2,x_3,x_6,x_9,x_18],[1+t^3,1+t^6,1+t^9,
1+t^18,1+t^27,1+t^54]));
g2:=List(c2, f->Value(f,[x_1,x_2,x_3,x_6],[1+t^3,1+t^6,1+t^9,1+t^18]));
\end{verbatim}
In order to obtain these two lists as one generating function, we may simply multiply \verb!g1! and \verb!g2! together and divide the result by the length of one of the lists. This is achieved by the following command in GAP

\[\verb!G1G2:=(1/Length(g1))*(g1*g2);!\]
For the second action of $A_{31}$ that is $(\mathbb{Z}^\ast_{27}, Y_2)$, we again determine the list of permutations \verb!l3! and list of cycle structures \verb!c3! as follows
\begin{verbatim}
l3:= List(zstar27,c->PermList(List(y2,x->Position(y2,(\mod(c*x,27))))));
c3:=List(l3,g->CycleIndex(g,[1..Length(y2)]));
\end{verbatim}
The corresponding list of generating functions \verb!g3! is obtained by using the command
\[\verb!g3:=List(c3, f->Value(f,[x_1,x_2],[1+t,1+t^2]));!\]
where each \verb!x_i! is substituted by \verb!1+t^i! since the generating sets in \verb!y2! are singletons. One generating function may then be obtained from this list by carrying out an inner product of \verb!g3! with the \verb!allones! vector as described above:

\begin{verbatim}
                    allones:=[];
                    for i in [1..Length(g3)] do
                    allones[i]:=1;
                    od;
                    G3:=(1/Length(g3))*(g3*allones);
\end{verbatim}

The final result for $A_{31}$ is then obtained by multiplying the result of the generating function \verb!G1G2! with the result obtained for \verb!G3! as follows
\[\verb!genfn3_1:=G1G2*G3;!\]
The actions in $A_{32}$ are dealt with in a similar manner, however the sets receiving the action and the values for the substitutions of the indeterminates in the generating functions are changed accordingly. If we denote the resulting generating function in $A_{32}$ by \verb!genfn3_2!, the final generating function for the isomorphism problem $A_3$ is then given by the command
\[\verb!genfn3 := genfn3_1 - genfn3_2;!\]
Substituting $t=1$ in the generating function \verb!genfn3!, gives the number of distinct circulants under this isomorphism problem. This may be achieved through the command
\[\verb!v3:=Value(genfn3,[t],[1]);!\]
The number of distinct circulants under the isomorphism problems $A_1$, $A_2$, $A_4$ and $A_5$ is obtained using the same commands described for $A_3$, however in $A_5$, we also need to determine the group effecting the action, as such we require some more commands. As described in \Chapref{chp:Chp4}, the group effecting the action, contains all ordered pairs $(a,a')$, such that $a, a' \in \mathbb{Z}^\ast_{27}$ and $a'\equiv a\bmod3$.

A list of ordered pairs is obtained by first assigning a function \verb!allpairs! to a list \verb![x,y]! by using the command
\[\verb!allpairs:= function(x,y) return [x,y]; end;!\]
Now to construct a list \verb!S1! of all possible pairs having elements in $\mathbb{Z}^\ast_{27}$, we require the command
\[\verb!S1:=ListX(zstar27, zstar27,allpairs);!\]
The command \verb!ListX!, returns a list constructed from the arguments, \verb!zstar27! (both arguments are the same in this case) under the function \verb!allpairs!.

Now this list must be filtered according to the condition on the multipliers. Since $a'\equiv a\bmod3$, we will define another function \verb!ismod3! as follows
\[\verb!ismod3:= function(a) return \mod(a[1],3) = \mod(a[2],3); end;!,\]
where \verb!a[1]! represents the first component of each pair in \verb!S1!, and \verb!a[2]! the second component. Finally we need to filter out those pairs which have the condition \verb!ismod3!, from the list of all possible pairs, \verb!S1!. This is obtained by using the command
\[\verb!grp := Filtered(S1, ismod3);!\]
The resulting group, \verb!grp!, is the group we require to effect the actions in the isomorphism problem $A_5$.

For the undirected case when $n=27$, the following identifiers are used to represent the sets mentioned in \Chapref{chp:Chp4}.
\begin{quote}
\centering
\begin{tabular}{|c|c|}
\hline
Identifier in GAP&Notation in Thesis\\
\hline
zstar27&$\mathbb{Z}^\ast_{27}$\\
uzprime27&$\mathbb{Z}'_{27}$\\
uy0&$Y_0$\\
uy1&$Y_1$\\
uy2&$Y_2$\\
uystar0&$Y^\ast_0$\\
uystarstar0&$Y^{\ast\ast}_0$\\
uystar1&$Y^\ast_1$\\
\hline
\end{tabular}
\end{quote}
\vspace{5mm}
The same commands used for the directed case when $n=27$ are used for the undirected case, however recall that the set on which the action is taking place will now be made up of inverse pairs of
elements. Therefore when defining these sets, the \verb!AsSet! command must be used as described previously. For example
\[\verb!uy1:=List([[3,24],[6,21],[12,15]], x->AsSet(x));!\]

For the directed case with $n=125$, the following identifiers are used to represent the sets mentioned in \Chapref{chp:Chp4}.
\begin{quote}
\centering
\begin{tabular}{|c|c|}
\hline
Identifier in GAP&Notation in Thesis\\
\hline
zstar125&$\mathbb{Z}^\ast_{125}$\\
zprime125&$\mathbb{Z}'_{125}$\\
y0&$Y_0$\\
y1&$Y_1$\\
y2&$Y_2$\\
ystar0&$Y^\ast_0$\\
ystarstar0&$Y^{\ast\ast}_0$\\
ystar1&$Y^\ast_1$\\
\hline
\end{tabular}
\end{quote}
\vspace{5mm}
\verb!zprime125! is the set $\lbrace 1,...124 \rbrace $ while \verb!zstar125! contains all elements in \verb!zprime125! which are relatively prime to 125. This may be obtained by using the command
\[\verb!zstar125:=PrimeResidues(125);!\]
The sets \verb! y0, y1,y2! are obtained as follows: \verb!y0! contains all elements in \verb!zprime125! which do not have a factor of 5. In other words this set is equivalent to the set \verb!zstar125!. Now \verb!y1! is the set containing elements of \verb!zprime125! which have a factor of 5 but not 25. In order to obtain this set we first filter out all those elements which have a factor of 5, by using the command

\[\verb!b:=Filtered(zprime125,x->\mod(x,5)=0);!\]
We then need to eliminate those elements which have a factor of 25. To do so we use the command

\[\verb!y1:=Filtered(b,x->\mod(x,25)<>0);!\]
This command chooses all those \verb!x! in \verb!b! such that $x\bmod 25 \neq 0$.

In \verb!y2! we need all those elements in \verb!zprime125! which have a factor of 25. To achieve this we have the command
\[\verb!y2:=Filtered(zprime125,x->\mod(x,25)=0);!\]
which takes all \verb!x! in \verb!zprime125! such that $x\bmod 25=0$.

To determine the subsets forming \verb!ystar0!, we simply require the orbits of the action \verb!x -> 26x mod 125! on \verb!y0!. The permutation \verb!g1! which corresponds to this action is given by the command

\[\verb!g1:=PermList(List(y0,x->Position(y0,\mod(26*x,125))));!\]
To find the group \verb!G1! generated by the permutation \verb!g1!, we have the command
\[\verb!G1:=Group(g1);!\]
The orbits of \verb!G1! are then determined using
\[\verb! O1:=Orbits(G1,[1..Length(y0)]);!\]
Again note that the domain is not \verb!y0! but \verb![1..Length(y0)]!. The resulting orbits give the positions of the elements of \verb!y0! not the elements themselves. In order to change these orbits into a list of lists of elements of \verb!y0!, we use the command

\[\verb!p1:=List(O1,x->List(x,y->y0[y]));!\]
where the outer \verb!List! command takes every list \verb!x! of \verb!O1!, and transforms it according to the inner \verb!List! command. The inner  \verb!List! command takes every element \verb!y! of \verb!x! (which is a number representing the position) and changes \verb!y! into the element of \verb!y0! which is in the position given by \verb!y!. The elements in the lists making up \verb!p1! are then ordered using the \verb!AsSet! command to give the set \verb!ystar0! as follows

\[\verb!ystar0:=List(p1,x->AsSet(x));!\]
The sets \verb!ystarstar0! and \verb!ystar1! are obtained in a similar manner.

The number of non-isomorphic undirected circulants when $n=125$, is obtained using the commands described above, however the following identifiers are used in GAP to represent the sets mentioned in \Chapref{chp:Chp4}.
\begin{quote}
\centering
\begin{tabular}{|c|c|}
\hline
Identifier in GAP&Notation in Thesis\\
\hline
zstar125&$\mathbb{Z}^\ast_{125}$\\
uzprime125&$\mathbb{Z}'_{125}$\\
uy0&$Y_0$\\
uy1&$Y_1$\\
uy2&$Y_2$\\
uystar0&$Y^\ast_0$\\
uystarstar0&$Y^{\ast\ast}_0$\\
uystar1&$Y^\ast_1$\\
\hline
\end{tabular}
\end{quote}

\chapter{Appendix B: The full GAP programmes}
\lhead{\emph{APPENDIX B.} \emph{THE FULL GAP PROGRAMMES}}
{\bf GAP program for the number of non-isomorphic graphs for $n=27$ (directed) using Table~\ref{table:Table3}}

\begin{spverbatim}
zstar27:=PrimeResidues(27);
zprime27:=[1..26];
y0:=zstar27;
y1:=[3,6,12,15,21,24];
y2:=[9,18];

ystar0:= List([[1,10,19],[8,17,26],[2,11,20],[7,16,25],[4,13,22],[5,14,23]],
x->AsSet(x));
ystarstar0:=List([[1,4,16,10,13,25,19,22,7],[2,8,5,20,26,23,11,17,14]],
x->AsSet(x));
ystar1:=List([[3,12,21],[6,24,15]], x->AsSet(x));

x_1:=Indeterminate(Rationals,1);
x_2:=Indeterminate(Rationals,2);
x_3:=Indeterminate(Rationals,3);
x_4:=Indeterminate(Rationals,4);
x_5:=Indeterminate(Rationals,5);
x_6:=Indeterminate(Rationals,6);
x_7:=Indeterminate(Rationals,7);
x_8:=Indeterminate(Rationals,8);
x_9:=Indeterminate(Rationals,9);
x_12:=Indeterminate(Rationals,12);
x_15:=Indeterminate(Rationals,15);
x_18:=Indeterminate(Rationals,18);
t:=Indeterminate(Rationals,"t", [x_1]);
\end{spverbatim}

\begin{lstlisting}[mathescape]
$\mbox A_{1}$
\end{lstlisting}

\begin{spverbatim}
l1:= List(zstar27,c->PermList(List(ystarstar0,x->Position(ystarstar0,
AsSet(\mod(c*x,27))))));
c1:=List(l1,g->CycleIndex(g,[1..Length(ystarstar0)]));
g1:=List(c1, f->Value(f,[x_1,x_2],[1+t^9,1+t^18]));
allones:=[];
for i in [1..Length(g1)] do
 allones[i]:=1;
 od;
G1:=(1/Length(g1))*(g1*allones);

l2:= List(zstar27,c->PermList(List(ystar1,x->Position(ystar1,
AsSet(\mod(c*x,27))))));
c2:=List(l2,g->CycleIndex(g,[1..Length(ystar1)]));
g2:=List(c2, f->Value(f,[x_1,x_2],[1+t^3,1+t^6]));
allones:=[];
for i in [1..Length(g2)] do
 allones[i]:=1;
 od;
G2:=(1/Length(g2))*(g2*allones);

l3:= List(zstar27,c->PermList(List(y2,x->Position(y2, (\mod(c*x,27))))));
c3:=List(l3,g->CycleIndex(g,[1..Length(y2)]));
g3:=List(c3, f->Value(f,[x_1,x_2],[1+t,1+t^2]));
allones:=[];
for i in [1..Length(g3)] do
 allones[i]:=1;
 od;
G3:=(1/Length(g3))*(g3*allones);
genfn1:=G1*G2*G3;
v1:=Value(genfn1,[t],[1]);
\end{spverbatim}

\begin{lstlisting}[mathescape]
$\mbox A_{2}$
\end{lstlisting}

\begin{lstlisting}[mathescape]
$\mbox A_{21}:$
\end{lstlisting}

\begin{spverbatim}
l1:= List(zstar27,c->PermList(List(y0,x->Position(y0,(\mod(c*x,27))))));
c1:=List(l1,g->CycleIndex(g,[1..Length(y0)]));
g1:=List(c1, f->Value(f,[x_1,x_2,x_3,x_6,x_9,x_18],[1+t,1+t^2,1+t^3,
1+t^6,1+t^9,1+t^18]));

l2:= List(zstar27,c->PermList(List(y1,x->Position(y1, (\mod(c*x,27))))));
c2:=List(l2,g->CycleIndex(g,[1..Length(y1)]));
g2:=List(c2, f->Value(f,[x_1,x_2,x_3,x_6],[1+t,1+t^2,1+t^3,1+t^6]));

l3:= List(zstar27,c->PermList(List(y2,x->Position(y2,(\mod(c*x,27))))));
c3:=List(l3,g->CycleIndex(g,[1..Length(y2)]));
g3:=List(c3, f->Value(f,[x_1,x_2],[1+t,1+t^2]));

cyc:=[];
for i in [1..Length(g1)] do
   cyc[i]:=g1[i]*g2[i]*g3[i];
  od;
allones:=[];
for i in [1..Length(g1)] do
   allones[i]:=1;
  od;
genfn2_1:=(1/Length(g1))*(cyc*allones);
\end{spverbatim}

\begin{lstlisting}[mathescape]
$\mbox A_{22}:$
\end{lstlisting}

\begin{spverbatim}
l1:= List(zstar27,c->PermList(List(ystar0,x->Position(ystar0,
AsSet(\mod(c*x,27))))));
c1:=List(l1,g->CycleIndex(g,[1..Length(ystar0)]));
g1:=List(c1, f->Value(f,[x_1,x_2,x_3,x_6,x_9,x_18],[1+t^3,1+t^6,1+t^9,
1+t^18,1+t^27,1+t^54]));

l2:= List(zstar27,c->PermList(List(y1,x->Position(y1,(\mod(c*x,27))))));
c2:=List(l2,g->CycleIndex(g,[1..Length(y1)]));
g2:=List(c2, f->Value(f,[x_1,x_2,x_3,x_6],[1+t,1+t^2,1+t^3,1+t^6]));

l3:= List(zstar27,c->PermList(List(y2,x->Position(y2,(\mod(c*x,27))))));
c3:=List(l3,g->CycleIndex(g,[1..Length(y2)]));
g3:=List(c3, f->Value(f,[x_1,x_2],[1+t,1+t^2]));
cyc:=[];
for i in [1..Length(g1)] do
   cyc[i]:=g1[i]*g2[i]*g3[i];
  od;
allones:=[];
for i in [1..Length(g1)] do
   allones[i]:=1;
  od;

genfn2_2:=(1/Length(g1))*(cyc*allones);
genfn2:= genfn2_1 - genfn2_2;
v2:=Value(genfn2,[t],[1]);
\end{spverbatim}

\begin{lstlisting}[mathescape]
$\mbox A_{3}$
\end{lstlisting}

\begin{lstlisting}[mathescape]
$\mbox A_{31}:$
\end{lstlisting}

\begin{spverbatim}
l1:= List(zstar27,c->PermList(List(ystar0,x->Position(ystar0,
AsSet(\mod(c*x,27))))));
c1:=List(l1,g->CycleIndex(g,[1..Length(ystar0)]));
g1:=List(c1, f->Value(f,[x_1,x_2,x_3,x_6,x_9,x_18],[1+t^3,1+t^6,1+t^9,
1+t^18,1+t^27,1+t^54]));

l2:= List(zstar27,c->PermList(List(ystar1,x->Position(ystar1,
AsSet(\mod(c*x,27))))));
c2:=List(l2,g->CycleIndex(g,[1..Length(ystar1)]));
g2:=List(c2, f->Value(f,[x_1,x_2,x_3,x_6],[1+t^3,1+t^6,1+t^9,1+t^18]));
G1G2:=(1/Length(g1))*(g1*g2);

l3:= List(zstar27,c->PermList(List(y2,x->Position(y2,(\mod(c*x,27))))));
c3:=List(l3,g->CycleIndex(g,[1..Length(y2)]));
g3:=List(c3, f->Value(f,[x_1,x_2],[1+t,1+t^2]));
allones:=[];
for i in [1..Length(g3)] do
   allones[i]:=1;
  od;
G3:=(1/Length(g3))*(g3*allones);

genfn3_1:=G1G2*G3;
\end{spverbatim}

\begin{lstlisting}[mathescape]
$\mbox A_{32}:$
\end{lstlisting}

\begin{spverbatim}
l1:= List(zstar27,c->PermList(List(ystarstar0,x->Position(ystarstar0,
AsSet(\mod(c*x,27))))));
c1:=List(l1,g->CycleIndex(g,[1..Length(ystarstar0)]));
g1:=List(c1, f->Value(f,[x_1,x_2,x_3,x_6,x_9],[1+t^9,1+t^18,1+t^27,
1+t^54,1+t^81]));

l2:= List(zstar27,c->PermList(List(ystar1,x->Position(ystar1,
AsSet(\mod(c*x,27))))));
c2:=List(l2,g->CycleIndex(g,[1..Length(ystar1)]));
g2:=List(c2, f->Value(f,[x_1,x_2,x_3,x_6],[1+t^3,1+t^6,1+t^9,1+t^18]));
G1G2:=(1/Length(g1))*(g1*g2);

l3:= List(zstar27,c->PermList(List(y2,x->Position(y2,(\mod(c*x,27))))));
c3:=List(l3,g->CycleIndex(g,[1..Length(y2)]));
g3:=List(c3, f->Value(f,[x_1,x_2],[1+t,1+t^2]));
allones:=[];
for i in [1..Length(g3)] do
   allones[i]:=1;
  od;
G3:=(1/Length(g3))*(g3*allones);
genfn3_2:=G1G2*G3;

genfn3 := genfn3_1 - genfn3_2;
v3:=Value(genfn3,[t],[1]);
\end{spverbatim}

\begin{lstlisting}[mathescape]
$\mbox A_{4}$
\end{lstlisting}

\begin{lstlisting}[mathescape]
$\mbox A_{41}:$
\end{lstlisting}

\begin{spverbatim}
l1:= List(zstar27,c->PermList(List(y1,x->Position(y1,(\mod(c*x,27))))));
c1:=List(l1,g->CycleIndex(g,[1..Length(y1)]));
g1:=List(c1, f->Value(f,[x_1,x_2,x_3,x_6],[1+t,1+t^2,1+t^3,1+t^6]));

l2:= List(zstar27,c->PermList(List(y2,x->Position(y2,(\mod(c*x,27))))));
c2:=List(l2,g->CycleIndex(g,[1..Length(y2)]));
g2:=List(c2, f->Value(f,[x_1,x_2],[1+t,1+t^2]));
G1G2:=(1/Length(g1))*(g1*g2);

l3:= List(zstar27,c->PermList(List(ystarstar0,x->Position(ystarstar0,
AsSet(\mod(c*x,27))))));
c3:=List(l3,g->CycleIndex(g,[1..Length(ystarstar0)]));
g3:=List(c3, f->Value(f,[x_1,x_2,x_3,x_6,x_9],[1+t^9,1+t^18,1+t^27,
1+t^54,1+t^81]));
allones:=[];
for i in [1..Length(g3)] do
   allones[i]:=1;
  od;
G3:=(1/Length(g3))*(g3*allones);

genfn4_1:=G1G2*G3;
\end{spverbatim}

\begin{lstlisting}[mathescape]
$\mbox A_{42}:$
\end{lstlisting}

\begin{spverbatim}
l1:= List(zstar27,c->PermList(List(ystar1,x->Position(ystar1,
AsSet(\mod(c*x,27))))));
c1:=List(l1,g->CycleIndex(g,[1..Length(ystar1)]));
g1:=List(c1, f->Value(f,[x_1,x_2,x_3,x_6],[1+t^3,1+t^6,1+t^9,1+t^18]));

l2:= List(zstar27,c->PermList(List(y2,x->Position(y2,(\mod(c*x,27))))));
c2:=List(l2,g->CycleIndex(g,[1..Length(y2)]));
g2:=List(c2, f->Value(f,[x_1,x_2],[1+t,1+t^2]));
G1G2:=(1/Length(g1))*(g1*g2);

l3:= List(zstar27,c->PermList(List(ystarstar0,x->Position(ystarstar0,
AsSet(\mod(c*x,27))))));
c3:=List(l3,g->CycleIndex(g,[1..Length(ystarstar0)]));
g3:=List(c3, f->Value(f,[x_1,x_2,x_3,x_6,x_9],[1+t^9,1+t^18,1+t^27,
1+t^54,1+t^81]));
allones:=[];
for i in [1..Length(g3)] do
   allones[i]:=1;
  od;
G3:=(1/Length(g3))*(g3*allones);
genfn4_2:=G1G2*G3;

genfn4:=genfn4_1 - genfn4_2;
v4:=Value(genfn4,[t],[1]);
\end{spverbatim}

\begin{lstlisting}[mathescape]
$\mbox A_{5}$
\end{lstlisting}

\begin{spverbatim}
allpairs:= function(x,y) return [x,y]; end;
S1:=ListX(zstar27, zstar27,allpairs);
ismod3:= function(a) return \mod(a[1],3) = \mod(a[2],3); end;
grp := Filtered(S1, ismod3);
\end{spverbatim}

\begin{lstlisting}[mathescape]
$\mbox A_{51}:$
\end{lstlisting}

\begin{spverbatim}
list1:=List(grp,c->PermList(List(y1,x->Position(y1,(\mod(c[1]*x,27))))));;
c1:=List(list1, g -> CycleIndex(g,[1..Length(y1)]));
g1:=List(c1, f->Value(f,[x_1,x_2,x_3,x_6],[1+t,1+t^2,1+t^3,1+t^6]));

list2:=List(grp,c->PermList(List(y2,x->Position(y2,(\mod(c[1]*x,27))))));;
c2:=List(list2, g -> CycleIndex(g,[1..Length(y2)]));
g2:=List(c2, f->Value(f,[x_1,x_2],[1+t,1+t^2]));

list3:=List(grp,c->PermList(List(ystar0,x->Position(ystar0,
AsSet(\mod(c[2]*x,27))))));;
c3:=List(list3, g -> CycleIndex(g,[1..Length(ystar0)]));
g3:=List(c3, f->Value(f,[x_1,x_2,x_3,x_6,x_9,x_18],[1+t^3,1+t^6,1+t^9,
1+t^18,1+t^27,1+t^54]));

cyc:=[];
for i in [1..Length(g1)] do
   cyc[i]:=g1[i]*g2[i]*g3[i];
  od;	
allones:=[];
for i in [1..Length(g1)] do
   allones[i]:=1;
  od;
genfn5_1:=(1/Length(g1))*(cyc*allones);
\end{spverbatim}

\begin{lstlisting}[mathescape]
$\mbox A_{52}:$
\end{lstlisting}

\begin{lstlisting}[mathescape]
$\neg \mbox R_{01}:$
\end{lstlisting}

\begin{spverbatim}
list1:=List(grp,c->PermList(List(y1,x->Position(y1,(\mod(c[1]*x,27))))));;
c1:=List(list1, g -> CycleIndex(g,[1..Length(y1)]));
g1:=List(c1, f->Value(f,[x_1,x_2,x_3,x_6],[1+t,1+t^2,1+t^3,1+t^6]));

list2:=List(grp,c->PermList(List(y2,x->Position(y2,(\mod(c[1]*x,27))))));;
c2:=List(list2, g -> CycleIndex(g,[1..Length(y2)]));
g2:=List(c2, f->Value(f,[x_1,x_2],[1+t,1+t^2]));

list3:=List(grp,c->PermList(List(ystarstar0,x->Position(ystarstar0,
AsSet(\mod(c[2]*x,27))))));;
c3:=List(list3, g -> CycleIndex(g,[1..Length(ystarstar0)]));
g3:=List(c3, f->Value(f,[x_1,x_2,x_3,x_6,x_9],[1+t^9,1+t^18,1+t^27,
1+t^54,1+t^81]));

cyc:=[];
for i in [1..Length(g1)] do
  cyc[i]:=g1[i]*g2[i]*g3[i];
 od;

allones:=[];
for i in [1..Length(g1)] do
 allones[i]:=1;
od;
genfn5_21:=(1/Length(g1))*(cyc*allones);
v_51:=Value(genfn5_21,[t],[1]);
\end{spverbatim}

\begin{lstlisting}[mathescape]
$\neg \mbox R_{10} \mbox{ and } \neg \mbox R_{00}:$
\end{lstlisting}

\begin{spverbatim}
list1:=List(grp,c->PermList(List(ystar1,x->Position(ystar1,
AsSet(\mod(c[1]*x,27))))));;
c1:=List(list1, g -> CycleIndex(g,[1..Length(ystar1)]));
g1:=List(c1, f->Value(f,[x_1,x_2,x_3,x_6],[1+t^3,1+t^6,1+t^9,1+t^18]));

list2:=List(grp,c->PermList(List(y2,x->Position(y2,(\mod(c[1]*x,27))))));;
c2:=List(list2, g -> CycleIndex(g,[1..Length(y2)]));
g2:=List(c2, f->Value(f,[x_1,x_2],[1+t,1+t^2]));

list3:=List(grp,c->PermList(List(ystar0,x->Position(ystar0,
AsSet(\mod(c[2]*x,27))))));;
c3:=List(list3, g -> CycleIndex(g,[1..Length(ystar0)]));
g3:=List(c3, f->Value(f,[x_1,x_2,x_3,x_6,x_9,x_18],[1+t^3,1+t^6,1+t^9,
1+t^18,1+t^27,1+t^54]));

cyc:=[];
for i in [1..Length(g1)] do
 cyc[i]:=g1[i]*g2[i]*g3[i];
 od;

allones:=[];
for i in [1..Length(g1)] do
 allones[i]:=1;
 od;
genfn5_22:=(1/Length(g1))*(cyc*allones);
v_52:=Value(genfn5_22,[t],[1]);
\end{spverbatim}

\begin{lstlisting}[mathescape]
$\neg \mbox R_{01} \mbox{ and } \neg \mbox R_{10}:$
\end{lstlisting}

\begin{spverbatim}
list1:=List(grp,c->PermList(List(ystar1,x->Position(ystar1,
AsSet(\mod(c[1]*x,27))))));;
c1:=List(list1, g -> CycleIndex(g,[1..Length(ystar1)]));
g1:=List(c1, f->Value(f,[x_1,x_2,x_3,x_6],[1+t^3,1+t^6,1+t^9,1+t^18]));

list2:=List(grp,c->PermList(List(y2,x->Position(y2,(\mod(c[1]*x,27))))));;
c2:=List(list2, g -> CycleIndex(g,[1..Length(y2)]));
g2:=List(c2, f->Value(f,[x_1,x_2],[1+t,1+t^2]));

list3:=List(grp,c->PermList(List(ystarstar0,x->Position(ystarstar0,
AsSet(\mod(c[2]*x,27))))));;
c3:=List(list3, g -> CycleIndex(g,[1..Length(ystarstar0)]));
g3:=List(c3, f->Value(f,[x_1,x_2,x_3,x_6,x_9],[1+t^9,1+t^18,1+t^27,
1+t^54,1+t^81]));

cyc:=[];
for i in [1..Length(g1)] do
 cyc[i]:=g1[i]*g2[i]*g3[i];
od;
allones:=[];
for i in [1..Length(g1)] do
 allones[i]:=1;
 od;
genfn5_23:=(1/Length(g1))*(cyc*allones);
v_53:=Value(genfn5_23,[t],[1]);

genfn5_2:=(genfn5_21+genfn5_22)-genfn5_23;
v52:=Value(genfn5_2,[t],[1]);

genfn5:=genfn5_1-genfn5_2;
v5:=Value(genfn5,[t],[1]);


totgenfn:=genfn1+ genfn2 + genfn3 + genfn4 + genfn5;
V:=Value(totgenfn, [t], [1]);
\end{spverbatim}

\newpage

{\bf GAP program for the number of non-isomorphic graphs for $n=27$ (undirected) using Table~\ref{table:Table3}}

\begin{spverbatim}
zstar27:=PrimeResidues(27);
zprime27:=[1..26];
y0:=zstar27;
y1:=[3,6,12,15,21,24];
y2:=[9,18];
uzprime27:=List([[1,26],[2,25],[3,24],[4,23],[5,22],[6,21],[7,20],[8,19], [9,18],[10,17],[11,16],[12,15],[13,14]], x->AsSet(x));
uy0:=List([[1,26],[2,25],[4,23],[5,22],[7,20],[8,19],[10,17],[11,16],
[13,14]], x-> AsSet(x));
uy1:=List([[3,24],[6,21],[12,15]], x->AsSet(x));
uy2:=List([[9,18]], x->AsSet(x));
uystar0 := List([[1,10,19,8,17,26],[2,11,20,7,16,25],[4,13,22,5,14,23]],
x->AsSet(x));
uystarstar0:=List([[1,2,4,5,7,8,10,11,13,14,16,17,19,20,22,23,25,26]],
x-> AsSet(x));
uystar1:=List([[3,6,12,15,21,24]],x->AsSet(x));


x_1:=Indeterminate(Rationals,1);
x_2:=Indeterminate(Rationals,2);
x_3:=Indeterminate(Rationals,3);
x_4:=Indeterminate(Rationals,4);
x_5:=Indeterminate(Rationals,5);
x_6:=Indeterminate(Rationals,6);
x_7:=Indeterminate(Rationals,7);
x_8:=Indeterminate(Rationals,8);
x_9:=Indeterminate(Rationals,9);
t:=Indeterminate(Rationals,"t", [x_1]);
\end{spverbatim}

\begin{lstlisting}[mathescape]
$\mbox A_1$
\end{lstlisting}

\begin{spverbatim}
l1:= List(zstar27,c->PermList(List(uystarstar0,x->Position(uystarstar0,
AsSet(\mod(c*x,27))))));
c1:=List(l1,g->CycleIndex(g,[1..Length(uystarstar0)]));
g1:=List(c1, f->Value(f,[x_1],[1+t^18]));

l2:= List(zstar27,c->PermList(List(uystar1,x->Position(uystar1,
AsSet(\mod(c*x,27))))));
c2:=List(l2,g->CycleIndex(g,[1..Length(uystar1)]));
g2:=List(c2, f->Value(f,[x_1],[1+t^6]));

l3:= List(zstar27,c->PermList(List(uy2,x->Position(uy2,
AsSet(\mod(c*x,27))))));
c3:=List(l3,g->CycleIndex(g,[1..Length(uy2)]));
g3:=List(c3, f->Value(f,[x_1],[1+t^2]));

cyc:=[];
for i in [1..Length(g1)] do
   cyc[i]:=g1[i]*g2[i]*g3[i];
  od;
allones:=[];
for i in [1..Length(g1)] do
   allones[i]:=1;
  od;
genfn1:=(1/Length(g1))*(cyc*allones);
v1:=Value(genfn1,[t],[1]);
\end{spverbatim}

\begin{lstlisting}[mathescape]
$\mbox A_2$
\end{lstlisting}

\begin{lstlisting}[mathescape]
$\mbox A_{21}:$
\end{lstlisting}

\begin{spverbatim}
l1:= List(zstar27,c->PermList(List(uy0,x->Position(uy0,
AsSet(\mod(c*x,27))))));
c1:=List(l1,g->CycleIndex(g,[1..Length(uy0)]));
g1:=List(c1, f->Value(f,[x_1,x_3,x_9],[1+t^2,1+t^6,1+t^18]));

l2:= List(zstar27,c->PermList(List(uy1,x->Position(uy1,
AsSet(\mod(c*x,27))))));
c2:=List(l2,g->CycleIndex(g,[1..Length(uy1)]));
g2:=List(c2, f->Value(f,[x_1,x_3,x_9],[1+t^2,1+t^6,1+t^18]));

l3:= List(zstar27,c->PermList(List(uy2,x->Position(uy2,
AsSet(\mod(c*x,27))))));
c3:=List(l3,g->CycleIndex(g,[1..Length(uy2)]));
g3:=List(c3, f->Value(f,[x_1,x_3,x_9],[1+t^2,1+t^6,1+t^18]));

cyc:=[];
for i in [1..Length(g1)] do
   cyc[i]:=g1[i]*g2[i]*g3[i];
  od;
allones:=[];
for i in [1..Length(g1)] do
   allones[i]:=1;
  od;
genfn2_1:=(1/Length(g1))*(cyc*allones);
\end{spverbatim}

\begin{lstlisting}[mathescape]
$\mbox A_{22}:$
\end{lstlisting}

\begin{spverbatim}
l1:= List(zstar27,c->PermList(List(uystar0,x->Position(uystar0,
AsSet(\mod(c*x,27))))));
c1:=List(l1,g->CycleIndex(g,[1..Length(uystar0)]));
g1:=List(c1, f->Value(f,[x_1,x_3,x_9],[1+t^6,1+t^18,1+t^54]));

l2:= List(zstar27,c->PermList(List(uy1,x->Position(uy1,
AsSet(\mod(c*x,27))))));
c2:=List(l2,g->CycleIndex(g,[1..Length(uy1)]));
g2:=List(c2, f->Value(f,[x_1,x_3,x_9],[1+t^2,1+t^6,1+t^18]));

l3:= List(zstar27,c->PermList(List(uy2,x->Position(uy2,
AsSet(\mod(c*x,27))))));
c3:=List(l3,g->CycleIndex(g,[1..Length(uy2)]));
g3:=List(c3, f->Value(f,[x_1,x_3,x_9],[1+t^2,1+t^6,1+t^18]));
cyc:=[];
for i in [1..Length(g1)] do
   cyc[i]:=g1[i]*g2[i]*g3[i];
  od;
allones:=[];
for i in [1..Length(g1)] do
   allones[i]:=1;
  od;
genfn2_2:=(1/Length(g1))*(cyc*allones);
genfn2:= genfn2_1 - genfn2_2;
v2:=Value(genfn2,[t],[1]);
\end{spverbatim}

\begin{lstlisting}[mathescape]
$\mbox A_3$
\end{lstlisting}

\begin{lstlisting}[mathescape]
$\mbox A_{31}:$
\end{lstlisting}

\begin{spverbatim}
l1:= List(zstar27,c->PermList(List(uystar0,x->Position(uystar0,
AsSet(\mod(c*x,27))))));
c1:=List(l1,g->CycleIndex(g,[1..Length(uystar0)]));
g1:=List(c1, f->Value(f,[x_1,x_3,x_9],[1+t^6,1+t^18,1+t^54]));

l2:= List(zstar27,c->PermList(List(uystar1,x->Position(uystar1,
AsSet(\mod(c*x,27))))));
c2:=List(l2,g->CycleIndex(g,[1..Length(uystar1)]));
g2:=List(c2, f->Value(f,[x_1,x_3,x_9],[1+t^6,1+t^18,1+t^54]));

G1G2:=(1/Length(g1))*(g1*g2);

l3:= List(zstar27,c->PermList(List(uy2,x->Position(uy2,
AsSet(\mod(c*x,27))))));
c3:=List(l3,g->CycleIndex(g,[1..Length(uy2)]));
g3:=List(c3, f->Value(f,[x_1,x_3,x_9],[1+t^2,1+t^6,1+t^18]));

allones:=[];
for i in [1..Length(g3)] do
   allones[i]:=1;
  od;
G3:=(1/Length(g3))*(g3*allones);
genfn3_1:=G1G2*G3;
\end{spverbatim}

\begin{lstlisting}[mathescape]
$\mbox A_{32}:$
\end{lstlisting}

\begin{spverbatim}
l1:= List(zstar27,c->PermList(List(uystarstar0,x->Position(uystarstar0,
AsSet(\mod(c*x,27))))));
c1:=List(l1,g->CycleIndex(g,[1..Length(uystarstar0)]));
g1:=List(c1, f->Value(f,[x_1,x_3,x_9],[1+t^18,1+t^54,1+t^162]));

l2:= List(zstar27,c->PermList(List(uystar1,x->Position(uystar1,
AsSet(\mod(c*x,27))))));
c2:=List(l2,g->CycleIndex(g,[1..Length(uystar1)]));
g2:=List(c2, f->Value(f,[x_1,x_3,x_9],[1+t^6,1+t^18,1+t^54]));
G1G2:=(1/Length(g1))*(g1*g2);

l3:= List(zstar27,c->PermList(List(uy2,x->Position(uy2,
AsSet(\mod(c*x,27))))));
c3:=List(l3,g->CycleIndex(g,[1..Length(uy2)]));
g3:=List(c3, f->Value(f,[x_1,x_3,x_9],[1+t^2,1+t^6,1+t^18]));
allones:=[];
for i in [1..Length(g3)] do
   allones[i]:=1;
  od;
G3:=(1/Length(g3))*(g3*allones);

genfn3_2:=G1G2*G3;
genfn3 := genfn3_1 - genfn3_2;
v3:=Value(genfn3,[t],[1]);
\end{spverbatim}

\begin{lstlisting}[mathescape]
$\mbox A_{4}$
\end{lstlisting}

\begin{lstlisting}[mathescape]
$\mbox A_{41}:$
\end{lstlisting}

\begin{spverbatim}
l1:= List(zstar27,c->PermList(List(uy1,x->Position(uy1,
AsSet(\mod(c*x,27))))));
c1:=List(l1,g->CycleIndex(g,[1..Length(uy1)]));
g1:=List(c1, f->Value(f,[x_1,x_3,x_9],[1+t^2,1+t^6,1+t^18]));

l2:= List(zstar27,c->PermList(List(uy2,x->Position(uy2,
AsSet(\mod(c*x,27))))));
c2:=List(l2,g->CycleIndex(g,[1..Length(uy2)]));
g2:=List(c2, f->Value(f,[x_1,x_3,x_9],[1+t^2,1+t^6,1+t^18]));

G1G2:=(1/Length(g1))*(g1*g2);

l3:= List(zstar27,c->PermList(List(uystarstar0,
x->Position(uystarstar0,
AsSet(\mod(c*x,27))))));
c3:=List(l3,g->CycleIndex(g,[1..Length(uystarstar0)]));
g3:=List(c3, f->Value(f,[x_1,x_3,x_9],[1+t^18,1+t^54,1+t^162]));
allones:=[];
for i in [1..Length(g3)] do
   allones[i]:=1;
  od;
G3:=(1/Length(g3))*(g3*allones);
genfn4_1:=G1G2*G3;
\end{spverbatim}

\begin{lstlisting}[mathescape]
$\mbox A_{42}:$
\end{lstlisting}

\begin{spverbatim}
l1:= List(zstar27,c->PermList(List(uystar1,x->Position(uystar1,
AsSet(\mod(c*x,27))))));
c1:=List(l1,g->CycleIndex(g,[1..Length(uystar1)]));
g1:=List(c1, f->Value(f,[x_1,x_3,x_9],[1+t^6,1+t^18,1+t^54]));

l2:= List(zstar27,c->PermList(List(uy2,x->Position(uy2,
AsSet(\mod(c*x,27))))));
c2:=List(l2,g->CycleIndex(g,[1..Length(uy2)]));
g2:=List(c2, f->Value(f,[x_1,x_3,x_9],[1+t^2,1+t^6,1+t^18]));
G1G2:=(1/Length(g1))*(g1*g2);

l3:= List(zstar27,c->PermList(List(uystarstar0,
x->Position(uystarstar0,
AsSet(\mod(c*x,27))))));
c3:=List(l3,g->CycleIndex(g,[1..Length(uystarstar0)]));
g3:=List(c3, f->Value(f,[x_1,x_3,x_9],[1+t^18,1+t^54,1+t^162]));
allones:=[];
for i in [1..Length(g3)] do
   allones[i]:=1;
  od;
G3:=(1/Length(g3))*(g3*allones);
genfn4_2:=G1G2*G3;
genfn4:=genfn4_1 - genfn4_2;
v4:=Value(genfn4,[t],[1]);
\end{spverbatim}

\begin{lstlisting}[mathescape]
$\mbox A_{5}$
\end{lstlisting}

\begin{spverbatim}
allpairs:= function(x,y) return [x,y]; end;
S1:=ListX(zstar27, zstar27,allpairs);
ismod3:= function(a) return \mod(a[1],3) = \mod(a[2],3); end;
grp := Filtered(S1, ismod3);
\end{spverbatim}

\begin{lstlisting}[mathescape]
$\mbox A_{51}:$
\end{lstlisting}

\begin{spverbatim}
list1:=List(grp,c->PermList(List(uy1,x->Position(uy1,
AsSet(\mod(c[1]*x,27))))));
c1:=List(list1, g -> CycleIndex(g,[1..Length(uy1)]));
g1:=List(c1, f->Value(f,[x_1,x_3,x_9],[1+t^2,1+t^6,1+t^18]));

list2:=List(grp,c->PermList(List(uy2,x->Position(uy2,
AsSet(\mod(c[1]*x,27))))));
c2:=List(list2, g -> CycleIndex(g,[1..Length(uy2)]));
g2:=List(c2, f->Value(f,[x_1,x_3,x_9],[1+t^2,1+t^6,1+t^18]));

list3:=List(grp,c->PermList(List(uystar0,x->Position(uystar0,
AsSet(\mod(c[2]*x,27))))));;
c3:=List(list3, g -> CycleIndex(g,[1..Length(uystar0)]));;
g3:=List(c3, f->Value(f,[x_1,x_3,x_9],[1+t^6,1+t^18,1+t^54]));

cyc:=[];
for i in [1..Length(g1)] do
   cyc[i]:=g1[i]*g2[i]*g3[i];
  od;	
allones:=[];
for i in [1..Length(g1)] do
   allones[i]:=1;
  od;
genfn5_1:=(1/Length(g1))*(cyc*allones);
\end{spverbatim}

\begin{lstlisting}[mathescape]
$\mbox A_{52}:$
\end{lstlisting}

\begin{lstlisting}[mathescape]
$\neg \mbox R_{01}:$
\end{lstlisting}

\begin{spverbatim}
list1:=List(grp,c->PermList(List(uy1,x->Position(uy1,
AsSet(\mod(c[1]*x,27))))));;
c1:=List(list1, g -> CycleIndex(g,[1..Length(uy1)]));;
g1:=List(c1, f->Value(f,[x_1,x_3,x_9],[1+t^2,1+t^6,1+t^18]));

list2:=List(grp,c->PermList(List(uy2,x->Position(uy2,
AsSet(\mod(c[1]*x,27))))));;
c2:=List(list2, g -> CycleIndex(g,[1..Length(uy2)]));;
g2:=List(c2, f->Value(f,[x_1,x_3,x_9],[1+t^2,1+t^6,1+t^18]));

list3:=List(grp,c->PermList(List(uystarstar0,
x->Position(uystarstar0,
AsSet(\mod(c[2]*x,27))))));;
c3:=List(list3, g -> CycleIndex(g,[1..Length(uystarstar0)]));;
g3:=List(c3, f->Value(f,[x_1,x_3,x_9],[1+t^18,1+t^54,1+t^162]));

cyc:=[];
for i in [1..Length(g1)] do
  cyc[i]:=g1[i]*g2[i]*g3[i];
 od;

allones:=[];
for i in [1..Length(g1)] do
 allones[i]:=1;
od;
genfn5_21:=(1/Length(g1))*(cyc*allones);
\end{spverbatim}

\begin{lstlisting}[mathescape]
$\neg \mbox R_{10}$ and $\neg \mbox R_{00}:$
\end{lstlisting}

\begin{spverbatim}
list1:=List(grp,c->PermList(List(uystar1,x->Position(uystar1,
AsSet(\mod(c[1]*x,27))))));;
c1:=List(list1, g -> CycleIndex(g,[1..Length(uystar1)]));;
g1:=List(c1, f->Value(f,[x_1,x_3,x_9],[1+t^6,1+t^18,1+t^54]));

list2:=List(grp,c->PermList(List(uy2,x->Position(uy2,
AsSet(\mod(c[1]*x,27))))));;
c2:=List(list2, g -> CycleIndex(g,[1..Length(uy2)]));;
g2:=List(c2, f->Value(f,[x_1,x_3,x_9],[1+t^2,1+t^6,1+t^18]));

list3:=List(grp,c->PermList(List(uystar0,x->Position(uystar0,
AsSet(\mod(c[2]*x,27))))));;
c3:=List(list3, g -> CycleIndex(g,[1..Length(uystar0)]));;
g3:=List(c3, f->Value(f,[x_1,x_3,x_9],[1+t^6,1+t^18,1+t^54]));

cyc:=[];
for i in [1..Length(g1)] do
 cyc[i]:=g1[i]*g2[i]*g3[i];
 od;

allones:=[];
for i in [1..Length(g1)] do
 allones[i]:=1;
 od;
genfn5_22:=(1/Length(g1))*(cyc*allones);
\end{spverbatim}

\begin{lstlisting}[mathescape]
$\neg \mbox  R_{01}$ and $\neg \mbox R_{10}:$
\end{lstlisting}

\begin{spverbatim}
list1:=List(grp,c->PermList(List(uystar1,x->Position(uystar1,
AsSet(\mod(c[1]*x,27))))));;
c1:=List(list1, g -> CycleIndex(g,[1..Length(uystar1)]));;
g1:=List(c1, f->Value(f,[x_1,x_3,x_9],[1+t^6,1+t^18,1+t^54]));

list2:=List(grp,c->PermList(List(uy2,x->Position(uy2,
AsSet(\mod(c[1]*x,27))))));;
c2:=List(list2, g -> CycleIndex(g,[1..Length(uy2)]));;
g2:=List(c2, f->Value(f,[x_1,x_3,x_9],[1+t^2,1+t^6,1+t^18]));

list3:=List(grp,c->PermList(List(uystarstar0,
x->Position(uystarstar0,
AsSet(\mod(c[2]*x,27))))));;
c3:=List(list3, g -> CycleIndex(g,[1..Length(uystarstar0)]));;
g3:=List(c3, f->Value(f,[x_1,x_3,x_9],[1+t^18,1+t^54,1+t^162]));
cyc:=[];
for i in [1..Length(g1)] do
 cyc[i]:=g1[i]*g2[i]*g3[i];
od;

allones:=[];
for i in [1..Length(g1)] do
 allones[i]:=1;
 od;
genfn5_23:=(1/Length(g1))*(cyc*allones);
genfn5_2:=genfn5_21+genfn5_22-genfn5_23;
genfn5:=genfn5_1-genfn5_2;
v5:=Value(genfn5,[t],[1]);

totcycind := genfn1+ genfn2+ genfn3 + genfn4 + genfn5;
V:=Value(totcycind, [t], [1]);
\end{spverbatim}

\newpage

{\bf GAP program for the number of non-isomorphic graphs for $n=125$ (directed) using Table~\ref{table:Table3}}

\begin{spverbatim}
zstar125:=PrimeResidues(125);
zprime125:=[1..124];
y0:=zstar125;
b:=Filtered(zprime125,x->\mod(x,5)=0);
y1:=Filtered(b,x->\mod(x,25)<>0);
y2:=Filtered(zprime125,x->\mod(x,25)=0);
g1:=PermList(List(y0,x->Position(y0,\mod(26*x,125))));
#Find the group G generated by the permutation
g1;
G1:=Group(g1);
#To find the orbits of G1;
O1:=Orbits(G1,[1..Length(y0)]);
p1:=List(O1,x->List(x,y->y0[y]));
ystar0:=List(p1,x->AsSet(x));
g2:=PermList(List(y0,x->Position(y0,\mod(6*x,125))));
#Find the group G2 generated by the permutation
g2;
G2:=Group(g2);
#To find orbits of G2;
O2:=Orbits(G2,[1..Length(y0)]);
p2:=List(O2,x->List(x,y->y0[y]));
ystarstar0:=List(p2,x->AsSet(x));

g3:=PermList(List(y1,x->Position(y1,\mod(6*x,125))));
#Find the group G3 generated by the permutation
g3;
G3:=Group(g3);
O3:=Orbits(G3,[1..Length(y1)]);
p3:=List(O3,x->List(x,y->y1[y]));
ystar1:=List(p3,x->AsSet(x));

x_1:=Indeterminate(Rationals,1);
x_2:=Indeterminate(Rationals,2);
x_3:=Indeterminate(Rationals,3);
x_4:=Indeterminate(Rationals,4);
x_5:=Indeterminate(Rationals,5);
x_6:=Indeterminate(Rationals,6);
x_7:=Indeterminate(Rationals,7);
x_8:=Indeterminate(Rationals,8);
x_9:=Indeterminate(Rationals,9);
x_10:=Indeterminate(Rationals,10);
x_20:=Indeterminate(Rationals,20);
x_25:=Indeterminate(Rationals,25);
x_50:=Indeterminate(Rationals,50);
x_100:=Indeterminate(Rationals,100);
\end{spverbatim}

\begin{lstlisting}[mathescape]
$A_1$
\end{lstlisting}

\begin{spverbatim}
l1:= List(zstar125,c->PermList(List(ystarstar0,
x->Position(ystarstar0,
AsSet(\mod(c*x,125))))));
c1:=List(l1,g->CycleIndex(g,[1..Length(ystarstar0)]));
allones:=[];
for i in [1..Length(c1)] do
 allones[i]:=1;
 od;
c1:=(1/Length(c1))*(c1*allones);

l2:= List(zstar125,c->PermList(List(ystar1,x->Position(ystar1,
AsSet(\mod(c*x,125))))));
c2:=List(l2,g->CycleIndex(g,[1..Length(ystar1)]));
allones:=[];
for i in [1..Length(c2)] do
 allones[i]:=1;
 od;
c2:=(1/Length(c2))*(c2*allones);

l3:= List(zstar125,c->PermList(List(y2,
x->Position(y2,(\mod(c*x,125))))));
c3:=List(l3,g->CycleIndex(g,[1..Length(y2)]));
allones:=[];
for i in [1..Length(c3)] do
 allones[i]:=1; 	
 od;
c3:=(1/Length(c3))*(c3*allones);

cycleindex1:=c1*c2*c3;
v1:=Value(cycleindex1,[x_1,x_2,x_4],[2,2,2]);
\end{spverbatim}

\begin{lstlisting}[mathescape]
$A_2$
\end{lstlisting}

\begin{lstlisting}[mathescape]
$A_{21}$
\end{lstlisting}

\begin{spverbatim}
l1:= List(zstar125,c->PermList(List(y0,x->Position(y0,(\mod(c*x,125))))));
c1:=List(l1,g->CycleIndex(g,[1..Length(y0)]));

l2:= List(zstar125,c->PermList(List(y1,x->Position(y1,(\mod(c*x,125))))));
c2:=List(l2,g->CycleIndex(g,[1..Length(y1)]));

l3:= List(zstar125,c->PermList(List(y2,x->Position(y2,(\mod(c*x,125))))));
c3:=List(l3,g->CycleIndex(g,[1..Length(y2)]));

cyc:=[];
for i in [1..Length(c1)] do
   cyc[i]:=c1[i]*c2[i]*c3[i];
  od;
allones:=[];
for i in [1..Length(c1)] do
   allones[i]:=1;
  od;
cycleindex2_1:=(1/Length(c1))*(cyc*allones);
\end{spverbatim}

\begin{lstlisting}[mathescape]
$A_{22}$
\end{lstlisting}

\begin{spverbatim}
l1:= List(zstar125,c->PermList(List(ystar0,x->Position(ystar0,
AsSet(\mod(c*x,125))))));
c1:=List(l1,g->CycleIndex(g,[1..Length(ystar0)]));

l2:= List(zstar125,c->PermList(List(y1,x->Position(y1,(\mod(c*x,125))))));
c2:=List(l2,g->CycleIndex(g,[1..Length(y1)]));

l3:= List(zstar125,c->PermList(List(y2,x->Position(y2,(\mod(c*x,125))))));
c3:=List(l3,g->CycleIndex(g,[1..Length(y2)]));

cyc:=[];

for i in [1..Length(c1)] do
   cyc[i]:=c1[i]*c2[i]*c3[i];
  od;
allones:=[];
for i in [1..Length(c1)] do
   allones[i]:=1;
  od;

cycleindex2_2:=(1/Length(c1))*(cyc*allones);

cycleindex2:= cycleindex2_1 - cycleindex2_2;
v2:=Value(cycleindex2,[x_1,x_2,x_4,x_5,x_6,x_10,x_20,x_25,x_50,x_100],
[2,2,2,2,2,2,2,2,2,2]);
\end{spverbatim}

\begin{lstlisting}[mathescape]
$A_{3}$
\end{lstlisting}

\begin{lstlisting}[mathescape]
$A_{31}$
\end{lstlisting}

\begin{spverbatim}
l1:= List(zstar125,c->PermList(List(ystar0,x->Position(ystar0,
AsSet(\mod(c*x,125))))));
c1:=List(l1,g->CycleIndex(g,[1..Length(ystar0)]));

l2:= List(zstar125,c->PermList(List(ystar1,x->Position(ystar1,
AsSet(\mod(c*x,125))))));
c2:=List(l2,g->CycleIndex(g,[1..Length(ystar1)]));

c1c2:=(1/Length(c1))*(c1*c2);

l3:= List(zstar125,c->PermList(List(y2,x->Position(y2,(\mod(c*x,125))))));
c3:=List(l3,g->CycleIndex(g,[1..Length(y2)]));

allones:=[];
for i in [1..Length(c3)] do
   allones[i]:=1;
  od;
c3:=(1/Length(c3))*(c3*allones);
cycleindex3_1:=c1c2*c3;
\end{spverbatim}

\begin{lstlisting}[mathescape]
$A_{32}$
\end{lstlisting}

\begin{spverbatim}
l1:= List(zstar125,c->PermList(List(ystarstar0,x->Position(ystarstar0,
AsSet(\mod(c*x,125))))));
c1:=List(l1,g->CycleIndex(g,[1..Length(ystarstar0)]));

l2:= List(zstar125,c->PermList(List(ystar1,x->Position(ystar1,
AsSet(\mod(c*x,125))))));
c2:=List(l2,g->CycleIndex(g,[1..Length(ystar1)]));
c1c2:=(1/Length(c1))*(c1*c2);

l3:= List(zstar125,c->PermList(List(y2,x->Position(y2,(\mod(c*x,125))))));
c3:=List(l3,g->CycleIndex(g,[1..Length(y2)]));
allones:=[];
for i in [1..Length(c3)] do
   allones[i]:=1;
  od;
c3:=(1/Length(c3))*(c3*allones);

cycleindex3_2:=c1c2*c3;

cycleindex3 := cycleindex3_1 - cycleindex3_2;
v3:=Value(cycleindex3,[x_1,x_2,x_4,x_5,x_6,x_10,x_20,x_25,x_50,x_100],
[2,2,2,2,2,2,2,2,2,2]);
\end{spverbatim}

\begin{lstlisting}[mathescape]
$A_{4}$
\end{lstlisting}

\begin{lstlisting}[mathescape]
$A_{41}$
\end{lstlisting}

\begin{spverbatim}
l1:= List(zstar125,c->PermList(List(y1,x->Position(y1,(\mod(c*x,125))))));
c1:=List(l1,g->CycleIndex(g,[1..Length(y1)]));

l2:= List(zstar125,c->PermList(List(y2,x->Position(y2,(\mod(c*x,125))))));
c2:=List(l2,g->CycleIndex(g,[1..Length(y2)]));
c1c2:=(1/Length(c1))*(c1*c2);

l3:= List(zstar125,c->PermList(List(ystarstar0,x->Position(ystarstar0,
AsSet(\mod(c*x,125))))));
c3:=List(l3,g->CycleIndex(g,[1..Length(ystarstar0)]));

allones:=[];
for i in [1..Length(c3)] do
   allones[i]:=1;
  od;
c3:=(1/Length(c3))*(c3*allones);
cycleindex4_1:=c1c2*c3;
\end{spverbatim}

\begin{lstlisting}[mathescape]
$A_{42}$
\end{lstlisting}

\begin{spverbatim}
l1:= List(zstar125,c->PermList(List(ystar1,x->Position(ystar1,
AsSet(\mod(c*x,125))))));
c1:=List(l1,g->CycleIndex(g,[1..Length(ystar1)]));

l2:= List(zstar125,c->PermList(List(y2,x->Position(y2,(\mod(c*x,125))))));
c2:=List(l2,g->CycleIndex(g,[1..Length(y2)]));
c1c2:=(1/Length(c1))*(c1*c2);

l3:= List(zstar125,c->PermList(List(ystarstar0,x->Position(ystarstar0,
AsSet(\mod(c*x,125))))));
c3:=List(l3,g->CycleIndex(g,[1..Length(ystarstar0)]));

allones:=[];
for i in [1..Length(c3)] do
   allones[i]:=1;
  od;
c3:=(1/Length(c3))*(c3*allones);
cycleindex4_2:=c1c2*c3;

cycleindex4:=cycleindex4_1 - cycleindex4_2;
v4:=Value(cycleindex4,[x_1,x_2,x_4,x_5,x_6,x_10,x_20,x_25,x_50,x_100],
[2,2,2,2,2,2,2,2,2,2]);
\end{spverbatim}

\begin{lstlisting}[mathescape]
$A_{5}$
\end{lstlisting}

\begin{spverbatim}
allpairs:= function(x,y) return [x,y]; end;
g1:=ListX(zstar125, zstar125,allpairs);
ismod5:= function(a) return \mod(a[1],5) = \mod(a[2],5); end;
grp := Filtered(g1, ismod5);
\end{spverbatim}

\begin{lstlisting}[mathescape]
$A_{51}$
\end{lstlisting}

\begin{spverbatim}
list1:=List(grp,c->PermList(List(y1,x->Position(y1,(\mod(c[1]*x,125))))));
c1:=List(list1, g -> CycleIndex(g,[1..Length(y1)]));

list2:=List(grp,c->PermList(List(y2,x->Position(y2,(\mod(c[1]*x,125))))));
c2:=List(list2, g -> CycleIndex(g,[1..Length(y2)]));

list3:=List(grp,c->PermList(List(ystar0,x->Position(ystar0,
AsSet(\mod(c[2]*x,125))))));;
c3:=List(list3, g -> CycleIndex(g,[1..Length(ystar0)]));

cyc:=[];
for i in [1..Length(c1)] do
   cyc[i]:=c1[i]*c2[i]*c3[i];
  od;	
allones:=[];
for i in [1..Length(c1)] do
   allones[i]:=1;
  od;
cycleindex5_1:=(1/Length(c1))*(cyc*allones);
\end{spverbatim}

\begin{lstlisting}[mathescape]
$A_{52}$
\end{lstlisting}

\begin{lstlisting}[mathescape]
$\neg \mbox R_{01}:$
\end{lstlisting}

\begin{spverbatim}
list1:=List(grp,c->PermList(List(y1,x->Position(y1,(\mod(c[1]*x,125))))));
c1:=List(list1, g -> CycleIndex(g,[1..Length(y1)]));

list2:=List(grp,c->PermList(List(y2,x->Position(y2,(\mod(c[1]*x,125))))));
c2:=List(list2, g -> CycleIndex(g,[1..Length(y2)]));

list3:=List(grp,c->PermList(List(ystarstar0,x->Position(ystarstar0,
AsSet(\mod(c[2]*x,125))))));
c3:=List(list3, g -> CycleIndex(g,[1..Length(ystarstar0)]));

cyc:=[];
for i in [1..Length(c1)] do
  cyc[i]:=c1[i]*c2[i]*c3[i];
 od;

allones:=[];
for i in [1..Length(c1)] do
 allones[i]:=1;
od;
cycleindex5_21:=(1/Length(c1))*(cyc*allones);
\end{spverbatim}

\begin{lstlisting}[mathescape]
$\neg \mbox  R_{10}$ and $\neg \mbox R_{00}:$
\end{lstlisting}

\begin{spverbatim}
list1:=List(grp,c->PermList(List(ystar1,x->Position(ystar1,
AsSet(\mod(c[1]*x,125))))));
c1:=List(list1, g -> CycleIndex(g,[1..Length(ystar1)]));

list2:=List(grp,c->PermList(List(y2,
x->Position(y2,(\mod(c[1]*x,125))))));
c2:=List(list2, g -> CycleIndex(g,[1..Length(y2)]));

list3:=List(grp,c->PermList(List(ystar0,x->Position(ystar0,
AsSet(\mod(c[2]*x,125))))));
c3:=List(list3, g -> CycleIndex(g,[1..Length(ystar0)]));

cyc:=[];
for i in [1..Length(c1)] do
 cyc[i]:=c1[i]*c2[i]*c3[i];
 od;

allones:=[];
for i in [1..Length(c1)] do
 allones[i]:=1;
 od;
cycleindex5_22:=(1/Length(c1))*(cyc*allones);
\end{spverbatim}

\begin{lstlisting}[mathescape]
$\neg \mbox  R_{01}$ and $\neg \mbox R_{10}:$
\end{lstlisting}

\begin{spverbatim}
list1:=List(grp,c->PermList(List(ystar1,x->Position(ystar1,
AsSet(\mod(c[1]*x,125))))));
c1:=List(list1, g -> CycleIndex(g,[1..Length(ystar1)]));

list2:=List(grp,c->PermList(List(y2,x->Position(y2,(\mod(c[1]*x,125))))));
c2:=List(list2, g -> CycleIndex(g,[1..Length(y2)]));

list3:=List(grp,c->PermList(List(ystarstar0,x->Position(ystarstar0,
AsSet(\mod(c[2]*x,125))))));
c3:=List(list3, g -> CycleIndex(g,[1..Length(ystarstar0)]));
cyc:=[];
for i in [1..Length(c1)] do
 cyc[i]:=c1[i]*c2[i]*c3[i];
od;

allones:=[];
for i in [1..Length(c1)] do
 allones[i]:=1;
 od;
cycleindex5_23:=(1/Length(c1))*(cyc*allones);
cycleindex5_2:=cycleindex5_21+cycleindex5_22-cycleindex5_23;
cycleindex5:=cycleindex5_1-cycleindex5_2;
v5:=Value(cycleindex5,[x_1,x_2,x_4,x_5,x_6,x_10,x_20,x_25,x_50,x_100],
[2,2,2,2,2,2,2,2,2,2]);
\end{spverbatim}

\newpage

{\bf GAP program for the number of non-isomorphic graphs for $n=125$ (undirected) using Table~\ref{table:Table3}}

\begin{spverbatim}
zstar125:=PrimeResidues(125);
zprime125:=[1..124];
y0:=zstar125;
b:=Filtered(zprime125,x->\mod(x,5)=0);
y1:=Filtered(b,x->\mod(x,25)<>0);
y2:=Filtered(zprime125,x->\mod(x,25)=0);


pair:=function(x,y)
return [x,y];
end;
listofpairs:=ListX(y0,y0,pair);
inversepairs:=function(a) return \mod(a[1],125)=\mod(-a[2],125);
end;
b1:=Filtered(listofpairs,inversepairs);
b2:= function (a) return a[1]<a[2];
end;
uy0:=List(Filtered(b1,b2), x-> AsSet(x));

pair:=function(x,y)
return [x,y];
end;
listofpairs:=ListX(y1,y1,pair);
inversepairs:=function(a) return \mod(a[1],125)=\mod(-a[2],125);
end;
b4:=Filtered(listofpairs,inversepairs);
b5:= function (a) return a[1]<a[2];
end;
uy1:=List(Filtered(b4,b5),x->AsSet(x));

pair:=function(x,y)
return [x,y];
end;
listofpairs:=ListX(y2,y2,pair);
inversepairs:=function(a) return \mod(a[1],125)=\mod(-a[2],125);
end;
b7:=Filtered(listofpairs,inversepairs);
b8:= function (a) return a[1]<a[2];
end;
uy2:=List(Filtered(b7,b8),x->AsSet(x));

pair:=function(x,y)
return [x,y];
end;
listofpairs:=ListX(zprime125,zprime125,pair);
inversepairs:=function(a) return \mod(a[1],125)=\mod(-a[2],125);
end;
b10:=Filtered(listofpairs,inversepairs);
b11:= function (a) return a[1]<a[2];
end;
uzprime125:=List(Filtered(b10,b11),x->AsSet(x));

g1:=PermList(List(y0,x->Position(y0,\mod(26*x,125))));
#Find the group G generated by the permutation
g1;
G1:=Group(g1);
#To find the orbits of G1;
O1:=Orbits(G1,[1..Length(y0)]);
L1:=List(O1,x->List(x,y->y0[y]));

#arranging sets (x,-x)
uystar0:=List([[ 1, 26, 51, 76, 101, 24, 124, 99, 74, 49 ], [ 2, 52, 102, 27, 77, 23, 98, 48, 123, 73  ],
[ 3, 78, 28, 103, 53, 22, 72, 122, 47, 97], [ 4, 104, 79, 54, 29, 21, 46, 71, 96, 121],
[ 6, 31, 56, 81, 106, 19, 119, 94, 69, 44], [ 7, 57, 107, 32, 82, 18, 93, 43, 118, 68 ],
[ 8, 83, 33, 108, 58, 17, 67, 117, 42, 92  ], [ 9, 109, 84, 59, 34, 16, 41, 66, 91, 116  ],
[ 11, 36, 61, 86, 111, 14, 114, 89, 64, 39  ], [ 12, 62, 112, 37, 87, 13, 88, 38, 113, 63]],x-> AsSet(x));

g2:=PermList(List(y0,x->Position(y0,\mod(6*x,125))));
#Find the group G2 generated by the permutation
g2;
G2:=Group(g2);
#To find orbits of G2;
O2:=Orbits(G2,[1..Length(y0)]);
L2:=List(O2,x->List(x,y->y0[y]));

#arranging sets (x,-x)
uystarstar0:= List([[1, 6, 36, 91, 46, 26, 31, 61, 116, 71, 51, 56, 86, 16, 96, 76, 81, 111, 41, 121, 101, 106, 11, 66, 21, 4, 24, 19, 114, 59, 104, 124, 119, 89, 34, 79, 99, 94, 64, 9, 54, 74, 69, 39, 109, 29, 49, 44, 14, 84  ], [ 2, 12, 72, 57, 92, 52, 62, 122, 107, 17, 102, 112, 47, 32, 67, 27, 37, 97, 82, 117, 77, 87, 22, 7, 42, 3, 18, 108, 23, 13, 78, 93, 58, 98, 88, 28, 43, 8, 48, 38, 103, 118, 83, 123, 113, 53, 68, 33, 73, 63]],
x-> AsSet(x));

g3:=PermList(List(y1,x->Position(y1,\mod(6*x,125))));
#Find the group G3 generated by the permutation
g3;
G3:=Group(g3);
O3:=Orbits(G3,[1..Length(y1)]);
l3:=List(O3,x->List(x,y->y1[y]));
uystar1:=List([[ 5, 30, 55, 80, 105, 20, 120, 95, 70, 45  ], [ 10, 60, 110, 35, 85, 15, 90, 40, 115, 65]], x-> AsSet(x));
\end{spverbatim}

\begin{spverbatim}
x_1:=Indeterminate(Rationals,1);
x_2:=Indeterminate(Rationals,2);
x_3:=Indeterminate(Rationals,3);
x_4:=Indeterminate(Rationals,4);
x_5:=Indeterminate(Rationals,5);
x_6:=Indeterminate(Rationals,6);
x_7:=Indeterminate(Rationals,7);
x_8:=Indeterminate(Rationals,8);
x_9:=Indeterminate(Rationals,9);
x_10:=Indeterminate(Rationals,10);
x_25:=Indeterminate(Rationals,25);
x_50:=Indeterminate(Rationals,50);
t:=Indeterminate(Rationals,"t", [x_1]);
\end{spverbatim}

\begin{lstlisting}[mathescape]
$A_1$
\end{lstlisting}

\begin{spverbatim}
l1:= List(zstar125,c->PermList(List(uystarstar0,x->Position(uystarstar0,
AsSet(\mod(c*x,125))))));
c1:=List(l1,g->CycleIndex(g,[1..Length(uystarstar0)]));
h1:=List(c1, f->Value(f,[x_1,x_2],[1+t^50,1+t^100]));
allones:=[];
for i in [1..Length(h1)] do
 allones[i]:=1;
 od;
H1:=(1/Length(h1))*(h1*allones);

l2:= List(zstar125,c->PermList(List(uystar1,x->Position(uystar1,
AsSet(\mod(c*x,125))))));
c2:=List(l2,g->CycleIndex(g,[1..Length(uystar1)]));
h2:=List(c2, f->Value(f,[x_1,x_2],[1+t^10,1+t^20]));
allones:=[];
for i in [1..Length(h2)] do
 allones[i]:=1;
 od;
H2:=(1/Length(h2))*(h2*allones);

l3:= List(zstar125,c->PermList(List(uy2,x->Position(uy2,
AsSet(\mod(c*x,125))))));
c3:=List(l3,g->CycleIndex(g,[1..Length(uy2)]));
h3:=List(c3, f->Value(f,[x_1,x_2],[1+t^2,1+t^4]));
allones:=[];
for i in [1..Length(h3)] do
 allones[i]:=1; 	
 od;
H3:=(1/Length(h3))*(h3*allones);
genfn1:=H1*H2*H3;
v1:=Value(genfn1,[t],[1]);
\end{spverbatim}

\begin{lstlisting}[mathescape]
$A_2$
\end{lstlisting}

\begin{lstlisting}[mathescape]
$A_{21}$
\end{lstlisting}
\begin{spverbatim}
l1:= List(zstar125,c->PermList(List(uy0,x->Position(uy0,
AsSet(\mod(c*x,125))))));
c1:=List(l1,g->CycleIndex(g,[1..Length(uy0)]));
h1:=List(c1, f->Value(f, [x_1,x_2,x_5,x_10,x_25,x_50],[1+t^2,1+t^4,
1+t^10,1+t^20, 1+t^50,1+t^100]));
l2:= List(zstar125,c->PermList(List(uy1,x->Position(uy1,
AsSet(\mod(c*x,125))))));
c2:=List(l2,g->CycleIndex(g,[1..Length(uy1)]));
h2:=List(c2, f->Value(f,[x_1,x_2,x_5,x_10,x_25,x_50],[1+t^2,1+t^4,
1+t^10,1+t^20, 1+t^50,1+t^100]));

l3:= List(zstar125,c->PermList(List(uy2,x->Position(uy2,
AsSet(\mod(c*x,125))))));
c3:=List(l3,g->CycleIndex(g,[1..Length(uy2)]));
h3:=List(c3, f->Value(f,[x_1,x_2,x_5,x_10,x_25,x_50],[1+t^2,1+t^4,
1+t^10,1+t^20, 1+t^50,1+t^100]));

cyc:=[];
for i in [1..Length(h1)] do
   cyc[i]:=h1[i]*h2[i]*h3[i];
  od;
allones:=[];
for i in [1..Length(h1)] do
   allones[i]:=1;
  od;
genfn2_1:=(1/Length(h1))*(cyc*allones);
\end{spverbatim}

\begin{lstlisting}[mathescape]
$A_{22}$
\end{lstlisting}

\begin{spverbatim}
l1:= List(zstar125,c->PermList(List(uystar0,x->Position(uystar0,
AsSet(\mod(c*x,125))))));
c1:=List(l1,g->CycleIndex(g,[1..Length(uystar0)]));
h1:=List(c1, f->Value(f, [x_1,x_2,x_5,x_10,x_25,x_50],[1+t^10,1+t^20,
1+t^50,1+t^100, 1+t^250,1+t^500]));

l2:= List(zstar125,c->PermList(List(uy1,x->Position(uy1,
AsSet(\mod(c*x,125))))));
c2:=List(l2,g->CycleIndex(g,[1..Length(uy1)]));
h2:=List(c2, f->Value(f,[x_1,x_2,x_5,x_10,x_25,x_50],[1+t^2,1+t^4, 1+t^10,
1+t^20,1+t^50,1+t^100]));

l3:= List(zstar125,c->PermList(List(uy2,x->Position(uy2,
AsSet(\mod(c*x,125))))));
c3:=List(l3,g->CycleIndex(g,[1..Length(uy2)]));
h3:=List(c3, f->Value(f,[x_1,x_2,x_5,x_10,x_25,x_50],[1+t^2,1+t^4,
1+t^10,1+t^20, 1+t^50,1+t^100]));

cyc:=[];
for i in [1..Length(h1)] do
   cyc[i]:=h1[i]*h2[i]*h3[i];
  od;
allones:=[];
for i in [1..Length(h1)] do
   allones[i]:=1;
  od;

genfn2_2:=(1/Length(h1))*(cyc*allones);

genfn2:= genfn2_1 - genfn2_2;
v2:=Value(genfn2,[t],[1]);
\end{spverbatim}

\begin{lstlisting}[mathescape]
$A_3$
\end{lstlisting}

\begin{lstlisting}[mathescape]
$A_{31}$
\end{lstlisting}

\begin{spverbatim}
l1:= List(zstar125,c->PermList(List(uystar0,x->Position(uystar0,
AsSet(\mod(c*x,125))))));
c1:=List(l1,g->CycleIndex(g,[1..Length(uystar0)]));
h1:=List(c1, f->Value(f, [x_1,x_2,x_5,x_10,x_25,x_50],[1+t^10,1+t^20,
1+t^50,1+t^100, 1+t^250,1+t^500]));

l2:= List(zstar125,c->PermList(List(uystar1,x->Position(uystar1,
AsSet(\mod(c*x,125))))));
c2:=List(l2,g->CycleIndex(g,[1..Length(uystar1)]));
h2:=List(c2, f->Value(f, [x_1,x_2,x_5,x_10,x_25,x_50],[1+t^10,1+t^20,
1+t^50,1+t^100, 1+t^250,1+t^500]));
H1H2:=(1/Length(h1))*(h1*h2);

l3:= List(zstar125,c->PermList(List(uy2,x->Position(uy2,
AsSet(\mod(c*x,125))))));
c3:=List(l3,g->CycleIndex(g,[1..Length(uy2)]));
h3:=List(c3, f->Value(f,[x_1,x_2,x_5,x_10,x_25,x_50],[1+t^2,1+t^4,
1+t^10,1+t^20, 1+t^50,1+t^100]));
allones:=[];
for i in [1..Length(h3)] do
   allones[i]:=1;
  od;
H3:=(1/Length(h3))*(h3*allones);
genfn3_1:=H1H2*H3;
\end{spverbatim}

\begin{lstlisting}[mathescape]
$A_{32}$
\end{lstlisting}

\begin{spverbatim}
l1:= List(zstar125,c->PermList(List(uystarstar0,
x->Position(uystarstar0,AsSet(\mod(c*x,125))))));
c1:=List(l1,g->CycleIndex(g,[1..Length(uystarstar0)]));
h1:=List(c1, f->Value(f,[x_1,x_2],[1+t^50,1+t^100]));

l2:= List(zstar125,c->PermList(List(uystar1,x->Position(uystar1,
AsSet(\mod(c*x,125))))));
c2:=List(l2,g->CycleIndex(g,[1..Length(uystar1)]));
h2:=List(c2, f->Value(f, [x_1,x_2,x_5,x_10,x_25,x_50],[1+t^10,1+t^20,
1+t^50,1+t^100, 1+t^250,1+t^500]));
H1H2:=(1/Length(h1))*(h1*h2);

l3:= List(zstar125,c->PermList(List(uy2,x->Position(uy2,
AsSet(\mod(c*x,125))))));
c3:=List(l3,g->CycleIndex(g,[1..Length(uy2)]));
h3:=List(c3, f->Value(f,[x_1,x_2,x_5,x_10,x_25,x_50],[1+t^2,1+t^4,
1+t^10,1+t^20, 1+t^50,1+t^100]));

allones:=[];
for i in [1..Length(h3)] do
   allones[i]:=1;
  od;
H3:=(1/Length(h3))*(h3*allones);

genfn3_2:=H1H2*H3;

genfn3:= genfn3_1 - genfn3_2;
v3:=Value(genfn3,[t],[1]);
\end{spverbatim}

\begin{lstlisting}[mathescape]
$A_4$
\end{lstlisting}

\begin{lstlisting}[mathescape]
$A_{41}$
\end{lstlisting}

\begin{spverbatim}
l1:= List(zstar125,c->PermList(List(uy1,x->Position(uy1,
AsSet(\mod(c*x,125))))));
c1:=List(l1,g->CycleIndex(g,[1..Length(uy1)]));
h1:=List(c1, f->Value(f,[x_1,x_2,x_5,x_10,x_25,x_50],[1+t^2,1+t^4,
1+t^10,1+t^20, 1+t^50,1+t^100]));

l2:= List(zstar125,c->PermList(List(uy2,x->Position(uy2,
AsSet(\mod(c*x,125))))));
c2:=List(l2,g->CycleIndex(g,[1..Length(uy2)]));
h2:=List(c2, f->Value(f,[x_1,x_2,x_5,x_10,x_25,x_50],[1+t^2,1+t^4,
1+t^10,1+t^20, 1+t^50,1+t^100]));
H1H2:=(1/Length(h1))*(h1*h2);

l3:= List(zstar125,c->PermList(List(uystarstar0,
x->Position(uystarstar0,AsSet(\mod(c*x,125))))));
c3:=List(l3,g->CycleIndex(g,[1..Length(uystarstar0)]));
h3:=List(c3, f->Value(f,[x_1,x_2],[1+t^50,1+t^100]));

allones:=[];
for i in [1..Length(h3)] do
   allones[i]:=1;
  od;
H3:=(1/Length(h3))*(h3*allones);
genfn4_1:=H1H2*H3;
\end{spverbatim}

\begin{lstlisting}[mathescape]
$A_{42}$
\end{lstlisting}

\begin{spverbatim}
l1:= List(zstar125,c->PermList(List(uystar1,x->Position(uystar1,
AsSet(\mod(c*x,125))))));
c1:=List(l1,g->CycleIndex(g,[1..Length(uystar1)]));
h1:=List(c1, f->Value(f, [x_1,x_2,x_5,x_10,x_25,x_50],[1+t^10,1+t^20,
1+t^50,1+t^100, 1+t^250,1+t^500]));

l2:= List(zstar125,c->PermList(List(uy2,x->Position(uy2,
AsSet(\mod(c*x,125))))));
c2:=List(l2,g->CycleIndex(g,[1..Length(uy2)]));
h2:=List(c2, f->Value(f,[x_1,x_2,x_5,x_10,x_25,x_50],[1+t^2,1+t^4,
1+t^10,1+t^20, 1+t^50,1+t^100]));

H1H2:=(1/Length(h1))*(h1*h2);

l3:= List(zstar125,c->PermList(List(uystarstar0,x->Position(uystarstar0,
AsSet(\mod(c*x,125))))));
c3:=List(l3,g->CycleIndex(g,[1..Length(uystarstar0)]));
h3:=List(c3, f->Value(f,[x_1,x_2],[1+t^50,1+t^100]));

allones:=[];
for i in [1..Length(h3)] do
   allones[i]:=1;
  od;
H3:=(1/Length(h3))*(h3*allones);
genfn4_2:=H1H2*H3;

genfn4:=genfn4_1 - genfn4_2;
v4:=Value(genfn4,[t],[1]);
\end{spverbatim}

\begin{lstlisting}[mathescape]
$A_5$
\end{lstlisting}

\begin{spverbatim}
allpairs:= function(x,y) return [x,y]; end;
S1:=ListX(zstar125, zstar125,allpairs);
ismod5:= function(a) return \mod(a[1],5) = \mod(a[2],5); end;
grp := Filtered(S1, ismod5);
\end{spverbatim}

\begin{lstlisting}[mathescape]
$A_{51}$
\end{lstlisting}

\begin{spverbatim}
list1:=List(grp,c->PermList(List(uy1,x->Position(uy1,
AsSet(\mod(c[1]*x,125))))));
c1:=List(list1, g -> CycleIndex(g,[1..Length(uy1)]));
h1:=List(c1, f->Value(f,[x_1,x_2,x_5,x_10,x_25,x_50],[1+t^2,1+t^4,
1+t^10,1+t^20, 1+t^50,1+t^100]));

list2:=List(grp,c->PermList(List(uy2,x->Position(uy2,
AsSet(\mod(c[1]*x,125))))));
c2:=List(list2, g -> CycleIndex(g,[1..Length(uy2)]));
h2:=List(c2, f->Value(f,[x_1,x_2,x_5,x_10,x_25,x_50],[1+t^2,1+t^4,
1+t^10,1+t^20, 1+t^50,1+t^100]));

list3:=List(grp,c->PermList(List(uystar0,x->Position(uystar0,
AsSet(\mod(c[2]*x,125))))));;
c3:=List(list3, g -> CycleIndex(g,[1..Length(uystar0)]));
h3:=List(c3, f->Value(f, [x_1,x_2,x_5,x_10,x_25,x_50],[1+t^10,1+t^20,
1+t^50,1+t^100, 1+t^250,1+t^500]));

cyc:=[];
for i in [1..Length(h1)] do
   cyc[i]:=h1[i]*h2[i]*h3[i];
  od;	
allones:=[];
for i in [1..Length(h1)] do
   allones[i]:=1;
  od;
genfn5_1:=(1/Length(h1))*(cyc*allones);
\end{spverbatim}

\begin{lstlisting}[mathescape]
$A_{52}$
\end{lstlisting}

\begin{lstlisting}[mathescape]
$\neg \mbox R_{01}:$
\end{lstlisting}

\begin{spverbatim}
list1:=List(grp,c->PermList(List(uy1,x->Position(uy1,
AsSet(\mod(c[1]*x,125))))));
c1:=List(list1, g -> CycleIndex(g,[1..Length(uy1)]));
h1:=List(c1, f->Value(f,[x_1,x_2,x_5,x_10,x_25,x_50],[1+t^2,1+t^4,
1+t^10,1+t^20, 1+t^50,1+t^100]));

list2:=List(grp,c->PermList(List(uy2,x->Position(uy2,
AsSet(\mod(c[1]*x,125))))));
c2:=List(list2, g -> CycleIndex(g,[1..Length(uy2)]));
h2:=List(c2, f->Value(f,[x_1,x_2,x_5,x_10,x_25,x_50],[1+t^2,1+t^4,
1+t^10,1+t^20, 1+t^50,1+t^100]));

list3:=List(grp,c->PermList(List(uystarstar0,
x->Position(uystarstar0,AsSet(\mod(c[2]*x,125))))));
c3:=List(list3, g -> CycleIndex(g,[1..Length(uystarstar0)]));
h3:=List(c3, f->Value(f,[x_1,x_2],[1+t^50,1+t^100]));

cyc:=[];
for i in [1..Length(h1)] do
  cyc[i]:=h1[i]*h2[i]*h3[i];
 od;

allones:=[];
for i in [1..Length(h1)] do
 allones[i]:=1;
od;
genfn5_21:=(1/Length(h1))*(cyc*allones);
\end{spverbatim}

\begin{lstlisting}[mathescape]
$\neg \mbox  R_{10}$ and $\neg \mbox R_{00}:$
\end{lstlisting}

\begin{spverbatim}
list1:=List(grp,c->PermList(List(uystar1,x->Position(uystar1,
AsSet(\mod(c[1]*x,125))))));
c1:=List(list1, g -> CycleIndex(g,[1..Length(uystar1)]));
h1:=List(c1, f->Value(f, [x_1,x_2,x_5,x_10,x_25,x_50],[1+t^10,1+t^20,
1+t^50,1+t^100, 1+t^250,1+t^500]));
list2:=List(grp,c->PermList(List(uy2,x->Position(uy2,
AsSet(\mod(c[1]*x,125))))));
c2:=List(list2, g -> CycleIndex(g,[1..Length(uy2)]));
h2:=List(c2, f->Value(f,[x_1,x_2,x_5,x_10,x_25,x_50],[1+t^2,1+t^4,
1+t^10,1+t^20, 1+t^50,1+t^100]));
list3:=List(grp,c->PermList(List(uystar0,x->Position(uystar0,
AsSet(\mod(c[2]*x,125))))));
c3:=List(list3, g -> CycleIndex(g,[1..Length(uystar0)]));
h3:=List(c3, f->Value(f, [x_1,x_2,x_5,x_10,x_25,x_50],[1+t^10,1+t^20,
1+t^50,1+t^100, 1+t^250,1+t^500]));

cyc:=[];
for i in [1..Length(h1)] do
 cyc[i]:=h1[i]*h2[i]*h3[i];
 od;

allones:=[];
for i in [1..Length(h1)] do
 allones[i]:=1;
 od;
genfn5_22:=(1/Length(h1))*(cyc*allones);
\end{spverbatim}

\begin{lstlisting}[mathescape]
$\neg \mbox  R_{01}$ and $\neg \mbox R_{10}:$
\end{lstlisting}

\begin{spverbatim}
list1:=List(grp,c->PermList(List(uystar1,x->Position(uystar1,
AsSet(\mod(c[1]*x,125))))));
c1:=List(list1, g -> CycleIndex(g,[1..Length(uystar1)]));
h1:=List(c1, f->Value(f, [x_1,x_2,x_5,x_10,x_25,x_50],[1+t^10,1+t^20,
1+t^50,1+t^100, 1+t^250,1+t^500]));

list2:=List(grp,c->PermList(List(uy2,x->Position(uy2,
AsSet(\mod(c[1]*x,125))))));
c2:=List(list2, g -> CycleIndex(g,[1..Length(uy2)]));
h2:=List(c2, f->Value(f,[x_1,x_2,x_5,x_10,x_25,x_50],[1+t^2,1+t^4,
1+t^10,1+t^20, 1+t^50,1+t^100]));

list3:=List(grp,c->PermList(List(uystarstar0,
x->Position(uystarstar0,AsSet(\mod(c[2]*x,125))))));
c3:=List(list3, g -> CycleIndex(g,[1..Length(uystarstar0)]));
h3:=List(c3, f->Value(f,[x_1,x_2],[1+t^50,1+t^100]));

cyc:=[];
for i in [1..Length(h1)] do
 cyc[i]:=h1[i]*h2[i]*h3[i];
od;

allones:=[];
for i in [1..Length(h1)] do
 allones[i]:=1;
 od;
genfn5_23:=(1/Length(h1))*(cyc*allones);
genfn5_2:=genfn5_21+genfn5_22-genfn5_23;
genfn5:=genfn5_1-genfn5_2;
v5:=Value(genfn5,[t],[1]);
totcycind :=genfn1+ genfn2 + genfn3 + genfn4 + genfn5;
v:=Value(totcycind,[t],[1]);
\end{spverbatim}

\newpage

{\bf GAP program for the number of non-isomorphic graphs for $n=27$ (undirected) using Theorem~\ref{thm:theorem4} directly.}

\begin{spverbatim}
zstar27:=PrimeResidues(27);
zprime27:=[1..26];
y0:=zstar27;
y1:=[3,6,12,15,21,24];
y2:=[9,18];
uzprime27:=List([[1,26],[2,25],[3,24],[4,23],[5,22],[6,21],[7,20],[8,19], [9,18],[10,17],[11,16],[12,15],[13,14]], x->AsSet(x));
uy0:=List([[1,26],[2,25],[4,23],[5,22],[7,20],[8,19],[10,17],[11,16],[13,14]],
x-> AsSet(x));
uy1:=List([[3,24],[6,21],[12,15]], x->AsSet(x));
uy2:=List([[9,18]], x->AsSet(x));
uystar0 := List([[1,10,19,8,17,26],[2,11,20,7,16,25],[4,13,22,5,14,23]],
x->AsSet(x));
uystarstar0:=List([[1,2,4,5,7,8,10,11,13,14,16,17,19,20,22,23,25,26]],
x-> AsSet(x));
uystar1:=List([[3,6,12,15,21,24]],x->AsSet(x));
#All triples of elements from Z*_27 before the conditions on the multipliers for the three cases;
allpairsfn1:= function(x,y) return [x,y]; end;
allpairsfn2:= function(x,y) return [x[1],x[2],y]; end;
alltripleszstar27:=ListX(ListX(zstar27,zstar27,allpairsfn1),zstar27,
allpairsfn2);
x_1:=Indeterminate(Rationals,1);
x_3:=Indeterminate(Rationals,3);
x_9:=Indeterminate(Rationals,9);
t:=Indeterminate(Rationals,"t", [x_1]);
\end{spverbatim}

\begin{lstlisting}[mathescape]
${A_1}:$
\end{lstlisting}

\begin{spverbatim}
#The condition on the three multipliers;
cond1:= function(a) return (\mod(a[2],9) = \mod(a[1],9)) and
(\mod(a[3],3) = \mod(a[2],3)); end;
grp1 := Filtered(alltripleszstar27, cond1);
#grp1 will be the triple of group elements for A_1;
\end{spverbatim}

\begin{lstlisting}[mathescape]
$\mbox {A1.1}:$
\end{lstlisting}
\begin{spverbatim}
l1:= List(grp1,c->PermList(List(uy0,x->Position(uy0,
AsSet(\mod(c[1]*x,27))))));
c1:=List(l1,g->CycleIndex(g,[1..Length(uy0)]));
g1:=List(c1, f->Value(f,[x_1,x_3,x_9],[1+t^2,1+t^6,1+t^18]));

l2:= List(grp1,c->PermList(List(uy1,x->Position(uy1,
AsSet(\mod(c[2]*x,27))))));
c2:=List(l2,g->CycleIndex(g,[1..Length(uy1)]));
g2:=List(c2, f->Value(f,[x_1,x_3,x_9],[1+t^2,1+t^6,1+t^18]));
l3:= List(grp1,c->PermList(List(uy2,x->Position(uy2,
AsSet(\mod(c[3]*x,27))))));
c3:=List(l3,g->CycleIndex(g,[1..Length(uy2)]));
g3:=List(c3, f->Value(f,[x_1,x_3,x_9],[1+t^2,1+t^6,1+t^18]));
cyc:=[];
for i in [1..Length(c1)] do
   cyc[i]:=g1[i]*g2[i]*g3[i];
  od;
allones:=[];
for i in [1..Length(g1)] do
   allones[i]:=1;
  od;
genfn1_1:=(1/Length(g1))*(cyc*allones);
\end{spverbatim}

\begin{lstlisting}[mathescape]
$\mbox {A1.2}:$
\end{lstlisting}
\begin{spverbatim}
l1:= List(grp1,c->PermList(List(uystar0,x->Position(uystar0,
AsSet(\mod(c[1]*x,27))))));
c1:=List(l1,g->CycleIndex(g,[1..Length(uystar0)]));
g1:=List(c1, f->Value(f,[x_1,x_3,x_9],[1+t^6,1+t^18,1+t^54]));

l2:= List(grp1,c->PermList(List(uy1,x->Position(uy1,
AsSet(\mod(c[2]*x,27))))));
c2:=List(l2,g->CycleIndex(g,[1..Length(uy1)]));
g2:=List(c2, f->Value(f,[x_1,x_3,x_9],[1+t^2,1+t^6,1+t^18]));

l3:= List(grp1,c->PermList(List(uy2,x->Position(uy2,
AsSet(\mod(c[3]*x,27))))));
c3:=List(l3,g->CycleIndex(g,[1..Length(uy2)]));
g3:=List(c3, f->Value(f,[x_1,x_3,x_9],[1+t^2,1+t^6,1+t^18]));

cyc:=[];
for i in [1..Length(g1)] do
   cyc[i]:=g1[i]*g2[i]*g3[i];
  od;
allones:=[];
for i in [1..Length(g1)] do
   allones[i]:=1;
  od;
genfn1_2:=(1/Length(g1))*(cyc*allones);

genfn1:= genfn1_1 - genfn1_2;
\end{spverbatim}

\begin{lstlisting}[mathescape]
$A_2:$
\end{lstlisting}

\begin{spverbatim}
#The condition on the three multipliers;
cond2:= function(a) return (\mod(a[2],3) = \mod(a[1],3)); end;
grp2 := Filtered(alltripleszstar27, cond2);
#grp2 will be the triple of group elements for A_2;
\end{spverbatim}

\begin{lstlisting}[mathescape]
$\mbox {A2.1}:$
\end{lstlisting}
\begin{spverbatim}
l1:= List(grp2,c->PermList(List(uy0,x->Position(uy0,
AsSet(\mod(c[1]*x,27))))));
c1:=List(l1,g->CycleIndex(g,[1..Length(uy0)]));
g1:=List(c1, f->Value(f,[x_1,x_3,x_9],[1+t^2,1+t^6,1+t^18]));

l2:= List(grp2,c->PermList(List(uy1,x->Position(uy1,
AsSet(\mod(c[2]*x,27))))));
c2:=List(l2,g->CycleIndex(g,[1..Length(uy1)]));
g2:=List(c2, f->Value(f,[x_1,x_3,x_9],[1+t^2,1+t^6,1+t^18]));

l3:= List(grp2,c->PermList(List(uy2,x->Position(uy2,
AsSet(\mod(c[3]*x,27))))));
c3:=List(l3,g->CycleIndex(g,[1..Length(uy2)]));
g3:=List(c3, f->Value(f,[x_1,x_3,x_9],[1+t^2,1+t^6,1+t^18]));

cyc:=[];
for i in [1..Length(c1)] do
   cyc[i]:=g1[i]*g2[i]*g3[i];
  od;
allones:=[];
for i in [1..Length(g1)] do
   allones[i]:=1;
  od;
genfn2_1:=(1/Length(g1))*(cyc*allones);
\end{spverbatim}

\begin{lstlisting}[mathescape]
$\mbox {A2.2}:$
\end{lstlisting}
\begin{spverbatim}
l1:= List(grp2,c->PermList(List(uystarstar0,x->Position(uystarstar0,
AsSet(\mod(c[1]*x,27))))));
c1:=List(l1,g->CycleIndex(g,[1..Length(uystarstar0)]));
g1:=List(c1, f->Value(f,[x_1,x_3,x_9],[1+t^18,1+t^54,1+t^162]));

l2:= List(grp2,c->PermList(List(uy1,x->Position(uy1,
AsSet(\mod(c[2]*x,27))))));
c2:=List(l2,g->CycleIndex(g,[1..Length(uy1)]));
g2:=List(c2, f->Value(f,[x_1,x_3,x_9],[1+t^2,1+t^6,1+t^18]));

l3:= List(grp2,c->PermList(List(uy2,x->Position(uy2,
AsSet(\mod(c[3]*x,27))))));
c3:=List(l3,g->CycleIndex(g,[1..Length(uy2)]));
g3:=List(c3, f->Value(f,[x_1,x_3,x_9],[1+t^2,1+t^6,1+t^18]));

cyc:=[];
for i in [1..Length(c1)] do
   cyc[i]:=g1[i]*g2[i]*g3[i];
  od;
allones:=[];
for i in [1..Length(g1)] do
   allones[i]:=1;
  od;
genfn2_2:=(1/Length(g1))*(cyc*allones);
genfn2:= genfn2_1 - genfn2_2;
\end{spverbatim}

\begin{lstlisting}[mathescape]
$A_3:$
\end{lstlisting}

\begin{spverbatim}
#The condition on the three multipliers;
cond3:= function(a) return (\mod(a[3],3) = \mod(a[2],3)); end;
grp3 := Filtered(alltripleszstar27, cond3);
#grp3 will be the triple of group elements for A_3;
\end{spverbatim}

\begin{lstlisting}[mathescape]
$\mbox {A3.1}:$
\end{lstlisting}
\begin{spverbatim}
l1:= List(grp3,c->PermList(List(uy0,x->Position(uy0,
AsSet(\mod(c[1]*x,27))))));
c1:=List(l1,g->CycleIndex(g,[1..Length(uy0)]));
g1:=List(c1, f->Value(f,[x_1,x_3,x_9],[1+t^2,1+t^6,1+t^18]));

l2:= List(grp3,c->PermList(List(uy1,x->Position(uy1,
AsSet(\mod(c[2]*x,27))))));
c2:=List(l2,g->CycleIndex(g,[1..Length(uy1)]));
g2:=List(c2, f->Value(f,[x_1,x_3,x_9],[1+t^2,1+t^6,1+t^18]));

l3:= List(grp3,c->PermList(List(uy2,x->Position(uy2,
AsSet(\mod(c[3]*x,27))))));
c3:=List(l3,g->CycleIndex(g,[1..Length(uy2)]));
g3:=List(c3, f->Value(f,[x_1,x_3,x_9],[1+t^2,1+t^6,1+t^18]));

cyc:=[];
for i in [1..Length(c1)] do
   cyc[i]:=g1[i]*g2[i]*g3[i];
  od;
allones:=[];
for i in [1..Length(g1)] do
   allones[i]:=1;
  od;
genfn3_1:=(1/Length(g1))*(cyc*allones);
\end{spverbatim}

\begin{lstlisting}[mathescape]
$\mbox {A3.2}:$
\end{lstlisting}
\begin{spverbatim}
l1:= List(grp3,c->PermList(List(uy0,x->Position(uy0,
AsSet(\mod(c[1]*x,27))))));
c1:=List(l1,g->CycleIndex(g,[1..Length(uy0)]));
g1:=List(c1, f->Value(f,[x_1,x_3,x_9],[1+t^2,1+t^6,1+t^18]));

l2:= List(grp3,c->PermList(List(uystar1,x->Position(uystar1,
AsSet(\mod(c[2]*x,27))))));
c2:=List(l2,g->CycleIndex(g,[1..Length(uystar1)]));
g2:=List(c2, f->Value(f,[x_1,x_3,x_9],[1+t^6,1+t^18,1+t^54]));

l3:= List(grp3,c->PermList(List(uy2,x->Position(uy2,
AsSet(\mod(c[3]*x,27))))));
c3:=List(l3,g->CycleIndex(g,[1..Length(uy2)]));
g3:=List(c3, f->Value(f,[x_1,x_3,x_9],[1+t^2,1+t^6,1+t^18]));

cyc:=[];
for i in [1..Length(g1)] do
   cyc[i]:=g1[i]*g2[i]*g3[i];
  od;
allones:=[];
for i in [1..Length(g1)] do
   allones[i]:=1;
  od;
genfn3_2:=(1/Length(g1))*(cyc*allones);

genfn3:= genfn3_1 - genfn3_2;
G1:=genfn1+genfn2+genfn3;
\end{spverbatim}

\begin{lstlisting}[mathescape]
$A_{12}:$
\end{lstlisting}
\begin{spverbatim}
grp12:=[];
for a in grp1 do
for b in grp2 do
c:=[1,1,1];
c[1]:=\mod(a[1]*b[1],27);
c[2]:=\mod(a[2]*b[2],27);
c[3]:=\mod(a[3]*b[3],27);
Add(grp12,c);
od;
od;
grp12:=DuplicateFreeList(grp12);
\end{spverbatim}

\begin{lstlisting}[mathescape]
$\mbox {A12.1}:$
\end{lstlisting}
\begin{spverbatim}
l1:= List(grp12,c->PermList(List(uy0,x->Position(uy0,
AsSet(\mod(c[1]*x,27))))));
c1:=List(l1,g->CycleIndex(g,[1..Length(uy0)]));
g1:=List(c1, f->Value(f,[x_1,x_3,x_9],[1+t^2,1+t^6,1+t^18]));

l2:= List(grp12,c->PermList(List(uy1,x->Position(uy1,
AsSet(\mod(c[2]*x,27))))));
c2:=List(l2,g->CycleIndex(g,[1..Length(uy1)]));
g2:=List(c2, f->Value(f,[x_1,x_3,x_9],[1+t^2,1+t^6,1+t^18]));

l3:= List(grp12,c->PermList(List(uy2,x->Position(uy2,
AsSet(\mod(c[3]*x,27))))));
c3:=List(l3,g->CycleIndex(g,[1..Length(uy2)]));
g3:=List(c3, f->Value(f,[x_1,x_3,x_9],[1+t^2,1+t^6,1+t^18]));

cyc:=[];
for i in [1..Length(g1)] do
   cyc[i]:=g1[i]*g2[i]*g3[i];
  od;
allones:=[];
for i in [1..Length(g1)] do
   allones[i]:=1;
  od;
genfn1_1:=(1/Length(g1))*(cyc*allones);
\end{spverbatim}

\begin{lstlisting}[mathescape]
$\mbox {A12.2}:$
\end{lstlisting}

\begin{lstlisting}[mathescape]
$\neg(R_{00})$
\end{lstlisting}
\begin{spverbatim}
l1:= List(grp12,c->PermList(List(uystar0,x->Position(uystar0,
AsSet(\mod(c[1]*x,27))))));
c1:=List(l1,g->CycleIndex(g,[1..Length(uystar0)]));
g1:=List(c1, f->Value(f,[x_1,x_3,x_9],[1+t^6,1+t^18,1+t^54]));

l2:= List(grp12,c->PermList(List(uy1,x->Position(uy1,
AsSet(\mod(c[2]*x,27))))));
c2:=List(l2,g->CycleIndex(g,[1..Length(uy1)]));
g2:=List(c2, f->Value(f,[x_1,x_3,x_9],[1+t^2,1+t^6,1+t^18]));

l3:= List(grp12,c->PermList(List(uy2,x->Position(uy2,
AsSet(\mod(c[3]*x,27))))));
c3:=List(l3,g->CycleIndex(g,[1..Length(uy2)]));
g3:=List(c3, f->Value(f,[x_1,x_3,x_9],[1+t^2,1+t^6,1+t^18]));

cyc:=[];
for i in [1..Length(g1)] do
   cyc[i]:=g1[i]*g2[i]*g3[i];
  od;
allones:=[];
for i in [1..Length(g1)] do
   allones[i]:=1;
  od;
genfn1_2:=(1/Length(g1))*(cyc*allones);

generatingfn1:= genfn1_1- genfn1_2;
\end{spverbatim}

\begin{lstlisting}[mathescape]
$A_{13}:$
\end{lstlisting}
\begin{spverbatim}
grp13:=[];
for a in grp1 do
for b in grp3 do
c:=[1,1,1];
c[1]:=\mod(a[1]*b[1],27);
c[2]:=\mod(a[2]*b[2],27);
c[3]:=\mod(a[3]*b[3],27);
Add(grp13,c);
od;
od;
grp13:=DuplicateFreeList(grp13);
\end{spverbatim}

\begin{lstlisting}[mathescape]
$\mbox {A13.1}:$
\end{lstlisting}
\begin{spverbatim}
l1:= List(grp13,c->PermList(List(uy0,x->Position(uy0,
AsSet(\mod(c[1]*x,27))))));
c1:=List(l1,g->CycleIndex(g,[1..Length(uy0)]));
g1:=List(c1, f->Value(f,[x_1,x_3,x_9],[1+t^2,1+t^6,1+t^18]));

l2:= List(grp13,c->PermList(List(uy1,x->Position(uy1,
AsSet(\mod(c[2]*x,27))))));
c2:=List(l2,g->CycleIndex(g,[1..Length(uy1)]));
g2:=List(c2, f->Value(f,[x_1,x_3,x_9],[1+t^2,1+t^6,1+t^18]));

l3:= List(grp13,c->PermList(List(uy2,x->Position(uy2,
AsSet(\mod(c[3]*x,27))))));
c3:=List(l3,g->CycleIndex(g,[1..Length(uy2)]));
g3:=List(c3, f->Value(f,[x_1,x_3,x_9],[1+t^2,1+t^6,1+t^18]));

cyc:=[];
for i in [1..Length(g1)] do
   cyc[i]:=g1[i]*g2[i]*g3[i];
  od;
allones:=[];
for i in [1..Length(g1)] do
   allones[i]:=1;
  od;
genfn2_1:=(1/Length(g1))*(cyc*allones);
\end{spverbatim}

\begin{lstlisting}[mathescape]
$\mbox {A13.2}:$
\end{lstlisting}
\begin{lstlisting}[mathescape]
$\neg(R_{00})$
\end{lstlisting}
\begin{spverbatim}
l1:= List(grp13,c->PermList(List(uystar0,x->Position(uystar0,
AsSet(\mod(c[1]*x,27))))));
c1:=List(l1,g->CycleIndex(g,[1..Length(uystar0)]));
g1:=List(c1, f->Value(f,[x_1,x_3,x_9],[1+t^6,1+t^18,1+t^54]));

l2:= List(grp13,c->PermList(List(uy1,x->Position(uy1,
AsSet(\mod(c[2]*x,27))))));
c2:=List(l2,g->CycleIndex(g,[1..Length(uy1)]));
g2:=List(c2, f->Value(f,[x_1,x_3,x_9],[1+t^2,1+t^6,1+t^18]));

l3:= List(grp13,c->PermList(List(uy2,x->Position(uy2,
AsSet(\mod(c[3]*x,27))))));
c3:=List(l3,g->CycleIndex(g,[1..Length(uy2)]));
g3:=List(c3, f->Value(f,[x_1,x_3,x_9],[1+t^2,1+t^6,1+t^18]));

cyc:=[];
for i in [1..Length(g1)] do
   cyc[i]:=g1[i]*g2[i]*g3[i];
  od;
allones:=[];
for i in [1..Length(g1)] do
   allones[i]:=1;
  od;
genfn2_2:=(1/Length(g1))*(cyc*allones);
\end{spverbatim}

\begin{lstlisting}[mathescape]
$\neg(R_{10})$
\end{lstlisting}
\begin{spverbatim}
l1:= List(grp13,c->PermList(List(uy0,x->Position(uy0,
AsSet(\mod(c[1]*x,27))))));
c1:=List(l1,g->CycleIndex(g,[1..Length(uy0)]));
g1:=List(c1, f->Value(f,[x_1,x_3,x_9],[1+t^2,1+t^6,1+t^18]));

l2:= List(grp13,c->PermList(List(uystar1,x->Position(uystar1,
AsSet(\mod(c[2]*x,27))))));
c2:=List(l2,g->CycleIndex(g,[1..Length(uystar1)]));
g2:=List(c2, f->Value(f,[x_1,x_3,x_9],[1+t^6,1+t^18,1+t^54]));

l3:= List(grp13,c->PermList(List(uy2,x->Position(uy2,
AsSet(\mod(c[3]*x,27))))));
c3:=List(l3,g->CycleIndex(g,[1..Length(uy2)]));
g3:=List(c3, f->Value(f,[x_1,x_3,x_9],[1+t^2,1+t^6,1+t^18]));

cyc:=[];
for i in [1..Length(g1)] do
   cyc[i]:=g1[i]*g2[i]*g3[i];
  od;
allones:=[];
for i in [1..Length(g1)] do
   allones[i]:=1;
  od;
genfn2_3:=(1/Length(g1))*(cyc*allones);

generatingfn2:=genfn2_2+genfn2_3;
\end{spverbatim}

\begin{lstlisting}[mathescape]
$\neg(R_{00})\cap \neg(R_{10})$
\end{lstlisting}
\begin{spverbatim}
l1:= List(grp13,c->PermList(List(uystar0,x->Position(uystar0,
AsSet(\mod(c[1]*x,27))))));
c1:=List(l1,g->CycleIndex(g,[1..Length(uystar0)]));
g1:=List(c1, f->Value(f,[x_1,x_3,x_9],[1+t^6,1+t^18,1+t^54]));

l2:= List(grp13,c->PermList(List(uystar1,x->Position(uystar1,
AsSet(\mod(c[2]*x,27))))));
c2:=List(l2,g->CycleIndex(g,[1..Length(uystar1)]));
g2:=List(c2, f->Value(f,[x_1,x_3,x_9],[1+t^6,1+t^18,1+t^54]));

l3:= List(grp13,c->PermList(List(uy2,x->Position(uy2,
AsSet(\mod(c[3]*x,27))))));
c3:=List(l3,g->CycleIndex(g,[1..Length(uy2)]));
g3:=List(c3, f->Value(f,[x_1,x_3,x_9],[1+t^2,1+t^6,1+t^18]));

cyc:=[];
for i in [1..Length(g1)] do
   cyc[i]:=g1[i]*g2[i]*g3[i];
  od;
allones:=[];
for i in [1..Length(g1)] do
   allones[i]:=1;
  od;
genfn2_4:=(1/Length(g1))*(cyc*allones);
generatingfn3:=generatingfn2-genfn2_4;
generatingfn4:=genfn2_1-generatingfn3;
\end{spverbatim}

\begin{lstlisting}[mathescape]
$A_{23}:$
\end{lstlisting}
\begin{spverbatim}
grp23:=[];
for a in grp2 do
for b in grp3 do
c:=[1,1,1];
c[1]:=\mod(a[1]*b[1],27);
c[2]:=\mod(a[2]*b[2],27);
c[3]:=\mod(a[3]*b[3],27);
Add(grp23,c);
od;
od;
grp23:=DuplicateFreeList(grp23);
\end{spverbatim}

\begin{lstlisting}[mathescape]
$\mbox {A23.1}:$
\end{lstlisting}
\begin{spverbatim}
l1:= List(grp23,c->PermList(List(uy0,x->Position(uy0,
AsSet(\mod(c[1]*x,27))))));
c1:=List(l1,g->CycleIndex(g,[1..Length(uy0)]));
g1:=List(c1, f->Value(f,[x_1,x_3,x_9],[1+t^2,1+t^6,1+t^18]));

l2:= List(grp23,c->PermList(List(uy1,x->Position(uy1,
AsSet(\mod(c[2]*x,27))))));
c2:=List(l2,g->CycleIndex(g,[1..Length(uy1)]));
g2:=List(c2, f->Value(f,[x_1,x_3,x_9],[1+t^2,1+t^6,1+t^18]));

l3:= List(grp23,c->PermList(List(uy2,x->Position(uy2,
AsSet(\mod(c[3]*x,27))))));
c3:=List(l3,g->CycleIndex(g,[1..Length(uy2)]));
g3:=List(c3, f->Value(f,[x_1,x_3,x_9],[1+t^2,1+t^6,1+t^18]));

cyc:=[];
for i in [1..Length(g1)] do
   cyc[i]:=g1[i]*g2[i]*g3[i];
  od;
allones:=[];
for i in [1..Length(g1)] do
   allones[i]:=1;
  od;
genfn3_1:=(1/Length(g1))*(cyc*allones);
\end{spverbatim}

\begin{lstlisting}[mathescape]
$\mbox{A23.2}:$
\end{lstlisting}

\begin{lstlisting}[mathescape]
$\neg(R_{01})$
\end{lstlisting}
\begin{spverbatim}
l1:= List(grp23,c->PermList(List(uystarstar0,
x->Position(uystarstar0,AsSet(\mod(c[1]*x,27))))));
c1:=List(l1,g->CycleIndex(g,[1..Length(uystarstar0)]));
g1:=List(c1, f->Value(f,[x_1,x_3,x_9],[1+t^18,1+t^54,1+t^162]));

l2:= List(grp23,c->PermList(List(uy1,x->Position(uy1,
AsSet(\mod(c[2]*x,27))))));
c2:=List(l2,g->CycleIndex(g,[1..Length(uy1)]));
g2:=List(c2, f->Value(f,[x_1,x_3,x_9],[1+t^2,1+t^6,1+t^18]));

l3:= List(grp23,c->PermList(List(uy2,x->Position(uy2,
AsSet(\mod(c[3]*x,27))))));
c3:=List(l3,g->CycleIndex(g,[1..Length(uy2)]));
g3:=List(c3, f->Value(f,[x_1,x_3,x_9],[1+t^2,1+t^6,1+t^18]));


cyc:=[];
for i in [1..Length(g1)] do
   cyc[i]:=g1[i]*g2[i]*g3[i];
  od;
allones:=[];
for i in [1..Length(g1)] do
   allones[i]:=1;
  od;
genfn3_2:=(1/Length(g1))*(cyc*allones);
\end{spverbatim}

\begin{lstlisting}[mathescape]
$\neg(R_{10})$
\end{lstlisting}
\begin{spverbatim}
l1:= List(grp23,c->PermList(List(uy0,x->Position(uy0,
AsSet(\mod(c[1]*x,27))))));
c1:=List(l1,g->CycleIndex(g,[1..Length(uy0)]));
g1:=List(c1, f->Value(f,[x_1,x_3,x_9],[1+t^2,1+t^6,1+t^18]));

l2:= List(grp23,c->PermList(List(uystar1,x->Position(uystar1,
AsSet(\mod(c[2]*x,27))))));
c2:=List(l2,g->CycleIndex(g,[1..Length(uystar1)]));
g2:=List(c2, f->Value(f,[x_1,x_3,x_9],[1+t^6,1+t^18,1+t^54]));

l3:= List(grp23,c->PermList(List(uy2,x->Position(uy2,
AsSet(\mod(c[3]*x,27))))));
c3:=List(l3,g->CycleIndex(g,[1..Length(uy2)]));
g3:=List(c3, f->Value(f,[x_1,x_3,x_9],[1+t^2,1+t^6,1+t^18]));

cyc:=[];
for i in [1..Length(g1)] do
   cyc[i]:=g1[i]*g2[i]*g3[i];
  od;
allones:=[];
for i in [1..Length(g1)] do
   allones[i]:=1;
  od;
genfn3_3:=(1/Length(g1))*(cyc*allones);

generatingfn5:=genfn3_2+genfn3_3;
\end{spverbatim}

\begin{lstlisting}[mathescape]
$\neg(R_{01})\cap \neg(R_{10})$
\end{lstlisting}
\begin{spverbatim}
l1:= List(grp23,c->PermList(List(uystarstar0,
x->Position(uystarstar0,AsSet(\mod(c[1]*x,27))))));
c1:=List(l1,g->CycleIndex(g,[1..Length(uystarstar0)]));
g1:=List(c1, f->Value(f,[x_1,x_3,x_9],[1+t^18,1+t^54,1+t^162]));

l2:= List(grp23,c->PermList(List(uystar1,x->Position(uystar1,
AsSet(\mod(c[2]*x,27))))));
c2:=List(l2,g->CycleIndex(g,[1..Length(uystar1)]));
g2:=List(c2, f->Value(f,[x_1,x_3,x_9],[1+t^6,1+t^18,1+t^54]));

l3:= List(grp23,c->PermList(List(uy2,x->Position(uy2,
AsSet(\mod(c[3]*x,27))))));
c3:=List(l3,g->CycleIndex(g,[1..Length(uy2)]));
g3:=List(c3, f->Value(f,[x_1,x_3,x_9],[1+t^2,1+t^6,1+t^18]));
cyc:=[];
for i in [1..Length(g1)] do
   cyc[i]:=g1[i]*g2[i]*g3[i];
  od;
allones:=[];
for i in [1..Length(g1)] do
   allones[i]:=1;
  od;
genfn3_4:=(1/Length(g1))*(cyc*allones);
generatingfn6:=generatingfn5-genfn3_4;
generatingfn7:=genfn3_1- generatingfn6;
\end{spverbatim}

\begin{lstlisting}[mathescape]
$A_{123}:$
\end{lstlisting}

\begin{spverbatim}
grp123:=grp23;
\end{spverbatim}

\begin{lstlisting}[mathescape]
$\mbox{A123.1}:$
\end{lstlisting}
\begin{spverbatim}
l1:= List(grp123,c->PermList(List(uy0,x->Position(uy0,
AsSet(\mod(c[1]*x,27))))));
c1:=List(l1,g->CycleIndex(g,[1..Length(uy0)]));
g1:=List(c1, f->Value(f,[x_1,x_3,x_9],[1+t^2,1+t^6,1+t^18]));

l2:= List(grp123,c->PermList(List(uy1,x->Position(uy1,
AsSet(\mod(c[2]*x,27))))));
c2:=List(l2,g->CycleIndex(g,[1..Length(uy1)]));
g2:=List(c2, f->Value(f,[x_1,x_3,x_9],[1+t^2,1+t^6,1+t^18]));

l3:= List(grp123,c->PermList(List(uy2,x->Position(uy2,
AsSet(\mod(c[3]*x,27))))));
c3:=List(l3,g->CycleIndex(g,[1..Length(uy2)]));
g3:=List(c3, f->Value(f,[x_1,x_3,x_9],[1+t^2,1+t^6,1+t^18]));

cyc:=[];
for i in [1..Length(g1)] do
   cyc[i]:=g1[i]*g2[i]*g3[i];
  od;
allones:=[];
for i in [1..Length(g1)] do
   allones[i]:=1;
  od;
genfn4_1:=(1/Length(g1))*(cyc*allones);
\end{spverbatim}

\begin{lstlisting}[mathescape]
$\mbox{A123.2}:$
\end{lstlisting}

\begin{lstlisting}[mathescape]
$\neg(R_{00})$
\end{lstlisting}
\begin{spverbatim}
l1:= List(grp123,c->PermList(List(uystar0,x->Position(uystar0,
AsSet(\mod(c[1]*x,27))))));
c1:=List(l1,g->CycleIndex(g,[1..Length(uystar0)]));
g1:=List(c1, f->Value(f,[x_1,x_3,x_9],[1+t^6,1+t^18,1+t^54]));

l2:= List(grp123,c->PermList(List(uy1,x->Position(uy1,
AsSet(\mod(c[2]*x,27))))));
c2:=List(l2,g->CycleIndex(g,[1..Length(uy1)]));
g2:=List(c2, f->Value(f,[x_1,x_3,x_9],[1+t^2,1+t^6,1+t^18]));

l3:= List(grp123,c->PermList(List(uy2,x->Position(uy2,
AsSet(\mod(c[3]*x,27))))));
c3:=List(l3,g->CycleIndex(g,[1..Length(uy2)]));
g3:=List(c3, f->Value(f,[x_1,x_3,x_9],[1+t^2,1+t^6,1+t^18]));

cyc:=[];
for i in [1..Length(g1)] do
   cyc[i]:=g1[i]*g2[i]*g3[i];
  od;
allones:=[];
for i in [1..Length(g1)] do
   allones[i]:=1;
  od;
genfn4_2:=(1/Length(g1))*(cyc*allones);
\end{spverbatim}

\begin{lstlisting}[mathescape]
$\neg(R_{10})$
\end{lstlisting}
\begin{spverbatim}
l1:= List(grp123,c->PermList(List(uy0,x->Position(uy0,
AsSet(\mod(c[1]*x,27))))));
c1:=List(l1,g->CycleIndex(g,[1..Length(uy0)]));
g1:=List(c1, f->Value(f,[x_1,x_3,x_9],[1+t^2,1+t^6,1+t^18]));

l2:= List(grp123,c->PermList(List(uystar1,x->Position(uystar1,
AsSet(\mod(c[2]*x,27))))));
c2:=List(l2,g->CycleIndex(g,[1..Length(uystar1)]));
g2:=List(c2, f->Value(f,[x_1,x_3,x_9],[1+t^6,1+t^18,1+t^54]));

l3:= List(grp123,c->PermList(List(uy2,x->Position(uy2,
AsSet(\mod(c[3]*x,27))))));
c3:=List(l3,g->CycleIndex(g,[1..Length(uy2)]));
g3:=List(c3, f->Value(f,[x_1,x_3,x_9],[1+t^2,1+t^6,1+t^18]));


cyc:=[];
for i in [1..Length(g1)] do
   cyc[i]:=g1[i]*g2[i]*g3[i];
  od;
allones:=[];
for i in [1..Length(g1)] do
   allones[i]:=1;
  od;
genfn4_3:=(1/Length(g1))*(cyc*allones);
generatingfn8:=genfn4_2+genfn4_3;
\end{spverbatim}

\begin{lstlisting}[mathescape]
$\neg(R_{00})\cap \neg(R_{10})$
\end{lstlisting}
\begin{spverbatim}
l1:= List(grp123,c->PermList(List(uystar0,x->Position(uystar0,
AsSet(\mod(c[1]*x,27))))));
c1:=List(l1,g->CycleIndex(g,[1..Length(uystar0)]));
g1:=List(c1, f->Value(f,[x_1,x_3,x_9],[1+t^6,1+t^18,1+t^54]));

l2:= List(grp123,c->PermList(List(uystar1,x->Position(uystar1,
AsSet(\mod(c[2]*x,27))))));
c2:=List(l2,g->CycleIndex(g,[1..Length(uystar1)]));
g2:=List(c2, f->Value(f,[x_1,x_3,x_9],[1+t^6,1+t^18,1+t^54]));

l3:= List(grp123,c->PermList(List(uy2,x->Position(uy2,
AsSet(\mod(c[3]*x,27))))));
c3:=List(l3,g->CycleIndex(g,[1..Length(uy2)]));
g3:=List(c3, f->Value(f,[x_1,x_3,x_9],[1+t^2,1+t^6,1+t^18]));

cyc:=[];
for i in [1..Length(g1)] do
   cyc[i]:=g1[i]*g2[i]*g3[i];
  od;
allones:=[];
for i in [1..Length(g1)] do
   allones[i]:=1;
  od;
genfn4_4:=(1/Length(g1))*(cyc*allones);
generatingfn9:= generatingfn8-genfn4_4;
generatingfn10:=genfn4_1- generatingfn9;

G2:= generatingfn1+ generatingfn4+ generatingfn7;
G3:= generatingfn10;
\end{spverbatim}

\begin{lstlisting}[mathescape]
$\mbox {B}$
\end{lstlisting}
\begin{spverbatim}
#The condition on the three multipliers;
grpB :=alltripleszstar27;
#grpB will be the triple of group elements for B;
\end{spverbatim}

\begin{spverbatim}
l1:= List(grpB,c->PermList(List(uystarstar0,
x->Position(uystarstar0,AsSet(\mod(c[1]*x,27))))));
c1:=List(l1,g->CycleIndex(g,[1..Length(uystarstar0)]));
g1:=List(c1, f->Value(f,[x_1,x_3,x_9],[1+t^18,1+t^54,1+t^162]));

l2:= List(grpB,c->PermList(List(uystar1,x->Position(uystar1,
AsSet(\mod(c[2]*x,27))))));
c2:=List(l2,g->CycleIndex(g,[1..Length(uystar1)]));
g2:=List(c2, f->Value(f,[x_1,x_3,x_9],[1+t^6,1+t^18,1+t^54]));

l3:= List(grpB,c->PermList(List(uy2,x->Position(uy2,
AsSet(\mod(c[3]*x,27))))));c3:=List(l3,g->CycleIndex(g,[1..Length(uy2)]));
g3:=List(c3, f->Value(f,[x_1,x_3,x_9],[1+t^2,1+t^6,1+t^18]));

cyc:=[];
for i in [1..Length(g1)] do
   cyc[i]:=g1[i]*g2[i]*g3[i];
  od;
allones:=[];
for i in [1..Length(g1)] do
   allones[i]:=1;
  od;
genfnB:=(1/Length(g1))*(cyc*allones);
genfnB := (1+t^2)*(1+t^6)*(1+t^18);
G4:=genfnB;


G:=G1-G2+G3+G4;
V:=Value(G, [t], [1]);
\end{spverbatim}

\newpage
{\bf The following program describes how to obtain the sizes of the automorphism groups of the Schur rings and their normalisers, for the undirected case for n=27, in GAP.}

\begin{spverbatim}
LoadPackage("grape");;
a:=(1,2,3,4,5,6,7,8,9,10,11,12,13,14,15,16,17,18,19,20,21,22,23,24,25,
26,27);;
grp:=Group([a]);;
\end{spverbatim}

\begin{lstlisting}[mathescape]
$\mathfrak{S}_1$
\end{lstlisting}
\begin{spverbatim}
T1:=[a,a^26];;
T2:=[a^2,a^25];;
T3:=[a^3,a^24];;
T4:=[a^4,a^23];;
T5:=[a^5,a^22];;
T6:=[a^6,a^21];;
T7:=[a^7,a^20];;
T8:=[a^8,a^19];;
T9:=[a^9,a^18];;
T10:=[a^10,a^17];;
T11:=[a^11,a^16];;
T12:=[a^12,a^15];;
T13:=[a^13,a^14];;

g1:=CayleyGraph(grp,T1);;
g2:=CayleyGraph(grp,T2);;
g3:=CayleyGraph(grp,T3);;
g4:=CayleyGraph(grp,T4);;
g5:=CayleyGraph(grp,T5);;
g6:=CayleyGraph(grp,T6);;
g7:=CayleyGraph(grp,T7);;
g8:=CayleyGraph(grp,T8);;
g9:=CayleyGraph(grp,T9);;
g10:=CayleyGraph(grp,T10);;
g11:=CayleyGraph(grp,T11);;
g12:=CayleyGraph(grp,T12);;
g13:=CayleyGraph(grp,T13);;

grp1:=AutomorphismGroup(g1);;
grp2:=AutomorphismGroup(g2);;
grp3:=AutomorphismGroup(g3);;
grp4:=AutomorphismGroup(g4);;
grp5:=AutomorphismGroup(g5);;
grp6:=AutomorphismGroup(g6);;
grp7:=AutomorphismGroup(g7);;
grp8:=AutomorphismGroup(g8);;
grp9:=AutomorphismGroup(g9);;
grp10:=AutomorphismGroup(g10);;
grp11:=AutomorphismGroup(g11);;
grp12:=AutomorphismGroup(g12);;
grp13:=AutomorphismGroup(g13);;

grpall:=Intersection(grp1,grp2,grp3,grp4,grp5,grp6,grp7,grp8,grp9,grp10,
grp11,grp12,grp13);;
Size(grpall);
StructureDescription(grpall);
Size(Normalizer(SymmetricGroup(27),grpall));
\end{spverbatim}

\begin{lstlisting}[mathescape]
$\mathfrak{S}_2$
\end{lstlisting}
\begin{spverbatim}
T1:=[a,a^26,a^8,a^19,a^10,a^17];;
T2:=[a^2,a^25,a^7,a^20,a^11,a^16];;
T3:=[a^3,a^24];;
T4:=[a^4,a^23,a^5,a^22,a^13,a^14];;
T5:=[a^6,a^21];;
T6:=[a^9,a^18];;
T7:=[a^12,a^15];;

g1:=CayleyGraph(grp,T1);;
g2:=CayleyGraph(grp,T2);;
g3:=CayleyGraph(grp,T3);;
g4:=CayleyGraph(grp,T4);
g5:=CayleyGraph(grp,T5);;
g6:=CayleyGraph(grp,T6);;
g7:=CayleyGraph(grp,T7);;

grp1:=AutomorphismGroup(g1);;
grp2:=AutomorphismGroup(g2);;
grp3:=AutomorphismGroup(g3);;
grp4:=AutomorphismGroup(g4);;
grp5:=AutomorphismGroup(g5);;
grp6:=AutomorphismGroup(g6);;
grp7:=AutomorphismGroup(g7);;

grpall:=Intersection(grp1,grp2,grp3,grp4,grp5,grp6,grp7);;
Size(grpall);
StructureDescription(grpall);
Size(Normalizer(SymmetricGroup(27),grpall));
\end{spverbatim}

\begin{lstlisting}[mathescape]
$\mathfrak{S}_3$
\end{lstlisting}
\begin{spverbatim}
T1:=[a,a^26,a^2,a^25,a^4,a^23, a^5,a^22,a^7,a^20,a^8,a^19,a^10,a^17,a^11,
a^16,a^13,a^14];;
T2:=[a^3,a^24];;
T3:=[a^6,a^21];;
T4:=[a^9,a^18];;
T5:=[a^12,a^15];;

g1:=CayleyGraph(grp,T1);;
g2:=CayleyGraph(grp,T2);;
g3:=CayleyGraph(grp,T3);;
g4:=CayleyGraph(grp,T4);;
g5:=CayleyGraph(grp,T5);;

grp1:=AutomorphismGroup(g1);;
grp2:=AutomorphismGroup(g2);;
grp3:=AutomorphismGroup(g3);;
grp4:=AutomorphismGroup(g4);;
grp5:=AutomorphismGroup(g5);;

grpall:=Intersection(grp1,grp2,grp3,grp4,grp5);;

Size(grpall);
StructureDescription(grpall);
Size(Normalizer(SymmetricGroup(27),grpall));
\end{spverbatim}

\begin{lstlisting}[mathescape]
$\mathfrak{S}_4$
\end{lstlisting}
\begin{spverbatim}
T1:=[a,a^26,a^8,a^19,a^10,a^17];;
T2:=[a^2,a^25,a^7,a^20,a^11,a^16];;
T3:=[a^3,a^24, a^6,a^21, a^12,a^15];;
T4:=[a^4,a^23,a^5,a^22,a^13,a^14];;
T5:=[a^9,a^18];;

g1:=CayleyGraph(grp,T1);;
g2:=CayleyGraph(grp,T2);;
g3:=CayleyGraph(grp,T3);;
g4:=CayleyGraph(grp,T4);;
g5:=CayleyGraph(grp,T5);;

grp1:=AutomorphismGroup(g1);;
grp2:=AutomorphismGroup(g2);;
grp3:=AutomorphismGroup(g3);;
grp4:=AutomorphismGroup(g4);;
grp5:=AutomorphismGroup(g5);;

grpall:=Intersection(grp1,grp2,grp3,grp4,grp5);;
Size(grpall);
Size(Normalizer(SymmetricGroup(27),grpall));
\end{spverbatim}

\begin{lstlisting}[mathescape]
$\mathfrak{S}_5$
\end{lstlisting}
\begin{spverbatim}
T1:=[a,a^26,a^2,a^25,a^4,a^23,a^5,a^22,a^7,a^20,a^8,a^19,a^10,a^17,a^11,
a^16,a^13,a^14];;
T2:=[a^3,a^24, a^6,a^21, a^12,a^15];;
T3:=[a^9,a^18];;

g1:=CayleyGraph(grp,T1);;
g2:=CayleyGraph(grp,T2);;
g3:=CayleyGraph(grp,T3);;

grp1:=AutomorphismGroup(g1);;
grp2:=AutomorphismGroup(g2);;
grp3:=AutomorphismGroup(g3);;

grpall:=Intersection(grp1,grp2,grp3);;
Size(grpall);
Size(Normalizer(SymmetricGroup(27),grpall));
\end{spverbatim}

\begin{lstlisting}[mathescape]
$\mathfrak{S}_6$
\end{lstlisting}
\begin{spverbatim}
T1:=[a,a^26,a^2,a^25,a^4,a^23,a^5,a^22,a^7,a^20,a^8,a^19,a^10,a^17,a^11,
a^16,a^13,a^14];;
T2:=[a^3,a^24, a^6,a^21, a^9,a^18, a^12,a^15];;

g1:=CayleyGraph(grp,T1);;
g2:=CayleyGraph(grp,T2);;

grp1:=AutomorphismGroup(g1);;
grp2:=AutomorphismGroup(g2);;

grpall:=Intersection(grp1,grp2);;

Size(grpall);
Size(Normalizer(SymmetricGroup(27),grpall));
\end{spverbatim}

\begin{lstlisting}[mathescape]
$\mathfrak{S}_7$
\end{lstlisting}
\begin{spverbatim}
T1:=[a,a^26,a^2,a^25, a^3, a^24,a^4,a^23, a^5,a^22, a^6, a^21,a^7,a^20,
a^8,a^19,a^10,a^17,a^11,a^16, a^12, a^15,a^13,a^14];;
T2:=[a^9,a^18];;

g1:=CayleyGraph(grp,T1);;
g2:=CayleyGraph(grp,T2);;

gap> grp1:=AutomorphismGroup(g1);;
grp2:=AutomorphismGroup(g2);;

grpall:=Intersection(grp1,grp2);;

Size(grpall);
Size(Normalizer(SymmetricGroup(27),grpall));
\end{spverbatim}

\begin{lstlisting}[mathescape]
$\mathfrak{S}_8$
\end{lstlisting}
\begin{spverbatim}
T1:=[a,a^2,a^3,a^4, a^5, a^6,a^7,a^8, a^9,a^10, a^11, a^12,a^13,a^14,
a^15,a^16,a^17,a^18,a^19,a^20, a^21, a^22,a^23,a^24, a^25,a^26];;

g1:=CayleyGraph(grp,T1);;

grp1:=AutomorphismGroup(g1);;

Size(grp1);
Size(Normalizer(SymmetricGroup(27),grp1));
\end{spverbatim}

\newpage
{\bf GAP program for the generating function for the case $n=27$ (undirected) using the structural method.}

\begin{spverbatim}
x_1:=Indeterminate(Rationals,1);

t:=Indeterminate(Rationals,"t", [x_1]);
f1:=(1+t^2)^13;
f2:=(1+t^6)^3*(1+t^2)^4;
f3:=(1+t^18)*(1+t^2)^4;
f4:=(1+t^6)^4*(1+t^2);
f5:=(1+t^18)*(1+t^6) *(1+t^2);
f6:=(1+t^18)*(1+t^8);
f7:=(1+t^24)*(1+t^2);
f8:=1+t^26;

g8:=f8;
g7:=f7-f8;
g6:=f6-f8;
g5:=f5-(g8+g7+g6);
g4:=1/3*(f4-g8-g7-g6-g5);
g3:=1/3* (f3-g8-g7-g6-g5);
g2:=1/9*(f2-g8-g7-g6-g5-3*g4-3*g3);
g1:=1/9*(f1-g8-g7-g6-g5-(3*g4)-(3*g3)-(9*g2));

g:=g1+g2+g3+g4+g5+g6+g7+g8;
v:=Value(g,[t],[1]);
\end{spverbatim}

\appendix
\chapter{Appendix C: Complete List of Generating Functions}
\lhead{\emph{APPENDIX C.} \emph{Generating functions}}

\begin{changemargin}{-2.0cm}{0cm} 
\mathleft

\chapter{Generating Functions for Circulants of order 27, Undirected}

\begin{flalign*}
A_{1}[u;27](t)=t^{26}+t^{24}+t^{20}+t^{18}+t^8+t^6+t^2+1\\
\end{flalign*}

\begin{align*}
A_{21}[u;27](t)=t^{26}+3t^{24}+10t^{22}+34t^{20}+83t^{18}+147t^{16}\\
+194t^{14}+194t^{12}+147t^{10}+83t^8+34t^6+10t^4+3t^2+1
\end{align*}

\begin{align*}
A_{22}[u;27](t)=t^{26}+2t^{24}+2t^{22}+3t^{20}+5t^{18}+6t^{16}+5t^{14}+5t^{12}+\\
6t^{10}+5t^8+3t^6+2t^4+2t^2+1
\end{align*}

\begin{align*}
A_{2}[u;27](t)=t^{24}+8t^{22}+31t^{20}+78t^{18}+141t^{16}+189t^{14}+189t^{12}+\\
141t^{10}+78t^8+31t^6+8t^4+t^2
\end{align*}

\begin{align*}
A_{31}[u;27](t)=t^{26}+t^{24}+2t^{20}+2t^{18}+2t^{14}+2t^{12}+2t^8+2t^6+t^2+1
\end{align*}

\begin{align*}
A_{32}[u;27](t)=t^{26}+t^{24}+t^{20}+t^{18}+t^8+t^6+t^2+1
\end{align*}

\begin{align*}
A_{3}[u;27](t)=t^{20}+t^{18}+2t^{14}+2t^{12}+t^8+t^6
\end{align*}

\begin{align*}
A_{41}[u;27](t)=t^{26}+2t^{24}+2t^{22}+2t^{20}+t^{18}+t^8+2t^6+2t^4+2t^2+1
\end{align*}

\begin{align*}
A_{42}[u;27](t)=t^{26}+t^{24}+t^{20}+t^{18}+t^8+t^6+t^2+1
\end{align*}

\begin{align*}
A_{4}[u;27](t)&=t^{24}+2t^{22}+t^{20}+t^6+2t^4+t^2\\
\end{align*}

\begin{align*}
A_{51}[u;27](t)&=t^{26}+2t^{24}+2t^{22}+3t^{20}+3t^{18}+2t^{16}+3t^{14}+3t^{12}+\\
&2t^{10}+3t^8+3t^6+2t^4+2t^2+1
\end{align*}

\begin{align*}
A5_{21}[u;27](t)&=t^{26}+2t^{24}+2t^{22}+2t^{20}+t^{18}+t^8+2t^6+2t^4+2t^2+1\\
\end{align*}

\begin{align*}
A5_{22}[u;27](t)&=t^{26}+t^{24}+2t^{20}+2t^{18}+2t^{14}+2t^{12}+2t^8+2t^6+t^2+1
\end{align*}

\begin{align*}
A5_{23}[u;27](t)=t^{26}+t^{24}+t^{20}+t^{18}+t^8+t^6+t^2+1
\end{align*}

\begin{align*}
A_{52}[u;27](t)=t^{26}+2t^{24}+2t^{22}+3t^{20}+2t^{18}+2t^{14}+2t^{12}+2t^8+3t^6+2t^4+2t^2+1
\end{align*}

\begin{align*}
A_{5}[u;27](t)=t^{18}+2t^{16}+t^{14}+t^{12}+2t^{10}+t^8
\end{align*}

\begin{align*}
Total[u;27](t)&=A_{1}+A_{2}+A_{3}+A_{4}+A_{5}\\
&=t^{26}+3t^{24}+10t^{22}+34t^{20}+81t^{18}+143t^{16}+192t^{14}+\\
&192t^{12}+143t^{10}+81t^8+34t^6+10t^4+3t^2+1\\
\end{align*}

\newpage
\chapter{Generating Functions for Circulants of Order 27, Directed}
\begin{align*}
A_{1}[d;27](t)&=t^{26}+t^{25}+t^{24}+t^{23}+t^{22}+t^{21}+t^{20}+t^{19}+t^{18}+t^{17}+\\
&t^{16}+t^{15}+t^{14}+t^{13}+t^{12}+t^{11}+t^{10}+t^9+t^8+t^7+t^6+t^5+t^4+t^3+t^2+t+1\\
\end{align*}

\begin{align*}
A_{21}[d;27](t)&=t^{26}+3t^{25}+23t^{24}+152t^{23}+850t^{22}+\\
&3680t^{21}+12850t^{20}+36606t^{19}+86919t^{18}+173701t^{17}+\\
&295311t^{16}+429388t^{15}+536810t^{14}+577996t^{13}+536810t^{12}+\\
&429388t^{11}+295311t^{10}+173701t^9+86919t^8+36606t^7+12850t^6+\\
&3680t^5+850t^4+152t^3+23t^2+3t+1\\
\end{align*}

\begin{align*}
A_{22}[d;27](t)&=t^{26}+2t^{25}+6t^{24}+11t^{23}+22t^{22}+38t^{21}+\\
&65t^{20}+92t^{19}+129t^{18}+172t^{17}+214t^{16}+235t^{15}+263t^{14}+\\
&276t^{13}+263t^{12}+235t^{11}+214t^{10}+172t^9+129t^8+92t^7+\\
&65t^6+38t^5+22t^4+11t^3+6t^2+2t+1\\
\end{align*}

\begin{align*}
A_{2}[d;27](t)&=t^{25}+17t^{24}+141t^{23}+828t^{22}+3642t^{21}+\\
&12785t^{20}+36514t^{19}+86790t^{18}+173529t^{17}+295097t^{16}+\\
&429153t^{15}+536547t^{14}+577720t^{13}+536547t^{12}+429153t^{11}+\\
&295097t^{10}+173529t^9+86790t^8+36514t^7+12785t^6+3642t^5+828t^4+\\
&141t^3+17t^2+t\\
\end{align*}

\begin{align*}
A_{31}[d;27](t)&=t^{26}+t^{25}+t^{24}+2t^{23}+2t^{22}+2t^{21}+6t^{20}+\\
&6t^{19}+6t^{18}+10t^{17}+10t^{16}+10t^{15}+14t^{14}+14t^{13}+14t^{12}+\\
&10t^{11}+10t^{10}+10t^9+6t^8+6t^7+6t^6+2t^5+2t^4+2t^3+t^2+t+1\\
\end{align*}

\begin{align*}
A_{32}[d;27](t)&=t^{26}+t^{25}+t^{24}+t^{23}+t^{22}+t^{21}+t^{20}+t^{19}+\\
&t^{18}+t^{17}+t^{16}+t^{15}+2t^{14}+2t^{13}+2t^{12}+t^{11}+t^{10}+\\
&t^9+t^8+t^7+t^6+t^5+t^4+t^3+t^2+t+1\\
\end{align*}

\begin{flalign*}
A_{3}[d;27](t)&=t^{23}+t^{22}+t^{21}+5t^{20}+5t^{19}+5t^{18}+9t^{17}+\\
&9t^{16}+9t^{15}+12t^{14}+12t^{13}+12t^{12}+9t^{11}+9t^{10}+9t^9+\\
&5t^8+5t^7+5t^6+t^5+t^4+t^3\\
\end{flalign*}

\begin{align*}
A_{41}[d;27](t)&=t^{26}+2t^{25}+6t^{24}+10t^{23}+14t^{22}+10t^{21}+\\
&6t^{20}+2t^{19}+t^{18}+t^{17}+2t^{16}+6t^{15}+10t^{14}+14t^{13}+\\
&10t^{12}+6t^{11}+2t^{10}+t^9+t^8+2t^7+6t^6+10t^5+14t^4+10t^3+6t^2+2t+1\\
\end{align*}

\begin{align*}
A_{42}[d;27](t)&=t^{26}+t^{25}+t^{24}+t^{23}+2t^{22}+t^{21}+t^{20}+t^{19}+\\
&t^{18}+t^{17}+t^{16}+t^{15}+t^{14}+2t^{13}+\\
&t^{12}+t^{11}+t^{10}+t^9+t^8+t^7+t^6+t^5+2t^4+t^3+t^2+t+1\\
\end{align*}

\begin{align*}
A_{4}[d;27](t)&=t^{25}+5t^{24}+9t^{23}+12t^{22}+9t^{21}+5t^{20}+t^{19}+\\
&t^{16}+5t^{15}+9t^{14}+12t^{13}+9t^{12}+\\
&5t^{11}+t^{10}+t^7+5t^6+9t^5+12t^4+9t^3+5t^2+t\\
\end{align*}

\begin{align*}
A_{51}[d;27](t)&=t^{26}+2t^{25}+6t^{24}+11t^{23}+18t^{22}+20t^{21}+\\
&29t^{20}+38t^{19}+47t^{18}+64t^{17}+86t^{16}+\\
&91t^{15}+109t^{14}+124t^{13}+109t^{12}+91t^{11}+86t^{10}+64t^9+\\
&47t^8+38t^7+29t^6+20t^5+18t^4+11t^3+6t^2+2t+1\\
\end{align*}

\begin{align*}
A5_{21}[d;27](t)&=t^{26}+2t^{25}+6t^{24}+10t^{23}+14t^{22}+10t^{21}+\\
&6t^{20}+2t^{19}+t^{18}+t^{17}+4t^{16}+10t^{15}+\\
&20t^{14}+26t^{13}+20t^{12}+10t^{11}+4t^{10}+t^9+t^8+2t^7+6t^6+\\
&10t^5+14t^4+10t^3+6t^2+2t+1\\
\end{align*}
\begin{align*}
A5_{22}[d;27](t)&=t^{26}+t^{25}+t^{24}+2t^{23}+4t^{22}+2t^{21}+6t^{20}+\\
&10t^{19}+6t^{18}+10t^{17}+20t^{16}+10t^{15}+\\
&14t^{14}+26t^{13}+14t^{12}+10t^{11}+20t^{10}+10t^9+6t^8+10t^7+\\
&6t^6+2t^5+4t^4+2t^3+t^2+t+1\\
\end{align*}

\begin{align*}
A5_{23}[d;27](t)&=t^{26}+t^{25}+t^{24}+t^{23}+2t^{22}+t^{21}+t^{20}+t^{19}+t^{18}+t^{17}+2t^{16}+t^{15}+\\
&2t^{14}+4t^{13}+2t^{12}+t^{11}+2t^{10}+t^9+t^8+t^7+t^6+t^5+2+t^4+t^3+t^2+t+1\\
\end{align*}

\begin{align*}
A_{52}[d;27](t)&=t^{26}+2t^{25}+6t^{24}+11t^{23}+16t^{22}+11t^{21}+11t^{20}+11t^{19}+\\
&6t^{18}+10t^{17}+22t^{16}+19t^{15}+32t^{14}+48t^{13}+32t^{12}+19t^{11}+22t^{10}+\\
&10t^9+6t^8+11t^7+11t^6+11t^5+16t^4+11t^3+6t^2+2t+1\\
\end{align*}

\begin{align*}
A_{5}[d;27](t)&=2t^{22}+9t^{21}+18t^{20}+27t^{19}+41t^{18}+54t^{17}+64t^{16}+72t^{15}+77t^{14}+76t^{13}+\\
&77t^{12}+72t^{11}+64t^{10}+54t^9+41t^8+27t^7+ 18t^6+9t^5+2t^4\\
\end{align*}

\begin{align*}
Total[d;27](t)&=A_{1}+A_{2}+A_{3}+A_{4}+A_{5}\\
&=t^{26}+3t^{25}+23t^{24}+152t^{23}+844t^{22}+3662t^{21}+12814t^{20}+36548t^{19}+\\
&86837t^{18}+173593t^{17}+295172t^{16}+429240t^{15}+536646t^{14}+577821t^{13}+\\
&536646t^{12}+429240t^{11}+295172t^{10}+173593t^9+86837t^8+36548t^7+12814t^6+\\
&3662t^5+844t^4+152t^3+23t^2+3t+1\\
\end{align*}

\newpage
\chapter{Generating Functions for Circulants of Order 125, Undirected}
\begin{align*}
A_{1}[u;125](t)&=t^{124}+t^{122}+t^{120}+t^{114}+t^{112}+t^{110}+t^{104}+t^{102}+\\
&t^{100}+t^{74}+t^{72}+t^{70}+t^{64}+t^{62}+t^{60}+t^{54}+t^{52}+t^{50}+\\
&t^{24}+t^{22}+t^{20}+t^{14}+t^{12}+t^{10}+t^4+t^2+1
\end{align*}

\begin{align*}
A_{21}[u;125](t)&=t^{124}+3t^{122}+45t^{120}+774t^{118}+\\
&11207t^{116}+129485t^{114}+1229667t^{112}+9836040t^{110}+\\
&67622817t^{108}+405732239t^{106}+2150382717t^{104}+10165427246t^{102}+\\
&43203078063t^{100}+166165626045t^{98}+581579741555t^{96}+\\
&1861055001312t^{94}+ 5466849219029t^{92}+14792650395167t^{90}+\\
&36981626386305t^{88}+85641660167910t^{86}+184129570243991t^{84}+\\
&368259138707289t^{82}+686301123821491t^{80}+\\
&1193567168911620t^{78}+1939546652300505t^{76}+2948110907204795t^{74}+\\
&4195388602835609t^{72}+5593851464940846t^{70}+6992314336474163t^{68}+\\
&8197885767577933t^{66}+9017674350348587t^{64}+9308567065124256t^{62}+\\
&9017674350348587t^{60}+8197885767577933t^{58}+6992314336474163t^{56}+\\
&5593851464940846t^{54}+4195388602835609t^{52}+2948110907204795t^{50}+\\
&1939546652300505t^{48}+1193567168911620t^{46}+686301123821491t^{44}+\\
&368259138707289t^{42}+184129570243991t^{40}+85641660167910t^{38}+\\
&36981626386305t^{36}+14792650395167t^{34}+ 5466849219029t^{32}+1861055001312t^{30}+\\
&581579741555t^{28}+166165626045t^{26}+43203078063t^{24}+10165427246t^{22}+\\
&2150382717t^{20}+405732239t^{18}+67622817t^{16}+9836040t^{14}+1229667t^{12}+\\
&129485t^{10}+11207t^8+774t^6+45t^4+3t^2+1\\
\end{align*}

\begin{align*}
A_{22}[u;125](t)&=t^{124}+2t^{122}+8t^{120}+22t^{118}+51t^{116}+\\
&81t^{114}+108t^{112}+146t^{110}+271t^{108}+517t^{106}+805t^{104}+\\
&980t^{102}+1093t^{100}+1485t^{98}+2455t^{96}+3642t^{94}+4324t^{92}+\\
&4357t^{90}+4875t^{88}+6930t^{86}+9826t^{84}+11394t^{82}+10901t^{80}+\\
&10560t^{78}+13050t^{76}+17450t^{74}+19872t^{72}+18308t^{70}+15954t^{68}+\\
&17094t^{66}+21374t^{64}+23792t^{62}+21374t^{60}+17094t^{58}+15954t^{56}+\\
&18308t^{54}+19872t^{52}+17450t^{50}+13050t^{48}+10560t^{46}+10901t^{44}+\\
&11394t^{42}+9826t^{40}+6930t^{38}+4875t^{36}+4357t^{34}+4324t^{32}+3642t^{30}+\\
&2455t^{28}+1485t^{26}+1093t^{24}+980t^{22}+805t^{20}+517t^{18}+271t^{16}+\\
&146t^{14}+108t^{12}+81t^{10}+51t^8+ 22t^6+8t^4+2t^2+1\\
\end{align*}

\begin{align*}
A_{2}[u;&125](t)=t^{122}+37t^{120}+752t^{118}+11156t^{116}+129404t^{114}+\\
&1229559t^{112}+9835894t^{110}+67622546t^{108}+405731722t^{106}+\\
&2150381912t^{104}+10165426266t^{102}+43203076970t^{100}+166165624560t^{98}+\\
&581579739100t^{96}+1861054997670t^{94}+5466849214705t^{92}+14792650390810t^{90}+\\
&36981626381430t^{88}+85641660160980t^{86}+184129570234165t^{84}+ 368259138695895t^{82}+\\
&686301123810590t^{80}+1193567168901060t^{78}+1939546652287455t^{76}+\\
&2948110907187345t^{74}+4195388602815737t^{72}+5593851464922538t^{70}+\\
&6992314336458209t^{68}+8197885767560839t^{66}+9017674350327213t^{64}+\\
&9308567065100464t^{62}+9017674350327213t^{60}+8197885767560839t^{58}+\\
&6992314336458209t^{56}+5593851464922538t^{54}+ 4195388602815737t^{52}+\\
&2948110907187345t^{50}+1939546652287455t^{48}+1193567168901060t^{46}+\\
&686301123810590t^{44}+368259138695895t^{42}+184129570234165t^{40}+85641660160980t^{38}+\\
&36981626381430t^{36}+14792650390810t^{34}+ 5466849214705t^{32}+1861054997670t^{30}+\\
&581579739100t^{28}+166165624560t^{26}+43203076970t^{24}+10165426266t^{22}+\\
&2150381912t^{20}+405731722t^{18}+67622546t^{16}+9835894t^{14}+1229559t^{12}+\\
&129404t^{10}+11156t^8+752t^6+37t^4+t^2
\end{align*}

\begin{align*}
A_{31}[u;125](t)&=t^{124}+t^{122}+t^{120}+2t^{114}+2t^{112}+2t^{110}+8t^{104}+8t^{102}+\\
&8t^{100}+22t^{94}+22t^{92}+22t^{90}+51t^{84}+51t^{82}+51t^{80}+80t^{74}+80t^{72}+\\
&80t^{70}+96t^{64}+96t^{62}+96t^{60}+80t^{54}+80t^{52}+80t^{50}+51t^{44}+51t^{42}+\\
&51t^{40}+22t^{34}+22t^{32}+22t^{30}+8t^{24}+8t^{22}+8t^{20}+2t^{14}+2t^{12}+2t^{10}+t^4+t^2+1
\end{align*}

\begin{align*}
A_{32}[u;125](t)&=t^{124}+t^{122}+t^{120}+t^{114}+t^{112}+t^{110}+t^{104}+t^{102}+t^{100}+t^{74}+\\
&t^{72}+t^{70}+2t^{64}+2t^{62}+2t^{60}+t^{54}+t^{52}+t^{50}+t^{24}+t^{22}+\\
&t^{20}+t^{14}+t^{12}+t^{10}+t^4+t^2+1
\end{align*}

\begin{align*}
A_{3}[u;125](t)&=t^{114}+t^{112}+t^{110}+7t^{104}+7t^{102}+7t^{100}+22t^{94}+22t^{92}+22t^{90}+\\
&51t^{84}+51t^{82}+51t^{80}+79t^{74}+79t^{72}+79t^{70}+94t^{64}+94t^{62}+94t^{60}+79t^{54}+79t^{52}+\\
&79t^{50}+51t^{44}+51t^{42}+51t^{40}+22t^{34}+22t^{32}+22t^{30}+7t^{24}+7t^{22}+7t^{20}+t^{14}+\\
&t^{12}+t^{10}
\end{align*}

\begin{align*}
A_{41}[u;125](t)&=t^{124}+2t^{122}+8t^{120}+22t^{118}+51t^{116}+80t^{114}+96t^{112}+80t^{110}+\\
&51t^{108}+22t^{106}+8t^{104}+2t^{102}+t^{100}+t^{74}+2t^{72}+8t^{70}+22t^{68}+51t^{66}+80t^{64}+\\
&96t^{62}+80t^{60}+51t^{58}+22t^{56}+8t^{54}+2t^{52}+t^{50}+t^{24}+2t^{22}+8t^{20}+22t^{18}+\\
&51t^{16}+80t^{14}+96t^{12}+80t^{10}+51t^8+22t^6+8t^4+2t^2+1
\end{align*}

\begin{align*}
A_{42}[u;125](t)&=t^{124}+t^{122}+t^{120}+t^{114}+2t^{112}+t^{110}+t^{104}+t^{102}+\\
&t^{100}+t^{74}+t^{72}+t^{70}+t^{64}+2t^{62}+t^{60}+t^{54}+t^{52}+t^{50}+t^{24}+t^{22}+\\
&t^{20}+t^{14}+2t^{12}+t^{10}+t^4+t^2+1
\end{align*}

\begin{align*}
A_{4}[u;125](t)&=t^{122}+7t^{120}+22t^{118}+51t^{116}+79t^{114}+94t^{112}+79t^{110}+\\
&51t^{108}+22t^{106}+7t^{104}+t^{102}+t^{72}+7t^{70}+22t^{68}+51t^{66}+79t^{64}+94t^{62}+\\
&79t^{60}+51t^{58}+22t^{56}+7t^{54}+t^{52}+t^{22}+7t^{20}+22t^{18}+51t^{16}+79t^{14}+\\
&94t^{12}+79t^{10}+51t^8+22t^6+7t^4+t^2
\end{align*}

\begin{align*}
A_{51}[u;&125](t)=t^{124}+2t^{122}+8t^{120}+22t^{118}+51t^{116}+81t^{114}+100t^{112}+94t^{110}+95t^{108}+\\
&121t^{106}+173t^{104}+208t^{102}+225t^{100}+297t^{98}+491t^{96}+746t^{94}+900t^{92}+889t^{90}+975t^{88}+\\
&1386t^{86}+2006t^{84}+2358t^{82}+2221t^{80}+2112t^{78}+2610t^{76}+3554t^{74}+4104t^{72}+3736t^{70}+\\
&3226t^{68}+3498t^{66}+4478t^{64}+5056t^{62}+4478t^{60}+3498t^{58}+3226t^{56}+3736t^{54}+4104t^{52}+\\
&3554t^{50}+2610t^{48}+2112t^{46}+2221t^{44}+2358t^{42}+2006t^{40}+1386t^{38}+975t^{36}+889t^{34}+\\
&900t^{32}+746t^{30}+491t^{28}+297t^{26}+225t^{24}+208t^{22}+173t^{20}+121t^{18}+95t^{16}+94t^{14}+\\
&100t^{12}+81t^{10}+51t^8+22t^6+8t^4+2t^2+1
\end{align*}

\begin{align*}
A5_{21}[u;125](t)&=t^{124}+2t^{122}+8t^{120}+22t^{118}+51t^{116}+80t^{114}+96t^{112}+80t^{110}+\\
&51t^{108}+22t^{106}+8t^{104}+2t^{102}+t^{100}+t^{74}+4t^{72}+14t^{70}+44t^{68}+99t^{66}+160t^{64}+\\
&188t^{62}+160t^{60}+99t^{58}+44t^{56}+14t^{54}+4t^{52}+t^{50}+t^{24}+2t^{22}+8t^{20}+22t^{18}+\\
&51t^{16}+80t^{14}+96t^{12}+80t^{10}+51t^8+22t^6+8t^4+2t^2+1
\end{align*}

\begin{align*}
A5_{22}[u;125](t)&=t^{124}+t^{122}+t^{120}+2t^{114}+4t^{112}+2t^{110}+8t^{104}+14t^{102}+8t^{100}+\\
&22t^{94}+44t^{92}+22t^{90}+51t^{84}+99t^{82}+51t^{80}+80t^{74}+160t^{72}+80t^{70}+96t^{64}+188t^{62}+\\
&96t^{60}+80t^{54}+160t^{52}+80t^{50}+51t^{44}+99t^{42}+51t^{40}+22t^{34}+44t^{32}+22t^{30}+8t^{24}+\\
&14t^{22}+8t^{20}+2t^{14}+4t^{12}+2t^{10}+t^4+t^2+1
\end{align*}

\begin{align*}
A5_{23}[u;125](t)&=t^{124}+t^{122}+t^{120}+t^{114}+2t^{112}+t^{110}+t^{104}+t^{102}+\\
&t^{100}+t^{74}+2t^{72}+t^{70}+2t^{64}+4t^{62}+2t^{60}+t^{54}+2t^{52}+t^{50}+t^{24}+t^{22}+\\
&t^{20}+t^{14}+2t^{12}+t^{10}+t^4+t^2+1\\
\end{align*}

\begin{align*}
A_{52}[u;&125](t)=t^{124}+2t^{122}+8t^{120}+22t^{118}+51t^{116}+81t^{114}+98t^{112}+\\
&81t^{110}+51t^{108}+22t^{106}+15t^{104}+15t^{102}+8t^{100}+22t^{94}+44t^{92}+22t^{90}+\\
&51t^{84}+99t^{82}+51t^{80}+80t^{74}+162t^{72}+93t^{70}+44t^{68}+99t^{66}+254t^{64}+\\
&372t^{62}+254t^{60}+99t^{58}+44t^{56}+93t^{54}+162t^{52}+80t^{50}+51t^{44}+99t^{42}+\\
&51t^{40}+22t^{34}+44t^{32}+22t^{30}+8t^{24}+15t^{22}+15t^{20}+22t^{18}+51t^{16}+\\
&81t^{14}+98t^{12}+81t^{10}+51t^8+22t^6+8t^4+2t^2+1
\end{align*}

\begin{align*}
A_{5}[u;&125](t)=2t^{112}+13t^{110}+44t^{108}+99t^{106}+158t^{104}+193t^{102}+217t^{100}+\\
&297t^{98}+491t^{96}+724t^{94}+856t^{92}+867t^{90}+975t^{88}+1386t^{86}+1955t^{84}+2259t^{82}+\\
&2170t^{80}+2112t^{78}+2610t^{76}+3474t^{74}+3942t^{72}+3643t^{70}+3182t^{68}+3399t^{66}+4224t^{64}+\\
&4684t^{62}+4224t^{60}+3399t^{58}+3182t^{56}+3643t^{54}+3942t^{52}+3474t^{50}+2610t^{48}+2112t^{46}+\\
&2170t^{44}+2259t^{42}+1955t^{40}+1386t^{38}+975t^{36}+867t^{34}+856t^{32}+724t^{30}+491t^{28}+\\
&297t^{26}+217t^{24}+193t^{22}+158t^{20}+99t^{18}+44t^{16}+13t^{14}+2t^{12}
\end{align*}

\begin{align*}
Total[u;125](t)&=A_{1}+A_{2}+A_{3}+A_{4}+A_{5}\\
&t^{124}+3t^{122}+45t^{120}+774t^{118}+11207t^{116}+129485t^{114}+\\
&1229657t^{112}+9835988t^{110}+67622641t^{108}+ 405731843t^{106}+\\
&2150382085t^{104}+10165426468t^{102}+43203077195t^{100}+\\
&166165624857t^{98}+581579739591t^{96}+1861054998416t^{94}+\\
&5466849215583t^{92}+14792650391699t^{90}+36981626382405t^{88}+\\
&85641660162366t^{86}+184129570236171t^{84}+368259138698205t^{82}+\\
&686301123812811t^{80}+1193567168903172t^{78}+1939546652290065t^{76}+\\
&2948110907190899t^{74}+4195388602819760t^{72}+5593851464926268t^{70}+\\
&6992314336461413t^{68}+8197885767564289t^{66}+\\
&9017674350331611t^{64}+9308567065105337t^{62}+9017674350331611t^{60}+\\
&8197885767564289t^{58}+6992314336461413t^{56}+5593851464926268t^{54}+\\
&4195388602819760t^{52}+2948110907190899t^{50}+1939546652290065t^{48}+\\
&1193567168903172t^{46}+686301123812811t^{44}+368259138698205t^{42}+\\
&184129570236171t^{40}+85641660162366t^{38}+36981626382405t^{36}+\\
&14792650391699t^{34}+5466849215583t^{32}+1861054998416t^{30}+\\
&581579739591t^{28}+166165624857t^{26}+43203077195t^{24}+\\
&10165426468t^{22}+2150382085t^{20}+405731843t^{18}+67622641t^{16}+\\
&9835988t^{14}+1229657t^{12}+129485t^{10}+11207t^8+\\
&774t^6+45t^4+3t^2+1
\end{align*}

\newpage
\chapter{Generating Functions for Circulants of Order 125, Directed}
\begin{align*}
A_{1}[d;&125](t)=t^{124}+t^{123}+2t^{122}+t^{121}+t^{120}+t^{119}+t^{118}+2t^{117}+t^{116}+t^{115}+\\
&2t^{114}+2t^{113}+4t^{112}+2t^{111}+2t^{110}+t^{109}+t^{108}+2t^{107}+t^{106}+\\
&t^{105}+t^{104}+t^{103}+2t^{102}+t^{101}+t^{100}+t^{99}+t^{98}+2t^{97}+t^{96}+t^{95}+t^{94}+\\
&t^{93}+2t^{92}+t^{91}+t^{90}+2t^{89}+2t^{88}+4t^{87}+2t^{86}+2t^{85}+t^{84}+t^{83}+\\
&2t^{82}+t^{81}+t^{80}+t^{79}+t^{78}+2t^{77}+t^{76}+t^{75}+2t^{74}+2t^{73}+4t^{72}+2t^{71}+\\
&2t^{70}+2t^{69}+2t^{68}+4t^{67}+2t^{66}+2t^{65}+4t^{64}+4t^{63}+8t^{62}+4t^{61}+\\
&4t^{60}+2t^{59}+2t^{58}+4t^{57}+2t^{56}+2t^{55}+2t^{54}+2t^{53}+4t^{52}+2t^{51}+\\
&2t^{50}+t^{49}+t^{48}+2t^{47}+t^{46}+t^{45}+t^{44}+t^{43}+2t^{42}+t^{41}+t^{40}+2t^{39}+\\
&2t^{38}+4t^{37}+2t^{36}+2t^{35}+t^{34}+t^{33}+2t^{32}+t^{31}+t^{30}+t^{29}+t^{28}+2t^{27}+\\
&t^{26}+t^{25}+t^{24}+t^{23}+2t^{22}+t^{21}+t^{20}+t^{19}+t^{18}+2t^{17}+t^{16}+t^{15}+2t^{14}+\\
&2t^{13}+4t^{12}+2t^{11}+2t^{10}+t^9+t^8+2t^7+t^6+t^5+t^4+t^3+2t^2+t+1
\end{align*}

\begin{align*}
A_{21}[d;&125](t)=t^{124}+3t^{123}+90t^{122}+3183t^{121}+94261t^{120}+\\
&2253202t^{119}+44660548t^{118}+752765650t^{117}+11009028511t^{116}+\\
&141893733677t^{115}+1631777415270t^{114}+16911146129477t^{113}+\\
&159246625098675t^{112}+1371970915997752t^{111}+10877769405955360t^{110}+\\
&79770308934046652t^{109}+543435229636808801t^{108}+3452412046875069875t^{107}+\\
&20522671612161479218t^{106}+114494904782475069427t^{105}+\\
&601098250109024238421t^{104}+2976867524344065707158t^{103}+\\
&13937152500343124859524t^{102}+61808241523238164274278t^{101}+\\
&260109683076982063079271t^{100}+1040438732307841647264001t^{99}+\\
&3961670557633787552369622t^{98}+14379396838818630702670729t^{97}+\\
&49814339048764832456390627t^{96}+164902639609703309837818576t^{95}+\\
&522191692097394744372570064t^{94}+1583419969585645757298166644t^{93}+\\
&4601814286608285714581287637t^{92}+12829300435392788916008371659t^{91}+\\
&34337245282963060081957805954t^{90}+88295773584762135475767267735t^{89}+\\
&218286773584550853415117681033t^{88}+519168542579472256033947258058t^{87}+\\
&1188622715905633892159206423188t^{86}+2621065476099602847256268985110t^{85}+\\
&5569764136711656142481362767183t^{84}+11411224084970222152069566117509t^{83}+\\
&22550752358393534437025056773982t^{82}+43003760311355111831098813088769t^{81}+\\
&79166013300449183486759966707923t^{80}+140739579200798547810856736509576t^{79}+\\
&241704929497023593576293669675592t^{78}+401127329803571069200032815963604t^{77}+\\
&643475091559895257811487192086457t^{76}+998042999154123255509591723712251t^{75}+\\
&1497064498731184884738438067393274t^{74}+2172211233453091791403216937366843t^{73}+\\
&3049450385424532709259897590332593t^{72}+4142649580199365187088270983859398t^{71}+\\
&5446817040632498674709277300717412t^{70}+6932312597168634673342988991544262t^{69}+\\
&8541599450082782011722336731949035t^{68}+10189978291326827658936491765174825t^{67}+\\
&11771181819291335403215232033465030t^{66}+13167762713105561632909744695314237t^{65}+\\
&14265076272531025106827725830461707t^{64}+14966637400688288631941128601570312t^{63}+\\
&15208034778118744904852527054387840t^{62}+14966637400688288631941128601570312t^{61}+\\
&14265076272531025106827725830461707t^{60}+13167762713105561632909744695314237t^{59}+\\
&11771181819291335403215232033465030t^{58}+10189978291326827658936491765174825t^{57}+\\
\end{align*}
\begin{align*}
&8541599450082782011722336731949035t^{56}+6932312597168634673342988991544262t^{55}+\\
&5446817040632498674709277300717412t^{54}+4142649580199365187088270983859398t^{53}+\\
&3049450385424532709259897590332593t^{52}+2172211233453091791403216937366843t^{51}+\\
&1497064498731184884738438067393274t^{50}+998042999154123255509591723712251t^{49}+\\
&643475091559895257811487192086457t^{48}+401127329803571069200032815963604t^{47}+\\
&241704929497023593576293669675592t^{46}+140739579200798547810856736509576t^{45}+\\
&79166013300449183486759966707923t^{44}+43003760311355111831098813088769t^{43}+\\
&22550752358393534437025056773982t^{42}+11411224084970222152069566117509t^{41}+\\
&5569764136711656142481362767183t^{40}+2621065476099602847256268985110t^{39}+\\
&1188622715905633892159206423188t^{38}+519168542579472256033947258058t^{37}+\\
&218286773584550853415117681033t^{36}+88295773584762135475767267735t^{35}+\\
&34337245282963060081957805954t^{34}+12829300435392788916008371659t^{33}+\\
&4601814286608285714581287637t^{32}+1583419969585645757298166644t^{31}+\\
&522191692097394744372570064t^{30}+164902639609703309837818576t^{29}+\\
&49814339048764832456390627t^{28}+14379396838818630702670729t^{27}+\\
&3961670557633787552369622t^{26}+1040438732307841647264001t^{25}+\\
&260109683076982063079271t^{24}+61808241523238164274278t^{23}+\\
&13937152500343124859524t^{22}+2976867524344065707158t^{21}+\\
&601098250109024238421t^{20}+114494904782475069427t^{19}+\\
&20522671612161479218t^{18}+3452412046875069875t^{17}+\\
&543435229636808801t^{16}+79770308934046652t^{15}+10877769405955360t^{14}+\\
&1371970915997752t^{13}+159246625098675t^{12}+16911146129477t^{11}+\\
&1631777415270t^{10}+141893733677t^9+11009028511t^8+752765650t^7+\\
&44660548t^6+2253202t^5+94261t^4+3183t^3+90t^2+3t+1
\end{align*}

\begin{align*}
A_{22}[d;&125](t)=t^{124}+2t^{123}+16t^{122}+102t^{121}+536t^{120}+\\
&2127t^{119}+6768t^{118}+17586t^{117}+38827t^{116}+76002t^{115}+\\
&140618t^{114}+259636t^{113}+483996t^{112}+879511t^{111}+1506588t^{110}+\\
&2430477t^{109}+3813087t^{108}+6025186t^{107}+9605478t^{106}+\\
&14990352t^{105}+22363341t^{104}+32126727t^{103}+45830880t^{102}+\\
&66260127t^{101}+95776861t^{100}+134520876t^{99}+181894438t^{98}+\\
&241482810t^{97}+323296983t^{96}+437175501t^{95}+582695804t^{94}+\\
&751017303t^{93}+943649236t^{92}+1186422728t^{91}+1511709474t^{90}+\\
&1920090810t^{89}+2374236579t^{88}+2851940514t^{87}+3400809384t^{86}+\\
&4105791435t^{85}+4981822098t^{84}+5929999778t^{83}+6852264216t^{82}+\\
&7805305478t^{81}+8968682293t^{80}+10411728126t^{79}+11949925688t^{78}+\\
&13334526060t^{77}+14585287978t^{76}+16008752876t^{75}+17798416164t^{74}+\\
&19707130012t^{73}+21283301536t^{72}+22438886362t^{71}+23596306968t^{70}+\\
&25139780062t^{69}+26846406914t^{68}+28090065316t^{67}+28621603364t^{66}+\\
&28906040812t^{65}+29522002718t^{64}+30378601066t^{63}+30797103792t^{62}+\\
&30378601066t^{61}+29522002718t^{60}+28906040812t^{59}+28621603364t^{58}+\\
&28090065316t^{57}+26846406914t^{56}+25139780062t^{55}+23596306968t^{54}+\\
&22438886362t^{53}+21283301536t^{52}+19707130012t^{51}+17798416164t^{50}+\\
&16008752876t^{49}+14585287978t^{48}+13334526060t^{47}+11949925688t^{46}+\\
&10411728126t^{45}+8968682293t^{44}+7805305478t^{43}+6852264216t^{42}+\\
&5929999778t^{41}+4981822098t^{40}+4105791435t^{39}+3400809384t^{38}+\\
&2851940514t^{37}+2374236579t^{36}+1920090810t^{35}+1511709474t^{34}+\\
&1186422728t^{33}+943649236t^{32}+751017303t^{31}+582695804t^{30}+\\
&437175501t^{29}+323296983t^{28}+241482810t^{27}+181894438t^{26}+\\
&134520876t^{25}+95776861t^{24}+66260127t^{23}+45830880t^{22}+\\
&32126727t^{21}+22363341t^{20}+14990352t^{19}+9605478t^{18}+\\
&6025186t^{17}+3813087t^{16}+2430477t^{15}+1506588t^{14}+879511t^{13}+\\
&483996t^{12}+259636t^{11}+140618t^{10}+76002t^9+38827t^8+17586t^7+\\
&6768t^6+2127t^5+536t^4+102t^3+16t^2+2t+1
\end{align*}

\begin{align*}
A_{2}[d;&125](t)=t^{123}+74t^{122}+3081t^{121}+93725t^{120}+2251075t^{119}+\\
&44653780t^{118}+752748064t^{117}+11008989684t^{116}+141893657675t^{115}+\\
&1631777274652t^{114}+16911145869841t^{113}+159246624614679t^{112}+\\
&1371970915118241t^{111}+10877769404448772t^{110}+79770308931616175t^{109}+\\
&543435229632995714t^{108}+3452412046869044689t^{107}+\\
&20522671612151873740t^{106}+114494904782460079075t^{105}+\\
&601098250109001875080t^{104}+2976867524344033580431t^{103}+\\
&13937152500343079028644t^{102}+61808241523238098014151t^{101}+\\
&260109683076981967302410t^{100}+1040438732307841512743125t^{99}+\\
&3961670557633787370475184t^{98}+14379396838818630461187919t^{97}+\\
&49814339048764832133093644t^{96}+164902639609703309400643075t^{95}+\\
&522191692097394743789874260t^{94}+1583419969585645756547149341t^{93}+\\
&4601814286608285713637638401t^{92}+12829300435392788914821948931t^{91}+\\
&34337245282963060080446096480t^{90}+88295773584762135473847176925t^{89}+\\
&218286773584550853412743444454t^{88}+519168542579472256031095317544t^{87}+\\
&1188622715905633892155805613804t^{86}+2621065476099602847252163193675t^{85}+\\
&5569764136711656142476380945085t^{84}+11411224084970222152063636117731t^{83}+\\
&22550752358393534437018204509766t^{82}+43003760311355111831091007783291t^{81}+\\
&79166013300449183486750998025630t^{80}+140739579200798547810846324781450t^{79}+\\
&241704929497023593576281719749904t^{78}+401127329803571069200019481437544t^{77}+\\
&643475091559895257811472606798479t^{76}+998042999154123255509575714959375t^{75}+\\
&1497064498731184884738420268977110t^{74}+2172211233453091791403197230236831t^{73}+\\
&3049450385424532709259876307031057t^{72}+4142649580199365187088248544973036t^{71}+\\
&5446817040632498674709253704410444t^{70}+6932312597168634673342963851764200t^{69}+\\
&8541599450082782011722309885542121t^{68}+10189978291326827658936463675109509t^{67}+\\
&11771181819291335403215203411861666t^{66}+13167762713105561632909715789273425t^{65}+\\
&14265076272531025106827696308458989t^{64}+14966637400688288631941098222969246t^{63}+\\
&15208034778118744904852496257284048t^{62}+14966637400688288631941098222969246t^{61}+\\
&14265076272531025106827696308458989t^{60}+13167762713105561632909715789273425t^{59}+\\
&11771181819291335403215203411861666t^{58}+10189978291326827658936463675109509t^{57}+\\
\end{align*}
\begin{align*}
A_{2}[d;&125](t)continued...\\
&8541599450082782011722309885542121t^{56}+6932312597168634673342963851764200t^{55}+\\
&5446817040632498674709253704410444t^{54}+4142649580199365187088248544973036t^{53}+\\
&3049450385424532709259876307031057t^{52}+2172211233453091791403197230236831t^{51}+\\
&1497064498731184884738420268977110t^{50}+998042999154123255509575714959375t^{49}+\\
&643475091559895257811472606798479t^{48}+401127329803571069200019481437544t^{47}+\\
&241704929497023593576281719749904t^{46}+140739579200798547810846324781450t^{45}+\\
&79166013300449183486750998025630t^{44}+43003760311355111831091007783291t^{43}+\\
&22550752358393534437018204509766t^{42}+11411224084970222152063636117731t^{41}+\\
&5569764136711656142476380945085t^{40}+2621065476099602847252163193675t^{39}+\\
&1188622715905633892155805613804t^{38}+519168542579472256031095317544t^{37}+\\
&218286773584550853412743444454t^{36}+88295773584762135473847176925t^{35}+\\
&34337245282963060080446096480t^{34}+12829300435392788914821948931t^{33}+\\
&4601814286608285713637638401t^{32}+1583419969585645756547149341t^{31}+\\
&522191692097394743789874260t^{30}+164902639609703309400643075t^{29}+\\
&49814339048764832133093644t^{28}+14379396838818630461187919t^{27}+\\
&3961670557633787370475184t^{26}+1040438732307841512743125t^{25}+\\
&260109683076981967302410t^{24}+61808241523238098014151t^{23}+13937152500343079028644t^{22}+\\
&2976867524344033580431t^{21}+601098250109001875080t^{20}+114494904782460079075t^{19}+\\
&20522671612151873740t^{18}+3452412046869044689t^{17}+543435229632995714t^{16}+\\
&79770308931616175t^{15}+10877769404448772t^{14}+1371970915118241t^{13}+\\
&159246624614679t^{12}+16911145869841t^{11}+1631777274652t^{10}+141893657675t^{9}+\\
&11008989684t^8+752748064t^7+44653780t^6+2251075t^5+93725t^4+3081t^3+74t^2+t
\end{align*}

\begin{align*}
A_{31}[d;&125](t)=t^{124}+t^{123}+2t^{122}+t^{121}+t^{120}+2t^{119}+2t^{118}+\\
&4t^{117}+2t^{116}+2t^{115}+16t^{114}+16t^{113}+32t^{112}+16t^{111}+16t^{110}+\\
&102t^{109}+102t^{108}+204t^{107}+102t^{106}+102t^{105}+536t^{104}+536t^{103}+\\
&1072t^{102}+536t^{101}+536t^{100}+2126t^{99}+2126t^{98}+4252t^{97}+2126t^{96}+\\
&2126t^{95}+6744t^{94}+6744t^{93}+13488t^{92}+6744t^{91}+6744t^{90}+17310t^{89}+\\
&17310t^{88}+34620t^{87}+17310t^{86}+17310t^{85}+36803t^{84}+36803t^{83}+\\
&73606t^{82}+36803t^{81}+36803t^{80}+65376t^{79}+65376t^{78}+130752t^{77}+\\
&65376t^{76}+65376t^{75}+98104t^{74}+98104t^{73}+196208t^{72}+98104t^{71}+\\
&98104t^{70}+124812t^{69}+124812t^{68}+249624t^{67}+124812t^{66}+124812t^{65}+\\
&135264t^{64}+135264t^{63}+270528t^{62}+135264t^{61}+135264t^{60}+124812t^{59}+\\
&124812t^{58}+249624t^{57}+124812t^{56}+124812t^{55}+98104t^{54}+98104t^{53}+\\
&196208t^{52}+98104t^{51}+98104t^{50}+65376t^{49}+65376t^{48}+130752t^{47}+\\
&65376t^{46}+65376t^{45}+36803t^{44}+36803t^{43}+73606t^{42}+36803t^{41}+\\
&36803t^{40}+17310t^{39}+17310t^{38}+34620t^{37}+17310t^{36}+17310t^{35}+\\
&6744t^{34}+6744t^{33}+13488t^{32}+6744t^{31}+6744t^{30}+2126t^{29}+2126t^{28}+\\
&4252t^{27}+2126t^{26}+2126t^{25}+536t^{24}+536t^{23}+1072t^{22}+536t^{21}+\\
&536t^{20}+102t^{19}+102t^{18}+204t^{17}+102t^{16}+102t^{15}+16t^{14}+16t^{13}+\\
&32t^{12}+16t^{11}+16t^{10}+2t^9+2t^8+4t^7+2t^6+2t^5+t^4+t^3+2t^2+t+1\\
\end{align*}

\begin{align*}
A_{32}[d;&125](t)=t^{124}+t^{123}+2t^{122}+t^{121}+t^{120}+t^{119}+t^{118}+2t^{117}+t^{116}+\\
&t^{115}+2t^{114}+2t^{113}+4t^{112}+2t^{111}+2t^{110}+t^{109}+t^{108}+2t^{107}+t^{106}+t^{105}+\\
&t^{104}+t^{103}+2t^{102}+t^{101}+t^{100}+t^{99}+t^{98}+2t^{97}+t^{96}+t^{95}+4t^{94}+4t^{93}+8t^{92}+\\
&4t^{91}+4t^{90}+6t^{89}+6t^{88}+12t^{87}+6t^{86}+6t^{85}+4t^{84}+4t^{83}+8t^{82}+4t^{81}+\\
&4t^{80}+t^{79}+t^{78}+2t^{77}+t^{76}+t^{75}+2t^{74}+2t^{73}+4t^{72}+2t^{71}+2t^{70}+6t^{69}+\\
&6t^{68}+12t^{67}+6t^{66}+6t^{65}+10t^{64}+10t^{63}+20t^{62}+10t^{61}+10t^{60}+6t^{59}+\\
&6t^{58}+12t^{57}+6t^{56}+6t^{55}+2t^{54}+2t^{53}+4t^{52}+2t^{51}+2t^{50}+t^{49}+t^{48}+2t^{47}+\\
&t^{46}+t^{45}+4t^{44}+4t^{43}+8t^{42}+4t^{41}+4t^{40}+6t^{39}+6t^{38}+12t^{37}+6t^{36}+6t^{35}+\\
&4t^{34}+4t^{33}+8t^{32}+4t^{31}+4t^{30}+t^{29}+t^{28}+2t^{27}+t^{26}+t^{25}+t^{24}+t^{23}+2t^{22}+t^{21}+\\
&t^{20}+t^{19}+t^{18}+2t^{17}+t^{16}+t^{15}+2t^{14}+2t^{13}+4t^{12}+2t^{11}+2t^{10}+t^9+t^8+2t^7+t^6+\\
&t^5+t^4+t^3+2t^2+t+1
\end{align*}

\begin{align*}
A_{3}&[d;125](t)=t^{119}+t^{118}+2t^{117}+t^{116}+t^{115}+14t^{114}+14t^{113}+28t^{112}+14t^{111}+\\
&14t^{110}+101t^{109}+101t^{108}+202t^{107}+101t^{106}+101t^{105}+535t^{104}+535t^{103}+1070t^{102}+\\
&535t^{101}+535t^{100}+2125t^{99}+2125t^{98}+4250t^{97}+2125t^{96}+2125t^{95}+6740t^{94}+6740t^{93}+\\
&13480t^{92}+6740t^{91}+6740t^{90}+17304t^{89}+17304t^{88}+34608t^{87}+17304t^{86}+17304t^{85}+\\
&36799t^{84}+36799t^{83}+73598t^{82}+36799t^{81}+36799t^{80}+65375t^{79}+65375t^{78}+130750t^{77}+\\
&65375t^{76}+65375t^{75}+98102t^{74}+98102t^{73}+196204t^{72}+98102t^{71}+98102t^{70}+124806t^{69}+\\
&124806t^{68}+249612t^{67}+124806t^{66}+124806t^{65}+135254t^{64}+135254t^{63}+270508t^{62}+\\
&135254t^{61}+135254t^{60}+124806t^{59}+124806t^{58}+249612t^{57}+124806t^{56}+\\
&124806t^{55}+98102t^{54}+98102t^{53}+196204t^{52}+98102t^{51}+98102t^{50}+65375t^{49}+65375t^{48}+\\
&130750t^{47}+65375t^{46}+65375t^{45}+36799t^{44}+36799t^{43}+73598t^{42}+36799t^{41}+36799t^{40}+\\
&17304t^{39}+17304t^{38}+34608t^{37}+17304t^{36}+17304t^{35}+6740t^{34}+6740t^{33}+13480t^{32}+\\
&6740t^{31}+6740t^{30}+2125t^{29}+2125t^{28}+4250t^{27}+2125t^{26}+2125t^{25}+535t^{24}+535t^{23}+\\
&1070t^{22}+535t^{21}+535t^{20}+101t^{19}+101t^{18}+202t^{17}+101t^{16}+101t^{15}+14t^{14}+14t^{13}+\\
&28t^{12}+14t^{11}+14t^{10}+t^9+t^8+2t^7+t^6+t^5
\end{align*}

\begin{align*}
A_{41}[d;&125](t)=t^{124}+2t^{123}+16t^{122}+102t^{121}+536t^{120}+2126t^{119}+6744t^{118}+\\
&17310t^{117}+36803t^{116}+65376t^{115}+98104t^{114}+124812t^{113}+135264t^{112}+124812t^{111}+\\
&98104t^{110}+65376t^{109}+36803t^{108}+17310t^{107}+6744t^{106}+2126t^{105}+536t^{104}+\\
&102t^{103}+16t^{102}+2t^{101}+t^{100}+t^{99}+2t^{98}+16t^{97}+102t^{96}+536t^{95}+2126t^{94}+\\
&6744t^{93}+17310t^{92}+36803t^{91}+65376t^{90}+98104t^{89}+124812t^{88}+135264t^{87}+\\
&124812t^{86}+98104t^{85}+65376t^{84}+36803t^{83}+17310t^{82}+6744t^{81}+2126t^{80}+536t^{79}+\\
&102t^{78}+16t^{77}+2t^{76}+t^{75}+2t^{74}+4t^{73}+32t^{72}+204t^{71}+1072t^{70}+4252t^{69}+\\
&13488t^{68}+34620t^{67}+73606t^{66}+130752t^{65}+196208t^{64}+249624t^{63}+270528t^{62}+\\
&249624t^{61}+196208t^{60}+130752t^{59}+73606t^{58}+34620t^{57}+13488t^{56}+4252t^{55}+1072t^{54}+\\
&204t^{53}+32t^{52}+4t^{51}+2t^{50}+t^{49}+2t^{48}+16t^{47}+102t^{46}+536t^{45}+2126t^{44}+\\
&6744t^{43}+17310t^{42}+36803t^{41}+65376t^{40}+98104t^{39}+124812t^{38}+135264t^{37}+124812t^{36}+\\
&98104t^{35}+65376t^{34}+36803t^{33}+17310t^{32}+6744t^{31}+2126t^{30}+536t^{29}+102t^{28}+16t^{27}+\\
&2t^{26}+t^{25}+t^{24}+2t^{23}+16t^{22}+102t^{21}+536t^{20}+2126t^{19}+6744t^{18}+\\
&17310t^{17}+36803t^{16}+65376t^{15}+98104t^{14}+124812t^{13}+135264t^{12}+124812t^{11}+\\
&98104t^{10}+65376t^9+36803t^8+17310t^7+6744t^6+2126t^5+536t^4+102t^3+16t^2+2t+1\\
\end{align*}

\begin{align*}
A_{42}[d;&125](t)=t^{124}+t^{123}+2t^{122}+t^{121}+t^{120}+t^{119}+4t^{118}+6t^{117}+4t^{116}+\\
&t^{115}+2t^{114}+6t^{113}+10t^{112}+6t^{111}+2t^{110}+t^{109}+4t^{108}+6t^{107}+4t^{106}+t^{105}+\\
&t^{104}+t^{103}+2t^{102}+t^{101}+t^{100}+t^{99}+t^{98}+2t^{97}+t^{96}+t^{95}+t^{94}+4t^{93}+6t^{92}+\\
&4t^{91}+t^{90}+2t^{89}+6t^{88}+10t^{87}+6t^{86}+2t^{85}+t^{84}+4t^{83}+6t^{82}+4t^{81}+t^{80}+t^{79}+\\
&t^{78}+2t^{77}+t^{76}+t^{75}+2t^{74}+2t^{73}+4t^{72}+2t^{71}+2t^{70}+2t^{69}+8t^{68}+12t^{67}+8t^{66}+\\
&2t^{65}+4t^{64}+12t^{63}+20t^{62}+12t^{61}+4t^{60}+2t^{59}+8t^{58}+12t^{57}+8t^{56}+2t^{55}+2t^{54}+\\
&2t^{53}+4t^{52}+2t^{51}+2t^{50}+t^{49}+t^{48}+2t^{47}+t^{46}+t^{45}+t^{44}+4t^{43}+6t^{42}+4t^{41}+\\
&t^{40}+2t^{39}+6t^{38}+10t^{37}+6t^{36}+2t^{35}+t^{34}+4t^{33}+6t^{32}+4t^{31}+t^{30}+t^{29}+t^{28}+\\
&2t^{27}+t^{26}+t^{25}+t^{24}+t^{23}+2t^{22}+t^{21}+t^{20}+t^{19}+4t^{18}+6t^{17}+4t^{16}+t^{15}+2t^{14}+\\
&6t^{13}+10t^{12}+6t^{11}+2t^{10}+t^9+4t^8+6t^7+4t^6+t^5+t^4+t^3+2t^2+t+1
\end{align*}

\begin{align*}
A_{4}[d;&125](t)=t^{123}+14t^{122}+101t^{121}+535t^{120}+2125t^{119}+6740t^{118}+17304t^{117}+\\
&36799t^{116}+65375t^{115}+98102t^{114}+124806t^{113}+135254t^{112}+124806t^{111}+98102t^{110}+\\
&65375t^{109}+36799t^{108}+17304t^{107}+6740t^{106}+2125t^{105}+535t^{104}+101t^{103}+14t^{102}+\\
&t^{101}+t^{98}+14t^{97}+101t^{96}+535t^{95}+2125t^{94}+6740t^{93}+17304t^{92}+36799t^{91}+\\
&65375t^{90}+98102t^{89}+124806t^{88}+135254t^{87}+124806t^{86}+98102t^{85}+65375t^{84}+\\
&36799t^{83}+17304t^{82}+6740t^{81}+2125t^{80}+535t^{79}+101t^{78}+14t^{77}+t^{76}+2t^{73}+\\
&28t^{72}+202t^{71}+1070t^{70}+4250t^{69}+13480t^{68}+34608t^{67}+73598t^{66}+130750t^{65}+\\
&196204t^{64}+249612t^{63}+270508t^{62}+249612t^{61}+196204t^{60}+130750t^{59}+73598t^{58}+\\
&34608t^{57}+13480t^{56}+4250t^{55}+1070t^{54}+202t^{53}+28t^{52}+2t^{51}+t^{48}+14t^{47}+\\
&101t^{46}+535t^{45}+2125t^{44}+6740t^{43}+17304t^{42}+36799t^{41}+65375t^{40}+98102t^{39}+\\
&124806t^{38}+135254t^{37}+124806t^{36}+98102t^{35}+65375t^{34}+36799t^{33}+17304t^{32}+\\
&6740t^{31}+2125t^{30}+535t^{29}+101t^{28}+14t^{27}+t^{26}+t^{23}+14t^{22}+101t^{21}+\\
&535t^{20}+2125t^{19}+6740t^{18}+17304t^{17}+36799t^{16}+65375t^{15}+98102t^{14}+124806t^{13}+\\
&135254t^{12}+124806t^{11}+98102t^{10}+65375t^{9}+36799t^{8}+17304t^{7}+6740t^6+2125t^5+\\
&535t^4+101t^3+14t^2+t
\end{align*}

\begin{align*}
A_{51}[d;&125](t)=t^{124}+2t^{123}+16t^{122}+102t^{121}+536t^{120}+2127t^{119}+6752t^{118}+17370t^{117}+\\
&37211t^{116}+67502t^{115}+106618t^{114}+151820t^{113}+205076t^{112}+275795t^{111}+379812t^{110}+\\
&538477t^{109}+792383t^{108}+1219370t^{107}+1926814t^{106}+2999852t^{105}+4473525t^{104}+\\
&6427127t^{103}+9168744t^{102}+13253727t^{101}+19155801t^{100}+26905876t^{99}+36385694t^{98}+\\
&48306810t^{97}+64666523t^{96}+87438501t^{95}+116551356t^{94}+150246539t^{93}+188817556t^{92}+\\
&237423764t^{91}+302556490t^{90}+384345810t^{89}+475302087t^{88}+570903850t^{87}+\\
&680616648t^{86}+821485935t^{85}+996603062t^{84}+1186235314t^{83}+1370684784t^{82}+\\
&1561200314t^{81}+1793772701t^{80}+2082399626t^{79}+2390194664t^{78}+2667219060t^{77}+\\
&2917266802t^{76}+3201802876t^{75}+3559761716t^{74}+3941739812t^{73}+4257131144t^{72}+\\
&4488091562t^{71}+4719342432t^{70}+5028066062t^{69}+5369713098t^{68}+5618695220t^{67}+\\
&5724896620t^{66}+5781621812t^{65}+5904979518t^{64}+6076751970t^{63}+6160718928t^{62}+\\
&6076751970t^{61}+5904979518t^{60}+5781621812t^{59}+5724896620t^{58}+5618695220t^{57}+\\
&5369713098t^{56}+5028066062t^{55}+4719342432t^{54}+4488091562t^{53}+4257131144t^{52}+\\
&3941739812t^{51}+3559761716t^{50}+3201802876t^{49}+2917266802t^{48}+2667219060t^{47}+\\
&2390194664t^{46}+2082399626t^{45}+1793772701t^{44}+1561200314t^{43}+1370684784t^{42}+\\
&1186235314t^{41}+996603062t^{40}+821485935t^{39}+680616648t^{38}+570903850t^{37}+\\
&475302087t^{36}+384345810t^{35}+302556490t^{34}+237423764t^{33}+188817556t^{32}+\\
&150246539t^{31}+116551356t^{30}+87438501t^{29}+64666523t^{28}+48306810t^{27}+36385694t^{26}+\\
&26905876t^{25}+19155801t^{24}+13253727t^{23}+9168744t^{22}+6427127t^{21}+4473525t^{20}+\\
&2999852t^{19}+1926814t^{18}+1219370t^{17}+792383t^{16}+538477t^{15}+379812t^{14}+\\
&275795t^{13}+205076t^{12}+151820t^{11}+106618t^{10}+67502t^9+37211t^8+17370t^7+6752t^6+\\
&2127t^5+536t^4+102t^3+16t^2+2t+1
\end{align*}

\begin{align*}
A5_{21}[d;&125](t)=t^{124}+2t^{123}+16t^{122}+102t^{121}+536t^{120}+2126t^{119}+\\
&6744t^{118}+17310t^{117}+36803t^{116}+65376t^{115}+98104t^{114}+124812t^{113}+\\
&135264t^{112}+124812t^{111}+98104t^{110}+65376t^{109}+36803t^{108}+17310t^{107}+\\
&6744t^{106}+2126t^{105}+536t^{104}+102t^{103}+16t^{102}+2t^{101}+t^{100}+t^{99}+\\
&8t^{98}+60t^{97}+408t^{96}+2126t^{95}+8504t^{94}+26932t^{93}+69240t^{92}+\\
&147107t^{91}+261504t^{90}+392256t^{89}+499248t^{88}+540860t^{87}+\\
&499248t^{86}+392256t^{85}+261504t^{84}+147107t^{83}+69240t^{82}+26932t^{81}+\\
&8504t^{80}+2126t^{79}+408t^{78}+60t^{77}+8t^{76}+t^{75}+2t^{74}+12t^{73}+92t^{72}+\\
&612t^{71}+3196t^{70}+12756t^{69}+40420t^{68}+103860t^{67}+220710t^{66}+392256t^{65}+\\
&588464t^{64}+748872t^{63}+811384t^{62}+748872t^{61}+588464t^{60}+392256t^{59}+\\
&220710t^{58}+103860t^{57}+40420t^{56}+12756t^{55}+3196t^{54}+612t^{53}+92t^{52}+\\
&12t^{51}+2t^{50}+t^{49}+8t^{48}+60t^{47}+408t^{46}+2126t^{45}+8504t^{44}+\\
&26932t^{43}+69240t^{42}+147107t^{41}+261504t^{40}+392256t^{39}+499248t^{38}+\\
&540860t^{37}+499248t^{36}+392256t^{35}+261504t^{34}+147107t^{33}+69240t^{32}+\\
&26932t^{31}+8504t^{30}+2126t^{29}+408t^{28}+60t^{27}+8t^{26}+t^{25}+t^{24}+\\
&2t^{23}+16t^{22}+102t^{21}+536t^{20}+2126t^{19}+6744t^{18}+17310t^{17}+\\
&36803t^{16}+65376t^{15}+98104t^{14}+124812t^{13}+135264t^{12}+124812t^{11}+\\
&98104t^{10}+65376t^9+36803t^8+17310t^7+6744t^6+2126t^5+536t^4+102t^3+16t^2+2t+1\\
\end{align*}

\begin{align*}
A5_{22}[d;&125](t)=t^{124}+t^{123}+2t^{122}+t^{121}+t^{120}+2t^{119}+\\
&8t^{118}+12t^{117}+8t^{116}+2t^{115}+16t^{114}+60t^{113}+92t^{112}+60t^{111}+\\
&16t^{110}+102t^{109}+408t^{108}+612t^{107}+408t^{106}+102t^{105}+536t^{104}+\\
&2126t^{103}+3196t^{102}+2126t^{101}+536t^{100}+2126t^{99}+8504t^{98}+12756t^{97}+\\
&8504t^{96}+2126t^{95}+6744t^{94}+26932t^{93}+40420t^{92}+26932t^{91}+6744t^{90}+\\
&17310t^{89}+69240t^{88}+103860t^{87}+69240t^{86}+17310t^{85}+36803t^{84}+\\
&147107t^{83}+220710t^{82}+147107t^{81}+36803t^{80}+65376t^{79}+\\
&261504t^{78}+392256t^{77}+261504t^{76}+65376t^{75}+98104t^{74}+\\
&392256t^{73}+588464t^{72}+392256t^{71}+98104t^{70}+124812t^{69}+499248t^{68}+\\
&748872t^{67}+499248t^{66}+124812t^{65}+135264t^{64}+540860t^{63}+811384t^{62}+\\
&540860t^{61}+135264t^{60}+124812t^{59}+499248t^{58}+748872t^{57}+499248t^{56}+\\
&124812t^{55}+98104t^{54}+392256t^{53}+588464t^{52}+392256t^{51}+98104t^{50}+\\
&65376t^{49}+261504t^{48}+392256t^{47}+261504t^{46}+65376t^{45}+36803t^{44}+\\
&147107t^{43}+220710t^{42}+147107t^{41}+36803t^{40}+17310t^{39}+69240t^{38}+\\
&103860t^{37}+69240t^{36}+17310t^{35}+6744t^{34}+26932t^{33}+40420t^{32}+\\
&26932t^{31}+6744t^{30}+2126t^{29}+8504t^{28}+12756t^{27}+8504t^{26}+2126t^{25}+\\
&536t^{24}+2126t^{23}+3196t^{22}+2126t^{21}+536t^{20}+102t^{19}+408t^{18}+\\
&612t^{17}+408t^{16}+102t^{15}+16t^{14}+60t^{13}+92t^{12}+60t^{11}+16t^{10}+2t^9+\\
&8t^8+12t^7+8t^6+2t^5+t^4+t^3+2t^2+t+1\\
\end{align*}

\begin{align*}
A5_{23}[d;&125](t)=t^{124}+t^{123}+2t^{122}+t^{121}+t^{120}+t^{119}+4t^{118}+\\
&6t^{117}+4t^{116}+t^{115}+2t^{114}+6t^{113}+10t^{112}+6t^{111}+2t^{110}+t^{109}+\\
&4t^{108}+6t^{107}+4t^{106}+t^{105}+t^{104}+t^{103}+2t^{102}+t^{101}+t^{100}+\\
&t^{99}+4t^{98}+6t^{97}+4t^{96}+t^{95}+4t^{94}+16t^{93}+24t^{92}+16t^{91}+4t^{90}+\\
&6t^{89}+24t^{88}+36t^{87}+24t^{86}+6t^{85}+4t^{84}+16t^{83}+24t^{82}+16t^{81}+\\
&4t^{80}+t^{79}+4t^{78}+6t^{77}+4t^{76}+t^{75}+2t^{74}+6t^{73}+10t^{72}+6t^{71}+\\
&2t^{70}+6t^{69}+24t^{68}+36t^{67}+24t^{66}+6t^{65}+10t^{64}+36t^{63}+56t^{62}+\\
&36t^{61}+10t^{60}+6t^{59}+24t^{58}+36t^{57}+24t^{56}+6t^{55}+2t^{54}+6t^{53}+\\
&10t^{52}+6t^{51}+2t^{50}+t^{49}+4t^{48}+6t^{47}+4t^{46}+t^{45}+4t^{44}+16t^{43}+\\
&24t^{42}+16t^{41}+4t^{40}+6t^{39}+24t^{38}+36t^{37}+24t^{36}+6t^{35}+4t^{34}+\\
&16t^{33}+24t^{32}+16t^{31}+4t^{30}+t^{29}+4t^{28}+6t^{27}+4t^{26}+t^{25}+t^{24}+\\
&t^{23}+2t^{22}+t^{21}+t^{20}+t^{19}+4t^{18}+6t^{17}+4t^{16}+t^{15}+2t^{14}+6t^{13}+\\
&10t^{12}+6t^{11}+2t^{10}+t^9+4t^8+6t^7+4t^6+t^5+t^4+t^3+2t^2+t+1
\end{align*}

\begin{align*}
A_{52}[d;&125](t)=t^{124}+2t^{123}+16t^{122}+102t^{121}+536t^{120}+2127t^{119}+\\
&6748t^{118}+17316t^{117}+36807t^{116}+65377t^{115}+98118t^{114}+124866t^{113}+\\
&135346t^{112}+124866t^{111}+98118t^{110}+65477t^{109}+37207t^{108}+17916t^{107}+\\
&7148t^{106}+2227t^{105}+1071t^{104}+2227t^{103}+3210t^{102}+2127t^{101}+536t^{100}+\\
&2126t^{99}+8508t^{98}+12810t^{97}+8908t^{96}+4251t^{95}+15244t^{94}+53848t^{93}+\\
&109636t^{92}+174023t^{91}+268244t^{90}+409560t^{89}+568464t^{88}+644684t^{87}+\\
&568464t^{86}+409560t^{85}+298303t^{84}+294198t^{83}+289926t^{82}+174023t^{81}+\\
&45303t^{80}+67501t^{79}+261908t^{78}+392310t^{77}+261508t^{76}+65376t^{75}+\\
&98104t^{74}+392262t^{73}+588546t^{72}+392862t^{71}+101298t^{70}+137562t^{69}+\\
&539644t^{68}+852696t^{67}+719934t^{66}+517062t^{65}+723718t^{64}+1289696t^{63}+\\
&1622712t^{62}+1289696t^{61}+723718t^{60}+517062t^{59}+719934t^{58}+852696t^{57}+\\
&539644t^{56}+137562t^{55}+101298t^{54}+392862t^{53}+588546t^{52}+392262t^{51}+\\
&98104t^{50}+65376t^{49}+261508t^{48}+392310t^{47}+261908t^{46}+67501t^{45}+\\
&45303t^{44}+174023t^{43}+289926t^{42}+294198t^{41}+298303t^{40}+409560t^{39}+\\
&568464t^{38}+644684t^{37}+568464t^{36}+409560t^{35}+268244t^{34}+174023t^{33}+\\
&109636t^{32}+53848t^{31}+15244t^{30}+4251t^{29}+8908t^{28}+12810t^{27}+8508t^{26}+\\
&2126t^{25}+536t^{24}+2127t^{23}+3210t^{22}+2227t^{21}+1071t^{20}+2227t^{19}+\\
&7148t^{18}+17916t^{17}+37207t^{16}+65477t^{15}+98118t^{14}+124866t^{13}+\\
&135346t^{12}+124866t^{11}+98118t^{10}+65377t^9+36807t^8+17316t^7+6748t^6+\\
&2127t^5+536t^4+102t^3+16t^2+2t+1
\end{align*}

\begin{align*}
A_{5}[d;&125](t)=4t^{118}+54t^{117}+404t^{116}+2125t^{115}+8500t^{114}+\\
&26954t^{113}+69730t^{112}+150929t^{111}+281694t^{110}+473000t^{109}+\\
&755176t^{108}+1201454t^{107}+1919666t^{106}+2997625t^{105}+4472454t^{104}+\\
&6424900t^{103}+9165534t^{102}+13251600t^{101}+19155265t^{100}+26903750t^{99}+\\
&36377186t^{98}+48294000t^{97}+64657615t^{96}+87434250t^{95}+116536112t^{94}+\\
&150192691t^{93}+188707920t^{92}+237249741t^{91}+302288246t^{90}+\\
&383936250t^{89}+474733623t^{88}+570259166t^{87}+680048184t^{86}+821076375t^{85}+\\
&996304759t^{84}+1185941116t^{83}+1370394858t^{82}+1561026291t^{81}+\\
&1793727398t^{80}+2082332125t^{79}+2389932756t^{78}+2666826750t^{77}+\\
&2917005294t^{76}+3201737500t^{75}+3559663612t^{74}+3941347550t^{73}+\\
&4256542598t^{72}+4487698700t^{71}+4719241134t^{70}+5027928500t^{69}+\\
&5369173454t^{68}+5617842524t^{67}+5724176686t^{66}+5781104750t^{65}+\\
&5904255800t^{64}+6075462274t^{63}+6159096216t^{62}+6075462274t^{61}+\\
&5904255800t^{60}+5781104750t^{59}+5724176686t^{58}+5617842524t^{57}+\\
&5369173454t^{56}+5027928500t^{55}+4719241134t^{54}+4487698700t^{53}+\\
&4256542598t^{52}+3941347550t^{51}+3559663612t^{50}+3201737500t^{49}+\\
&2917005294t^{48}+2666826750t^{47}+2389932756t^{46}+2082332125t^{45}+\\
&1793727398t^{44}+1561026291t^{43}+1370394858t^{42}+1185941116t^{41}+\\
&996304759t^{40}+821076375t^{39}+680048184t^{38}+570259166t^{37}+\\
&474733623t^{36}+383936250t^{35}+302288246t^{34}+237249741t^{33}+\\
&188707920t^{32}+150192691t^{31}+116536112t^{30}+87434250t^{29}+\\
&64657615t^{28}+48294000t^{27}+36377186t^{26}+26903750t^{25}+\\
&19155265t^{24}+13251600t^{23}+9165534t^{22}+6424900t^{21}+\\
&4472454t^{20}+2997625t^{19}+1919666t^{18}+1201454t^{17}+755176t^{16}+\\
&473000t^{15}+281694t^{14}+150929t^{13}+69730t^{12}+26954t^{11}+\\
&8500t^{10}+2125t^9+404t^8+54t^7+4t^6
\end{align*}

\begin{align*}
Total&[d;125](t)=A_{1}+A_{2}+A_{3}+A_{4}+A_{5}\\
&t^{124}+3t^{123}+90t^{122}+3183t^{121}+94261t^{120}+2253202t^{119}+\\
&44660526t^{118}+752765426t^{117}+11009026889t^{116}+141893725177t^{115}+\\
&1631777381270t^{114}+16911146021617t^{113}+159246624819695t^{112}+\\
&1371970915393992t^{111}+10877769404828584t^{110}+79770308932154652t^{109}+\\
&543435229633787791t^{108}+3452412046870263651t^{107}+\\
&20522671612153800248t^{106}+114494904782463078927t^{105}+\\
&601098250109006348605t^{104}+2976867524344040005968t^{103}+\\
&13937152500343088195264t^{102}+61808241523238111266288t^{101}+\\
&260109683076981986458211t^{100}+1040438732307841539649001t^{99}+\\
&3961670557633787406854497t^{98}+14379396838818630509486185t^{97}+\\
&49814339048764832197753486t^{96}+164902639609703309488079986t^{95}+\\
&522191692097394743906419238t^{94}+1583419969585645756697355513t^{93}+\\
&4601814286608285713826377107t^{92}+12829300435392788915059242212t^{91}+\\
&34337245282963060080748456842t^{90}+88295773584762135474231228583t^{89}+\\
&218286773584550853413218320189t^{88}+519168542579472256031665746576t^{87}+\\
&1188622715905633892156485804100t^{86}+2621065476099602847252984385458t^{85}+\\
&5569764136711656142477377352019t^{84}+11411224084970222152064822132446t^{83}+\\
&22550752358393534437019574995528t^{82}+43003760311355111831092568853122t^{81}+\\
&79166013300449183486752791791953t^{80}+140739579200798547810848407179486t^{79}+\\
&241704929497023593576284109748137t^{78}+401127329803571069200022148395060t^{77}+\\
&643475091559895257811475523869150t^{76}+998042999154123255509578916762251t^{75}+\\
&1497064498731184884738423828738826t^{74}+2172211233453091791403201171682487t^{73}+\\
&3049450385424532709259880563769891t^{72}+4142649580199365187088253032770042t^{71}+\\
&5446817040632498674709258423750752t^{70}+6932312597168634673342968879821758t^{69}+\\
&8541599450082782011722315254853863t^{68}+10189978291326827658936469293236257t^{67}+\\
&11771181819291335403215209136236758t^{66}+13167762713105561632909721570633733t^{65}+\\
&14265076272531025106827702213046251t^{64}+14966637400688288631941104298816390t^{63}+\\
&15208034778118744904852502416921288t^{62}+14966637400688288631941104298816390t^{61}+\\
&14265076272531025106827702213046251t^{60}+13167762713105561632909721570633733t^{59}+\\
\end{align*}

\begin{align*}
Total&[d;125](t) continued...\\
&11771181819291335403215209136236758t^{58}+10189978291326827658936469293236257t^{57}+\\
&8541599450082782011722315254853863t^{56}+6932312597168634673342968879821758t^{55}+\\
&5446817040632498674709258423750752t^{54}+4142649580199365187088253032770042t^{53}+\\
&3049450385424532709259880563769891t^{52}+2172211233453091791403201171682487t^{51}+\\
&1497064498731184884738423828738826t^{50}+998042999154123255509578916762251t^{49}+\\
&643475091559895257811475523869150t^{48}+401127329803571069200022148395060t^{47}+\\
&241704929497023593576284109748137t^{46}+140739579200798547810848407179486t^{45}+\\
&79166013300449183486752791791953t^{44}+43003760311355111831092568853122t^{43}+\\
&22550752358393534437019574995528t^{42}+11411224084970222152064822132446t^{41}+\\
&5569764136711656142477377352019t^{40}+2621065476099602847252984385458t^{39}+\\
&1188622715905633892156485804100t^{38}+519168542579472256031665746576t^{37}+\\
&218286773584550853413218320189t^{36}+88295773584762135474231228583t^{35}+\\
&34337245282963060080748456842t^{34}+12829300435392788915059242212t^{33}+\\
&4601814286608285713826377107t^{32}+1583419969585645756697355513t^{31}+\\
&522191692097394743906419238t^{30}+164902639609703309488079986t^{29}+\\
&49814339048764832197753486t^{28}+14379396838818630509486185t^{27}+\\
&3961670557633787406854497t^{26}+1040438732307841539649001t^{25}+\\
&260109683076981986458211t^{24}+61808241523238111266288t^{23}+\\
&13937152500343088195264t^{22}+2976867524344040005968t^{21}+601098250109006348605t^{20}+\\
&114494904782463078927t^{19}+20522671612153800248t^{18}+3452412046870263651t^{17}+\\
&543435229633787791t^{16}+79770308932154652t^{15}+10877769404828584t^{14}+\\
&1371970915393992t^{13}+159246624819695t^{12}+16911146021617t^{11}+\\
&1631777381270t^{10}+141893725177t^9+11009026889t^8+752765426t^7+44660526t^6+\\
&2253202t^5+94261t^4+3183t^3+90t^2+3t+1
\end{align*}

\end{changemargin}

\nocite{Mishna98, KP78, KRRT91, Parks2012, FIK90, LS2003}
\printindex
\bibliography{bibfile}

\end{document}